\DeclarePairedDelimiter{\floor}{\lfloor}{\rfloor}
\newtheorem{thm}{Theorem}[section]
\newtheorem{cor}[thm]{Corollary}
\newtheorem{prop}[thm]{Proposition}
\newtheorem{lem}[thm]{Lemma}
\newtheorem{conj}[thm]{Conjecture}
\theoremstyle{definition}
\newtheorem{defn}[thm]{Definition}
\newtheorem{claim}[thm]{Claim}
\theoremstyle{remark}
\newtheorem{rem}[thm]{Remark}
\newcommand{\R}{\mathbb{R}}
\newcommand{\F}{\Phi}
\newcommand{\e}{\varepsilon}
\newcommand{\C}{\mathbb{C}}
\newcommand{\Z}{\mathbb{Z}}
\newcommand{\z}{\mathcal{Z}}
\newcommand{\N}{\mathbb{N}}
\renewcommand{\P}{\mathcal{P}}
\renewcommand{\|}{|||}
\newcommand{\psix}{\psi_x}
\newcommand{\Psix}{\Psi_x}
\newcommand{\funs}{\mathcal{F}}
\newcommand{\logd}{ \text{d}^{\log} }
\newcommand{\K}{\mathcal{K}}
\newcommand{\E}{\mathbb{E}}
\newcommand{\ch}{\mathbbm{1}}
\newcommand{\vertiii}[1]{{\left\vert\kern-0.25ex\left\vert\kern-0.25ex\left\vert #1 
		\right\vert\kern-0.25ex\right\vert\kern-0.25ex\right\vert}}
\renewcommand{\Re}{\text{Re} }
\begin{document}
	\title{Sarnak's conjecture for sequences of almost quadratic word growth}
	\author{Redmond McNamara}
	\maketitle
	\begin{abstract}
		We prove the logarithmic Sarnak conjecture for sequences of subquadratic word growth. In particular, we show that the Liouville function has at least quadratically many sign patterns. We deduce the main theorem from a variant which bounds the correlations between multiplicative functions and sequences with subquadratically many words which occur with positive logarithmic density. This allows us to actually prove that our multiplicative functions do not locally correlate with sequences of subquadratic word growth. We also prove a conditional result which shows that if the $\kappa-1$-Fourier uniformity conjecture holds then the Liouville function does not correlate with sequences with $O(n^{t-\e})$ many words of length $n$ where $t = \kappa(\kappa+1)/2$. We prove a variant of the $1$-Fourier uniformity conjecture where the frequencies are restricted to any set of box dimension $< 1$.
	\end{abstract}
	\section{Introduction}

	The prime number theorem states that
	\[
	\lim_{N \rightarrow \infty} \E_{n \leq N} \Lambda(n) = 1,
	\]
	where $\Lambda(n) = \log p$ if $n$ is a power of a prime $p$ and $0$ otherwise is the von Mangoldt function. (We refer the reader to Section \ref{background} for an explanation of the $\E$ notation). This is equivalent to the estimate
	\[
	\lim_{N\rightarrow \infty} \E_{n \leq N} \lambda(n) = 0,
	\]
	where $\lambda(n) = (-1)^{\text{$\#$ of prime factors of n}}$ is the Liouville function.
	Dirichlet's theorem on prime numbers in arithmetic progressions morally follows from the estimate
	\[
	\lim_{N\rightarrow \infty} \E_{n \leq N} \ch_{n \equiv r \mod d} \lambda(n) = 0,
	\]
	for any $d$ and $r$. Taking linear combinations, we find that for any periodic function $f$,
	\[
	\lim_{N\rightarrow \infty} \E_{n \leq N} f(n) \lambda(n) = 0.
	\]
	Equivalently, for any function $F \colon S^1 \rightarrow \C$ and any rational angle $\alpha$,
	\[
	\lim_{N\rightarrow \infty} \E_{n \leq N} F(\alpha^n) \lambda(n) = 0.
	\]
	The analogous estimate when $\alpha$ is irrational and $F$ is a continuous function was proved by Vinogradov and was a key ingredient in his proof that any sufficiently large odd number is the sum of three primes. Green and Tao proved that 
\[
	\lim_{N\rightarrow \infty} \E_{n \leq N} F(g^n \Gamma) \lambda(n) = 0.
\]
	where $G$ is a nilpotent Lie group, $g$ is an element of $G$, $\Gamma$ is a cocompact lattice and $F$ is a continuous function $F \colon G / \Gamma \rightarrow \C$. A version of this statement was a key ingredient in their proof with Tamar Ziegler that counts the solutions to almost any system of linear equations over the primes. This motivates the following conjecture, due to Sarnak:
	\begin{conj}[Sarnak, see \cite{MR3014544} ]
		For any topological dynamical system $(X, T)$ with zero entropy, any continuous function $F \colon X \rightarrow \C$ and any point $x$ in $X$,
		\[
		\lim_{N\rightarrow \infty} \E_{n \leq N} F(T^n x) \lambda(n) = 0.
		\]
	\end{conj}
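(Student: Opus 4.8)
The plan is to reduce the conjecture to an estimate about $\{0,1\}$-valued sequences of subexponential word complexity, and then to attack that estimate through the circle of ideas around the Chowla conjecture, the Matom\"aki--Radziwi\l\l{} theorem on multiplicative functions in short intervals, and Tao's entropy decrement argument. First I would pass to the orbit closure of $x$ and approximate $F$ by a locally constant function, so that $F(T^n x)$ becomes a finite linear combination of indicators of cylinder sets; it then suffices to show that for every $a \colon \N \to \{0,1\}$ whose generated subshift has zero topological entropy one has $\E_{n \le N} a(n) \lambda(n) \to 0$. By the variational principle, zero entropy means precisely that the word complexity satisfies $p_a(n) = e^{o(n)}$. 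Since the main theorem of this paper already gives the logarithmically averaged estimate when $p_a(n) = o(n^2)$, the remaining work splits into three tasks: push the complexity bound from $o(n^2)$ to an arbitrary polynomial $O(n^d)$, then push it all the way to $e^{o(n)}$, and throughout upgrade logarithmic averages to Ces\`aro averages.

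For the polynomial range I would iterate the mechanism behind the conditional result announced in the abstract. That result already shows that the $\kappa-1$-Fourier uniformity conjecture forces no correlation with sequences having $O(n^{t-\e})$ words, $t = \kappa(\kappa+1)/2$; granting all the higher-order Fourier uniformity conjectures simultaneously should then handle complexity $O(n^d)$ for every fixed $d$. The engine is the familiar one: on a short window $[n,n+H]$ the sequence $a$ takes only $p_a(H) = O(H^d)$ distinct patterns, so after an entropy decrement one replaces $\lambda$ by an essentially independent multiplicative weight at a dilated scale, uses Matom\"aki--Radziwi\l\l{} together with the assumed Fourier uniformity to annihilate the contribution of each of the $O(H^d)$ patterns, and sums over them. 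The passage from logarithmic to Ces\`aro averaging is the standard --- and still open --- point here: the present subquadratic result, like the Frantzikinakis--Host theorem for systems with countably many ergodic invariant measures, lives in the logarithmic category, and no general device for converting logarithmic Sarnak into Ces\`aro Sarnak is known, so I would be forced to hope that the extra rigidity supplied by a complexity bound makes the conversion tractable.

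The genuine obstacle is the last step: moving from polynomial complexity $O(n^d)$ to subexponential complexity $e^{o(n)}$. Every available tool --- the entropy decrement argument, the concatenation and inverse theorems for the Gowers norms, the detour through Matom\"aki--Radziwi\l\l{} --- currently costs at least a fixed power of the governing parameter, so iterating them absorbs any one polynomial but not the gap between $n^{O(1)}$ and $e^{o(n)}$. I expect closing this gap to be the crux, and it seems to demand a qualitatively new ingredient: either an inverse theorem for the Gowers $U^s$ norms with quasi-polynomial bounds that are uniform in $s$, or a way of exploiting the multiplicativity of $\lambda$ directly, without routing through additive-combinatorial structure theorems at all. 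Absent such an input I would not claim the full conjecture; the right reading is that the methods here carry the subexponential program to the subquadratic threshold, with the polynomial range conditional on higher-order Fourier uniformity and the subexponential range conditional on a genuinely new structural idea.
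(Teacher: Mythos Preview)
The statement you are addressing is Sarnak's conjecture itself, which the paper records as an open conjecture and does not prove; there is no proof in the paper to compare your proposal against. The paper's contribution is the special case of sequences with $o(k^2)$ word growth (and a conditional polynomial-growth version assuming higher Fourier uniformity), all in the logarithmically averaged setting.

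Your proposal is not a proof either, and you say so explicitly. The three obstructions you isolate are exactly the right ones, and each is genuinely open: (i) extending from $o(k^2)$ to $O(k^d)$ complexity already requires the full tower of $\kappa$-Fourier uniformity conjectures, none of which beyond $\kappa=0$ is known unconditionally; (ii) upgrading logarithmic averages to Ces\`aro averages has no general mechanism---the entropy decrement argument lives intrinsically in the logarithmic world; (iii) the jump from polynomial to subexponential complexity is not reachable by any known iteration of these tools, for precisely the power-loss reason you give. Your reduction to finite-valued sequences via cylinder approximation and the identification of zero topological entropy with subexponential word complexity are both correct. What you have written is an accurate map of the territory around the conjecture rather than a proof attempt, and it should be read as such.
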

	Tao introduced the following variant,
	\begin{conj}[Logarithmically Averaged Sarnak Conjecture]
		For any topological dynamical system $(X, T)$ with zero entropy, any continuous function $F \colon X \rightarrow \C$ and any point $x$ in $X$,
		\[
		\lim_{N\rightarrow \infty} \E^{\log}_{n \leq N} F(T^n x) \lambda(n) = 0.
		\]
	\end{conj}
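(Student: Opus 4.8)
The plan is to run the by-now standard reduction of Sarnak-type statements to correlation estimates for $\lambda$, and then to attack those estimates with the Matomaki--Radziwill--Tao circle of ideas. First I would reduce the assertion for a general zero-entropy system $(X,T)$, continuous $F$ and point $x$ to a purely arithmetic one: replacing $X$ by the orbit closure of $x$ and approximating $F$ uniformly by a function of finitely many symbols, it suffices to prove that whenever $y \in \A^{\N}$ is a point of a subshift of zero entropy, $\E^{\log}_{n \leq N} y(n)\lambda(n) \to 0$ as $N \to \infty$. The zero-entropy hypothesis is used only through its consequence that the number of words of length $H$ appearing in $y$ grows subexponentially in $H$.

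Second, I would pass from the single logarithmic average to a local average: the Matomaki--Radziwill theorem, in its logarithmically averaged form due to Tao, lets one replace $\E^{\log}_{n} y(n)\lambda(n)$, up to an error tending to $0$ with $H$, by the average over $n$ of $\E_{h \leq H} \lambda(n+h)\, y(n+h)$, after which Tao's entropy decrement argument produces a sparse sequence of scales $H$ at which the block $(\lambda(n+1),\dots,\lambda(n+H))$ statistically decouples from $(y(n+1),\dots,y(n+H))$. Feeding in the subexponential count of length-$H$ words of $y$, the problem collapses to a uniform cancellation statement for the bounded-length correlations of $\lambda$ alone, i.e. to a version of the logarithmically averaged Chowla conjecture along these scales.

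Third, I would try to supply that Chowla-type cancellation through the inverse theory of the Gowers norms: the Green--Tao--Ziegler nilsequence estimates show $\lambda$ is orthogonal to all bounded-complexity nilsequences, and, granting that these global estimates can be localized to short intervals, one concludes the needed Gowers uniformity of $\lambda$ at almost all logarithmic scales. This localization step is exactly the $\kappa$-Fourier uniformity conjecture, and it is the main obstacle: the higher-order Fourier uniformity conjecture is open, and the entropy decrement argument by itself loses too much to overcome subexponential — as opposed to subquadratic or polynomial — growth of the word count. Consequently the final statement as worded should be regarded as conditional on (a suitable form of) the $\kappa$-Fourier uniformity conjecture; what the methods above deliver unconditionally is the case of subshifts of subquadratic word growth, and, under the $(\kappa-1)$-Fourier uniformity conjecture, the case of $O(n^{t-\e})$ words of length $n$ with $t = \kappa(\kappa+1)/2$. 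I also expect the variant of $1$-Fourier uniformity for frequency sets of box dimension $<1$, proved below, to be the unconditional input that drives the subquadratic case.
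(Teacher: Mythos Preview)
The statement you are attempting is labelled in the paper as a \emph{conjecture}, not a theorem; the paper does not prove it and offers no proof. You correctly recognize this in your third paragraph: the reduction to the $\kappa$-Fourier uniformity conjecture is exactly where the argument becomes conditional, and that conjecture is open. What the paper actually establishes unconditionally is precisely the subquadratic word-growth case you mention, together with the conditional polynomial-growth case under the $(\kappa-1)$-Fourier uniformity conjecture; so your assessment of the status of the full conjecture is accurate.

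One correction to your last sentence: the box-dimension variant of $1$-Fourier uniformity is not the input that drives the subquadratic case. It is an \emph{application} of the approximate-word version of the main theorem, not an ingredient in its proof. The subquadratic case is handled directly: pass to a dynamical model, use the Frantzikinakis--Host structure theorem to replace $f$ by its projection onto the Host--Kra factor and then by a single nilcharacter, apply the entropy decrement to average over prime dilates, and finally expand the $2t$-th power and count solutions to Vinogradov's mean value problem to reach a contradiction. No Fourier-uniformity hypothesis beyond the Matom\"{a}ki--Radziwi\l\l--Tao short-interval estimate enters that argument.
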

	Many instances of Sarnak's conjecture have been proven. We give a few examples but stress that this is an incomplete list: \cite{MR3043150} \cite{MR2986954} \cite{MR3095150} \cite{MR3459905} \cite{MR3415586} \cite{MR3622068} \cite{MR3819999} \cite{MR3631324} \cite{MR3347317} \cite{MR3263939} \cite{MR3724218} \cite{MR3859364} \cite{MR3702497} \cite{MR3660308}.
	
\begin{defn}
	A word $\epsilon$ of length $k$ is an element of $\C^k$.
	Let $k$ be a natural number, let $\epsilon \in \C^k$ and let $b \colon \N \rightarrow \C$. We say that $\epsilon$ occurs as a word of $b$ if there exists a natural number $n$ such that $b(n+h) = \epsilon_h$ for all $h \leq k$. We say that $\epsilon$ occurs with (upper) logarithmic density $\delta \in \R$ if 
	\[
	\limsup_{N \rightarrow \infty} \E^{\log}_{n \leq N} \ch_{\epsilon_h = b(n+h) \text{ for all $h \leq k$ }}  = \delta.
	\]
	In this paper, when we refer to $\log$-density we mean upper logarithmic density.
	A word $\epsilon$ whose entries are all $\pm 1$ is called a sign pattern. We say that $b$ has subquadratic word growth if $b$ takes finitely many possible values and the number of words of length $k$ that occur with positive upper logarithmic density is $o(k^2)$.
\end{defn}
	
	Then a particular case of Sarnak's conjecture predicts that for any bounded sequence $b \colon \N\rightarrow \C$ with subexponential word growth that
	\[
	\lim_{N\rightarrow \infty} \E^{\log}_{n \leq N} b(n) \lambda(n) = 0.
	\]
	Because $\lambda$ correlates with itself, this in particular implies that the number of sign patterns of $\lambda$ of length $k$ is exponential in $k$. \cite{FrantzikinakisHost} proved the special case where $b$ has linear word growth.
	In this paper, we prove the following special case:
	\begin{thm}\label{subquadforl}
		Let $b$ be a bounded sequence with subquadratic word growth. Then
		\[
		\lim_{N\rightarrow \infty} \E^{\log}_{n \leq N} b(n) \lambda(n) = 0.
		\]
	\end{thm}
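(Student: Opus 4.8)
The plan is to deduce Theorem~\ref{subquadforl} from a statement about the failure of \emph{local} correlation, which I would attack through a dynamical reformulation together with a Fourier uniformity estimate for $\lambda$ along thin sets of frequencies. First I pass to a Furstenberg system: writing $A$ for the (finite) value set of $b$ and $T$ for the shift, choose $N_k\to\infty$ along which $\E^{\log}_{n\le N_k}b(n)\lambda(n)$ converges to a limit $L$ with $|L|$ equal to the $\limsup$ in question, and let $\nu$ be a weak-$*$ limit of the log-empirical measures $\E^{\log}_{n\le N_k}\delta_{(T^nb,\,T^n\lambda)}$ on $A^{\Z}\times\{-1,1\}^{\Z}$. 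Then $\nu$ is $T\times T$-invariant, $L=\int x_0y_0\,d\nu$, and the first marginal $\mu$ of $\nu$ is supported on a subshift $X$ whose language is contained in the set of words of $b$ of positive upper logarithmic density; in particular $X$ has subquadratic word growth. It therefore suffices to prove $\int x_0y_0\,d\nu=0$ for every coupling of this kind.

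The argument is then local. Using the elementary identity $\E^{\log}_nb(n)\lambda(n)=\E^{\log}_n\E_{h\le H}b(n+h)\lambda(n+h)+o_{H\to\infty}(1)$ and an inverse-theorem analysis of the windows $b|_{[n,n+H)}$, in which the orthogonality of $\lambda$ to nilsequences (Green--Tao--Ziegler) rules out the higher-step contributions that the structure of $X$ could a priori produce, I would argue that if $L\ne0$ then there are a window length $H$ (arbitrarily large), a threshold $\eta>0$, and a set of $n$ of positive upper logarithmic density carrying a common frequency $\alpha_n$ at which both $|\E_{h\le H}b(n+h)e(h\alpha_n)|\gtrsim\eta$ and $|\E_{h\le H}\lambda(n+h)e(h\alpha_n)|\gtrsim\eta$. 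Since $\alpha_n$ depends, up to a negligible set of $n$, only on the window $b|_{[n,n+H)}$ — of which there are only $o(H^2)$ — and since this assignment is coherent as $H$ runs through the powers of $2$, the subquadratic word growth of $X$ confines the frequencies $\alpha_n$ to a fixed set $S\subseteq\mathbb{T}$ of upper box dimension strictly less than $1$. This is exactly where the hypothesis $o(k^2)$, rather than merely zero entropy, is used.

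It remains to contradict the lower bound $|\E_{h\le H}\lambda(n+h)e(h\alpha_n)|\gtrsim\eta$, and this is the content of the restricted Fourier uniformity estimate proved earlier in the paper: for any fixed $S\subseteq\mathbb{T}$ with $\overline{\dim}_{\mathrm{box}}S<1$,
\[
\lim_{H\to\infty}\ \limsup_{N\to\infty}\ \E^{\log}_{n\le N}\ \sup_{\alpha\in S}\ \bigl|\E_{h\le H}\lambda(n+h)e(h\alpha)\bigr|=0 .
\]
I would prove this by covering $S$ at scale $1/H$ by $O(H^{1-\delta})$ intervals and combining, for each interval, the Matom\"aki--Radziwi\l\l\ bound on the short-interval sums $\E_{m\in[n,n+H)}\lambda(m)e(m\alpha)$ (small for every fixed $\alpha$ because $\lambda$ is strongly aperiodic) with large-sieve inequalities adapted to the $O(H^{1-\delta})$ frequencies of the net, the box-dimension bound being precisely what makes the summation over the net tractable. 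Applied to the set $S$ of the previous paragraph this forces the relevant $\limsup$ to vanish, contradicting the lower bound on a set of positive upper logarithmic density; hence $\int x_0y_0\,d\nu=0$, which is Theorem~\ref{subquadforl}.

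I expect the main obstacle to be the middle step: extracting, from a system of subquadratic word growth that couples nontrivially with $\lambda$, a bona fide degree-one obstruction whose frequency set is quantitatively thin. Locating the obstruction at the linear level needs the full strength of the nilsequence orthogonality theorem together with a careful scale analysis so that higher-step structure inside $X$ is not misread as structure in $\lambda$, and upgrading the word-growth bound $o(k^2)$ to ``$S$ has upper box dimension $<1$'' — in particular fixing the right normalization of the frequencies across scales — is the heart of the matter and the point at which the precise subquadratic hypothesis is indispensable. A secondary difficulty, which the Furstenberg-system packaging above is designed to absorb, is making the local-to-global passage compatible with the ``positive upper logarithmic density'' convention built into the definition of word growth.
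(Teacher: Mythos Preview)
Your proposal has two genuine gaps, and the overall strategy is quite different from the paper's.

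First, the reduction to a \emph{linear} obstruction does not follow from Green--Tao--Ziegler. What GTZ (and Green--Tao on M\"obius and nilsequences) give is that $\lambda$ is globally orthogonal to any \emph{fixed} nilsequence; they say nothing about local windows $\lambda(n+1),\ldots,\lambda(n+H)$ correlating with nilsequences that vary with $n$. The latter is precisely the local Fourier uniformity conjecture, which is open even at step $1$. In the paper's argument the Host--Kra factor of the dynamical model for $\lambda$ produces nilcharacters of arbitrary step $s\ge 1$, and all steps are handled simultaneously: the key inequality (Theorem~\ref{bigthm}) is proved by showing that, after expanding $|\,\cdot\,|^{2t}$ and averaging over dilations, only tuples solving Vinogradov's system survive, and then counting those via Bourgain--Demeter--Guth. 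No reduction to the abelian case is possible, and indeed the Sawin obstruction the paper discusses shows there are ``statistically multiplicative'' models with genuinely step-$2$ structure.

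Second, the passage from ``$o(k^2)$ words of length $k$'' to ``frequencies confined to a set of upper box dimension $<1$'' is not justified and is almost certainly false as stated. At scale $1/H$ there are $H$ frequency classes; having $o(H^2)$ words gives you at most $o(H^2)$ candidate frequencies, which places no constraint whatsoever on how many of the $H$ classes are hit, let alone the bound $O(H^{1-\delta})$ that box dimension $<1$ requires. There is no coherence across scales that would force such a conclusion. Relatedly, your proposed independent proof of the box-dimension Fourier uniformity estimate (Theorem~\ref{Cantor}) via Matom\"aki--Radziwi\l\l\ plus a large sieve does not work: MR controls $\sum_{m\in[n,n+H)}\lambda(m)$, not twisted sums with varying $e(m\alpha)$, and the paper itself proves Theorem~\ref{Cantor} \emph{from} the main theorem (via Theorem~\ref{approxmainthm}), so invoking it here would be circular.

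For comparison, the paper derives Theorem~\ref{subquadforl} as an immediate special case of Theorem~\ref{mainthm} with $a=\lambda$. The proof of Theorem~\ref{mainthm} builds a dynamical model for $a$ carrying the multiplicative dilation symmetry, uses the Frantzikinakis--Host structure theorem to replace $f$ by its Host--Kra projection, applies Tao's entropy decrement to remove the congruence condition, and then---crucially---handles nilcharacters of every step at once via the equidistribution theory of Green--Tao together with a mean-value counting argument (Lemma~\ref{frz} and Theorem~\ref{BDG}). The subquadratic hypothesis enters only at the pigeonhole step (Lemmas~\ref{lem1} and \ref{lem2}), not through any box-dimension mechanism.
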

	Previously \cite{Hildebrand} showed that all 8 sign patterns of length 3 occur infinitely often. \cite{MRT2} showed all 8 sign patterns of length 3 occur with positive density. \cite{TJ} proved that all 16 sign patterns of length 4 occur with positive density using an argument communicated to them by Matom\"aki and Sawin. \cite{TJ} also showed the number of sign patterns of length $k$ is at least $2k + 8$ for $k \geq 4$. \cite{FrantzikinakisHost} showed that the number of sign patterns is super linear.
	In particular, Theorem \ref{subquadforl} implies that $\lambda$ does not have subquadratically many sign patterns. We actually prove something slightly stronger.
	\begin{thm}\label{signpatternsofl}
		There is a constant $\delta > 0$ such that $\lambda$ has at least $\delta k^2$ many sign patterns of length $k$. 
	\end{thm}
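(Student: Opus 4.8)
The plan is to deduce Theorem~\ref{signpatternsofl} from Theorem~\ref{subquadforl} by contradiction, using nothing more than the identity $\lambda(n)^2 = 1$. Write $p(k)$ for the number of sign patterns of length $k$ occurring in $\lambda$ and $N(k) \le p(k)$ for the number occurring with positive upper logarithmic density. Suppose toward a contradiction that $p(k) = o(k^2)$. Then $N(k) \le p(k) = o(k^2)$, so $\lambda$ is itself a bounded sequence of subquadratic word growth, and Theorem~\ref{subquadforl} applied with $b = \lambda$ gives
\[
	1 = \lim_{N \to \infty} \E^{\log}_{n \le N} \lambda(n)^2 = \lim_{N \to \infty} \E^{\log}_{n \le N} b(n)\lambda(n) = 0,
\]
which is absurd. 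Hence $\limsup_{k \to \infty} p(k)/k^2 > 0$, so there is a fixed $\delta > 0$ with $p(k) \ge \delta k^2$ for arbitrarily large $k$.

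To promote this to a bound valid for every $k$, I would first record two soft facts: $p$ and $N$ are non-decreasing in $k$ (sending a length-$(k+1)$ word of $\lambda$ to its length-$k$ prefix surjects onto the set of length-$k$ words, and likewise for positive-density words), and in fact $N$ is the complexity function of the subshift of $\{-1,1\}^{\N}$ generated by the positive-density words, since every positive-density word has a positive-density one-letter extension — the indicators of its finitely many extensions sum to its own indicator, so they cannot all have density zero. By monotonicity of $N$, a lower bound $N(k) \ge \delta k^2$ throughout a dyadic block $k \in [K, 2K)$ reduces to $N(K) \gtrsim K^2$, so the only way the uniform bound could fail is for $N(k)/k^2$ to dip toward $0$ along a multiplicatively lacunary sequence of scales while staying bounded below on a complementary sequence.

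Ruling out this oscillation is the genuine obstacle in the deduction. The qualitative Theorem~\ref{subquadforl} does not detect it: any sequence forced to agree with $\lambda$ on a set of positive logarithmic density tends to inherit $\lambda$'s complexity and so fails to be subquadratic, and there is no evident way to build a subquadratic competitor out of the lacunary ``good'' scales. Instead I would invoke a quantitative refinement of Theorem~\ref{subquadforl} — the correlation estimate announced in the abstract — in which $\bigl|\E^{\log}_{n \le N} b(n)\lambda(n)\bigr|$ is bounded in terms of the word-growth rate of $b$ and is small as soon as the word counts of $b$ are $\le \varepsilon k^2$ at enough scales; applied to $b = \lambda$ this is incompatible with $\E^{\log}_{n \le N}\lambda(n)^2 = 1$ unless $N(k) \ge \delta k^2$ at all large $k$. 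Absorbing the finitely many remaining small $k$ (where $p(k) \ge 2 > 0$) into $\delta$ then gives the theorem as stated.
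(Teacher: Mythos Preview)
Your proof is ultimately correct and lands on the same argument as the paper, but the route is circuitous. The paper's proof is one line: apply Theorem~\ref{mainthm} with $a=b=\lambda$. The contrapositive of Theorem~\ref{mainthm} says that if $\bigl|\E^{\log}_{n\le N}\lambda(n)b(n)\bigr|\not\to 0$ then there exists $\delta_0>0$ such that $N(k)>\delta_0 k^2$ for all but finitely many $k$; taking $b=\lambda$ and using $\lambda^2\equiv 1$ gives $p(k)\ge N(k)\ge\delta_0 k^2$ for large $k$, and the finitely many small $k$ are absorbed into $\delta$ exactly as you say.

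Your first two paragraphs are unnecessary. Theorem~\ref{subquadforl} only yields $\limsup_k p(k)/k^2>0$, and your monotonicity/dyadic-block discussion cannot close the gap---you recognize this yourself. The ``quantitative refinement'' you invoke in the third paragraph is precisely Theorem~\ref{mainthm}, which is stated in the paper, not merely announced in the abstract; its hypothesis is ``for every $\delta>0$ there are \emph{infinitely many} $k$ with at most $\delta k^2$ positive-density words,'' and this is exactly the negation of the conclusion you want (up to the trivial passage from $N(k)$ to $p(k)$ and the handling of small $k$). So the whole argument is your third paragraph with Theorem~\ref{mainthm} cited by name; the rest can be deleted.
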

	\cite{TaoEquiv} showed that the $\log$ Sarnak conjecture is equivalent to the following Fourier uniformity conjecture for every natural number $t$.
	\begin{conj}[$t$-Fourier uniformity]
		Let $G$ be a nilpotent Lie group of step $t$, $\Gamma$ a cocompact lattice and $F \colon G / \Gamma \rightarrow \C$ a continuous function. Then
		\[
		\lim_{H \rightarrow \infty} \lim_{N \rightarrow \infty} \E^{\log}_{n \leq N} \sup_{g \in G} | \E_{h \leq H} \lambda(n + h) F(g^h \Gamma) | = 0.
		\]
	\end{conj}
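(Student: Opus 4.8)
The plan is to attack the statement through the standard local-Fourier-uniformity pipeline: an entropy decrement argument, the Matom\"aki--Radziwi\l\l{} theorem on averages of multiplicative functions over short intervals, the Green--Tao--Ziegler inverse theorem for the Gowers norms, and a concatenation step. I should say at the outset that the full conjecture, for every $t \ge 1$, is, by \cite{TaoEquiv}, equivalent to the logarithmic Sarnak conjecture, so I do not expect to close it unconditionally; instead I lay out the route and identify the wall. Fix the step-$t$ nilsequence $h \mapsto F(g^h\Gamma)$. The first step is a measurable-selection reduction: after pigeonholing in $n$ one replaces the inner $\sup_{g \in G}$ by a sequence $n \mapsto g_n$ that is constant on long blocks, so that it suffices to bound
\[
\E^{\log}_{n \leq N} \Big| \E_{h \leq H} \lambda(n+h) F(g_n^h\Gamma) \Big|
\]
by a quantity tending to $0$ as $H \to \infty$ (after first letting $N \to \infty$).

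The second step is Tao's entropy decrement argument. Using $\lambda(pm) = -\lambda(m)$ one dilates the $n$-average by an auxiliary prime $p$ drawn from a sparse set, and extracts the conclusion that for most such $p$ the value $\lambda(n+h)$ behaves independently of the residue class of $n+h$ modulo $p$; this ``independence'' is what lets one average the short correlations and bring in the short-interval machinery with an error that vanishes. The third step feeds the result into the Matom\"aki--Radziwi\l\l{} method, in the polynomial-phase and bounded-complexity-nilsequence form of Matom\"aki--Radziwi\l\l{}--Tao: on almost every short interval of length $H$, a bounded multiplicative function can correlate with a degree-$t$ nilsequence only by pretending to be $\chi(n) n^{it}$, and since $\lambda$ is real-valued and non-pretentious these short-interval correlations are $o(1)$ off a set of density $o(1)$. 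When $t \ge 2$ one interposes the $U^{t+1}$-inverse theorem, to pass from correlation of $\lambda$ with an arbitrary bounded sequence on short intervals to correlation with degree-$t$ nilsequences, and then a concatenation theorem in the style of Tao--Ter\"av\"ainen, to upgrade local Gowers uniformity on short intervals to the global $U^{t+1}$-uniformity of $\lambda$ that the entropy argument consumes.

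The step I expect to be the main obstacle --- and which is genuinely open --- is the exchange of $\sup_{g \in G}$ with the short-interval average. The Matom\"aki--Radziwi\l\l{} input controls correlations with a \emph{fixed} nilsequence (or a compact family of them) on almost every short interval, whereas here the resonating element $g_n$ may depend on $n$ and sweeps out a continuum, and there is no known circle-method or large-sieve estimate handling that continuum uniformly: for $t=1$ this is precisely the open $1$-Fourier uniformity conjecture, and for $t \ge 2$ it is harder still, since one does not even know how to exploit the higher-step nilpotent structure at short scales. Without the supremum, the $t=1$ case reduces to a single low-complexity sequence and is already covered --- for instance by Theorem \ref{subquadforl}, or by the result of \cite{FrantzikinakisHost} on sequences of linear word growth --- so it is exactly the uniformity in $g$ that puts the general statement beyond current reach. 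What one \emph{can} prove, and what this paper establishes in the $t=1$ direction, is the variant in which the frequencies are confined to a set of box dimension $< 1$: such a set is covered by boundedly many short arcs, so a union bound against the single-frequency $L^2$ estimate of Matom\"aki--Radziwi\l\l{} closes the argument.
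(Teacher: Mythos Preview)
The statement you were asked to prove is a \emph{conjecture}, not a theorem: the paper states the $t$-Fourier uniformity conjecture without proof, and explicitly remarks that ``even the $1$-Fourier uniformity conjecture is still unknown.'' There is no proof in the paper to compare your attempt against. You correctly recognize this and frame your write-up as an outline of the expected pipeline together with an identification of the obstruction, which is the appropriate response.

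Your diagnosis of the obstruction is accurate and matches the paper's discussion: the difficulty is precisely the supremum over $g \in G$, which allows the resonating nilsequence to vary with $n$, and no short-interval or large-sieve input currently controls this uniformly. The paper emphasizes this point via the \cite{Sawin} obstruction: there exist ``statistically multiplicative'' sequences (dynamical models for $\lambda$) that satisfy the Matom\"aki--Radziwi\l\l{} averaged estimate but fail $1$-Fourier uniformity, so no purely dynamical argument can close the gap.

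One correction to your final paragraph: your description of how the paper handles the box-dimension-$<1$ variant (Theorem~\ref{Cantor}) is not quite right. It is not a union bound against a single-frequency Matom\"aki--Radziwi\l\l{} $L^2$ estimate. Rather, the paper builds a ticker-tape function $\psi$ out of the locally resonating frequencies, observes that the box-dimension hypothesis forces $\psi$ to have sublinearly many words up to $\varepsilon$-rounding, and then invokes Theorem~\ref{approxmainthm} --- whose proof goes through the full machinery of dynamical models, the Frantzikinakis--Host structure theorem, the entropy decrement argument, and the nilsequence cancellation of Theorem~\ref{bigthm}. The point is that even this restricted case seems to require the paper's main engine, not just a covering argument plus \cite{MRT}.
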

	\cite{TaoEquiv} also showed that this is equivalent to the $\log$-Chowla conjecture for every $t$.
	\begin{conj}[Logarithmic Chowla Conjecture]
		For every natural number $t$ and every distinct natural numbers $h_1, \ldots, h_t$, we have
		\[
		\lim_{N \rightarrow \infty} \E^{\log}_{n \leq N} \lambda(n + h_1) \cdots \lambda(n+ h_t) = 0.
		\]
	\end{conj}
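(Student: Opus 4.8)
The plan is to run the route, developed by Tao and by Tao--Ter\"av\"ainen, which reduces such a logarithmic correlation to a structural statement about the Furstenberg systems of $\lambda$, and then to try to supply the input that route is still missing. Write $F(n)=\lambda(n+h_1)\cdots\lambda(n+h_t)$, so the goal is $\lim_{N\to\infty}\E^{\log}_{n\le N}F(n)=0$. The first step is Tao's entropy decrement argument: complete multiplicativity of $\lambda$ together with $\lambda(p)=-1$ gives the exact identity $\prod_{i=1}^{t}\lambda\big(p(n+h_i)\big)=(-1)^t\prod_{i=1}^{t}\lambda(n+h_i)$, and the entropy decrement argument upgrades this, along a suitable subsequence of scales $N_j\to\infty$ and a slowly growing parameter $P_j$, to the approximate relation $\E^{\log}_{n\le N_j}F(n)\approx(-1)^t\,\E_{p\sim P_j}\,\E^{\log}_{n\le N_j}\prod_{i=1}^{t}\lambda(n+ph_i)$: dilating all the shifts by a typical prime $p$ changes the logarithmic correlation only by the predictable sign.

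The second step brings in the Matom\"aki--Radziwi\l\l{} theorem on averages of multiplicative functions in short intervals, in the form needed for products of shifts; combined with the first step this lets one pass to a Furstenberg system $(X,\mu,S)$ of $\lambda$ --- an $S$-invariant probability measure $\mu$ and a function $f\colon X\to\{-1,+1\}$ generating the system --- and transports the entropy-decrement relation into a constraint on $\mu$, so that the correlation to bound becomes $\int_X f_{h_1}\cdots f_{h_t}\,d\mu$ (with $f_h:=f\circ S^{h}$), subject to the scaling constraint inherited from above. At this point the problem is purely ergodic-theoretic: one must show that $\int_X f_{h_1}\cdots f_{h_t}\,d\mu=0$ for every Furstenberg system of $\lambda$ and all distinct $h_1,\dots,h_t$.

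For $t$ odd this ergodic statement is known unconditionally --- it is the logarithmic odd-order Chowla theorem of Tao and Ter\"av\"ainen, where the net odd number of sign flips in the scaling constraint ultimately forces the correlation to equal its own negative --- so the genuine obstacle is the even case. When $t$ is even the sign flips cancel in pairs and no such cancellation is available (the case $t=2$ is a theorem of Tao by a separate two-point argument, but it does not extend to $t\ge4$); one is reduced to ruling out Furstenberg systems of $\lambda$ with a ``Bernoulli-like'' or $\Z/2$-extension ingredient compatible with the scaling constraint, and I do not see how to do this with current technology --- in particular not with the word-growth methods of this paper. The natural attempt, rewriting $\E^{\log}_{n\le N}\lambda(n+h_1)\cdots\lambda(n+h_t)$ as $\E^{\log}_{n\le N}b(n)\,\lambda(n+h_1)$ with $b(n)=\lambda(n+h_2)\cdots\lambda(n+h_t)$ and invoking Theorem~\ref{subquadforl}, fails because $b$ is built from $\lambda$ itself and is expected to have exponential rather than $o(k^2)$ word growth; and by \cite{TaoEquiv} the full conjecture is equivalent to $t$-Fourier uniformity and to the full logarithmic Sarnak conjecture, not just to its subquadratic-word-growth instance. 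So I would expect the first two steps and the odd case to go through, leaving the even case --- equivalently, the non-existence of ``non-Chowla'' Furstenberg systems of $\lambda$ --- as the real bottleneck, which is exactly the present frontier.
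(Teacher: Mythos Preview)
The statement you were asked to prove is labeled in the paper as a \emph{conjecture}, not a theorem: the paper does not prove the Logarithmic Chowla Conjecture and does not claim to. It merely records the conjecture (and the equivalent $t$-Fourier uniformity conjecture via \cite{TaoEquiv}) as context for the unconditional subquadratic-word-growth results and the conditional Theorem~\ref{condmainthm}. So there is no ``paper's own proof'' to compare your attempt against.

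Your write-up is not a proof either, and you are candid about this. Your summary of the known landscape is accurate: the entropy-decrement plus Furstenberg-system framework is the standard route; the odd-$t$ case is a theorem of Tao--Ter\"av\"ainen; $t=2$ is Tao's two-point theorem; and the even case $t\ge 4$ is genuinely open, being equivalent (via \cite{TaoEquiv}) to the full logarithmic Sarnak conjecture and to Fourier uniformity at all degrees. Your observation that one cannot simply write the correlation as $\E^{\log}_{n\le N} b(n)\lambda(n+h_1)$ and invoke Theorem~\ref{subquadforl}, because $b$ is built from $\lambda$ and is expected to have exponential word growth, is exactly the point: the methods of this paper cap out at subquadratic (unconditionally) or sub-$k^t$ (conditionally on $\kappa{-}1$-Fourier uniformity) word growth, well short of what would be needed. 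In short, you have correctly identified that the statement is open and that nothing in this paper closes the gap.
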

	A function $a \colon \N \rightarrow \C$ is said to be unpretentious, nonpretentious or strongly aperiodic if there exists a function $\phi$ from $\N$ to $\N$ such that, for all natural numbers $A$, for all Dirichlet characters $\chi$ of period at most $A$ we have, for all natural numbers $N$ sufficiently large and for all real numbers $|t| \leq A N$ we have
	\[
	\sum_{p \leq N} \frac{1 - \text{Re} (a(p) \chi(p) p^{-it} )}{p} \geq \phi(A),
	\]
	and $\phi(A) \rightarrow \infty$ as $A \rightarrow \infty$.
	The main goal of this paper is to prove the following theorems.
	\begin{thm}\label{mainthm}
		Let $a \colon \N \rightarrow S^1$ be an unpretentious completely multiplicative function taking values in the unit circle. Let $b \colon \N \rightarrow \C$ be a finite-valued $1$-bounded function. Suppose further that for any $\delta > 0$ there are infinitely many $k$ such that the number of words of $b$ of length $k$ that occur with positive upper logarithmic density is at most $\delta k^2$. Then
		\[
		\lim_{N \rightarrow \infty} \big| \E_{n \leq N}^{\log} a(n) b(n) \big| = 0.
		\]
	\end{thm}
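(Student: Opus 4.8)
The plan is to argue by contradiction, reduce to a statement about the correlation of $a$ with $b$ on typical short intervals, and then use the subquadratic word growth of $b$ to force the only surviving ``obstruction'' to be of degree one and supported on a set of frequencies of box-counting dimension strictly less than one --- at which point the variant of the $1$-Fourier uniformity conjecture proved elsewhere in this paper (for unpretentious multiplicative functions with frequencies restricted to such a set, and obtained ultimately from the Matomäki--Radziwi\l\l{} theorem) finishes the job. So suppose $\limsup_{N} \big|\E^{\log}_{n \le N} a(n) b(n)\big| = 2c > 0$. Fix $\delta = \delta(c) > 0$ to be chosen small, use the word-growth hypothesis to pick $k = k(\delta) \to \infty$ with at most $\delta k^2$ words of $b$ of length $k$ occurring with positive logarithmic density, set $H = H(k) \to \infty$ to be a large fixed fraction of $k$ (so that words of length $O(H)$ are still $O_\delta(H^2)$ in number), and pick $N = N(k) \to \infty$ along the sequence witnessing the $\limsup$, with $N$ enormous compared to $k$. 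Passing to a further subsequence so that all short-range correlations of $b$ converge, $b$ generates a shift system $(X, T, \mu)$ whose words of positive $\mu$-measure are exactly the words of $b$ of positive logarithmic density, so $(X, T, \mu)$ has complexity $o(k^2)$, in particular zero entropy.

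\textbf{Step 1 (reduction to short intervals).} Since $H = o(N)$, shifting $n \mapsto n + h$ with $h \le H$ perturbs the logarithmic average by $o(1)$, so $c \le \E^{\log}_{n \le N} \big|\E_{h \le H} a(n+h) b(n+h)\big| + o(1)$, and it suffices to show
\[
\lim_{H \to \infty}\ \limsup_{N \to \infty}\ \E^{\log}_{n \le N} \big|\E_{h \le H} a(n+h) b(n+h)\big| = 0 .
\]
The hypothesis that $a$ is unpretentious enters here: it is precisely what makes available the Matomäki--Radziwi\l\l--Tao estimates for multiplicative functions on typical short intervals, the associated estimates twisted by a fixed linear phase, Tao's entropy decrement argument, and Elliott-type inequalities --- all of which are false for pretentious $a$ and all of which will be used below.

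\textbf{Step 2 (the word-growth mechanism --- main obstacle).} Suppose the limit in Step 1 were positive; this is where essentially all the work lies, and where the hypothesis $o(k^2)$ (rather than $O(k^2)$) is indispensable. Two things must be extracted. First, the entropy decrement / dilation argument rewrites the correlation, up to $o(1)$, as an average over primes $p$ in a suitable range of $a(p)$ times the correlation of $a$ with the dilated sequence $n \mapsto b(pn)$; iterating over several primes shows that the frequencies of $b$ relevant to a nonzero correlation must form a set that is, approximately, invariant under multiplication by all small primes. Second, the inverse theory for the short-interval correlations, fed against the complexity bound through a counting argument that compares the number of available words with the number of $\delta$-separated nilsequences in the obstructing family, forces these frequencies to carry no genuine degree-$2$ component: an equidistributed quadratic obstruction $h \mapsto e(\beta(n+h)^2)$ has, in the window variable $h$, a genuinely two-parameter multiplier $(e(\beta n^2), e(2\beta n))$, and so would produce $\gtrsim k^2$ distinct words of length $k$, contradicting the bound $\delta k^2$ once $\delta$ is small. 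Combining the two, the surviving obstruction is a union over $n$ of linear phases $h \mapsto e(\alpha_n h)$ whose frequencies $\alpha_n$ lie in a single multiplicatively structured set $S = S_H \subseteq \R/\Z$; the same counting caps the covering number of $S$ at scale $1/H$ by $o(H)$, i.e.\ the limiting frequency set has upper box-counting dimension strictly less than $1$. Making all of this quantitatively precise --- the inverse theorem in this generality, the interplay of the dilation trick with the $o(k^2)$ bound, and the bookkeeping relating $\delta$, the complexity bound, $H$, and the range of primes --- is the heart of the matter and the step I expect to be the main obstacle.

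\textbf{Step 3 (conclusion).} By Step 2, for logarithmically almost every $n$ one has
\[
\big|\E_{h \le H} a(n+h) b(n+h)\big| \le \sup_{\alpha \in S}\ \big|\E_{h \le H} a(n+h) e(\alpha h)\big| + o(1),
\]
hence $\E^{\log}_{n \le N} \big|\E_{h \le H} a(n+h) b(n+h)\big| \le \E^{\log}_{n \le N} \sup_{\alpha \in S} \big|\E_{h \le H} a(n+h) e(\alpha h)\big| + o(1)$. Since $S$ has box-counting dimension $< 1$, the variant of the $1$-Fourier uniformity conjecture for unpretentious completely multiplicative functions with frequencies restricted to a set of box dimension $< 1$, proved elsewhere in this paper, shows that the right-hand side tends to $0$ as $N \to \infty$ and then $H \to \infty$. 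This contradicts $c > 0$ and proves the theorem.
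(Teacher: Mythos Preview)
Your proposal has two genuine problems.

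\textbf{Circularity.} In Step~3 you invoke ``the variant of the $1$-Fourier uniformity conjecture for frequencies in a set of box dimension $<1$, proved elsewhere in this paper'' (Theorem~\ref{Cantor}). But in this paper that theorem is \emph{deduced from} Theorem~\ref{approxmainthm}, whose proof is essentially the proof of Theorem~\ref{mainthm} with minor modifications (see Section~\ref{section3}). So you are assuming what you are trying to prove. There is no independent route to Theorem~\ref{Cantor} available here: the author explicitly remarks that he knows of no proof for any set of positive box dimension that does not go through Theorem~\ref{approxmainthm}.

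\textbf{Step~2 is not an argument.} You correctly identify Step~2 as ``the main obstacle'' and ``the heart of the matter,'' but what you write there is a wish list, not a proof. The claim that a degree-$2$ obstruction in the structured part of $a$ would force $\gtrsim k^2$ words \emph{of $b$} needs a mechanism: the inverse theory gives you nilsequence structure on $a$, not on $b$, and you have to explain how that structure interacts with the supremum over words of $b$ to produce the contradiction. The paper does this as follows. One builds a joining of a dynamical model for $a$ with $b$, uses Frantzikinakis--Host to replace $f$ by its Host--Kra conditional expectation, applies the dilation symmetry and the entropy decrement argument to introduce an average over primes $p$, and reduces to a single nilcharacter $\Phi$ on some $G/\Gamma$. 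Then comes the key step you are missing: one applies H\"older to raise $|\E_{h\le k}\Phi(g^{ph}x)\epsilon_h|$ to the power $2t$, replaces the $\sup$ over $y$ by the \emph{sum} over the $\le \delta k^2$ words of $b$, pigeonholes to a single word, and expands the $2t$-th power. The cross terms are then shown to vanish (via Green--Tao--Ziegler to pass from primes to integers, and a lemma of Frantzikinakis identifying which elements lie in the relevant diagonal-type subgroup), leaving only $\lesssim k^t$ ``diagonal'' terms --- which is too few if $\delta$ is small enough. The abelian ($s=1$) case is not handled by a box-dimension argument but by a separate combinatorial lemma (Lemma~\ref{lem2}) exploiting that a linear phase correlating with a word also correlates with its small translates, gaining an extra factor of $k$. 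None of this machinery appears in your Step~2.
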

	We also obtain a conditional version of this result.
	\begin{thm}\label{condmainthm}
		Let $\kappa$ be a natural number. Set $t = {\kappa + 1 \choose 2}$.
		Let $a \colon \N \rightarrow S^1$ be an unpretentious completely multiplicative function taking values in the unit circle so that the local $\kappa-1$-Fourier uniformity conjecture holds for $a$. Let $b \colon \N \rightarrow \C$ be a finite-valued $1$-bounded function. Suppose further that for some $\epsilon > 0$ there are infinitely many $k$ such that the number of words of $b$ of length $k$ that occur with positive upper logarithmic density is at most $k^{t - \e}$. Then
		\[
		\lim_{N \rightarrow \infty} \big| \E_{n \leq N}^{\log} a(n) b(n) \big| = 0.
		\]
	\end{thm}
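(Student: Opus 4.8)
The plan is to argue by contradiction. Suppose that along some sequence of scales $N\to\infty$ one has $|\E^{\log}_{n\le N} a(n) b(n)|\ge\delta$ for a fixed $\delta>0$. Since the logarithmic average is asymptotically invariant under bounded shifts, for each fixed $H$ we may rewrite this as $|\E^{\log}_{n\le N}\E_{h\le H} a(n+h) b(n+h)|\ge\delta-o_H(1)$, so that the surviving correlation is controlled, on average over $n$, by the interaction of $a$ with the window $b|_{[n,n+H]}$. By hypothesis we may take $H$ among the infinitely many lengths for which $b$ has at most $H^{t-\e}$ words, $t=\binom{\kappa+1}{2}$, so each window $b|_{[n,n+H]}$ lies in a pool of at most $H^{t-\e}$ words. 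Passing to a further subsequence, fix a Furstenberg system $(X,\mu,T)$ of $b$ realized at these scales, with $b(n)=f_0(T^n x)$ for a continuous $f_0\colon X\to\C$ and with every word of positive upper logarithmic density having positive $\mu$-measure.

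The heart of the argument is to exploit the multiplicativity of $a$. Running Tao's entropy decrement argument together with the Matom\"aki--Radziwi\l\l{} theorem on multiplicative functions in short intervals --- as in the reductions of the logarithmic Chowla and Sarnak conjectures to Fourier-uniformity statements, and following the higher-order Fourier analysis of multiplicative correlations of \cite{FrantzikinakisHost} --- one reduces the surviving correlation, up to $o(1)$, to a correlation of $a$ against the projection of $f_0$ onto a Host--Kra factor $\z_s(X,\mu,T)$ of some bounded order $s$: informally, because $a$ is completely multiplicative and unpretentious, any correlation with a low-complexity $b$ that persists must be carried by finitely many layers of nilstructure inside the Furstenberg system of $b$, a genuinely ``uniform'' component being washed out.

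One then uses the word-complexity bound to cap that order. The key lemma is that a finite-valued sequence with at most $k^{t-\e}$ words of length $k$ for infinitely many $k$, with $t=\binom{\kappa+1}{2}$, has Furstenberg systems in which $f_0=\E(f_0\mid\z_{\kappa-1})$ --- that is, no genuinely $\kappa$-step nilstructure is visible to the coordinate function. Hence the order $s$ above may be taken to be at most $\kappa-1$, and $\E^{\log}_{n\le N} a(n)b(n)$ is, up to $o(1)$, a logarithmic average of $a$ against the step-$(\kappa-1)$ nilsequence $n\mapsto\E(f_0\mid\z_{\kappa-1})(T^n x)$. Since $\z_{\kappa-1}$ is an inverse limit of $(\kappa-1)$-step nilsystems, it suffices to bound $\E^{\log}_{n\le N} a(n)\,\Psi(g^n\Gamma)$ for a single $1$-bounded step-$(\kappa-1)$ nilsequence; the assumption that the local $\kappa-1$-Fourier uniformity conjecture holds for $a$ --- which, through the inverse theorem for the $U^\kappa$-norm, says that $\lim_{H\to\infty}\lim_{N\to\infty}\E^{\log}_{n\le N}\vertiii{a}_{U^\kappa[n,n+H]}=0$ --- makes this tend to $0$, contradicting the choice of $\delta$.

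The two steps I expect to be genuinely hard are: the key lemma, whose proof requires the sharp word-complexity lower bound $k^{\binom{\kappa+1}{2}}$ forced by genuinely $\kappa$-step nilstructure, together with a measure-theoretic-to-topological upgrade in the spirit of the Host--Kra--Maass analysis of topological nilsystems (this is exactly what pins down the exponent $t=\binom{\kappa+1}{2}$); and the bookkeeping in the entropy-decrement step that makes the ``finitely many layers'' conclusion quantitatively compatible with the complexity input, and that allows the merely \emph{local} Fourier uniformity of $a$ (rather than a global non-correlation with nilsequences) to suffice --- this last point is where the unpretentiousness, finite-valuedness, and positive-logarithmic-density hypotheses on $b$ are used. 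Finally, the unconditional Theorem~\ref{mainthm} should follow by taking $\kappa=2$ and observing that $o(k^2)$ word growth confines the eigenvalues of the Kronecker factor to a set of box dimension $<1$, so that the proved box-dimension variant of the $1$-Fourier uniformity conjecture can be substituted for the hypothesis.
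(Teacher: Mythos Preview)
Your proposal follows a genuinely different route from the paper's, and the step you flag as ``genuinely hard'' is in fact the gap: the key lemma that word growth $\le k^{\binom{\kappa+1}{2}-\e}$ forces $f_0=\E(f_0\mid\z_{\kappa-1})$ in every Furstenberg system of $b$ is not a known result, and the paper neither proves nor uses it. Even granting the heuristic that a generic degree-$\kappa$ polynomial phase produces $\sim k^{\binom{\kappa+1}{2}}$ distinguishable windows of length $k$, upgrading this to a statement about the Host--Kra factors of an arbitrary finite-valued sequence would be a substantial new theorem. There is also a structural problem one step earlier: you want to replace $f_0$ by its projection onto $\z_{\kappa-1}(Y)$, but no disjointness argument on the $b$-side justifies this --- the Frantzikinakis--Host structure theorem you invoke applies to Furstenberg systems of \emph{multiplicative} functions, not to $b$, and a generic zero-entropy system need not be an inverse limit of nilsystems at all.

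The paper runs the argument in the opposite direction. It applies the Frantzikinakis--Host structure theorem to the \emph{$a$-side}: the ergodic components of the dynamical model for $a$ are Bernoulli times an inverse limit of nilsystems, so by disjointness one replaces $f$ (the $a$-avatar) by $\E^\mu[f\mid\z]$. The $\kappa{-}1$-Fourier uniformity hypothesis is used not at the end but near the beginning (Proposition~\ref{conderg} and the remark after Proposition~\ref{fourierdecomp}) to ensure that every nilcharacter surviving in the decomposition of $\E^\mu[f\mid\z]$ lives on a group of step $\ge\kappa$. After the dilation and entropy-decrement step (Corollary~\ref{cor1}), one raises to the $2t$-th power by H\"older, pigeonholes over the $\le k^{t-\e}$ words of $b$ (Lemma~\ref{lem1}), and expands. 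The exponent $t=\binom{\kappa+1}{2}$ then enters \emph{arithmetically}, via Bourgain--Demeter--Guth (Theorem~\ref{BDG}): tuples $(j_1,\ldots,j_{2t})$ not solving Vinogradov's system contribute zero by a Green--Tao--Ziegler equidistribution argument combined with Lemma~\ref{frz}, and the number of Vinogradov solutions is $\lesssim_\e k^{t+\e}$, contradicting the pigeonhole lower bound. So $\binom{\kappa+1}{2}$ arises from counting solutions to $\sum j_i^m=\sum j_{t+i}^m$ for $m\le\kappa$, not from a word-complexity lower bound for $\kappa$-step nilstructure. Your final remark on deducing Theorem~\ref{mainthm} via the box-dimension result is also not the paper's route: the unconditional case is handled by the same template with a direct diagonal count in place of Theorem~\ref{BDG}, plus the sharpened pigeonhole step of Lemma~\ref{lem2} in the abelian case.
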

	We note that this result matches the numerology in \cite{Sawin} and may be almost the best possible result one can obtain with purely dynamical methods. We also note that even the $1-$Fourier uniformity conjecture is still unknown and so this theorem currently has no unconditional content.
	We also obtain a version of the theorem where $b$ need not take only finitely many values and we only have information about the number of ``approximate" words.
	\begin{defn}
		We say a sequence $b$ has at most $h$ words of length $k$ up to $\e$ rounding if there exists a set $\Sigma$ of words of length $k$ such that for all $n \in \N$ there is an $\epsilon$ in $\Sigma$ such that $| b(n + j) - \epsilon_j | \leq \e$ for all $j \leq k$ and the cardinality of $\Sigma$ is at most $h$. We say $b$ has at most $h$ words of length $k$ that occur with positive logarithmic density up to $\e$ rounding if we only require $| b(n + j) - \epsilon_j | \leq \e$ for a set of $n$ of lower logarithmic density $1$.
	\end{defn} 
	\begin{thm}\label{approxmainthm}
		Let $c > 0$ and $\e > 0$. Then if $\e$ is sufficiently small depending on $c$ then the following holds:
		Let $a \colon \N \rightarrow S^1$ be an unpretentious completely multiplicative function taking values in the unit circle. Let $b \colon \N \rightarrow \C$ be a $1$-bounded function with entropy zero. Suppose further that for every $\delta > 0$ there are infinitely many $k$ such that the number of words of $b$ of length $k$ that occur with positive logarithmic density up to $\e$ rounding is at most $\delta k$. Then
		\[
		\limsup_{N \rightarrow \infty} \big| \E_{n \leq N}^{\log} a(n) b(n) \big| \leq c.
		\]
		In fact, this works for any $\e$ satisfying $c^2 > 2\e$.
	\end{thm}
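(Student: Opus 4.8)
The plan is to reduce the statement to a structural fact about $b$ alone — that the approximate word‑growth hypothesis forces $b$ to agree, off a set of logarithmic density zero, with a ``rotation‑type'' sequence up to an additive error of size $\e$ — and then to finish with a short $L^2$ bookkeeping step, which is where the constant $2$ in $c^2>2\e$ comes from. First I would reformulate the hypothesis as an $\e$‑scale complexity bound on the orbit of $b$: for every $\delta>0$ there are infinitely many $k$ such that, for $n$ in a set $S$ of full lower logarithmic density, the windows $(b(n),\dots,b(n+k-1))$ are covered by at most $\delta k$ balls of sup‑norm radius $\e$. In the exact, finite‑valued case this sub‑linear growth would already be extremely rigid — fewer than $m$ genuine words of some length $m$ forces eventual periodicity by Morse--Hedlund — so the real content is carrying this through with $\e$‑balls rather than exact cylinders, and with $b$ allowed to take infinitely many values.

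The heart of the argument is therefore an $\e$‑robust version of the Morse--Hedlund / low‑complexity structure theorem: I would pass to a logarithmic‑generic measure $\mu$ of $b$ on its orbit closure, show that sub‑linear $\e$‑complexity forces $(X,\mu,T)$ to be within $\e$, in the relevant sense, of a system with discrete spectrum (equivalently an inverse limit of compact‑abelian‑group rotations), and then use the genericity of $b$ for $\mu$ to transfer this to $b$ itself, obtaining a sequence $b'$ which is a periodic sequence — or, more generally, a continuous function of a group rotation — and which satisfies $|b(n)-b'(n)|\le\e$ for all $n$ in a set of full lower logarithmic density. Here the hypothesis that $b$ has entropy zero, which plays no role in Theorem~\ref{mainthm}, is essential: it is exactly what rules out $b$ concealing extra complexity at scales below $\e$, which a hard rounding of $b$ to finitely many values would lose irrecoverably because of the packing‑versus‑covering defect.

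To finish, I would observe that an unpretentious completely multiplicative function does not correlate with $b'$: if $b'$ is genuinely periodic and finite‑valued this is Theorem~\ref{mainthm} applied to $b'$ (a periodic sequence has boundedly many words of each length, hence subquadratically many), and in general it follows from the classical estimate $\E^{\log}_{n\le N} a(n)e(n\alpha)=o(1)$ uniformly in $\alpha$ for unpretentious $a$, applied along the rotation factor. Writing $\alpha_N=\E^{\log}_{n\le N}a(n)b(n)$ and $\beta_N=\E^{\log}_{n\le N}a(n)b'(n)$, we have $|\alpha_N|,|\beta_N|\le 1$ since $|a|\equiv 1$ and $|b|,|b'|\le 1$, and $|\alpha_N-\beta_N|\le\E^{\log}_{n\le N}|b(n)-b'(n)|\le\e+o_N(1)$ because $b=b'$ up to $\e$ off a set of logarithmic density zero. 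Hence $|\alpha_N|^2\le|\beta_N|^2+2|\alpha_N-\beta_N|\le|\beta_N|^2+2\e+o_N(1)$, and letting $N\to\infty$ gives $\limsup_N|\E^{\log}_{n\le N}a(n)b(n)|^2\le 2\e<c^2$, which is the claim; the weaker ``$\e$ sufficiently small depending on $c$'' version only needs this inequality with $2\e$ replaced by an absolute multiple of $\e$.

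The main obstacle is the second paragraph: obtaining the structural statement for $b$ from only \emph{approximate} word data, while losing no more than the clean factor of $2$. In the finite‑valued world the combinatorics of exact words give a clean ``unique right‑extension'' mechanism and hence periodicity outright; with $\e$‑balls this mechanism degrades (balls overlap, so ``which ball'' is not well defined), and one must instead work measure‑theoretically and argue that the log‑generic system is $\e$‑close to having discrete spectrum. Tracking that the $\e$ enters only once — namely in the final squaring step, and with constant exactly $2$ — rather than being amplified by the dynamical machinery, is the delicate point, and it is precisely what forces the entropy‑zero hypothesis into the statement.
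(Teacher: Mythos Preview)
Your approach is fundamentally different from the paper's, and the central step --- your ``$\e$-robust Morse--Hedlund'' theorem --- is the entire difficulty, which you acknowledge but do not resolve. The paper never attempts to classify $b$; Section~\ref{section3} simply reruns the argument of Theorem~\ref{mainthm} on the $a$-side: build the joining, replace $f$ by $\E^\mu[f|\z]$ via Frantzikinakis--Host and Bernoulli/zero-entropy disjointness, average over shifts, take the essential supremum in $y$, apply the entropy decrement to introduce prime dilations, square by Cauchy--Schwarz, and only then replace $f'(T^hy)$ by the nearest approximate word $\epsilon_h$ --- losing exactly $2\e$ through $\big||A|^2-|B|^2\big|\le 2|A-B|$ --- before pigeonholing over the $\le\delta k$ words to contradict the nilsequence estimate of Theorem~\ref{bigthm}. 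The hypothesis on $b$ enters only as a pigeonhole count; all structure lives on $a$. In particular, the entropy-zero hypothesis is used exactly as in Theorem~\ref{mainthm}, for the disjointness step of Lemma~\ref{disjoint}, not to control ``complexity below scale~$\e$'' as you suggest.

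Your structural claim --- that $b$ agrees up to $\e$, off a density-zero set, with a single sequence $b'=F(g^nx_0)$ from a compact abelian group rotation --- is not a known theorem, and the paper's own motivating example shows why one should not expect it. The ticker-tape function $\psi$ built in the proof of Theorem~\ref{Cantor} satisfies all the hypotheses of Theorem~\ref{approxmainthm}, yet on disjoint blocks it equals $e(\beta_n)e(\alpha_nh)$ with the frequencies $\alpha_n$ ranging over an uncountable Cantor set; there is no single almost-periodic $b'$ in sight, and your sketch offers no mechanism to produce one. Retreating to ``the Furstenberg system of $b$ has Kronecker ergodic components'' would still leave you needing that $a$ is orthogonal to every bounded function on a non-ergodic system with a continuum of rotational components, and the single-frequency bound $\E^{\log}_{n\le N}a(n)e(n\alpha)=o(1)$ does not deliver this without the local Matom\"aki--Radziwi\l\l--Tao input (Theorem~\ref{MRT}) and the surrounding dynamical framework --- at which point you are essentially back to the paper's argument.
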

	We list a few new applications of this theorem.
\\ {\textit{Proof of Theorem \ref{signpatternsofl}.}}
		Apply Theorem \ref{mainthm} to $a = b = \lambda$.
\hfill$\square$ \\
	\begin{thm}\label{localcorrelations}
		If $S$ is a finite set of sequences of subquadratic word growth and $a$ is an unpretentious completely multiplicative function taking values in the unit circle then
		\[
		\lim_{H \rightarrow \infty} \lim_{N \rightarrow \infty} \E^{\log}_{n \leq N} \sup_{\phi \in S} | \E_{h \leq H} a(n + h) \phi(h) | = 0.
		\]
	\end{thm}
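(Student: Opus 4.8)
The plan is to reduce the finite supremum to a single target sequence, and then to recognise the resulting average as a linear combination of two‑point correlations of $a$, which are controlled by the logarithmically averaged two‑point Elliott theorem. First, since $S$ is finite and each quantity $|\E_{h \le H} a(n+h)\phi(h)|$ is nonnegative, the triangle inequality gives $\sup_{\phi \in S} |\E_{h \le H} a(n+h)\phi(h)| \le \sum_{\phi \in S} |\E_{h \le H} a(n+h)\phi(h)|$, so it is enough to show that for every fixed $1$‑bounded sequence $\phi$ one has $\lim_{H \to \infty} \limsup_{N \to \infty} \E^{\log}_{n \le N} |\E_{h \le H} a(n+h)\phi(h)| = 0$; the original statement then follows by summing the $|S|$ resulting estimates.

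Fixing $H$, the next step is to apply the Cauchy--Schwarz inequality in $n$ (with respect to the normalised logarithmic average, which is a probability measure on $\{1, \dots, N\}$) to pass to the square, to expand $|\E_{h \le H} a(n+h)\phi(h)|^2$ as $\frac{1}{H^2} \sum_{h, h' \le H} a(n+h) \overline{a(n+h')} \phi(h) \overline{\phi(h')}$, and then to interchange the finite sums over $h, h'$ with $\E^{\log}_{n \le N}$, arriving at $\frac{1}{H^2} \sum_{h, h' \le H} \phi(h) \overline{\phi(h')} \, \E^{\log}_{n \le N} a(n+h) \overline{a(n+h')}$. The diagonal terms $h = h'$ contribute at most $1/H$, because $|a| \equiv 1$ and $|\phi| \le 1$. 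For each fixed off‑diagonal pair $h \ne h'$, the harmless shift $m = n + h'$ turns the inner average into $\E^{\log}_{m \le N} a(m + (h-h')) \overline{a(m)}$ with $h - h' \ne 0$; since $a$ is completely multiplicative and unpretentious --- which is precisely the ``strong aperiodicity'' hypothesis of the logarithmically averaged two‑point Elliott theorem of Tao --- this average tends to $0$ as $N \to \infty$. As there are fewer than $H^2$ such pairs, the full off‑diagonal contribution tends to $0$ as $N \to \infty$ for each fixed $H$, whence $\limsup_{N \to \infty} \E^{\log}_{n \le N} |\E_{h \le H} a(n+h)\phi(h)|^2 \le 1/H$ and therefore $\limsup_{N \to \infty} \E^{\log}_{n \le N} |\E_{h \le H} a(n+h)\phi(h)| \le H^{-1/2}$. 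Letting $H \to \infty$ finishes the argument.

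I do not anticipate a serious obstacle: the sole external ingredient is the two‑point logarithmic Elliott theorem, and everything else --- Cauchy--Schwarz, interchanging finite averages, and the $\limsup$ bookkeeping --- is routine. The one point deserving a moment's care is matching the notion of ``unpretentious'' used in this paper, namely the divergence of $\sum_{p \le N} \frac{1 - \Re(a(p) \chi(p) p^{-it})}{p}$ uniformly over $|t| \le AN$ and over characters of conductor at most $A$, with the strong‑aperiodicity hypothesis under which that two‑point estimate is known. It is worth remarking that the argument uses neither the subquadratic word growth of the members of $S$ nor the other results of this paper; the finiteness of $S$ is what makes the statement tractable, in contrast with the full Fourier uniformity conjecture, where the supremum ranges over a continuum of nilsequences in the window variable $h$.
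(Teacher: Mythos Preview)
Your argument is correct and takes a genuinely different, more elementary route than the paper. The paper proves Theorem~\ref{localcorrelations} by a ticker-tape construction: assuming the local correlations are large, it stitches together shifted copies of the $\phi$'s into a single sequence $\psi$ that correlates globally with $a$, checks that $\psi$ still has subquadratically many words occurring with positive logarithmic density, and then invokes Theorem~\ref{mainthm} to obtain a contradiction. Your approach bypasses all of this: after reducing to a single bounded $\phi$ (which is legitimate since sequences of subquadratic word growth are finite-valued, hence bounded), you square via Cauchy--Schwarz and reduce directly to the two-point logarithmically averaged Elliott estimate of \cite{TaoChowla}, which is available for unpretentious $a$ in exactly the sense defined here.

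The comparison is instructive. Your proof shows more: it works for \emph{any} finite set of bounded sequences, with no word-growth hypothesis whatsoever, and it relies only on \cite{TaoChowla} rather than on the main results of the present paper. The paper's proof, by contrast, is deliberately structured to exhibit Theorem~\ref{localcorrelations} as a corollary of Theorem~\ref{mainthm}; this is why the remark following the theorem emphasises the need to handle words occurring with zero logarithmic density. From the standpoint of proving the bare statement, your route is shorter and cleaner; from the standpoint of the paper, the ticker-tape argument is the point, since the same mechanism is reused for Theorem~\ref{Cantor}, where the set of target phases is infinite and your Cauchy--Schwarz reduction no longer suffices.
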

	\begin{rem}
		We remark that since the set $S$ is finite, it is enough to show that any one function does not locally correlate with $a$. However, we also remark that it is generally harder to show that $a$ does not locally correlate with $b$ than it is to show that $a$ does not correlate with $b$. For Theorem \ref{localcorrelations}, we need to use that Theorem \ref{mainthm} allows us to handle the case where $b$ may have many words which occur with $0$ $\log$-density but still only subquadratically many words which occur with positive $\log$-density. Theorem \ref{localcorrelations} in the linear word growth case seems to follow implicitly from \cite{alex2019mbius}. 
	\end{rem}
	\begin{proof}
		For convenience, we will assume that $0$ is in $S$.
		Let $\e > 0$. We aim to show that
		\[
		\limsup_{H \rightarrow \infty} \limsup_{N \rightarrow \infty} \E^{\log}_{n \leq N} \sup_{\phi \in S} | \E_{h \leq H} a(n + h) \phi(h) | = O(\e).
		\]
		Suppose not. We will now use an argument of \cite{TaoEquiv} (see Section 5 of that paper) to show that $a$ must be correlate with a ``ticker tape" function.
		We define $S_\e$ to be the set of sequences of the form $\phi'(n) = e(\alpha) \phi(n)$ where $\phi$ is an element of $S$ and $\alpha$ is a rational number with denominator $O(\e)$. By the pigeonhole principle, for any $\phi$ in $S$ and any natural numbers $H$ and $n$ in $\N$ there exists $\alpha$ a rational number with denominator $O(\e)$ such that 
		\[
		\text{Re}\Big( |\E^{\log}_{h \leq H} \phi(h)a(n+h)| - \E^{\log}_{h \leq H} e(\alpha) \phi(h)a(n+h) \Big) = O(\e).
		\]
		Therefore, we may assume for the sake of contradiction that for some $\phi_{n, H}$ in $S_\e$
		\[
		\limsup_{H \rightarrow \infty} \limsup_{N \rightarrow \infty} \Re \Big( \E^{\log}_{n \leq N}  \E_{h \leq H} a(n + h) \phi_{n, H}(h) \Big) \gg \e.
		\]
		By a diagonalization argument, we may find a sequence $H_i$ and $N_i$ of natural numbers both tending to infinity and functions $\phi_{n, i} = \phi_{n, H_i}$ such that $N_{i+1} \gg N_i \gg H_i$ and
		\[
		\lim_{i \rightarrow \infty} \Re\Big( \E^{\log}_{n \leq N_i} \E_{h \leq H_i} a(n + h) \phi_{n,i}(h) \Big)  \gg \e.
		\]
		Since the functions $\phi$ in $S_\e$ and $a$ are bounded, for $i$ sufficiently large there exists a set $A_i$ of natural numbers of lower logarithmic density $\gg \e$ in the interval $[1, N_i]$ such that for $n$ in $A_i$,
		\[
		\Re \Big( \E_{h \leq H_i} a(n + h) \phi_{n,i}(h) \Big) \gg \e.
		\]
		By a greedy algorithm, we can select a subset $B_i$ of $A_i$ of upper logarithmic density at least $\frac{\e}{H_i}$ in $[1, N_i]$ that is at least $H_i$ separated (meaning distinct points of $B_i$ differ by at least $H_i$).
		Now define the ``ticker tape" function $\psi$ as follows:
		\[
		\psi(n+h) = \phi_{n,i}(h),
		\]
		for all $n$ in $B_i$ between $N_{i-1}$ and $N_{i}$ and $h \leq H_i$. If $m$ is not of the form $n + h$ for $n$ in $B_i$ between $N_{i-1}$ and $N_i$ and $h \leq H_i$ then we set $\psi(m) = 0$. Thus,
		\[
		\limsup_{N \rightarrow \infty} \Re \Big( \E^{\log}_{n \leq N} a(n) \psi(n) \Big) \gg \e^2.
		\]
		Now we aim to show that $\psi$ has subquadratically many words of length $k$ that occur with positive upper logarithmic density. Let $k$ be a natural number and let $\epsilon$ be a word of length $k$ which occurs in $\psi$ with positive upper logarithmic density. Consider the set $C$ of natural numbers $m$ such that $m$ is within $k$ of an element $n$ of $B_i$ or $B_i + H_i$ for some $i$. Then since elements of $B_i$ are at least $H_i$ separated, the upper logarithmic density of $C$ in $[N_{i-1}, N_{i}]$ is at most $\frac{2k}{H_i}$ which clearly tends to $0$ as $i$ tends to infinity. Since $N_i \gg N_{i-1}$, we may assume that the $\log$-density of $[1,N_{i-1}]$ in the interval $[1,N_i]$ is also $o(1)$. Thus, $C$ has $\log$-density $0$. Therefore, if $\epsilon$ occurs with positive $\log$-density then $(\psi(n+h))_{h=1}^k = \epsilon$ for a positive density set of $n$ not in $C$. Since $S_\e$ has only finitely many members, we get that there exists $\phi$ in $S_\e$ such that for a positive upper logarithmic density set of $n$, $\psi(n+h) = \phi(n+h) = \epsilon_h$ for all $h \leq k$. Thus, $\psi$ has subquadratic word growth and $a$ does not correlate with $\psi$ by Theorem \ref{mainthm}, which gives a contradiction.
	\end{proof}
	\begin{thm}\label{Cantor}
		Let $C$ be a subset of $[0,1]$ of upper box dimension $< 1$. Then if $a$ is an unpretentious completely multiplicative function taking values in the unit circle
		\[
		\lim_{H \rightarrow \infty} \lim_{N \rightarrow \infty} \E^{\log}_{n \leq N} \sup_{\alpha \in C} | \E_{h \leq H} a(n + h) e(h\alpha) | = 0.
		\]
	\end{thm}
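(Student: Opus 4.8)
The plan is to run the ``ticker tape'' argument from the proof of Theorem \ref{localcorrelations}, but to replace the appeal to Theorem \ref{mainthm} (which requires finitely many values) by Theorem \ref{approxmainthm}, and to use the box dimension hypothesis to bound the word growth of the resulting sequence \emph{up to rounding}. First I would replace $C$ by its closure, which changes neither the supremum inside the expectation, since $\alpha\mapsto\E_{h\le H}a(n+h)e(h\alpha)$ is continuous, nor the upper box dimension; so assume $C$ is compact. Suppose for contradiction that the conclusion fails, i.e.\ there is $\e>0$ with
\[
\limsup_{H\to\infty}\ \limsup_{N\to\infty}\ \E^{\log}_{n\le N}\ \sup_{\alpha\in C}\ \big|\E_{h\le H}a(n+h)e(h\alpha)\big| > \e.
\]

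Pigeonholing exactly as in the proof of Theorem \ref{localcorrelations}, I would choose for each $n$ and $H$ a frequency $\alpha_{n,H}\in C$ nearly attaining the supremum and a phase $\beta_{n,H}$ lying in a fixed $O(1/\e)$-point net of the circle so that, for all $n$ in a positive-logarithmic-density set, $\Re\big(\E_{h\le H}a(n+h)e(h\alpha_{n,H}-\beta_{n,H})\big)\gg\e$. Setting $\phi_{n,H}(h)=e(h\alpha_{n,H}-\beta_{n,H})$, diagonalizing to obtain $H_i,N_i\to\infty$ with $N_{i+1}\gg N_i\gg H_i$, and greedily selecting an $H_i$-separated set $B_i\subseteq[1,N_i]$ of upper logarithmic density $\gg\e/H_i$ on whose blocks the correlation stays large, I obtain the $1$-bounded ``ticker tape'' $\psi$, supported on $\bigcup_i\bigcup_{n\in B_i}\{n+1,\dots,n+H_i\}$ and equal to $\phi_{n,i}$ on the block of $n\in B_i$, with
\[
\limsup_{N\to\infty}\Re\Big(\E^{\log}_{n\le N}a(n)\psi(n)\Big)\gg\e^2.
\]

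The one new ingredient is a bound on the words of $\psi$ up to $\e'$ rounding, for an arbitrary $\e'>0$. Fix $d'$ with $\overline{\dim}_B C<d'<1$. As in the proof of Theorem \ref{localcorrelations}, the set of starting positions $n$ for which the window $(\psi(n+j))_{j=1}^{k}$ is not contained in a single block or in the complementary zero region is, near scale $N_i$, within $k$ of $B_i\cup(B_i+H_i)$; since $B_i$ is $H_i$-separated, this set has bounded local density $O(k/H_i)$ and hence logarithmic density $0$. Every remaining window either vanishes identically or equals $e(\gamma)\,(e(j\alpha))_{j=1}^{k}$ for some $\alpha\in C$ and phase $\gamma$. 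Rounding $\alpha$ to an $\tfrac{\e'}{4\pi k}$-net of $C$ — of cardinality $N(C,\tfrac{\e'}{4\pi k})<(4\pi k/\e')^{d'}$ once $k$ is large, which is $o(k)$ — and $\gamma$ to an $\tfrac{\e'}{2}$-net of the circle produces a set $\Sigma$ of $o(k)$ words such that every window of $\psi$ outside the above density-zero set lies within $\e'$ of an element of $\Sigma$. Hence for every $\e'>0$ and every $\delta>0$, for all large $k$ the sequence $\psi$ has at most $\delta k$ words of length $k$ occurring with positive logarithmic density up to $\e'$ rounding; including the straddling windows only adds a polynomial-in-$k$ factor, so $\psi$ also has entropy zero.

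Finally, given any $c>0$, apply Theorem \ref{approxmainthm} with $\e':=c^{2}/4$ (so $c^{2}>2\e'$): it gives $\limsup_{N\to\infty}\big|\E^{\log}_{n\le N}a(n)\psi(n)\big|\le c$. As $c>0$ was arbitrary, $\lim_{N\to\infty}\E^{\log}_{n\le N}a(n)\psi(n)=0$, contradicting the lower bound $\gg\e^2>0$. I expect the main obstacle to be the third step: translating ``$\overline{\dim}_B C<1$'' into the statement that, off a logarithmic-density-zero set of positions, every length-$k$ window of $\psi$ is within $\e'$ of one of only $o(k)$ model words — this is the sole place the hypothesis on $C$ enters, and it is used precisely through $k^{d'}=o(k)$, which would fail at box dimension $1$.
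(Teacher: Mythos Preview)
Your proof is correct and follows essentially the same ticker-tape reduction to Theorem~\ref{approxmainthm} that the paper uses. Your quantifier structure is in fact slightly cleaner than the paper's: you establish the sublinear word bound up to $\e'$ rounding for \emph{every} $\e'>0$ and then let $c\to 0$, whereas the paper fixes the single value $\e'=\e^2$; you also explicitly round the overall phase $\gamma$ and address the entropy-zero hypothesis of Theorem~\ref{approxmainthm}, both of which the paper leaves implicit.
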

	\begin{rem}
		In particular, this implies that if $C$ is the middle thirds Cantor set then
		\[
		\lim_{H \rightarrow \infty} \lim_{N \rightarrow \infty} \E^{\log}_{n \leq N} \sup_{\alpha \in C} | \E_{h \leq H} a(n + h) e(h\alpha) | = 0.
		\]
		Of course, the result also applies to a large family of other fractals.
		The author does not know of any results in the literature where this is established for any infinite set. He does not know of any proof for any set of positive box dimension which does not use Theorem \ref{approxmainthm}.
	\end{rem}
	\begin{proof}
		Suppose the upper box dimension of $C \subset S^1$ is $< d < 1$. 
		Let $\e > 0$. As in the proof of Theorem \ref{localcorrelations}, we assume that
		\[
		\limsup_{H \rightarrow \infty} \limsup_{N \rightarrow \infty} \E^{\log}_{n \leq N} \sup_{\alpha \in C} | \E_{h \leq H} a(n + h) e(h\alpha) | \gg \e,
		\]
		and derive a contradiction. As before, there is a ticker tape function $\psi \colon \N \rightarrow \C$ such that
		\[
		\limsup_{N \rightarrow \infty} \Re \Big( \E^{\log}_{n \leq N}  \E_{h \leq H} a(n + h) \psi(h) \Big) \gg \e^2,
		\]
		of the following form: there exists sequences of natural numbers $N_i$ and $H_i$ tending to infinity with $N_{i+1} \gg N_i \gg H_i$, a sequence of $H_i$-separated sets $B_i$, and $\psi(n+h) = e(\beta_n) e(\alpha_n h)$ for some rational $\beta_n$ of denominator at most $O(\e)$, some $\alpha_n$ in $C$ and for all $n$ in some set $B_i \cap [N_{i-1}, N_i]$ and $h \leq H_i$. We set $\psi(m) = 0$ for all natural numbers $m$ not of this form. As before, for any natural number $k$, the natural numbers $m$ that are within $k$ of a number $n$ in $B_i$ or $B_i + H_i$ has $\log$-density $0$.
		
		Let $k$ be a natural number sufficiently large depending on $C$ and $\e$. Let $\e' = \e^2$. Then because $C$ has upper box dimension $< d$ there exists a collection of at most $(\frac{k}{\e'})^d$ intervals of length $\frac{\e'}{k}$ covering $C$. If two points on the circle $\alpha$ and $\alpha'$ differ by at most $\frac{\e'}{k}$ then by the triangle inequality, for all $h \leq k$, we have that $| e(h \alpha) - e(h \alpha') | \leq \e'$. Therefore, the number of sign patterns of $\psi$ that occur with positive $\log$-density up to $\e'$ rounding is sublinear. In particular, for any $\delta > 0$, there are fewer than $\delta k$ many sign patterns that occur with positive $\log$-density up to $\e^2$ rounding.  By Theorem \ref{approxmainthm}, we get a contradiction.
	\end{proof}
	On the surface, this argument appears to be very close to the $t$-Fourier uniformity conjecture, which Tao introduced in \cite{TaoEquiv} and proved was equivalent to the $\log$-Chowla and $\log$-Sarnak conjectures. (For recent significant progress on the Fourier uniformity conjecture, see \cite{MRT3}). If you wanted to prove the Fourier uniformity conjecture in the case $d = 1$, namely that
	\[
	\lim_{H \rightarrow \infty} \limsup_{N \rightarrow \infty} \E^{\log}_{n \leq N} \sup_{\alpha \in \R} | \E_{h \leq H} a(n + h) e(h\alpha) | = 0,
	\]
	the ticker tape functions that you would need $\lambda$ to be orthogonal to have $\sim \e^{-1} k$ many sign patterns of length $k$ up to $\e$ rounding. Thus, one might hope that a simple argument could adjust the constants in Theorem \ref{approxmainthm} and thereby prove the Fourier uniformity conjecture. However, there is a major theoretical obstacle to further progress. \cite{FrantzikinakisHost} introduced the dynamical system $(S^1 \times S^1, dx, T, \mathcal{B})$ where $T(\alpha, \beta) = (\alpha, \alpha \beta)$. \cite{Sawin} showed that this dynamical system with some additional structure is a dynamical model for the Liouville function (a notion which we will precisely define later). This is an obstruction to solving the Fourier uniformity conjecture purely with dynamical methods and without any new input from number theory. \cite{Sawin} further showed that there are dynamical models for the Liouville function which have only polynomially many sign patterns. Explicitly, consider the following function $\tilde a$ which behaves almost like a multiplicative function: we partition the natural numbers into intervals with the length of the intervals slowly tending to infinity. For instance, we could split all the numbers between $10^{10^n}$ and $10^{10^{n+1}}$ into blocks of length $\sim n$. Then on each interval $I$ we pick a random phase $\alpha_I$ in $S^1$  uniformly and indepently. Then we set $\tilde a$ to be the function obtained by rounding the function which sends $n \mapsto e(\alpha_I n)$ for $n$ in $I$. In formulas, we set $\tilde a(n) = 2\ch_{\text{Re } e(\alpha_I n) > 0} -1 $ for $n$ in $I$.  We remark that the dynamical model for this sequence is isomorphic to the product of the dynamical system introduced by \cite{FrantzikinakisHost} with $\widehat{\Z}$ (again, we defer the precise definition until later). Clearly, $\tilde a$ is  not multiplicative. However, it is ``statistically" multiplicative in the sense that, with high probability, for any sign pattern $\epsilon$ of length $k$, for any $m$ and for large $N$
\[
\E^{\log}_{n \leq N} \ch_{\tilde a_{n + h} = \epsilon_h \text{ for all } h \leq k  } \approx \E^{\log}_{n \leq N} m \ch_{m | n} \ch_{\tilde a_{n + mh} = -\epsilon_h \text{ for all } h \leq k  }.
\]
This function clearly does not satisfy the $1$-Fourier uniformity conjecture and \cite{Sawin} showed that it has quadratically many sign patterns that occur with positive upper logarithmic density even though it does satisfy the a version of \cite{MRT}. If we had used a random $\kappa-1$-degree polynomial instead of a random linear polynomial, we would get a function which is again statistically multiplicative but which fails the $\kappa-1$-Fourier uniformity conjecture and \cite{Sawin} showed that it has $\lesssim k^{\kappa + 2 \choose 2}$ many sign patterns of length $k$. However, the author is unaware of any ``dynamical" techiniques that distinguish these statistically multiplicative functions from the Liouville function. This is made precise with Definition \ref{systemdef}.
	
	We give one last application.
	\begin{thm}
		Again, suppose that $a$ is an unpretentious completely multiplicative function taking values in the unit circle. There is a set $C \subset [0,1]$ of Hausdorff dimension 1 such that 
		\[
		\lim_{H \rightarrow \infty} \limsup_{N \rightarrow \infty} \E^{\log}_{n \leq N} \sup_{\alpha \in C} | \E_{h \leq H} a(n + h) e(h\alpha) | = 0.
		\]   
	\end{thm}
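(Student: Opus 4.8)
The plan is to observe that the proof of Theorem \ref{Cantor} uses the hypothesis that $C$ has box dimension $<1$ in exactly one place, and only through the resulting covering estimate, and then to exhibit a compact subset of $[0,1]$ of full Hausdorff dimension for which that same covering estimate still holds.

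Concretely, the first step is to record the following variant of Theorem \ref{Cantor}: its conclusion holds for every compact $C \subseteq [0,1]$ such that $\delta\, N(C,\delta) \to 0$ as $\delta \to 0$, where $N(C,\delta)$ is the least number of intervals of length $\delta$ needed to cover $C$. The proof would be the proof of Theorem \ref{Cantor} verbatim: one forms the ticker tape function $\psi$ exactly as there, with $\psi(n+h) = e(\beta_n)\,e(\alpha_n h)$ on the active blocks and $\psi = 0$ elsewhere, where $\alpha_n \in C$ and $\beta_n$ is rational of bounded denominator; up to an error of size $\e'$ in each of the first $k$ coordinates, a word of $\psi$ occurring with positive logarithmic density is determined by $\beta_n$ (boundedly many choices), by its leading phase (again boundedly many choices after $\e'$ rounding), and by $\alpha_n$ to precision $\e'/k$. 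In the proof of Theorem \ref{Cantor} this last ingredient is bounded by $(k/\e')^d$; here one bounds it by $N(C,\e'/k)$ instead, and since $\e'$ is a constant fixed in terms of the target error via Theorem \ref{approxmainthm} and $\delta\, N(C,\delta) \to 0$, we get $N(C,\e'/k) = o(k)$. Hence $\psi$ has $o(k)$ words of length $k$ occurring with positive logarithmic density up to $\e'$ rounding, and Theorem \ref{approxmainthm} supplies the contradiction exactly as in the proof of Theorem \ref{Cantor}.

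The second step is to construct such a $C$ directly. Fix an infinite set $E \subseteq \N$ of natural density $0$, say $E = \{2^k : k \ge 1\}$, and let $C$ be the Cantor set defined by binary subdivision in which, passing from scale $2^{-m}$ to scale $2^{-m-1}$, each surviving interval keeps both of its halves if $m+1 \notin E$ and keeps only its left half if $m+1 \in E$; equivalently, $C$ is the set of $x \in [0,1]$ admitting a binary expansion that is $0$ in every coordinate indexed by $E$. At scale $2^{-m}$ there are $2^{\,m - |E \cap [1,m]|}$ surviving intervals, so $N(C,2^{-m}) \le 2^{\,m - |E\cap[1,m]|}$, and therefore $\delta\, N(C,\delta) \le 2^{\,1 - |E\cap[1,m]|} \to 0$ as $\delta \to 0$, since $E$ is infinite. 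On the other hand, the uniform probability measure $\mu$ on $C$ assigns mass $2^{-(m - |E\cap[1,m]|)} = 2^{-m}\, 2^{\,|E\cap[1,m]|}$ to each surviving scale-$2^{-m}$ interval; because $|E \cap [1,m]| = o(m)$, for every $s < 1$ one has $\mu(B) \le |B|^{s}$ for every sufficiently short interval $B$, so the mass distribution principle gives $\mathcal{H}^{s}(C) > 0$ and hence $\dim_H C \ge s$. Letting $s \to 1$ yields $\dim_H C = 1$, and the variant above applies to this $C$, completing the proof.

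The main point requiring care is the first step: checking that the argument of Theorem \ref{Cantor} genuinely isolates $N(C,\e'/k) = o(k)$ as its only use of the geometry of $C$, and keeping the rounding bookkeeping straight. The geometric construction is routine; the only mild tension is that $\delta\, N(C,\delta) \to 0$ and $\dim_H C = 1$ pull in opposite directions, but an infinite density-zero set $E$ leaves ample room (one may take $E$ as sparse as desired, e.g.\ $E = \{k! : k \ge 1\}$, and one even obtains that $C$ has Lebesgue measure $0$).
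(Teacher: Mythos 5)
Your proof is correct, but it takes a genuinely different route from the paper's. The paper proves the result by diagonalization, using Theorem \ref{Cantor} strictly as a black box: it builds $C$ as a nested intersection $C = \bigcap_n \bigcup_{J \in \mathcal{J}_n} J$ so that, at each stage $m$, every point of $C$ lies within $\tfrac{1}{m H_m}$ of a set $C_m$ of box dimension $\tfrac{\log(m-1)}{\log m} < 1$, with $H_m$ chosen from Theorem \ref{Cantor} for $C_m$; the supremum over $C$ at cutoff $H_m$ is then compared to the supremum over $C_m$ up to an $O(1/m)$ error, and the Hausdorff dimension is pushed to $1$ by letting the deletion fraction $\tfrac{1}{n}$ shrink. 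You instead reopen the proof of Theorem \ref{Cantor} and observe that the only geometric input is the covering estimate $N(C,\e'/k)\cdot O(1/\e') = o(k)$, i.e.\ $\delta N(C,\delta)\to 0$; you then exhibit a single explicit set (digits vanishing on a density-zero index set $E$) satisfying both this condition and, by the mass distribution principle, full Hausdorff dimension. Your route is cleaner in that it isolates the exact hypothesis the argument needs and avoids the nested construction and the accompanying $H_m$-bookkeeping; the paper's route is more modular, never reopening the proof of Theorem \ref{Cantor} and so automatically remaining valid under any future strengthening of that theorem. One small bookkeeping note in your first step: the relevant phase at a generic position $m=n+h_0$ inside a block is $e(\beta_n+\alpha_n h_0)$ rather than $e(\beta_n)$ alone, but since this is still a single unimodular constant it contributes only the same $O(1/\e')$ factor, so the count $O(1/\e')\cdot N(C,\e'/k)=o(k)$ is unaffected.
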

	\begin{proof}
		The main idea is to combine Theorem \ref{Cantor} with a diagonalization argument. For a disjoint collection of intervals $\mathcal{J} = \{J\}$ and a natural number $n$ we define $D_n(\mathcal{J})$ to be the set of intervals obtained by taking each $J$, removing a ball of diameter $\frac{|J|}{n}$ around the center of the interval $J$, taking the two remaining intervals, then taking the union over all $J$ in $\mathcal{J}$.
		
		We construct $C$ inductively as follows. Start with any interval $I$ and set $\mathcal{J}_2 = \{ I \}$. Assume inductively that we have constructed $\mathcal{J}_{n-1}$ Then we apply $D_n$ again and again. Let 
		\[
		C_n = \cap_{m \in \N } \cup_{J \in D^m\mathcal{J}_{n-1} } J.
		\]
		Since $C_n$ has box dimension $\frac{\log n-1}{\log n}$, we know by Theorem \ref{Cantor} that there exists a natural number $H_n$ such that if $H \geq H_n$ then 
		\[
		\limsup_{N \rightarrow \infty} \E^{\log}_{n \leq N} \sup_{\alpha \in C} | \E_{h \leq H} a(n + h) e(h\alpha) | \leq \frac{1}{n}.
		\] 
		Then define 
		\[
		\mathcal{J}_n = D_n^{n H_n} \mathcal{J}_{n-1}.
		\]
		We set
		\[
		C = \cap_{n \geq 2} \cup_{J \in \mathcal{J}_n } J.
		\]
		Clearly, the Hausdorff dimension of $C$ is at least $\frac{- \log 2}{\log (\frac{n - 1}{2n}) }$ for every $n$ and therefore the Hausdorff dimension is precisely $1$.
		Now we verify that $C$ has the desired property. For each natural number $m$, by enlarging the set we are maximizing over, we have that 
		\begin{align*}
		&\limsup_{H \rightarrow \infty} \limsup_{N \rightarrow \infty} \E^{\log}_{n \leq N} \sup_{\alpha \in C} | \E_{h \leq H} a(n + h) e(h\alpha) | \\
		\lesssim & 	\limsup_{N \rightarrow \infty} \E^{\log}_{n \leq N} \sup_{\alpha \in J \in \mathcal{J}_m } | \E_{h \leq H_m} a(n + h) e(h\alpha) |. 
		\intertext{Since every element $\alpha \in J \in \mathcal{J}_m$ is in $D_m^{m H_m }(\mathcal{J}_{m-1})$ there exists $\beta = \beta_\alpha$ depending on $\alpha$ such that $\beta$ is in $C_m$ and the distance from $\alpha$ to $\beta$ is no more than $\frac{1}{H_m m}$. Therefore, for all $h \leq H_m$, $\alpha h$ is within $\frac{1}{m}$ of $\beta h$. Thus,   }
		\lesssim & 	\limsup_{N \rightarrow \infty} \E^{\log}_{n \leq N} \sup_{\beta \in C_m} | \E_{h \leq H_m} a(n + h) e(h\beta) | + \frac{1}{m}.
		\intertext{However, by our choice of $H_m$, we have}  
		\lesssim &\frac{1}{m}.
		\end{align*}
		Since $m$ was arbitrary, we obtain the desired result.
	\end{proof}

	\begin{rem}
		We have stated our main theorems in the case that $a$ is completely multiplicative and takes values in the unit circle. We remark that these assumptions can be weakened to include all multiplicative functions taking values in the unit disk. The reduction from multiplicative functions taking values in the unit disk to multiplicative functions taking values in the unit circle is essentially due to Tao (see \cite{TaoChowla}, Proposition 2.1). The reduction from multiplicative functions to completely multiplicative functions (say both taking values in the unit circle) is carried out in the second appendix to this paper. The argument is rather short and was essentially communicated to me by Tao. However, it may be more broadly known and I make no claim of originality.
	\end{rem}
	
	We now sketch an outline of an argument that is morally very similar to the main argument in this paper. However, for the moment we will work in a more concrete setting. To make this argument rigorous, it is much easier to pass to the dynamical context. Suppose that $b$ is a sequence with quadratic word growth rate and that
	\[
	\limsup_{N \rightarrow \infty} | \E^{\log}_{n \leq N} \lambda(n) b(n) | > c > 0.
	\]
	Then we can fix a natural number $k$ and average over translates,
	\[
	\limsup_{N \rightarrow \infty} | \E^{\log}_{n \leq N} \E_{h \leq k} \lambda(n + h) b(n + h) | > c.
	\]
	Fix a large natural number $P$ with $N \gg P \gg k$. Because $\lambda$ also has a multiplicative symmetry, we can average over dilates 
	\[
	\limsup_{N \rightarrow \infty} | \E^{\log}_{n \leq N} \E_{P/2 < p \leq P} \E_{h \leq k} \lambda(pn + ph) b(n + h) | > c.
	\]
	Moving the absolute values inside and crudely replacing $b$ by the worst word of length $k$, we get 
	\[
	\limsup_{N \rightarrow \infty}  \E^{\log}_{n \leq N} \sup_{\epsilon} \E_{P/2 < p \leq P} | \E_{h \leq k} \lambda(pn + ph) \epsilon_h | > c,
	\]
	where the supremum is taken over all words $\epsilon$ of $b$. Tao's entropy decrement argument, introduced in \cite{TaoChowla}, allows us to replace $pn$ by $n$.
	\[
	\limsup_{N \rightarrow \infty}  \E^{\log}_{n \leq N} \sup_{\epsilon} \E_{P/2 < p \leq P} | \E_{h \leq k} \lambda(n + ph) \epsilon_h | > c.
	\]
	Now if $\lambda$ behaves randomly, then we already know that $\lambda$ is orthogonal to $b$. Therefore, if $\lambda$ correlates with $b$ it must have some structure. Morally, \cite{FrantzikinakisHost} says we can break up $\lambda$ into a structured part and a random part, and that all the correlation comes from the structured part. \cite{HostKra} proves that the structured part must take the form of a nilsequence. For the purposes of this sketch, we will focus on the case that there exists $\alpha_n$ and $\beta_n$ irrational such that
	\[
	\limsup_{N \rightarrow \infty}  \E^{\log}_{n \leq N} \sup_{\epsilon} \E_{P/2 < p \leq P} | \E_{h \leq k} e(\alpha_n (ph)^2 + \beta_n ph ) \epsilon_h | > c.
	\]
	By H\"older's inequality,
	\[
	\limsup_{N \rightarrow \infty}  \E^{\log}_{n \leq N} \sup_{\epsilon} \E_{P/2 < p \leq P} | \E_{h \leq k} e(\alpha_n (ph)^2 + \beta_n ph ) \epsilon_h |^4 > c^4.
	\]
	By the pigeonhole principle, since there are only $\delta k^2$ many sign patterns, there is a sign pattern $\epsilon$ such that,
	\[
	\limsup_{N \rightarrow \infty}  \E^{\log}_{n \leq N} \E_{P/2 < p \leq P} | \E_{h \leq k} e(\alpha_n (ph)^2 + \beta_n ph ) \epsilon_h |^4 > \delta^{-1} k^{-2} c^4.
	\]
	Expanding everything out and using that $\delta \leq \frac{c^{4}}{2}$
	\[
	\limsup_{N \rightarrow \infty}  \E^{\log}_{n \leq N} | \E_{P/2 < p \leq P}   \E_{j \in [k]^4} e(\alpha_n p^2(j_1^2 + j_2^2 - j_3^2 - j_4^2) + \beta_n p(j_1 + j_2 - j_3 - j_4) ) | >  2k^{-2}.
	\]
	When $j_1^2 + j_2^2 - j_3^2 - j_4^2 \neq 0$ or $j_1 + j_2 - j_3 - j_4 \neq 0$ then for $P$ large, by the circle method
	\[
	\E_{P/2 < p \leq P} e(\alpha_n p^2(j_1^2 + j_2^2 - j_3^2 - j_4^2) + \beta_n p(j_1 + j_2 - j_3 - j_4) )  \approx 0.
	\]
	The analogue of the circle method for more general nilpotent Lie groups was introduced in \cite{GT1}, \cite{GT2} and \cite{GTZ}.
	The analogue of the step where we conclude that the sums of powers is $0$ for more general nilpotent Lie groups is an argument of \cite{Frantzikinakis}.
	Thus the only contribution is from the terms where  $j_1^2 + j_2^2 - j_3^2 - j_4^2 = 0$ and $j_1 + j_2 - j_3 - j_4 = 0$. But it is easily seen from Newton's identities for symmetric polynomials that this only happens for the $2k^2$ ``diagonal" terms. Thus, we get
	\[
	2k^{-2} > 2k^{-2},
	\]
	which of course provides a contradiction. For the proof of Theorem \ref{condmainthm}, we need to not only use the theory of symmetric polynomials but also use \cite{MR3548534}.

	\subsection{Background and notation}\label{background}
	Suppose $a(n)$ is a 1-bounded, unpretentious multiplicative function with $|a(n)| = 1$ for all $n$. Let $b(n)$ a sequence where only $o(k^2)$ or $O(k^{t - \e})$ many sign patterns occur with positive $\log$-density. The usual construction of a Furstenberg system (see \cite{MR670131}) for $(a,b)$ proceeds as follows: consider the point $(a, b)$ in the space of pairs of sequences. Then apply a random shift to this deterministic variable, $(T^n a, T^n b)$. This gives a random variable in the space of pairs of sequences. The distribution of this random variable is then a shift invariant measure on the space of pairs of sequences. Furthermore, if $f$ is the function on the space of pairs of sequences that evaluates the first sequence at $1$ and $f'$ is the function which evaluates the second sequence at $1$ then
	\[
	f(T^n a, T^n b ) f' (T^n a, T^n b) = a(n+1) b(n+1),
	\] 
	which is the sequence whose average value we care about. Therefore, if the average of $a(n) b(n)$ is greater than $c$ in absolute value then
	\[
	\left| \int f \cdot f' \right| > c,
	\]
	as well. Of course, it does not really make sense to take a random natural number. Instead, one must shift by a random natural number in a large but finite interval whose length tends to infinity, then find a subsequence of the random variables that converges in distribution. This corresponds to taking a weak-$\ast$ limit of the corresponding measures.
	
	However, we take a slightly modified approach. The reason is that the function $a$ has some additional symmetry, namely $a(nm) = a(n)a(m)$. As such, the probability that some word occurs i.e., that $a(n + h) = \epsilon_h$ for $h = 1, \ldots, k$ and for $n$ randomly chosen between $1$ and $N$ is the same as the probability that $a(pn + ph) = a(p) \cdot \epsilon_h$ for $h = 1, \ldots, k$ and for $n$ chosen randomly between $1$ and $N$. That's the same as $p$ times the probability that for a randomly chosen $n$ between $1$ and $pN$ one has $a(n + ph) = a(p) \cdot \epsilon_h$ for $h = 1, \ldots, k$ and $p$ divides $n$. Just flipping everything around, the probability that a random $n$ between $1$ and $pN$ satisfies $a(n + ph) = a(p) \cdot \epsilon_h$ and is divisible by $p$ is $\frac{1}{p}$ times the probability that a random $n$ between $1$ and $N$ satisfies $a(n + h) = \epsilon_h$. We want our dynamical system to capture this symmetry. There are two difficulties which arise when we want to translate this symmetry to our dynamical system. The first is that the interval keeps changing: the distribution of $T^n a$ might be very different on the intervals from $1$ to $N$ and from $1$ to $pN$ so when we take a weak limit along a subsequence of intervals, the distribution $T^n a$ for shifts in one interval might approximate our invariant measure while shifts along the other interval might not. The fix for this problem is to use $\log$-averaging. After we weight each natural number $n$ by $\frac{1}{n}$, the probability that a random $n$ will be between $N$ and $pN$ is $\sim \frac{\log p}{\log N}$ which tends to $0$ as $N$ tends to infinity. Therefore, the distribution of $T^n a$ for a random $n$ between $1$ and $N$ is very close to the distribution of $T^n a$ for a random $n$ between $1$ and $pN$ as long as we choose $n$ randomly using logarithmic weights. The other problem is that our dynamical system does not have a good notion of ``being divisible" by a number. To remedy this, we make use of the profinite completion of the integers
	\[
	\widehat{\Z} = \prod_{p} \Z_p,
	\]
	where $p$ is always restricted to be a prime and $\Z_p$ is the $p$-adic integers $\Z_p = \lim_{\leftarrow} \Z / p^k \Z$ i.e. the inverse limit of $\Z / p^k \Z$ for all $k$. For each natural number $n$, we get an element of $\widehat{\Z}$ by reducing $n \mod p^k$ for every prime $p$ and every natural number $k$. Then to build our dynamical system, we take the space of triples consisting of two sequences and a profinite integer and for a logarithmically randomly chosen integer $n$ we consider the random variable $(T^n a, n, T^n b)$ in this space. The distribution of this random variable is a shift invariant measure. Furthermore, we have the following symmetry: let $Y = \overline{\{ T^n b \colon n \in \N \}}$ and $X = (S^1)^\N \times \widehat{\Z}$. Define the function
	\[
	M \colon X \rightarrow \widehat{\Z},
	\]
	by projecting onto the $\widehat{\Z}$ coordinate in $X$,
	\[
	M \colon (\alpha, r) \mapsto r.
	\]
	Define the function
	\[
	I_p \colon M^{-1} (p \widehat{\Z} ) \rightarrow X, 
	\]
	by ``zooming in" by a factor of $p$ and multiplying by $\overline{a(p)}$ on the first factor and dividing by $p$ on the second,
	\[
	I_p \colon (\alpha(n), r ) \mapsto (\overline{a(p)} \alpha(pn), r/p ),
	\]
	where $r/p$ is the unique element of $\widehat{\Z}$ such that $p \cdot (r/p) = r$.
	Then if $\nu$ is our invariant measure on $X \times Y$ and $\mu$ is its first marginal then $I_p$ pushes forward $\mu$ restricted to $M^{-1}(p \widehat{\Z})$ to $\frac{1}{p} \mu$. Formally, we make the following definition:
	\begin{defn}\label{systemdef}
		Let $(X, \mu, T)$ be a dynamical system, let $f \colon X \rightarrow \C$ be a measurable function, let $M \colon X \rightarrow \widehat{\Z}$ be a measurable function and for each $m$ let $I_m \colon M^{-1}(m \widehat{\Z}) \rightarrow X$ be a measurable function. We say $(X, \mu, T, f, M, I_m)$ is a dynamical model for $a$ if,
		\begin{itemize}
			\item $M \circ T = M + 1$ almost everywhere.
			\item $I_m \circ T^m = T \circ I_m$ almost everywhere in $M^{-1}(m \widehat{\Z})$.
			\item $I_m$ pushes forward the measure $\mu$ restricted to $M^{-1}(m \widehat{\Z})$ to $\frac{1}{m} \mu$. Symbolically, for any function $\phi$ in $L^1(\mu)$ we have
				\[
				\int_X \phi(x) \mu(dx) = \int_X m \ch_{x \in M^{-1} (m \widehat{\Z} )} \phi ( I_m (x) ) \mu(dx).
				\]
			\item $f \circ I_m = \overline{a(m)} \cdot f$ almost everywhere in $M^{-1}(m \widehat{\Z})$.
			\item For all $m$ and $n$, $I_{nm} = I_n \circ I_m$ almost everywhere in $M^{-1}(mn \widehat{\Z})$.
		\end{itemize}
		We also ask for the following property that \cite{Sawin} does not impose.
		\begin{itemize}
			\item For any natural number $m$ and any measurable subset $A$ of $\C^m$,
			\begin{align*}
			\mu&\{ x \in X \colon (f(T^1 x), \ldots, f(T^m x)) \in A \} \leq \\  \logd &\{ n \leq N \colon (a(n+1), \ldots, a(n+m)) \in A \},
			\end{align*}
where $\logd$ denotes upper logarithmic density.
			We remark that we can also fix a Banach limit $p-\lim$ extending the usual limit functional and require that equality holds in the previous equation holds for any limit taken with respect to that Banach limit. For more details, see \cite{TaoBlog}.
		\end{itemize}
		Let $(X \times Y, \nu, T)$ be a joining of two dynamical systems $X$ and $Y$. Suppose that $\mu$ is the first marginal and $(X, \mu, T, f, M, I_m)$ is a dynamical model for $a$. Let $f'$ be a measurable function on $X \times Y$ which is $Y$ measurable. We say $(X \times Y, \nu, T, f, f', M, I_m)$ is a joining of a dynamical model of $a$ with $b$ if we also have that, for any natural number $m$ and any measurable subset $A$ of $\C^m$,
		\begin{align*}
		\nu&\{ (x,y) \in X \times Y \colon (f'(T^1 y), \ldots, f'(T^my)) \in A \} \leq \\  \logd &\{ n \leq N \colon (b(n+1), \ldots, b(n+m)) \in A \}
		\end{align*}
where $\logd$ denotes upper logarithmic density.
		We could also require that the joint statistics of $(f, f')$ agree with the joint statistics of $(a,b)$ but this is not necessary for our argument.
	\end{defn} 
	\begin{rem}
		The preceding definition was used first in \cite{TaoBlog} and generalized in \cite{Sawin}.
	\end{rem}
We abuse notation and denote all transformations by the letter $T$. We also remark that for the proof of Theorems \ref{mainthm} and \ref{condmainthm} that $f'$ only takes finitely many values.

	We now specify some notation used in the main argument:
	\begin{itemize}
		\item We fix an unpretentious 1-bounded multiplicative function $a$. (For the definition of unpretentious, see \cite{MRT}; we will only really use that $a$ is unpretentious in Theorem \ref{MRT}; we remark that the Liouville function is unpretentious). We fix constants $t \in \N$, $c > 0$ and $\delta > 0$. We fix a 1-bounded function $b$ with at most $o(k^2)$ or $ k^{t - \e}$ many words of length $k$ occurring with positive upper logarithmic density for all $k \in \K$ where $\K$ is some fixed infinite set. We suppose that
		\[
		\limsup_{N \rightarrow \infty} |\E^{\log}_{n \leq N} a(n) b(n) | > c.
		\]
		We fix $\eta > 0$ such that 
		\[
		\limsup_{N \rightarrow \infty} |\E^{\log}_{n \leq N} a(n) b(n) | > c + \eta.
		\]
		\item 
		We use the following theorem of \cite{FrantzikinakisHost2}.
		\setcounter{section}{2}
		\setcounter{thm}{14}
		\begin{thm}[\cite{FrantzikinakisHost2} Theorem 1.5]
			There exists a joining of a dynamical model for $a$ with $b$, $(X \times Y, T, \nu, f, f', M, I_m)$ 
			satisfying
			\[
			\left| \int_{X \times Y} f(x,y) f'(x,y) \nu(dx dy) \right| > c + \eta,
			\]
			and if $\mu$ is the first marginal then the ergodic components $(X, \mu_\omega, T)$ are isomorphic to products of Bernoulli systems with the Host Kra factor of $(X, \mu_\omega, T)$.
		\end{thm}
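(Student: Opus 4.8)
The plan is to produce the tuple $(X\times Y,T,\nu,f,f',M,I_m)$ directly as a logarithmic Furstenberg system of the pair $(a,b)$ decorated by the universal odometer, and then to feed its first marginal into the structure theorem of \cite{FrantzikinakisHost2}. First I would use the choice of $\eta$ to fix $N_j\to\infty$ along which $\big|\E^{\log}_{n\le N_j}a(n)b(n)\big|\to L$ for some $L>c+\eta$. Let $\Sigma\subset\C$ be the finite value set of $b$, put $X=(S^1)^{\N}\times\widehat{\Z}$ and $Y=\overline{\{T^nb:n\in\N\}}\subseteq\Sigma^{\N}$, and let $z_0=(a,0,b)\in X\times Y$, where $0\in\widehat{\Z}$ and $T$ acts by the shift on the two sequence coordinates and by $r\mapsto r+1$ on $\widehat{\Z}$. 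Form the logarithmic empirical measures $\big(\sum_{n\le N_j}\tfrac1n\big)^{-1}\sum_{n\le N_j}\tfrac1n\,\delta_{T^nz_0}$ and, passing to a further subsequence, let $\nu$ be a weak-$\ast$ limit; the $\tfrac1n$ weights make the total variation distance between this measure and its $T$-image $O(1/\log N_j)\to0$, so $\nu$ is $T$-invariant. Let $\mu$ be the first marginal and define $f,f'$ to be evaluation at $1$ of the $S^1$- and $\Sigma$-coordinates, $M$ the projection onto $\widehat{\Z}$, and $I_m\colon M^{-1}(m\widehat{\Z})\to X$ by $(\alpha,r)\mapsto(\overline{a(m)}\,\alpha(m\,\cdot),r/m)$.

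Next I would verify the axioms of Definition \ref{systemdef}. The relations $M\circ T=M+1$, $I_m\circ T^m=T\circ I_m$, $f\circ I_m=\overline{a(m)}f$ and $I_{nm}=I_n\circ I_m$ all hold pointwise on the orbit $\{T^nz_0\}$ by complete multiplicativity of $a$, hence almost everywhere by continuity and weak-$\ast$ convergence. The pushforward identity
\[
\int_X\phi\,d\mu=\int_X m\,\ch_{M^{-1}(m\widehat{\Z})}\,(\phi\circ I_m)\,d\mu
\]
is exactly the computation of Section \ref{background}: it holds with error $O(\log m/\log N_j)$ at the level of the empirical measures because under logarithmic weighting the block $[N_j,mN_j]$ carries vanishing mass, and the error disappears in the limit. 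The domination axioms $\mu\{\dots\}\le\logd\{\dots\}$ and $\nu\{\dots\}\le\logd\{\dots\}$ follow from weak-$\ast$ convergence together with the definition of upper logarithmic density (closed cylinders first, then general Borel sets by regularity). Finally $f\cdot f'$ is bounded and continuous with $f(T^nz_0)f'(T^nz_0)=a(n+1)b(n+1)$, so $\int ff'\,d\nu=\lim_j\E^{\log}_{n\le N_j}a(n+1)b(n+1)=\lim_j\E^{\log}_{n\le N_j}a(n)b(n)$, which has absolute value $L>c+\eta$.

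It then remains to apply the structure theorem. The marginal $\mu$ is a logarithmic Furstenberg system of $a$ together with the adjoined universal odometer $r\mapsto r+1$ on $\widehat{\Z}$, and this odometer is isometric, hence of zero entropy and measurable with respect to the Kronecker, a fortiori the Host--Kra, factor. Since $a$ is unpretentious, Theorem 1.5 of \cite{FrantzikinakisHost2} applies (either directly to the decorated system, whose multiplicative coordinate is still $a$, or to the undecorated Furstenberg system followed by this odometer extension) and yields that after the ergodic decomposition $\mu=\int\mu_\omega\,d\omega$ almost every ergodic component $(X,\mu_\omega,T)$ is measurably isomorphic to a direct product of a Bernoulli system with its Host--Kra factor: adjoining the odometer affects neither the Bernoulli (Pinsker-complement) factor nor the conclusion, since it only enlarges the structured part by an isometric factor, which remains the Host--Kra factor of the component.

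The one genuinely hard ingredient is Theorem 1.5 of \cite{FrantzikinakisHost2}: it packages the analytic number theory, ultimately the Matom\"aki--Radziwi\l\l{} theorem and Tao's entropy decrement argument, and there is no soft dynamical substitute for it. Everything else is forced by the use of logarithmic averaging as explained in Section \ref{background}; the only care needed is to check that adjoining the $\widehat{\Z}$-coordinate is compatible with the measure-pushforward axiom for $I_m$ and with the hypotheses of the structure theorem, which holds because the added coordinate is merely a compact group rotation.
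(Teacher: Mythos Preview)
Your approach is correct and closely parallels the paper's own argument in the first appendix, with one structural difference. You build the joining directly on $X_0=(S^1)^{\N}\times\widehat{\Z}$ and then invoke \cite{FrantzikinakisHost2} Theorem 1.5 as a black box on the sequence-space marginal, afterwards arguing that adjoining the odometer preserves the Bernoulli $\times$ Host--Kra splitting. The paper instead partially unpacks that theorem: it passes to the auxiliary extension $\tilde X=(D^{\Z})^{\Z}$ of the Furstenberg system via \cite{FrantzikinakisHost2} Proposition~4.2, applies their Theorem~4.5 to get the Bernoulli $\times$ nilsystem structure on the ergodic components of $\tilde X$, and then takes the working system $X$ to be the \emph{larger} joining of $\tilde X$ with $\widehat{\Z}$, defining $I_m$ explicitly on the two-dimensional sequence coordinates. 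Your route is shorter because you cite the packaged result rather than its ingredients; the paper's route makes the $I_m$-axioms on the enlarged space completely concrete. Both arguments must then check that adjoining $\widehat{\Z}$ preserves the product structure on ergodic components: you dispatch this in a sentence, while the paper writes out the two facts it rests on---an ergodic joining of two inverse limits of nilsystems is again an inverse limit of nilsystems (measure classification on nilmanifolds), and Bernoulli systems are disjoint from zero-entropy systems---and then identifies the resulting nil-factor with the Host--Kra factor of the component. One small correction: you should work with two-sided sequences $D^{\Z}$ rather than $(S^1)^{\N}$, both so that the shift is invertible and so that the hypotheses of \cite{FrantzikinakisHost2} are met literally.
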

		\setcounter{section}{1}
		\setcounter{thm}{16}
		Because the statement here is slightly different than Theorem 1.5 in \cite{FrantzikinakisHost2}, we will go through the details in the first appendix. We fix such a system.
		We will always denote by $\mu$ the first marginal of $\nu$. We also fix ergodic decompositions $\nu = \int_\Omega \nu_\omega d\omega$ and $\mu = \int_\Omega \mu_\omega d\omega$.
		We define the words of length $k$ of $f'$ to be those words $\epsilon$ of length $k$ such that the set of $(x,y)$ such that $f'(T^h x,T^h y) = \epsilon_h$ for all $h \leq k$ has positive measure. We note that the set of words of $f'$ is a subset of the set of words of $b$ that occur with positive upper $\log$ density: after all, if $f'(T^h x,T^h y) = \epsilon_h$ then by definition of a joining of a dynamical model $a$ with $b$,
\begin{align*}
0 <& \mu \{ (x, y) \in X \times Y \colon f'(T^h y) = \epsilon_h \text{ for all } h  \leq k \} \\  \leq& \logd \{ n \in \N \colon b(n+h) = \epsilon_h \text{ for all } h \leq k \},
\end{align*}
where $\logd$ denotes upper logarithmic density.
		\item $G$ will always refer to a nilpotent Lie group. $G_s$ will always refer to the $s^{th}$ step in the lower central series. $\Gamma$ will always refer to a cocompact lattice in $G$, meaning that $G_s / \Gamma_s$ is compact for every $s$. $g, \sigma$ and $\tau$ will always refer to group elements. $\mathcal{B}$ will always refer to the Borel sigma algebra. We will fix a particular $G$, $\Gamma$ and $g$ following Corollary \ref{fixG}. For more on this see \cite{GT3}.
		\item For a nonempty, finite set $A$ and $\phi \colon A \rightarrow \C$, we denote $\E_{n \in A} \phi(n) = \frac{1}{\# A}\sum_{n \in A} \phi(n)$. For $A \subset \N$, we denote \[ \E^{\log}_{n \in A} \phi(n) = \frac{1}{\sum_{n \in A} \frac{1}{n}}\sum_{n \in A} \frac{\phi(n)}{n}. \] This notation is due to Frantzikinakis (see \cite{Frantzikinakis}). We always restrict $p$ to be prime.
		By definition a nilsystem is a dynamical system $(G/\Gamma, dx, T, \mathcal{B})$ where $G$ is a nilpotent Lie group, $\Gamma$ is a cocompact subgroup, $dx$ is Haar measure, there exists $g$ such that $T(x) = gx$ and $\mathcal{B}$ is the Borel sigma algebra. A nilsequence is a sequence of the form $F(g^n \Gamma)$ where $G$ is a nilpotent Lie group, $\Gamma$ is a cocompact lattice in $G$, $g$ is an element in $G$ and and $F \colon G / \Gamma \rightarrow \C$ is a continuous function. Suppose $G$ is an $s$-step nilpotent Lie group so that $G_s$ is an abelian group and $G_s / \Gamma_s$ is a compact abelian group. Then a nilcharacter $\F$ is a function $G / \Gamma \rightarrow \C$ such that there exists a character $\xi \colon G_s /\Gamma_s \rightarrow S^1$ called the frequency of $\F$ such that, for all $x$ in $G /\Gamma$ and $u$ in $G_s$ we have $\F(u x) = \xi(u \Gamma_s) \F(x)$. We will abuse notation and identify $\xi$ with the function on $G_s$ that maps $u \mapsto \xi(u \Gamma_s )$. We say $\xi$ is nontrivial if there exists $u$ in $G_s$ such that $\xi(u) \neq 1$. We say $\xi$ is nontrivial on the identity component if we can find a $u$ in the identity component of $G_s$ such that $\xi(u) \neq 1$.
		\item For Theorem \ref{tao}, we will adopt conventions from the theory of Shannon entropy. In particular, $H(x)$ will denote the Shannon entropy of $x$ and $I(x,y)$ will denote the mutual information between $x$ and $y$. For more details, see \cite{TaoChowla}.
		\item We will always denote by $\z$ the smallest sigma algebra on $X$ generated by the union of the sigma algebras corresponding each of the Host Kra factors. We will denote \[B = \{ (x,y) \in X \times Y \colon f'(T^n y) \text{ is eventually periodic as a function of $n$} \}. \] Since whether $(x,y)$ is in $B$ depends only $y$, we will abuse notation and also use  $B = \{ y \in Y \colon f'(T^n y) \text{ is eventually periodic as a function of $n$} \}$.
		\item For a complex numbers $z$, a set $A$ and a real number $w$ we say $z = O_A(w)$ and $z \lesssim_A w$ if there exists a constant $C$ depending on $A$ but not $z$ and $w$ such that $|z| \leq C w$. If there are more subscripts we mean that the constant may depend on more parameters. For instance, by $\lesssim_{A,u,K}$ we mean that the implied constant can depend on $A$, $u$ and $K$. 
	\end{itemize}
	\subsection{Acknowledgments}
	Special thanks to Terence Tao for sharing many ideas on an earlier version of this paper and for his many helpful comments. Also, special thanks to Nikos Frantzikinakis for pointing out a number of ways to strengthen the main theorem of this paper. I would also like to thank Tim Austin, Bj\"orn Bringmann, Alex Dobner, Asgar Jamneshan, Bernard Host, Gyu Eun Lee, Zane Li, Adam Lott, Maksym Radziwi\l\l, Bar Roytman, Chris Shriver, Joni Ter\"av\"ainen and Alex Wertheim for many helpful discussions. Thanks to Will Sawin for suggesting I use Vinogradov's mean value theorem to improve an earlier version of Theorem \ref{condmainthm}. I would lastly like to thank the anonymous reviewer for their extremely helpful comments. Some of this work was completed while the author was at the American Institute for Mathematics workshop on Sarnak's conjecture.

	\section{Main Argument}
	In this section, we prove Theorem \ref{mainthm} and Theorem \ref{condmainthm}. In Section \ref{section3}, we explain how to adapt the proof to handle Theorem \ref{approxmainthm}.
	
	We remark that much of the notation, including $X, Y, \mu, \nu, f, f', a,$ and $b$ was defined in Subsection \ref{background}.

	We start off with a theorem by \cite{MRT}, relying on work in \cite{MR}. This is a special case of our theorem, so it is no surprise that we need this result.
	\begin{thm}\label{MRT}[\cite{MRT} Theorem 1.7; see also \cite{MR}]
		Let $a$ be a bounded, non-pretentious multiplicative function. Let $\theta$ be a periodic sequence. Then
		\[
		\lim_{H \rightarrow \infty} \limsup_{N \rightarrow \infty} \E^{\log}_{n \leq N} | \E_{h \leq H} a(n+h) \theta(h) | = 0.
		\]
	\end{thm}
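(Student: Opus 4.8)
The plan is to deduce this from the Matom\"aki--Radziwi\l\l{} theorem on multiplicative functions in short intervals (the input from \cite{MR}) together with Hal\'asz's theorem. Let $q$ be a period of $\theta$. Since $\theta$ is $q$-periodic we may expand it in the finite Fourier basis of $\Z/q\Z$; because $q$ is fixed this produces only $O_q(1)$ pieces, so it suffices to treat $\theta(h) = e(bh/q)$ for a single frequency $b/q$. Writing $m = n+h$ and factoring out the unimodular quantity $e(-bn/q)$ gives
\[
\Big| \E_{h \le H} a(n+h)\, e(bh/q) \Big| = \frac1H \Big| \sum_{n < m \le n+H} a(m)\, e(bm/q) \Big|,
\]
so the quantity we must bound is, for each fixed $b/q$, a short-interval average of $a$ twisted by an additive character --- equivalently, a short-interval average of $a$ sorted into residue classes mod $q$.

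For a fixed $H$, I would rewrite $\E^{\log}_{n \le N}$ as an average over dyadic scales $2^k \le N$ of ordinary averages over the blocks $[2^k, 2^{k+1})$ (the logarithmic weights $1/n$ being essentially constant on each block). On each block the average of $\frac1H|\sum_{n<m\le n+H} a(m) e(bm/q)|$ is exactly the type of $L^1$ short-interval average controlled by the Matom\"aki--Radziwi\l\l{} theorem, once the additive character $e(bm/q)$ is resolved into Dirichlet characters modulo $q$ by Gauss sums. On the residue classes coprime to $q$ this is immediate; the non-coprime classes are disposed of by a standard, if fiddly, descent on the modulus (reducing $q$ to $q/d$ after substituting $m = dm'$), all of which is already contained in \cite{MR} and \cite{MRT}. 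The upshot is that, for $k$ in the relevant range,
\[
\E_{2^k \le n < 2^{k+1}} \frac1H \Big| \sum_{n < m \le n+H} a(m)\, e(bm/q) \Big| \;\le\; \big| \text{long average of the twisted } a \text{ over } [2^k, 2^{k+1}) \big| \;+\; E_q(H),
\]
where $E_q(H) \to 0$ as $H \to \infty$.

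It then remains to show that the long average of $a$, restricted to a fixed residue class mod $q$ (equivalently twisted by a fixed Dirichlet character $\chi$ of conductor dividing $q$), tends to $0$ along $[2^k, 2^{k+1})$. Here I would invoke Hal\'asz's theorem: non-pretentiousness of $a$ in the sense defined above propagates to every such twist $a\chi$, since a pretentious approximation $a\chi \approx \chi' n^{it}$ would be a pretentious approximation of $a$ by a Dirichlet character of bounded conductor times $n^{it}$, which the divergence $\phi(A)\to\infty$ forbids; hence the Hal\'asz distance $\min_{|t| \le x} \sum_{p \le x} \tfrac{1 - \Re(a(p)\chi(p) p^{-it})}{p} \to \infty$ and the mean value vanishes. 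Averaging this over the $\lesssim \log N$ dyadic scales $\le N$ still gives $0$, so $\limsup_{N\to\infty} \E^{\log}_{n \le N} |\E_{h\le H} a(n+h)\theta(h)| \ll_q E_q(H)$, and letting $H \to \infty$ finishes the proof.

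The main obstacle here is essentially bookkeeping rather than anything conceptual: organizing the Fourier expansion mod $q$ and the descent for the non-coprime residue classes so that every resulting piece really is (a non-pretentious multiplicative function) $\times$ (a Dirichlet character), and checking that the Matom\"aki--Radziwi\l\l{} error $E_q(H)$ and the Hal\'asz decay are uniform over the bounded moduli involved. The one genuinely substantive point worth isolating is that fixing $\theta$ --- equivalently, restricting to frequencies of bounded denominator --- is precisely what makes the Hal\'asz main term disappear; the same statement with $\sup_{\alpha \in \R}$ in place of a fixed periodic $\theta$ is the (still open) $1$-Fourier uniformity conjecture.
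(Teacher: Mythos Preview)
The paper does not give its own proof of this statement: it is quoted as a black-box input from \cite{MRT} (Theorem 1.7 there), and the surrounding text only says ``This theorem says that $a$ does not locally correlate with periodic functions'' before immediately using it to derive Corollary \ref{periodicpoints}. So there is nothing to compare your argument against in this paper.

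That said, your sketch is essentially the argument that actually proves the cited result: Fourier-expand the periodic weight, reduce to Dirichlet-character twists, apply the Matom\"aki--Radziwi\l\l{} short-interval theorem to pass from short to long averages, and kill the long average by Hal\'asz using non-pretentiousness. The bookkeeping you flag (handling residue classes not coprime to $q$, uniformity over the finitely many characters of modulus dividing $q$) is exactly what \cite{MRT} carries out, and your closing remark about why a fixed $\theta$ is essential---so that only boundedly many Dirichlet characters appear and Hal\'asz applies---is the correct conceptual point.
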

	This theorem says that $a$ does not locally correlate with periodic functions.
	Eventually, we plan to use a local argument. In particular, our argument will only work for those points where $f'$ does not behave locally like a periodic function. Therefore, we need to exclude any contribution to the integral coming from points where $f'$ behaves like a periodic function. That is the content of the following corollary.
	\begin{cor}\label{periodicpoints}
		Let $B = \{ (x,y) \in X \times Y \colon f'(T^n y) \text{ is eventually periodic as a function of $n$} \}$. Then
		\[
		\int_{B} f(x) f'(y) \nu(dx dy) = 0.
		\]
	\end{cor}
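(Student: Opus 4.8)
The plan is to reduce the claim to the statement that $f$ — whose statistics under $\mu$ are pinned down by those of $a$ via Definition \ref{systemdef} — does not locally correlate with periodic sequences, which is exactly Theorem \ref{MRT}.

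First I would stratify $B$. For $q \in \N$ let $B^{(q)} = \{(x,y) : f'(T^{n+q}y) = f'(T^n y) \text{ for all sufficiently large } n\}$. Each $B^{(q)}$ is measurable and $T$-invariant (being eventually $q$-periodic is stable under the shift and pins down the status of every later coordinate), $B^{(q)} \subseteq B^{(q')}$ whenever $q \mid q'$, and $\bigcup_q B^{(q)} = B$; so it is enough to prove $\int_{B^{(q)}} f(x) f'(y)\, \nu(dx\,dy) = 0$ for each $q$ and then send $q \to \infty$ through $q = Q!$. A short argument upgrades ``eventually $q$-periodic'' to ``genuinely $q$-periodic'' $\nu$-almost everywhere on $B^{(q)}$: the bounded function $g(x,y) = \ch_{f'(y) \ne f'(T^q y)}$ satisfies $\E_{n \le N} g(T^n(x,y)) \to 0$ pointwise on $B^{(q)}$ (a Ces\`aro average of an eventually vanishing sequence), so by bounded convergence and the $T$-invariance of $\nu$ and of $B^{(q)}$ one gets $\int_{B^{(q)}} g\, d\nu = 0$; since $B^{(q)}$ and $\nu$ are $T$-invariant this forces $g(T^n(x,y)) = 0$ for all $n \ge 0$ for $\nu$-a.e.\ $(x,y) \in B^{(q)}$, i.e.\ $h \mapsto f'(T^h y)$ is honestly $q$-periodic there.

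Now fix $q$. Since $f'$ takes only finitely many values, say in $V$, the $q$-periodic $V$-valued sequences form a finite list $\psi_1, \dots, \psi_M$, $M \le |V|^q$, and by the previous step $h \mapsto f'(T^h y)$ equals one of them for $\nu$-a.e.\ $(x,y) \in B^{(q)}$. Using $T$-invariance of $\nu$ and of $B^{(q)}$, for every $H$,
\[
\int_{B^{(q)}} f(x) f'(y)\, \nu(dx\,dy) = \int_{B^{(q)}} \E_{1 \le h \le H}\big[ f(T^h x) f'(T^h y) \big]\, \nu(dx\,dy),
\]
and on $B^{(q)}$ the integrand is dominated in modulus by $\Psi_H(x) := \max_{i \le M} \big| \E_{1 \le h \le H} f(T^h x)\, \psi_i(h) \big|$, a bounded Borel function of the finitely many values $f(T^1 x), \dots, f(T^H x)$. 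Hence $\big| \int_{B^{(q)}} f f'\, d\nu \big| \le \int_X \Psi_H\, d\mu$.

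The final and most delicate step is to show $\int_X \Psi_H\, d\mu \to 0$ as $H \to \infty$. Here I would invoke the statistics comparison of Definition \ref{systemdef}, in the form — available by the remark following it — that the joint statistics of $f(T^1 x), \dots, f(T^H x)$ under $\mu$ equal the logarithmic statistics of $a(n+1), \dots, a(n+H)$ with respect to a fixed Banach limit, so that integrating the bounded test function $\Psi_H$ yields
\[
\int_X \Psi_H\, d\mu \;\le\; \limsup_{N \to \infty} \E^{\log}_{n \le N} \max_{i \le M} \big| \E_{1 \le h \le H} a(n+h)\, \psi_i(h) \big| \;\le\; \sum_{i=1}^M \limsup_{N \to \infty} \E^{\log}_{n \le N} \big| \E_{1 \le h \le H} a(n+h)\, \psi_i(h) \big|.
\]
With $M$ fixed and each $\psi_i$ periodic, Theorem \ref{MRT} sends every summand to $0$ as $H \to \infty$. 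I expect the main obstacle to be bookkeeping precisely at this juncture: one must use the statistics domination in its ``equality along the defining limit'' form (the bare inequality points the wrong way once a general bounded test function is integrated), and it is the finite-valuedness of $f'$ that keeps the family of relevant periodic sequences finite, so that Theorem \ref{MRT} can be applied term by term. Granting this, $\int_{B^{(q)}} f f'\, d\nu = 0$ for every $q$, and letting $q \to \infty$ gives $\int_B f(x) f'(y)\, \nu(dx\,dy) = 0$.
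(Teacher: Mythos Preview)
Your proof is correct and, in fact, slightly cleaner than the paper's. The paper stratifies $B$ by the specific eventually-periodic sequence $\theta$ and then proves the \emph{pointwise} statement $\lim_{H\to\infty}\E_{h\le H} f(T^hx)\theta(h)=0$ for $\mu$-a.e.\ $x$; because the event $\{\limsup_H |\cdots|>\e\}$ is not determined by finitely many coordinates of $f$, transferring this from the arithmetic side to the dynamical model requires a Vitali-type covering lemma to pass from a single scale $H$ to $\sup_{L\in[H,H']}$ and then $H'\to\infty$. Your device of first upgrading ``eventually $q$-periodic'' to ``genuinely $q$-periodic'' $\nu$-a.e.\ on $B^{(q)}$ (via the Ces\`aro/invariance argument) lets you work at a \emph{fixed} $H$: the test function $\Psi_H$ then depends on only $H$ coordinates of $f$, so the transfer is immediate and no covering lemma is needed.

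One small comment on your flagged ``bookkeeping'' step: you are right that the bare inequality $\mu(A)\le\logd(A^{\mathrm{arith}})$ does not directly integrate a general bounded test function, but you do not actually need the Banach-limit equality here. Since $0\le\Psi_H\le 1$, for any $\e>0$ one has $\int_X\Psi_H\,d\mu\le \e+\mu\{\Psi_H>\e\}$, and the set $\{\Psi_H>\e\}$ is determined by $(f(T^1x),\dots,f(T^Hx))$, so the bare inequality plus Chebyshev on the arithmetic side gives $\mu\{\Psi_H>\e\}\le \e^{-1}\limsup_N \E^{\log}_{n\le N}\max_{i\le M}|\E_{h\le H}a(n+h)\psi_i(h)|$. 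Sending $H\to\infty$ (with $M$ fixed and each $\psi_i$ periodic, so Theorem~\ref{MRT} applies termwise) and then $\e\to 0$ finishes the job. This is essentially how the paper handles the transfer as well, just at a different stage of the argument.
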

	\begin{proof}
		In this proof, we introduce some notation which will not be used in the rest of the paper.
		Because $T$ preserves $\nu$ and because $B$ is $T$-invariant, we can average over shifts:
		\[
		\int_{B} f(x) f'(y) \nu(dx dy) = \lim_{H \rightarrow \infty} \int_{B} \E_{h \leq H} f(T^h x) f'(T^h y) \nu(dx dy).
		\]
		We know $f'$ takes only finitely many values. There are only countably many different periodic sequences taking values in a finite alphabet. Therefore, it suffices to prove that if $B_\theta$ is the set of points $(x,y)$ on which $f'(T^h y)$ is eventually equal to the periodic function $\theta$ that 
		\begin{align}
		0 =& \lim_{H \rightarrow \infty} \int_{B_\theta} \E_{h \leq H} f(T^h x) f'(T^h y) \nu(dx dy). \nonumber
		\intertext{
			Let $\e > 0$. By Theorem \ref{MRT}, for $H$ sufficiently large 
		}
		\e^3 & \gg \limsup_{N \rightarrow \infty} \E^{\log}_{n \leq N} \sup_{j \in \N} | \E_{h \leq H} a(n+h) \theta(h+j) |. \label{ppeq}
		\intertext{
We claim that, translating this to the dynamical world using the definition of a dynamical model for $a$,
		}
		\nu \{ (x, y) & \ \colon \limsup_{H \rightarrow \infty}  | \E_{h \leq H} f(T^h x) \theta(h) | > \e \} \leq \e. \nonumber
		\intertext{
After all, by Chebyshev's inequality, for any $H$ such that \ref{ppeq} holds,
}
\logd \{ n \in \N & \ \colon \sup_{j \in \N} | \E_{h \leq H} a(n+h) \theta(h+j) | \geq \e \} \ll \e^2, \nonumber
\intertext{
where $\logd$ denotes upper logarithmic density. Fix such an $H$ for the moment and fix a natural number $H' > H$. In fact, more is true. Let $S$ be the subset of the natural numbers such that $n$ is in $S$ if and only if there exists a natural number $H' \geq H_n \geq H$ such that 
}
& \ | \E_{h \leq H_n} a(n+h) \theta(h) | \geq \e. \nonumber
\intertext{
Let $S'$ denote the union of all the intervals $[n+1, n+H_n]$ for $n$ in $S$. We claim there is a subcollection $\mathcal{I}$ of these intervals which covers $S'$ and such that each natural number is contained in at most two intervals in $\mathcal{I}$. This is a somewhat standard covering lemma, but we include the details for the interested reader. For instance, consider the following construction. Let $\mathcal{I}_0$ denote the empty set. Then assuming we have constructed $\mathcal{I}_\ell$ for some natural number $\ell$, let $m$ denote the smallest natural number in $S'$ not contained in the union of the intervals in $\mathcal{I}_\ell$. (If no such $m$ exists, then just set $\mathcal{I}_{\ell+1} = \mathcal{I}_\ell$). Let $n$ be a natural number maximizing $n + H_n$ subject to the constraints that $n$ is in $S$ and $m$ is in $[n+1, n+H_n]$. Such an $n$ exists because $m$ is in $S'$. Then let $\mathcal{I}_{\ell+1} = \mathcal{I}_\ell \cup \{ [n+1, n+H_n] \}$. Now we check that $\mathcal{I} = \cup \mathcal{I_\ell}$ has the desired property. First, for any $m$ in $S$, $m$ is clearly contained in the union of the intervals in $\mathcal{I}_m$. Thus, $\mathcal{I}$ covers $S'$. Second, suppose that $m$ in $S'$ is contained in $I_1$, $I_2$ and $I_3$ with $I_\ell$ chosen before $I_{\ell + 1}$ for $\ell = 1,2$. Suppose that $\mathcal{I}_{\ell_i}$ is the first set of the form $\mathcal{I}_\ell$ where $I_i$ is contained in $\mathcal{I}_{\ell_i}$. Then the union of the intervals in $\mathcal{I}_{\ell_1}$ contains $m$. Thus, there exists $m'$ in $S'$ such that $I_2 = [n+1, n + H_n]$ was chosen to maximize $n + H_n$ subject to the constraint that $m'$ is in $I_2$. Since we assumed $m$ was contained in $I_2$, we have that $n < m$. Now let $I_3 = [n' +1, n' + H_{n'}]$. Since $I_3$ also contains $m$, $n'$ is also less than $m$ which is in turn less than $m'$. But $I_2$ maximized $n + H_n$ over all intervals containing $m'$ and if $n' + H_{n'}$ were larger than $n + H_n$ which is larger than $m'$, then $I_3$ would contained $m'$ as well. Thus $n' + H_{n'} \leq n + H_n$ and therefore any point contained in $I_3$ is already contained in the union of the intervals in $\mathcal{I}_{\ell_2}$. Therefore, $I_3$ should not have been selected for $\mathcal{I}_{\ell_3}$ which leads to a contradiction. Thus, every natural number is covered at most twice by the union of the intervals in $\mathcal{I}$. If 
}
& \ | \E_{h \leq H_n} a(n+h) \theta(h) | \gtrsim \e \nonumber
\intertext{
then
}
& \ \E_{h \leq H_n} | \E_{h' \leq H}  a(n+h +h') \theta(h+h') | \gtrsim \e. \nonumber
\intertext{
Therefore, for at least $\e \cdot H_n$ many points $n+h'$ in the interval $[n+1, n+H_n]$, 
}
\sup_{j \in \N} & | \E_{h \leq H} a(n+h+h') \theta(h+j) | \gtrsim \e. \nonumber
\intertext{
However, we know that
}
\logd \{ n \in \N & \ \colon \sup_{j \in \N} | \E_{h \leq H} a(n+h) \theta(h+j) | \geq \e \} \ll \e^2, \nonumber
\intertext{
and that each such natural number is contained in at most two intervals of the form $[n+1, n+H_n]$ in $\mathcal{I}$. We conclude that, by Chebyshev's inequality, the logarithmic density of $S'$ is at most $\e$. Therefore, the logarithmic density of $S$ is at most $\e$. This precisely means
}
\logd \{ n \in \N & \ \colon \sup_{L \in [H, H']} | \E_{h \leq L} a(n+h) \theta(h) | \geq \e \} \leq \e. \nonumber
\intertext{
The condition $\sup_{L \in [H, H']} | \E_{h \leq L} a(n+h) \theta(h) | \geq \e$ depends measurably on $(a(n+1), \ldots, a(n+H'))$ so by definition of a dynamical model for $a$,
}
\nu \{ (x,y) & \ \colon \sup_{L \in [H, H']} | \E_{h \leq L} f(T^h x) \theta(h) | \geq \e \} \leq \e. \nonumber
\intertext{
Since this is true for all $H'$, we get that
}
\nu \{ (x,y) & \ \colon \sup_{L \geq H} | \E_{h \leq L} f(T^h x) \theta(h) | \geq \e \} \leq \e. \nonumber
\intertext{
Since this is true for all $\e > 0$, for all $(x, y)$ outside a set of measure $0$, we have 
}
\lim_{H \rightarrow \infty} & \   \E_{h \leq H} f(T^h x) \theta(h)  = 0. \nonumber
\intertext{
 For $(x,y) \in B_\theta$, we know that $f'(T^h y) = \theta(h)$ for $h$ sufficiently large so for $(x,y) \in B_\theta$ outside a set of measure $0$, we know $\E_{h \leq H} f(T^h x) f'(T^h y) \rightarrow 0$. By the dominated convergence theorem, we have
		}
		0 =& \lim_{H \rightarrow \infty} \int_{B_\theta} \E_{h \leq H} f(T^h x) f'(T^h y) \nu(dx dy) \nonumber
		\end{align}
		as desired.
	\end{proof}
	We will also need the following result later. It states that $f$ does not correlate locally with periodic functions.
	\begin{cor}\label{MRTerg}
		Let $\mu = \int_\Omega \mu_\omega d\omega$ be an ergodic decomposition of $\mu$.
		For almost every $\omega$, for all $1$-bounded function $\phi \colon X \rightarrow \C$ such that, for $\mu_\omega$ almost every $x$, $\phi(T^hx)$ is periodic in $h$ we have
		\[
		\int_X f(x) \phi(x) \mu_\omega(dx) = 0.
		\]
	\end{cor}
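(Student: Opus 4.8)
The plan is to bootstrap from a special case already contained in the proof of Corollary~\ref{periodicpoints} --- that $f$ does not locally correlate with a \emph{fixed} periodic sequence --- to the case of an arbitrary $T$-periodic $\phi$, by expanding $h\mapsto\phi(T^hx)$ into cyclic-group characters orbit by orbit. The structural subtlety, and the reason the statement is phrased for almost every ergodic component $\omega$ rather than for a single function, is a question of the order of quantifiers: the conclusion must hold for \emph{all} admissible $\phi$ simultaneously, so one first extracts a single full-measure set of points that is independent of $\phi$, and only afterwards resolves a given $\phi$ into characters.

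\emph{Step 1.} The proof of Corollary~\ref{periodicpoints} shows, for each fixed bounded periodic sequence $\theta$, that $\E_{h\le H}f(T^hx)\theta(h)\to 0$ as $H\to\infty$ for $\mu$-a.e.\ $x$ --- this is exactly the assertion there that $\lim_{H}\E_{h\le H}f(T^hx)\theta(h)=0$ off a null set, which depends on $x$ alone and hence is a statement about the first marginal $\mu$. Applying this to $\theta(h)=e(jh/q)$ for each of the countably many rationals $j/q$ produces a single set $X_0\subseteq X$ with $\mu(X_0)=1$ on which
\[
\lim_{H\to\infty}\E_{h\le H}f(T^hx)\,\beta^h=0 \qquad\text{for every root of unity }\beta.
\]

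\emph{Step 2.} Disintegrating $\mu=\int_\Omega\mu_\omega\,d\omega$, for a.e.\ $\omega$ the set $X_0$ has full $\mu_\omega$-measure, and $\mu_\omega$ is $T$-invariant and ergodic; fix such an $\omega$ and a $1$-bounded $\phi$ with $\phi(T^hx)$ periodic in $h$ for $\mu_\omega$-a.e.\ $x$. For such $x$ let $q(x)$ be a period of $h\mapsto\phi(T^hx)$; this is a measurable function of $x$, and one has the finite Fourier expansion
\[
\phi(T^hx)=\sum_{j\bmod q(x)}\widehat{\phi_x}(j)\,e\!\left(jh/q(x)\right),\qquad \sum_{j}\bigl|\widehat{\phi_x}(j)\bigr|\le q(x).
\]
Multiplying by $f(T^hx)$, averaging over $h\le H$, and using $x\in X_0$ to send each of the $q(x)$ terms to $0$, we get $\E_{h\le H}f(T^hx)\phi(T^hx)\to 0$ for $\mu_\omega$-a.e.\ $x$. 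Since $|f|\le 1$ (as $a$ takes values in $S^1$) and $|\phi|\le 1$, and since $T$-invariance of $\mu_\omega$ makes $\int_X f(T^hx)\phi(T^hx)\,\mu_\omega(dx)$ independent of $h$, dominated convergence yields
\[
\int_X f(x)\phi(x)\,\mu_\omega(dx)=\lim_{H\to\infty}\int_X\E_{h\le H}f(T^hx)\phi(T^hx)\,\mu_\omega(dx)=0.
\]

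The only point needing care is the quantifier arrangement flagged above: because the conclusion is universal in $\phi$, one cannot simply apply the argument of Corollary~\ref{periodicpoints} to the sequence $h\mapsto\phi(T^hx)$ directly, but must pass through the $\phi$-independent set $X_0$ built from countably many characters. Everything else --- the measurability of $q(x)$, Fourier inversion on $\Z/q(x)\Z$, and the dominated convergence step --- is routine, and the harder inputs (Theorem~\ref{MRT}, the covering lemma, and the dynamical-model inequality) have already been deployed in the proof of Corollary~\ref{periodicpoints}.
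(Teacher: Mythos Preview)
Your proof is correct and rests on the same idea as the paper's: reduce to the countable family of characters $h\mapsto e(jh/q)$, use the pointwise statement established inside the proof of Corollary~\ref{periodicpoints}, pass to ergodic components, and conclude by dominated convergence. The organization differs slightly. The paper keeps the supremum $\sup_{\theta\in S_d}$ over all $1$-bounded $d!$-periodic sequences inside the limit, transfers that uniform bound to a full-measure set $\Omega_d\subset\Omega$, and then finishes by contradiction using the filtration $X_d=\{x:\text{period of }\phi(T^\cdot x)\le d\}$; you instead fix a single $\phi$-independent set $X_0$ on which all character averages vanish and Fourier-expand $\phi(T^hx)$ pointwise at the end. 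Your route is a bit more direct (no supremum, no contradiction step), but the two arguments are interchangeable and use the same inputs.
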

	\begin{proof}
		Let $d$ be a natural number. Then we claim that, 
		\[
		\lim_{H \rightarrow \infty} \lim_{N \rightarrow \infty} \E^{\log}_{n \leq N} \sup_{\theta \in S_d } | \E_{h \leq H} a(n+h) \theta(h) | = 0,
		\]
		where $S_d$ is the set of $d!$ periodic, 1-bounded functions. Since the supremum is over a finite set, this directly follows from Theorem \ref{MRT}. Let $\e> 0$. For $H$ sufficiently large,
		\[
		\limsup_{N \rightarrow \infty} \E^{\log}_{n \leq N} \sup_{\theta \in S_d } | \E_{h \leq H} a(n+h) \theta(h) | \leq \e^3.
		\]
		Therefore, as in the proof of Proposition \ref{periodicpoints}
		\[
		\nu \{ (x, y) \ \colon \limsup_{H \rightarrow \infty}  \sup_{\theta \in S_d} | \E_{h \leq H} f(T^h x) \theta(h) | > \e \} \leq \e.
		\]
		Since this is true for all $\e$, we get that
		\[
		\int_X \limsup_{H \rightarrow \infty} \sup_{\theta \in S_d} | \E_{h \leq H} f(T^h x) \theta(h) | \mu(dx) = 0.
		\]
		Therefore, there exists $\Omega_d \subset \Omega$ of full measure such that for $\omega$ in $\Omega_d$, we have
		\[
		\int_X \limsup_{H \rightarrow \infty} \sup_{\theta \in S_d} | \E_{h \leq H} f(T^h x) \theta(h) | \mu_\omega (dx) = 0.
		\]
		Now let $\omega$ be an element of $\Omega_d$ for all $d$ and let $\phi$ be a 1-bounded function such that $\phi(T^h x)$ is periodic in $h$ for $\mu_\omega$-almost every $x$. Suppose that there exists $\e > 0$ such that
		\[
		\Big| \int_X f(x) \phi(x) \mu_\omega(dx) \Big| > \e.
		\]
		Then by translation invariance, we know
		\[
		\Big| \limsup_{H \rightarrow \infty} \int_X \E_{h \leq H} f(T^h x) \phi(T^h x) \mu_\omega(dx) \Big| \geq \e.
		\]
		Let $X_d$ be the set of all points $x$ such that $\phi(T^h x)$ is periodic with period at most $d$. Note by assumption that $\mu_\omega(\cup X_d) = 1$. Then by dominated convergence, there exists $d$ such that
		\[
		\int_{X_d}  \limsup_{H \rightarrow \infty} | \E_{h \leq H} f(T^h x) \phi(T^h x) | \mu_\omega (dx) > .5 \e.
		\]
		Since $\phi(T^h x)$ is $d!$ periodic for every $x$ in $X_d$, this integral is bounded by 
		\[
		\int_X \limsup_{H \rightarrow \infty} \sup_{\theta \in S_d} | \E_{h \leq H} f(T^h x) \theta(h) | \mu_\omega (dx),
		\]
		which gives a contradiction.
	\end{proof}
For the proof of Theorem \ref{condmainthm}, we also need an upgraded version of Corollary \ref{MRTerg} under the assumption that the $\kappa-1$-Fourier uniformity conjecture holds. 
\begin{prop}\label{conderg}
Suppose that the $\kappa-1$-Fourier uniformity conjecture holds i.e., for every nilpotent Lie group $G$ of step $< \kappa$, every cocompact lattice $\Gamma$ and every continuous function $F \colon G / \Gamma \rightarrow \C$
\[
\lim_{H \rightarrow \infty} \limsup_{N \rightarrow \infty} \E^{\log}_{n \leq N} \sup_{g \in G} | \E_{h \leq H} a(n+h) F(g^h \Gamma) | = 0.
\]
Then for almost every $\omega$,  we have the following property: for every nilpotent Lie group $G$ of step $< \kappa$, every cocompact lattice $\Gamma$, every continuous function $F \colon G / \Gamma \rightarrow \C$ and every function $\phi$ on $X$ such that for $\mu_\omega$ almost every $x$ there exists $x'$ in $G / \Gamma$ and $g$ in $G$ we have $\phi(T^h x) = F(g^h x')$ for all $h$ in $\N$ we have that
\[
\int_X f(x) \phi(x) \mu_\omega (dx) = 0.
\]  
\end{prop}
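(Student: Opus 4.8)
The plan is to run the proof of Corollary \ref{MRTerg} almost verbatim, making two substitutions: the finite family $S_d$ of $d!$-periodic functions is replaced by a finite family of bounded-complexity nilsystems, and Theorem \ref{MRT} is replaced by the $\kappa-1$-Fourier uniformity hypothesis. The one genuinely new ingredient is a ``universality'' step reducing the uncountable collection of all step-$<\kappa$ triples $(G,\Gamma,F)$ to countably many nilsystems; this reduction is forced because the quantifier ``for almost every $\omega$'' lies outside ``for every $(G,\Gamma,F)$'', so an uncountable intersection of full-measure sets cannot be used directly.

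First I would set up the universality step. The base point $x'$ is harmless: writing $x'=\gamma_0\Gamma$ and using $g^h\gamma_0\Gamma=\gamma_0(\gamma_0^{-1}g\gamma_0)^h\Gamma$ gives $F(g^hx')=(F\circ L_{\gamma_0})\big((\gamma_0^{-1}g\gamma_0)^h\Gamma\big)$, a nilsequence on the same nilmanifold based at $\Gamma$; choosing $\gamma_0$ in a fixed compact fundamental domain keeps the complexity of $F\circ L_{\gamma_0}$ controlled by that of $(G,\Gamma,F)$. Then, by the standard quantitative structure theory of nilmanifolds (see e.g.\ \cite{GT3}, \cite{TaoEquiv}), for each $d$ there is a single step-$<\kappa$ nilmanifold $G_d/\Gamma_d$ together with a finite set $\mathcal F_d$ of continuous functions on it such that every step-$<\kappa$ nilsequence based at $\Gamma$ of complexity at most $d$ (dimension, smoothness of the function, and rationality of the nilmanifold all bounded by $d$) lies within $1/d$ in the sup norm of some $h\mapsto F'((g')^h\Gamma_d)$ with $F'\in\mathcal F_d$, $g'\in G_d$; and every step-$<\kappa$ triple $(G,\Gamma,F)$ is, after normalization, approximable to within $1/d$ by one of complexity $\le d$ for some $d$. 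Applying the hypothesis to the finitely many triples $(G_d,\Gamma_d,F')$ yields, for each $d$,
\[
\lim_{H\to\infty}\ \limsup_{N\to\infty}\ \E^{\log}_{n\le N}\ \max_{F'\in\mathcal F_d}\ \sup_{g'\in G_d}\Big|\E_{h\le H}a(n+h)F'((g')^h\Gamma_d)\Big|=0 .
\]

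Next I would transfer this to $(X,\mu,T)$, reusing the covering argument from the proof of Corollary \ref{periodicpoints}. Fix $d$ and $\e>0$; for $H$ large the inner $\limsup_N$ above is $\ll\e^3$, and since $\sup_{L\in[H,H']}\max_{F'\in\mathcal F_d}\sup_{g'\in G_d}|\E_{h\le L}a(n+h)F'((g')^h\Gamma_d)|$ depends measurably on $a(n+1),\dots,a(n+H')$, the covering lemma together with Chebyshev's inequality show that this quantity exceeds $\e$ only on a set of $n$ of upper logarithmic density $\le\e$; the defining inequality of a dynamical model for $a$ (applied to the corresponding Borel subset of $\C^{H'}$) then gives the same bound for the $\mu$-measure of the associated subset of $X$. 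Letting $\e\to0$ and $H\to\infty$ produces $\int_X\limsup_{H}\max_{F'\in\mathcal F_d}\sup_{g'\in G_d}|\E_{h\le H}f(T^hx)F'((g')^h\Gamma_d)|\,d\mu=0$, hence a full-measure set $\Omega_d\subseteq\Omega$ on which the same integral against $\mu_\omega$ vanishes; set $\Omega'=\bigcap_d\Omega_d$.

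Finally, for $\omega\in\Omega'$, given $(G,\Gamma,F)$ and a compatible $\phi$, suppose $\big|\int_X f\phi\,d\mu_\omega\big|>\e$ and pick $d$ with $(G,\Gamma,F)$ of complexity $\le d$ and $1/d<\e/2$. By $T$-invariance of $\mu_\omega$ we get $\int_X|\E_{h\le H}f(T^hx)\phi(T^hx)|\,d\mu_\omega\ge\e$ for every $H$, while for $\mu_\omega$-a.e.\ $x$ the integrand is, by the universality step, at most $\max_{F'\in\mathcal F_d}\sup_{g'\in G_d}|\E_{h\le H}f(T^hx)F'((g')^h\Gamma_d)|+1/d$; taking $\limsup_H$ and integrating yields $\e\le 0+1/d<\e/2$, a contradiction, so $\int_X f\phi\,d\mu_\omega=0$. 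I expect the main obstacle to be the universality step --- absorbing the base point into the group and invoking the correct finiteness-up-to-rationality statement for bounded-complexity nilsequences; with that in hand the transfer and conclusion are near-verbatim adaptations of the proofs of Corollaries \ref{periodicpoints} and \ref{MRTerg}.
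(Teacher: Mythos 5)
Your argument is correct and reaches the conclusion, but it differs from the paper's at two technical points. First, to collapse the uncountable family of triples $(G,\Gamma,F)$ to a countable one, the paper cites \cite[Chapter 10, Theorem 28]{HostKra2} (there are only countably many pairs $(G,\Gamma)$ up to isomorphism of $G/\Gamma$), then a Stone--Weierstrass countable dense family of functions on each representative, and handles the base point by invoking \cite[Section 4.5, Step 4]{Frantzikinakis} to include $\sup_{x\in G/\Gamma}$ inside the uniformity statement; you instead conjugate the base point away ($F(g^h\gamma_0\Gamma)=(F\circ L_{\gamma_0})((\gamma_0^{-1}g\gamma_0)^h\Gamma)$) and then invoke a quantitative ``universality'' claim, that all step-$<\kappa$ nilsequences of complexity $\leq d$ are $1/d$-approximable by a finite family on a single nilmanifold $G_d/\Gamma_d$. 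That claim is morally true (finitely many rational nilmanifolds of bounded dimension and height, plus $\e$-nets of Lipschitz functions, plus compactness of the fundamental domain for the $\gamma_0$-dependence), but it is not an off-the-shelf citation, and it is the one place your writeup would need a real lemma rather than a gesture at \cite{GT3} and \cite{TaoEquiv}. Second, the paper's actual proof of Proposition \ref{conderg} does \emph{not} pass through $\limsup_H$ and therefore does not need the covering argument from the proof of Corollary \ref{periodicpoints}: for each fixed pair $(i,F)$ in the countable list and each $\e>0$ it produces a set $K_\e$ of $\omega$'s on which $\mu_\omega$ of the bad set at a single scale $H_\e$ is $\leq\e$, then takes $\Omega_{i,F}=\bigcap_m\bigcup_{r\ge m}K_{1/r}$ and $\Omega'=\bigcap_{i,F}\Omega_{i,F}$. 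You instead model the transfer on Corollary \ref{MRTerg}: covering lemma to upgrade fixed-$H$ density bounds to $\sup_{L\ge H}$ bounds, then a.e.\ vanishing of $\limsup_H$, then integrate against $\mu_\omega$. Both routes are valid and buy roughly the same thing; yours is a more uniform template that reuses the machinery of Corollaries \ref{periodicpoints} and \ref{MRTerg}, while the paper's $K_\e$ bookkeeping sidesteps the covering lemma entirely. Neither discrepancy is a gap, but I would want the universality step stated and proved precisely before considering the argument complete.
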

\begin{proof}
In the proof of this proposition, we will introduce some notation which will not be used in the rest of the paper.
By, for instance, \cite[Chapter 10, Theorem 28]{HostKra2} there are only countably many pairs $(G, \Gamma)$ up to isomorphism of $G / \Gamma$. Thus, we can fix a sequence $(G_i, \Gamma_i)$ of nilpotent Lie groups of step $< \kappa$ and cocompact lattices such that, for any nilpotent Lie group $G$ of step $< \kappa$ and for any cocompact lattice $\Gamma$ there exists a natural number $i$ and a Lie group isomorphism $\psi \colon G_i \rightarrow G$ such that $\psi(\Gamma_i) = \Gamma$. By Stone-Weierstass, there exists a countable, uniformly dense subset of the continuous functions on $G_i / \Gamma_i$. Fix such a subset and call it $\funs_i$. 
We are assuming the $\kappa-1$-Fourier uniformity conjecture:
\begin{align*}
\lim_{H \rightarrow \infty} \limsup_{N \rightarrow \infty} \E^{\log}_{n \leq N} &\sup_{g \in G_i} | \E_{h \leq H} a(n+h) F(g^h \Gamma_i) | = 0,
\intertext{for all $i$ and all $F$ a continuous function on $G / \Gamma$.
By \cite[Section 4.5, Step 4]{Frantzikinakis} we also get that, for all $i$ and $F$ as before,}
\lim_{H \rightarrow \infty} \limsup_{N \rightarrow \infty} \E^{\log}_{n \leq N} &\sup_{\substack{ g \in G_i \\ x \in G_i / \Gamma_i }} | \E_{h \leq H} a(n+h) F(g^h x) | = 0.
\intertext{Fix a natural number $i$ for the moment and a function $F$ in $\funs_i$.
For each $\e > 0$ there exists $H_\e$ such that }
\limsup_{N \rightarrow \infty} \E^{\log}_{n \leq N} &\sup_{\substack{ g \in G_i \\ x \in G_i / \Gamma_i }} | \E_{h \leq H_\e} a(n+h) F(g^h x) | \ll \e^3.
\intertext{Therefore, by Chebyshev's inequality,}
\logd \{ n \in \N \colon  &\sup_{\substack{ g \in G_i \\ x \in G_i / \Gamma_i }} | \E_{h \leq H_\e} a(n+h) F(g^h x) | \geq \e \} \ll \e^2,
\intertext{where $\logd$ denotes the upper logarithmic density. Note that }
&\sup_{\substack{ g \in G_i \\ x \in G_i / \Gamma_i }}  | \E_{h \leq H_\e} a(n+h) F(g^h x) |
\end{align*}
depends measurably on $(a(n+1), \ldots, a(n+H_\e))$. Thus, there exists some set $A$ in $\C^{H_e}$ such that 
\begin{align*}
 \sup_{\substack{ g \in G_i \\ x \in G_i / \Gamma_i } }| \E_{h \leq H_\e} a(n+h) F(g^h x) | &\geq \e 
\intertext{if and only if $(a(n+1), \ldots, a(n+H_\e))$ are in $A$. Therefore, we know that }
\logd \{ n \in \N \colon  (a(n+1), \ldots, a(n+H_\e)) \in A \} &\ll \e^2.
\intertext{By the definition of a dynamical model for $a$,}
\mu \{ x' \in X \colon  (f(T^{1}x'), \ldots, f(T^{H_\e}x')) \in A \} &\ll \e^2.
\intertext{Unpacking definitions, we get}
\mu \{ x' \in X \colon  \sup_{\substack{ g \in G_i \\ x \in G_i / \Gamma_i }} | \E_{h \leq H_\e} f(T^{h}x') F(g^h x) | \geq \e \} &\ll \e^2.
\intertext{We call this set }
 \{ x' \in X \colon  \sup_{\substack{ g \in G_i \\ x \in G_i / \Gamma_i }} | \E_{h \leq H_\e} f(T^{h}x') F(g^h x) | \geq \e \} &= S_\e.
\end{align*}
Remember that $S_\e$ implicitly depends on $i$ and $F$. By the definition of the ergodic decomposition, we have that
\[
\mu(S_\e) = \int_\Omega \mu_\omega(S_\e) d\omega.
\]
Therefore, by another application of Chebyshev's inequality, we find that 
\[ | \{ \omega \in \Omega \colon \mu_\omega(S_\e) \leq \e  \} | \geq 1 - \e. \]
We call this set $K_{\e} = \{ \omega \in \Omega \colon \mu_\omega(S_\e) \leq \e  \}$. Of course $K_\e$ depends on $i$ and $F$. Define
\begin{align*}
\Omega_{i, F} &= \bigcap_{m \in \N}  \bigcup_{r \geq m} K_{\frac{1}{r}},
\intertext{
and define,
}
\Omega' &= \bigcap_{i \in \N} \bigcap_{F \in \funs} \Omega_{i, F}.
\end{align*}
Since $| K_\e | \geq 1- \e$, we know that for any $m$, we have $\left| \bigcup_{r \geq m} K_{\frac{1}{r}} \right| = 1$ and therefore $| \Omega' | =1$.

Now we check that $\Omega'$ has the desired properties. 
Thus, fix $\omega$ in $\Omega'$, $\phi$ a measurable function on $X$, $G$ a nilpotent Lie group of step $< \kappa$, $\Gamma$ a cocompact lattice and $F'$ a function on $G / \Gamma$. 
Suppose that for $\mu_\omega$ almost every $x$ in $X$, there exists $x'$ in $G / \Gamma$ such that $\phi(T^h x) = F'(g^h x')$ for some $g$ in $G$. Fix $\e > 0$. We aim to show
\[
\left| \int_X f(x) \phi(x) \mu_\omega(dx) \right| \lesssim \e  \cdot \left( || F' ||_{L^\infty} + 1 \right).
\]
Fix $i$ in the natural numbers such that $(G, \Gamma)$ is isomorphic to $(G_i, \Gamma_i)$. Fix $\psi \colon G_i \rightarrow G$ an isomorphism such that $\psi(\Gamma_i) = \Gamma$. Fix $F$ in $\funs_i$ such that $|| F \circ \psi - F' ||_{L^\infty} \leq \e$. Then $\omega$ is in $\Omega'$ so $\omega$ is in $\Omega_{i, F}$ and therefore there exists $r > \frac{1}{\e}$ such that $\omega$ is in $K_{\frac{1}{r}}$. Therefore, for some $H = H_{\frac{1}{r}}$,
\begin{align*}
\mu_\omega & \{ x' \in X \colon  \sup_{\substack{ g \in G_i \\ x \in G_i / \Gamma_i }} | \E_{h \leq H} f(T^{h}x') F(g^h x) | \geq \e \} \leq \e.
\intertext{
By the triangle inequality,
}
\mu_\omega & \{ x' \in X \colon  \sup_{\substack{ g \in G \\ x \in G / \Gamma }} | \E_{h \leq H} f(T^{h}x') F' (g^h x) | \geq 2 \e \} \leq \e.
\intertext{
Next, we use that $\phi$ locally looks like $F'$:
}
\mu_\omega & \{ x' \in X \colon  | \E_{h \leq H} f(T^{h}x') \phi(T^h x') | \geq 2 \e \} \leq \e.
\end{align*}
Bounding the exceptional points by the $L^\infty$ norm, we get that:
\begin{align*}
\int_X \left| \E_{h \leq H} f(T^h x) \phi(T^h x)  \right| \mu_\omega(dx) &\lesssim \e \cdot  \left( || F' ||_{L^\infty} + 1 \right).
\intertext{
By the triangle inequality,
}
\left| \int_X \E_{h \leq H} f(T^h x) \phi(T^h x)  \mu_\omega(dx) \right| &\lesssim \e \cdot \left( || F' ||_{L^\infty} + 1 \right).
\intertext{
By translation invariance,
}
\left| \int_X f(x) \phi(x) \mu_\omega(dx) \right| &\lesssim \e  \cdot   \left( || F' ||_{L^\infty} + 1 \right) .
\end{align*}
This completes the proof.
\end{proof}

\begin{prop}\label{HKexists}
Let $(X, \mu, T)$ be a (topologically) compact, invertible, not necessarily ergodic dynamical system. Let $\mu = \int_\Omega \mu_\omega d\omega$ be an ergodic decomposition. Recall that, for each $\omega$, the Host Kra factor $\z_\omega$ is defined up to sets of $\mu_\omega$-measure $0$. For each $\omega$, fix such a Host Kra factor. For instance, one could use any definition of the Host Kra factor and then add all sets of $\mu_\omega$-measure $0$ to obtain the complete Host Kra factor. Then there exists a sigma algebra $\z$ on $X$ such that, for any measurable set $A$, $A$ is $\z$ measurable if and only if there exists a full measure subset $\Omega' \subset \Omega$ such that for all $\omega$ in $\Omega'$, $A$ is $\z_\omega$ measurable. This implies that a function $f$ in $L^\infty(\mu)$ is $\z$ measurable if and only if there exists a full measure subset $\Omega' \subset \Omega$ such that $f$ is $\z_\omega$ measurable for every $\omega$ in $\Omega'$.
\end{prop}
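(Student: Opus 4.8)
The plan is to take $\z$ to be, by fiat, the collection of all $\mu$-measurable sets $A\subseteq X$ for which a full-measure $\Omega'\subseteq\Omega$ with $A\in\z_\omega$ for all $\omega\in\Omega'$ exists, and then merely to check that this collection is a sigma algebra; the claimed equivalence is then true by construction. The one thing to be careful about is that one is \emph{not} allowed to define $\z$ by asking that $\{\omega:A\in\z_\omega\}$ be of full measure, because the representatives $\z_\omega$ were chosen pointwise in $\omega$ with no regard to measurability in $\omega$, so this set of $\omega$ need not even be $\mu$-measurable. Phrasing everything in terms of the \emph{existence} of a conull $\Omega'$ avoids this, at the cost of having to re-verify closure under countable operations by hand.

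So set $\z=\{A\subseteq X \text{ measurable} : \text{there is a full-measure } \Omega'\subseteq\Omega \text{ with } A\in\z_\omega \text{ for every } \omega\in\Omega'\}$. Clearly $\emptyset,X\in\z$, with witness $\Omega'=\Omega$. If $A\in\z$ with witness $\Omega'$, then $X\setminus A\in\z_\omega$ for every $\omega\in\Omega'$ since each $\z_\omega$ is a sigma algebra, so $X\setminus A\in\z$ with the same witness. If $A_1,A_2,\dots\in\z$ with witnesses $\Omega_1',\Omega_2',\dots$, put $\Omega'=\bigcap_n\Omega_n'$; a countable intersection of full-measure sets has full measure, and for each $\omega\in\Omega'$ every $A_n$ lies in $\z_\omega$, hence so does $\bigcup_n A_n$. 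Thus $\bigcup_n A_n\in\z$, and $\z$ is a sigma algebra. The stated characterization of its members is then a tautology: $A\in\z$ if and only if the required $\Omega'$ exists.

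For the statement about $L^\infty$ functions I would reduce to the case of sets. Fix a countable family $\{U_k\}$ of open subsets of $\C$ generating the Borel sigma algebra. If $f\in L^\infty(\mu)$ is $\z$-measurable then each $f^{-1}(U_k)\in\z$; choosing a witness $\Omega_k'$ for each and setting $\Omega'=\bigcap_k\Omega_k'$ (again of full measure), for every $\omega\in\Omega'$ all the sets $f^{-1}(U_k)$ lie in $\z_\omega$, so $f$ is $\z_\omega$-measurable. Conversely, if $f$ is $\z_\omega$-measurable for every $\omega$ in some full-measure $\Omega'$, then $f^{-1}(B)\in\z_\omega$ for every Borel $B$ and every such $\omega$, so $f^{-1}(B)\in\z$, i.e.\ $f$ is $\z$-measurable. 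Taking each $\z_\omega$ to be the \emph{complete} Host--Kra factor, as the statement suggests, makes $\z$ contain every $\mu$-null set (since $\mu=\int_\Omega\mu_\omega\,d\omega$), so there is no gap between being $\z$-measurable in the strict sense and modulo $\mu$-null sets. There is no genuine obstacle here; the only subtlety is the order of quantifiers, and the one nontrivial input is the triviality that a countable intersection of conull sets is conull, which is exactly what powers closure under countable unions. One may also note in passing that the same reasoning shows $\z$ is $T$-invariant, using $T_*\mu_\omega=\mu_\omega$ for a.e.\ $\omega$ together with the $T$-invariance of each $\z_\omega$.
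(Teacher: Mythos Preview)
Your proof is correct and follows essentially the same route as the paper: define $\z$ as the collection of measurable sets witnessed by a conull $\Omega'$, verify the sigma algebra axioms via countable intersections of conull sets, and then pass to functions. The only difference is in the forward direction of the function statement: the paper approximates $f$ by simple functions with finitely many level sets and uses $L^1(\mu_\omega)$ convergence, whereas you use a countable base for the Borel sets of $\C$ and pull back; both are standard and yours is arguably a touch cleaner.
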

\begin{proof}
Let $\z$ be the set of measurable subsets of $X$ such that there exists a full measure set $\Omega_A \subset \Omega$ such that for all $\omega$ in $\Omega_A$, $A$ is $\z_\omega$ measurable. For each such set, fix such an $\Omega_A$. Let $A_1, A_2, A_3, \ldots$ be a countable list of sets in $\z$. Consider 
\[
\Omega' = \bigcap_{i \in \N} \Omega_{A_i}.
\]
Because $\Omega'$ is the intersection of countably many full measure sets, it has full measure. Let $\omega$ be an element of $\Omega'$. Then for every natural number $i$, $A_i$ is $\z_\omega$ measurable. Because $\z_\omega$ is a sigma algebra, that implies the countable intersection and countable union of the sets $A_i$ are also $\z_\omega$ measurable. Thus the intersection $\cap A_i$ and union $\cup A_i$ are both $\z_\omega$ measurable for a full measure subset $\Omega' \subset \Omega$ and thus, by definition of $\z$, $\z$ is closed under countable unions and intersections. If $A$ is in $\z$, then $A$ is in $\z_\omega$ for every $\omega$ in $\Omega_A$. Since $\z_\omega$ is a sigma algebra, the complement $A^c$ is also in $\z_\omega$ for every $\omega$ in $\Omega_A$. By definition of $\z$, we conclude that $\z$ is closed under complements. Obviously $X$ and $\emptyset$ are in $\z$ so $\z$ is a sigma algebra.

Lastly, we check that a function $f$ in $L^\infty(\mu)$ is $\z$ measurable if and only if it is $\z_\omega$ measurable for a full measure set of $\omega$. First, suppose there exists a full measure subset $\Omega' \subset \Omega$ such that, for $\omega$ in $\Omega'$, $f$ is $\z_\omega$ measurable. Let $A$ be a measurable subset of $\C$. Then since $f$ is $\z_\omega$ measurable for any $\omega$ in $\Omega'$, $f^{-1}(A)$ is in $\z_\omega$ for any $\omega$ in $\Omega'$. Therefore, by definition of $\z$, $f^{-1}(A)$ is in $\z$ so $f$ is $\z$ measurable.

Now suppose $f$ is $\z$ measurable. We approximate $f$ by simple functions $f_i$. For instance, we can take $f_i(x) = k \cdot 2^{-i}$ if $f(x)$ is between $k \cdot 2^{-i}$ and $(k+1) \cdot 2^{-i}$ for any natural number $k$. Then $f_i \rightarrow f$ in $L^1(\mu)$ and also in $L^1(\mu_\omega)$ for any $\omega$ by the dominated convergence theorem. For each $i$, the function $f_i$ has only finitely many distinct level sets. Because $f$ is $\z$ measurable, the level sets of $f_i$ are $\z$ measurable. Therefore, there exists a full measure subset $\Omega_i \subset \Omega$ such that $f_i$ is $\z_\omega$ measurable for all $\omega$ in $\Omega_i$. Let
\[
\Omega' = \bigcap_{i \in \N} \Omega_i.
\]
Then since $\Omega'$ is the intersection of sets of full measure, $\Omega'$ has full measure. For each $\omega$ in $\Omega'$, $f_i$ is $\z_\omega$ measurable for all natural numbers $i$. But $f_i \rightarrow f$ in $L^1(\mu_\omega)$ so the limit $f$ is also $\z_\omega$ measurable for all $\omega$ in $\Omega'$. 
\end{proof}

\begin{defn}\label{HKdef}
By Proposition \ref{HKexists}, there exists a sigma algebra $\z$ such that a $L^\infty(\mu)$ function $f$ is $\z$ measurable if and only if it is $\z_\omega$ measurable for almost every $\omega$ in $\Omega$. We fix such a sigma algebra and call it the Host Kra sigma algebra for $(X, \mu, T)$.
\end{defn}


\begin{prop}\label{conditionwell}
Let $f$ be a function in $L^\infty(\mu)$. Then there exists a set $\Omega'$ of full measure in $\Omega$ such that for $\omega$ in $\Omega'$,
\[
\E^{\mu} [ f | \z] = \E^{\mu_\omega} [ f | \z_\omega],
\]
$\mu_\omega$ almost everywhere.
\end{prop}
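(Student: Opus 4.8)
The plan is to realize both conditional expectations as $L^2$-orthogonal projections and to compare them by a variational argument. First I would set $g = \E^{\mu}[f \mid \z]$ and define the ``fibrewise'' function $\tilde g$ on $X$ by $\tilde g(x) = \E^{\mu_\omega}[f \mid \z_\omega](x)$, where $\omega = \omega(x)$ is the ($\mu$-a.e.\ defined) ergodic component of $x$; since $|| f ||_{L^\infty(\mu_\omega)} \leq || f ||_{L^\infty(\mu)}$ for a.e.\ $\omega$, the function $\tilde g$ lies in $L^\infty(\mu)$. The whole statement then reduces to showing that $g = \tilde g$ in $L^\infty(\mu)$: granting this, for a.e.\ $\omega$ one has $g = \tilde g = \E^{\mu_\omega}[f \mid \z_\omega]$ in $L^\infty(\mu_\omega)$, which is exactly the claim, with $\Omega'$ the corresponding full-measure set.

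Next I would check that $\tilde g$ is well defined and $\z$-measurable. The conditional expectation $\E^{\mu_\omega}[f \mid \z_\omega]$ is only defined up to $\mu_\omega$-null sets, but it does not depend on the chosen version of the ($\mu_\omega$-a.e.\ defined) factor $\z_\omega$, so $\tilde g$ is unambiguously defined $\mu$-a.e. For its $\mu$-measurability I would use that, by the general theory of Host Kra factors for non-ergodic systems, one may choose versions of $\z_\omega$ --- and correspondingly of the map $\omega \mapsto \E^{\mu_\omega}[f \mid \z_\omega]$ --- depending measurably on the ergodic component (for instance via the measurable dependence on $\omega$ of the Host Kra cube measures and of the anti-uniform functions whose closed span in $L^2(\mu_\omega)$ is $L^2(\z_\omega,\mu_\omega)$); any such measurable assembly agrees $\mu$-a.e.\ with $\tilde g$, so $\tilde g \in L^\infty(\mu)$. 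Since the restriction of $\tilde g$ to the component $\omega$ is $\E^{\mu_\omega}[f \mid \z_\omega]$, which is $\z_\omega$-measurable, for a.e.\ $\omega$, Definition \ref{HKdef} (that is, Proposition \ref{HKexists}) gives that $\tilde g$ is $\z$-measurable. I expect this measurability point to be the main obstacle; the remaining steps are Hilbert space formalism together with Proposition \ref{HKexists}.

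Finally I would compare $g$ and $\tilde g$. Recall that for a probability measure $\rho$ and a sub-$\sigma$-algebra $\mathcal C$ the conditional expectation $\E^{\rho}[\,\cdot \mid \mathcal C]$ is the orthogonal projection onto $L^2(\mathcal C,\rho)$, i.e.\ the unique $\mathcal C$-measurable minimizer of $|| f - \cdot ||_{L^2(\rho)}$. Since $g$ is $\z$-measurable, Proposition \ref{HKexists} shows that $g$ is $\z_\omega$-measurable for a.e.\ $\omega$, whence $|| f - \tilde g ||_{L^2(\mu_\omega)} = || f - \E^{\mu_\omega}[f \mid \z_\omega] ||_{L^2(\mu_\omega)} \leq || f - g ||_{L^2(\mu_\omega)}$ for a.e.\ $\omega$; integrating over $\omega$ and using $|| \phi ||_{L^2(\mu)}^2 = \int_\Omega || \phi ||_{L^2(\mu_\omega)}^2 \, d\omega$ yields $|| f - \tilde g ||_{L^2(\mu)} \leq || f - g ||_{L^2(\mu)}$. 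Conversely $\tilde g$ is $\z$-measurable and $g = \E^{\mu}[f \mid \z]$ minimizes $|| f - \cdot ||_{L^2(\mu)}$ over $\z$-measurable functions, so $|| f - g ||_{L^2(\mu)} \leq || f - \tilde g ||_{L^2(\mu)}$. Hence all these inequalities are equalities, and by uniqueness of the orthogonal projection onto the closed subspace $L^2(\z,\mu) \subset L^2(\mu)$ I would conclude $g = \tilde g$ $\mu$-a.e., which completes the proof. As a remark, one could instead try to prove directly that $\z_\omega$ agrees, modulo $\mu_\omega$-null sets, with the trace of $\z$ on the ergodic component for a.e.\ $\omega$, and then invoke the tower property for the inclusion of $\sigma$-algebras; but this appears to require essentially the same measurable-family input, so I would prefer the variational route above.
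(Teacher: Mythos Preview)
Your variational approach is correct and genuinely different from the paper's argument. You set $\tilde g(x) = \E^{\mu_{\omega(x)}}[f\mid\z_{\omega(x)}](x)$, argue that $\tilde g$ is $\z$-measurable, and then use the $L^2$-minimization characterization of conditional expectation twice (once fibrewise, once globally) to force $g = \tilde g$; this is clean and conceptually the right picture. The paper instead argues by contradiction: it never assembles $\tilde g$, but assumes $\E^{\mu}[f\mid\z] \neq \E^{\mu_\omega}[f\mid\z_\omega]$ on a set of $\omega$'s of positive measure, and from this manufactures a $\z$-measurable function $\psi$ that correlates with $f$ yet is $\mu$-orthogonal to $\E^{\mu}[f\mid\z]$, which is absurd. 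To build $\psi$ measurably across components, the paper approximates the fibrewise witness $\phi$ by elements $f_i$ of a fixed countable dense set of continuous functions, quantifies near-$\z_\omega$-measurability via the Host--Kra seminorms $\vertiii{\cdot}_{k,\omega}$ (which depend measurably on $\omega$), selects the first index $i(\omega,\e)$ meeting four explicit inequalities, and passes to a weak-$*$ limit as $\e\to 0$.

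The trade-off is exactly at the point you flagged: your argument is shorter and more transparent, but it outsources the measurability of $\tilde g$ to ``the general theory of Host--Kra factors for non-ergodic systems''; the paper's longer construction is in effect a hands-on proof of precisely that measurable-family input (via the measurable dependence of $\mu_\omega^{[k]}$ on $\omega$ and a measurable selection into a countable dense family). If you want your write-up to be self-contained at the level of the paper, you should either cite a precise reference for the measurable assembly of $\omega\mapsto \E^{\mu_\omega}[f\mid\z_\omega]$ (e.g.\ the non-ergodic Host--Kra theory in \cite{HostKra2}), or sketch the same countable-dense-set plus Host--Kra-norm mechanism the paper uses; once that is in place, your variational step is strictly simpler than the paper's contradiction argument.
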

\begin{proof}
First, we need the following quick ergodic theoretic fact.
The space $X$ can be essentially partitioned into pieces where each piece carries all the mass of an ergodic component. More precisely, there exists a map $\omega' \colon X \rightarrow \Omega$ such that 
\[
\int_\Omega \int_{X} \phi(x) \mu_\omega(dx) d\omega = \int_\Omega \int_{X} \phi(x) \ch_{\omega = \omega'(x)} \mu_\omega(dx) d\omega 
\]
for any integrable $\phi$. For instance, in the usual construction of an ergodic decomposition, one can take $\Omega$ to be the set of atoms of $X$ with respect to the invariant sigma algebra $\mathcal{I}$. Then let $\int \psi(x') \mu_{[x]}(dx') = \E [\psi | \mathcal{I} ](x)$ where $x$ is any point in the atom $[x]$. In this case the map $\omega'$ just sends $x$ to the atom containing $x$. 

By Proposition \ref{HKexists}, there is a set $\Omega_0$ of full measure such that 
$\E^{\mu}[ f | \z ]$ is $\z_\omega$ measurable for every $\omega$ in $\Omega_0$. We also ask that for $\omega$ in $\Omega_0$ that
\[
||f||_{L^\infty(\mu_\omega)} \leq || f ||_{L^\infty(\mu)}
\] 
which holds for a full measure set of $\omega$.
Fix such an $\Omega_0$. Since $X$ is compact, there exists a countable uniformly dense subset of the space of continuous functions. Fix such a subset and fix an order on that subset $f_1, f_2, f_3, f_4, \ldots$. Again by Proposition \ref{HKexists}, there exists a full measure subset $\Omega_i$ of $\Omega$ such that for $\omega$ in $\Omega_i$, the function $\E^{\mu}[ f_i | \z ]$ is $\z_\omega$ measurable. Let
\[
\Omega' = \bigcap_{i \geq 0} \Omega_i.
\]
Since each $\Omega_i$ has full measure and there are only countably many choices of $i$, we conclude that $\Omega'$ has full measure.
Now let $\omega$ be an element of $\Omega'$ and suppose for the sake of contradiction that
\[
\E^{\mu}[ f | \z ] \neq \E^{\mu_\omega}[ f | \z_\omega ],
\]
meaning equality does not hold up to sets of $\mu_\omega$ measure 0.
The conditional expectation is uniquely defined by two properties, namely that $\E^{\mu_\omega}[ f | \z_\omega ]$ is $\z_\omega$ measurable and that 
\[
\int_X \E^{\mu_\omega}[ f | \z_\omega ](x) \phi(x) \mu_\omega(dx) = \int_X f(x) \phi(x) \mu_\omega(dx),
\]
for any $\z_\omega$ measurable function $\phi$ in $L^\infty(\mu_\omega)$. If $\E^{\mu}[ f | \z ]$ satisfies the same properties then $\E^{\mu}[ f | \z ]$ equals $\E^{\mu_\omega}[ f | \z_\omega ]$ $\mu_\omega$-almost everywhere. We know since $\omega$ is in $\Omega'$ which is contained in $\Omega_0$ that $\E^{\mu}[ f | \z ]$ is $\z_\omega$ measurable. Therefore, there exists $\phi$ in $L^\infty(\mu_\omega)$ such that 
\[
\int_X \E^{\mu}[ f | \z ](x) \phi(x) \mu_\omega(dx) \neq \int_X f(x) \phi(x) \mu_\omega(dx).
\]
By subtracting off the appropriate multiple of $\E^{\mu}[ f | \z ]$, we may assume that $\phi$ is $\mu_\omega$-orthogonal to $\E^{\mu}[ f | \z ]$. Multiplying by a scalar we may assume that $\langle f, \phi \rangle_{L^2(\mu_\omega)}$ is a positive real number greater than 1.

For each $\omega$ in $\Omega'$ such that $\E^{\mu}[ f | \z ] \neq \E^{\mu_\omega}[ f | \z_\omega ]$, we showed there exists $\phi$ a $\z_\omega$ measurable function such that $\langle \E^{\mu}[ f | \z ], \phi \rangle_{L^2(\mu_\omega)} = 0$ and $\langle \phi, f \rangle_{L^2(\mu_\omega)} > 1$.
Let $\phi$ be such a function. Suppose for the moment that $|| \phi ||_{L^2(\mu_\omega)} < C$.
Since $f_1, f_2, f_3, \ldots$ are dense in $L^2(\mu_\omega)$, for any $\e$ and for any power $p \geq 2$ we can find an $i$ such that $|| \phi - f_i ||_{L^p(\mu_\omega)} \leq \e$. This implies, by Cauchy-Schwarz, that
\begin{equation}\label{e1}
\langle \E^{\mu}[ f | \z ], f_i \rangle_{L^2(\mu_\omega)} \leq \e ||f ||_{L^\infty(\mu)}
\end{equation}
and
\begin{equation}\label{e2}
\langle \phi, f_i \rangle_{L^2(\mu_\omega)} > 1 - \e.
\end{equation}
We also need a quantitative way of saying that $f_i$ is close to being $\z_\omega$ measurable. One option is to use the Host Kra norms
defined for an ergodic system in \cite{HostKra} section 3.5.
 Let
$\| \phi \|_{k, \omega}$ denote the $k^{th}$ Host Kra norm.
The key feature of the Host Kra norms is that a function $\phi$ is $\z_\omega$ measurable if and only if $\| \phi \|_{k, \omega} = 0$ for all $k$ (see \cite{HostKra} Lemma 4.3). We claim that $\| \phi \|_{k, \omega}$ is a measurable function of $\omega$. After all, by definition $ \| \phi \|_{k, \omega}^{2^k} $ is the integral of some fixed function on $X^{2^k}$, namely $(x_1, \ldots, x_{2^k}) \mapsto  \phi(x_1) \ldots \phi(x_{2^k})$ with respect to some measure (namely $\mu_{\omega}^{[k]}$ defined in \cite{HostKra} section 3.1) which depends measurably on $\omega$. Thus, we can find also $f_i$ with
\begin{equation}\label{e4}
\| f_i \|_{k, \omega} \leq 2 \e,
\end{equation}
for all $k \leq \frac{1}{\e}$.
If we also know that
\[
|| \phi ||_{L^p(\mu_\omega)} < C
\]
for some constant $C$ then also by the triangle inequality,
\begin{equation}\label{e3}
|| f_i ||_{L^p(\mu_\omega)} \leq C + \e.
\end{equation}

Fix a constant $C$.
Now we define a function $i \colon \Omega \times \R_{> 0} \rightarrow \N \cup \{ \infty \}$  as follows:
Let $i(\omega, \e)$ be the first index such that all four inequalities \ref{e1} - \ref{e3} are satisfied with $p = \frac{1}{\e}$ if such an $i$ exists and $+\infty$ otherwise. Note that $i$ implicitly depends on $C$. Let $E$ denote the set of $\omega$ such that $i(\omega, \e)$ is finite for all $\e$. In particular, if $\E^{\mu}[ f | \z ] \neq \E^{\mu_\omega}[ f | \z_\omega ]$ then $\omega$ is in this set for some choice of $C$. Thus, we may assume for the sake of contradiction that the measure of $E$ is positive.
 Let
\[
\psi_\e(x) = 
\begin{cases}
 f_{i(\omega, \e)}(x)  & \omega \in E, \omega'(x) = \omega \\
 0 & \text{ otherwise}
\end{cases}
\]
Since, for all $p < \frac{1}{\e}$
\[
|| \psi_\e ||_{L^p(\mu)} = \int_\Omega || \psi_\e ||_{L^p(\mu_\omega)} d\omega \leq C + \e,
\]
we can take an $L^p(\mu)$ weak-$*$ limit $\psi_\e \rightarrow \psi$ for some subsequence of epsilons tending to $0$. By a diagonalization argument, we can ensure that this weak-$*$ limit exists for all $p < \infty$. 
By \ref{e4}, we conclude that $\psi$ is $\z_\omega$ measurable for each $\omega$ in $E$. If $\omega$ is not in $E$, then $\psi = 0$ on a set of $\mu_\omega$ full measure so $\psi$ is measurable with respect to $\z_\omega$ for a full measure set of $\omega$ in $\Omega$ so by definition $\psi$ is $\z$ measurable. Futhermore, by \ref{e2}
\[
\langle \phi, \psi \rangle_{L^2(\mu_\omega)} \geq 1
\]
so we conclude that
\[
\langle \phi, \psi \rangle_{L^2(\mu)} \geq |E|
\]
by integrating in $\omega$. On the other hand, by \ref{e1}
\[
\langle \E^{\mu}[ f | \z ], \psi \rangle_{L^2(\mu_\omega)} = 0.
\]
This contradicts the definition of $\E^{\mu}[ f | \z ]$. Thus,
\[
\E^{\mu} [ f | \z] = \E^{\mu_\omega} [ f | \z_\omega]
\]
for almost every $\omega$ in $\Omega$.
\end{proof}


	A crucial input is the following theorem of \cite{FrantzikinakisHost2}. This theorem says that if $a$ correlates with $b$ then it does so for some algebraic reason. In particular, any correlation between $f$ and $f'$ is due solely to some locally algebraic structure in $f$.
	\begin{thm}[\cite{FrantzikinakisHost2} Theorem 1.5; see also the first appendix to this paper]\label{fh2}
		Let $\mu$ be the first marginal of $\nu$ corresponding to the factor $X$. Then the ergodic components $(X, \mu_\omega, T)$ of $\mu$ are isomorphic to the product of a Bernoulli system with the Host Kra factor of $(X, \mu_\omega, T$).
	\end{thm}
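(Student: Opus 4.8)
The plan is to reduce the statement to the theorem of Frantzikinakis and Host recorded in the list above, whose content is a structure theorem for logarithmic Furstenberg systems of an unpretentious multiplicative function. Thus the work is to check two things: (a) that the measure $\mu$ we have built --- the first marginal of a joining $\nu$ of a dynamical model for $a$ with $b$ --- really is such a Furstenberg system, and (b) that the extra $\widehat{\Z}$-coordinate built into our dynamical model does not disturb the Bernoulli-times-Host Kra splitting of the ergodic components. Accordingly the argument has three steps: realize $\mu$ as a Banach limit of logarithmically weighted orbit measures; project onto the $(S^1)^{\N}$-coordinate to get a bona fide logarithmic Furstenberg system $\mu_X$ of $a$ and invoke \cite{FrantzikinakisHost2} there; and then transfer the splitting from $\mu_X$ back to $\mu$.

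For step one I would fix a Banach limit $p$-$\lim$ extending the usual limit with $p$-$\lim_N |\E^{\log}_{n \le N} a(n) b(n)| > c + \eta$, and take $\nu$ to be the $p$-limit of the normalized measures $\big(\sum_{n \le N} \tfrac1n\big)^{-1} \sum_{n \le N} \tfrac1n\, \delta_{(T^n a,\, n,\, T^n b)}$ on $(S^1)^{\N} \times \widehat{\Z} \times Y$, so that its first marginal $\mu$ lives on $X = (S^1)^{\N} \times \widehat{\Z}$. Then I would verify the axioms of Definition \ref{systemdef}: $M \circ T = M + 1$ is built in; $I_m \circ T^m = T \circ I_m$, $f \circ I_m = \overline{a(m)} f$ and the cocycle law $I_{nm} = I_n \circ I_m$ are formal consequences of complete multiplicativity $a(mn) = a(m) a(n)$; the push-forward identity $\int_X \phi\, d\mu = \int_X m\, \ch_{x \in M^{-1}(m \widehat{\Z})}\, \phi(I_m x)\, d\mu$ is the computation sketched in Subsection \ref{background}, which is exact on each finite scale up to the logarithmic mass $O(\log m / \log N) \to 0$ of the block $[N, mN]$ and therefore survives the $p$-limit; and the density bounds of Definition \ref{systemdef} hold with equality for the Banach limit by construction. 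The joining axiom for $b$ on the factor $Y$ is checked the same way. This is where the ``slight difference'' from \cite{FrantzikinakisHost2} lives, and it is carried out in detail in the first appendix.

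For steps two and three: the pushforward $\mu_X = \pi_* \mu$ of $\mu$ along the projection $\pi$ onto the $(S^1)^{\N}$-coordinate is a logarithmic Furstenberg system of the unpretentious completely multiplicative $a$, and $\mu_X = \int_\Omega \pi_* \mu_\omega\, d\omega$ displays it as an integral of ergodic measures, so \cite{FrantzikinakisHost2} Theorem 1.5 gives, for almost every $\omega$, an isomorphism $(\, (S^1)^{\N}, \pi_* \mu_\omega, T) \cong B_\omega \times Z_\omega$ with $B_\omega$ a Bernoulli system and $Z_\omega$ its Host Kra (pronil) factor. Since $M \circ T = M + 1$, adjoining the $\widehat{\Z}$-coordinate only adjoins a compact group rotation --- a zero-entropy, pure point spectrum system; as Bernoulli systems are disjoint from zero-entropy systems, $B_\omega$ stays independent of it, so $\mu_\omega \cong B_\omega \times W_\omega$ where $W_\omega$ is a joining of the pronilsystem $Z_\omega$ with a procyclic rotation. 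A joining of two pronilsystems is again a pronilsystem, so $W_\omega$ is its own Host Kra factor and hence is the Host Kra factor of $\mu_\omega$; this is exactly the claimed decomposition, and Propositions \ref{HKexists} and \ref{conditionwell} then let us choose the ergodic decompositions of $\mu$ and $\mu_X$ compatibly. The main obstacle is the limiting argument of step one --- ensuring the push-forward and cocycle identities of a dynamical model are not broken by the weak-$*$ and Banach limits, which is precisely where logarithmic rather than Cesàro averaging is essential --- together with the bookkeeping needed to match the exact hypotheses of \cite{FrantzikinakisHost2} Theorem 1.5.
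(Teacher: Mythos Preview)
Your high-level plan is correct and matches the paper's: reduce to the Frantzikinakis--Host structure theorem on the sequence-space factor, then argue that adjoining the $\widehat{\Z}$ coordinate does not disturb the Bernoulli\,$\times$\,pronil splitting because Bernoulli systems are disjoint from zero-entropy systems and an ergodic joining of two pronilsystems is again a pronilsystem. That last disjointness-and-joining paragraph is exactly what the appendix does.

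Where you diverge from the paper is in \emph{which} $X$ you use. You keep $X = (S^1)^{\N}\times\widehat{\Z}$ (the Tao-blog model $X_0$) and invoke \cite{FrantzikinakisHost2} Theorem~1.5 as a black box on the pushforward $\pi_*\mu$ to get $\pi_*\mu_\omega \cong B_\omega\times Z_\omega$ directly. The paper instead passes through the \emph{extension} $\tilde X=(D^{\Z})^{\Z}$ built in \cite{FrantzikinakisHost2} Proposition~4.2, obtains the Bernoulli\,$\times$\,pronil splitting on $\tilde X$ from their Theorem~4.5, and then takes $X$ to be the relative joining of $\tilde X$ with $X_0$ over $D^{\Z}$; this forces the paper to re-verify all the dynamical-model axioms ($M$, $I_m$, the push-forward identity, etc.) on the enlarged space, which is most of the appendix. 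Your route is shorter and avoids that bookkeeping, at the cost of relying on the final statement of \cite{FrantzikinakisHost2} Theorem~1.5 applying verbatim to the logarithmic Furstenberg system on the sequence space; the paper's route is more self-contained and tracks the $I_m$-structure explicitly through the Frantzikinakis--Host construction. Two minor points: you should work on the two-sided shift $D^{\Z}$ rather than $(S^1)^{\N}$ so that the Host--Kra machinery (which needs invertibility) applies; and your reference to ``the first appendix'' for step one is slightly off, since the appendix does more than verify the $X_0$ axioms --- those are already in \cite{TaoBlog}.
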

	To use this theorem, we need the following result, which essentially appears in \cite{FrantzikinakisHost}:
	\begin{lem}[\cite{FrantzikinakisHost}; see the proof of Lemma 6.2]\label{disjoint}
		Suppose that $(X, \mu_\omega,T) \cong (W, dw, T) \times (Z, dz,T)$ where $W$ is a Bernoulli system, $Z$ is a zero entropy system and $\mu_\omega$ is the first marginal of $\nu_\omega$. Then for any function $\phi \colon X \rightarrow \C$ and any function $\psi \colon Y \rightarrow \C$ we have
		\[
		\int_{X \times Y} \phi(x) \psi(y) \nu_\omega(dx dy) = \int_{X \times Y} \E^{\nu_\omega}[\phi | Z](x) \psi(y) \nu_\omega(dx dy)
		\]
where $\E^{\nu_\omega}[ \phi | Z]$ denotes the conditional expectation of $\phi$ with respect to the measure $\nu_\omega$ and the sigma algebra of $Z$-measurable functions.
	\end{lem}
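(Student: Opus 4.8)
\emph{Proof proposal.} The plan is to reduce the claimed identity to the single assertion that, with respect to $\nu_\omega$, the Bernoulli coordinate $W$ is probabilistically independent of the $\sigma$-algebra generated jointly by $Z$ and $Y$, and then to deduce that independence from the classical fact that a Bernoulli (hence $K$-) automorphism is disjoint from every zero entropy system. Throughout, write $\sigma(W),\sigma(Z),\sigma(Y)$ for the sub-$\sigma$-algebras of $X\times Y$ pulled back from the factors $W$, $Z$, $Y$.

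First I would carry out the reduction. For a fixed bounded $\psi$, both sides of the asserted equality are continuous in $\phi\in L^2(\mu_\omega)$, since $\mu_\omega$ is the $X$-marginal of $\nu_\omega$ and conditional expectation is an $L^2$-contraction; so it suffices to verify the identity for $\phi$ in a dense subset of $L^2(\mu_\omega)$. Using the isomorphism $(X,\mu_\omega,T)\cong(W\times Z,dw\times dz,T)$, the functions $\phi(w,z)=\phi_W(w)\phi_Z(z)$ with $\phi_W\in L^\infty(dw)$ and $\phi_Z\in L^\infty(dz)$ span a dense subspace. For such a $\phi$, because $W$ and $Z$ are independent under $\mu_\omega=dw\times dz$ and $\mu_\omega$ is the $X$-marginal of $\nu_\omega$, one gets $\E^{\nu_\omega}[\phi\mid\sigma(Z)]=\phi_Z\cdot\int\phi_W\,dw$. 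Hence the claimed identity is equivalent to
\[
\int_{X\times Y}\phi_W(w)\,\phi_Z(z)\,\psi(y)\;\nu_\omega(dx dy)=\Big(\int\phi_W\,dw\Big)\int_{X\times Y}\phi_Z(z)\,\psi(y)\;\nu_\omega(dx dy),
\]
which is exactly the statement that $\sigma(W)$ is $\nu_\omega$-independent of $\sigma(Z)\vee\sigma(Y)$.

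Second I would establish that independence. Let $\rho_\omega$ be the image of $\nu_\omega$ under the factor map $X\times Y\to Z\times Y$; it is a joining of $(Z,dz,T)$ and $(Y,\lambda_\omega,T)$, where $\lambda_\omega$ is the $Y$-marginal of $\nu_\omega$. Both of these factors have zero entropy: $(Z,dz,T)$ by hypothesis, and $(Y,\lambda_\omega,T)$ because $(Y,T)$ has zero topological entropy, being the orbit closure of the finite-valued sequence $b$ of subexponential word growth. Since $h(T,\mathcal{A}\vee\mathcal{B})\le h(T,\mathcal{A})+h(T,\mathcal{B})$ for sub-$\sigma$-algebras, the joining $(Z\times Y,\rho_\omega,T)$ also has zero entropy. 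On the other hand $(W,dw,T)$ is Bernoulli, hence a $K$-automorphism, so by Furstenberg's disjointness theorem it is disjoint from $(Z\times Y,\rho_\omega,T)$. Now $\nu_\omega$, pushed forward under $(w,z,y)\mapsto(w,(z,y))$, is a joining of $(W,dw,T)$ with $(Z\times Y,\rho_\omega,T)$: its marginals are $dw$ and $\rho_\omega$, and it is $T$-invariant because it is a marginal of the $T$-invariant measure $\nu_\omega$ whose $X$-marginal is $dw\times dz$. By disjointness this joining must be the product $dw\times\rho_\omega$, i.e.\ $\sigma(W)$ is $\nu_\omega$-independent of $\sigma(Z)\vee\sigma(Y)$, completing the argument.

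The one point that needs care is the entropy bookkeeping: confirming that the $(Z,Y)$-marginal of $\nu_\omega$ is genuinely a zero entropy joining — which uses both subadditivity of entropy over joins and the zero entropy of $(Y,T)$ — and that $\nu_\omega$ really exhibits $W$ and $Z\times Y$ as a bona fide joining so that disjointness of a $K$-system from zero entropy systems applies. Everything else is routine; indeed this is in essence the argument in the proof of Lemma 6.2 of \cite{FrantzikinakisHost}.
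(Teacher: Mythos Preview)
Your proposal is correct and follows essentially the same route as the paper: reduce by density to tensor functions $\phi=\phi_W\phi_Z$, then invoke disjointness of the Bernoulli system $W$ from the zero-entropy system $Z\times Y$ to conclude that $\nu_\omega$ is the product $dw\times\rho_\omega$, from which the identity follows. The paper's proof is terser---it simply asserts that ``any joining of the Bernoulli system $W$ and the zero entropy system $Z\times Y$ is trivial'' and splits the integral---whereas you spell out the intermediate computation of $\E^{\nu_\omega}[\phi\mid Z]$ and explicitly justify why $(Z\times Y,\rho_\omega,T)$ has zero entropy via subadditivity and the subexponential word growth of $b$; but the argument is the same.
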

	\begin{proof}
		By density, it suffices to consider the case $\phi(w,z) = \phi_W(w) \phi_Z(z)$. Because any joining of the Bernoulli system $W$ and the zero entropy system $Z \times Y$ is trivial i.e. is equipped with the product measure, we can break up the the integral
		\begin{align*}
		\int_{W \times Z \times Y} \phi_W(w) \phi_Z(z) \psi(y) \nu_\omega(dw dz dy) =& \int_W \phi_W(w) \nu_\omega(dw dz dy)  \cdot \int_{Z \times Y} \phi_Z(z) \psi(y) \nu_\omega(dw dz dy) \\
		=& \int_{X \times Y} \E^{\nu_\omega}[ \phi | Z ](z) \psi(y) \nu_\omega(dw dz dy).
		\end{align*}
	\end{proof}
	We also need the following result, which says that conditional expectation is essentially local.

	Putting everything together gives the following corollary.
	\begin{cor}\label{startingpoint}
	Let $X$, $Y$, $\nu$, $f$, $f'$, $c$ and $\eta$ be as in Subsection \ref{background}. Let $B$ be as in Corollary \ref{periodicpoints}. Then
		\[
		\left| \int_{B^c} \E_{h \leq k} T^h (\E^{\nu} [f | \z] \cdot f') d\nu \right| > c + \eta.
		\]
	\end{cor}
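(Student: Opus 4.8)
The plan is to assemble Corollary \ref{startingpoint} from the three ingredients already in place: the dynamical model furnished by Theorem \ref{fh2} (which says the ergodic components of $\mu$ split as Bernoulli $\times$ zero-entropy, the zero-entropy factor being the Host--Kra factor), the disjointness Lemma \ref{disjoint}, and the vanishing on periodic points Corollary \ref{periodicpoints}. The starting inequality is the output of Theorem 2.15 (the quoted Theorem 1.5 of \cite{FrantzikinakisHost2}), namely
\[
\left| \int_{X \times Y} f(x) f'(y)\, \nu(dx\,dy) \right| > c + \eta.
\]

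First I would pass to ergodic components. Writing $\nu = \int_\Omega \nu_\omega\, d\omega$ and $\mu = \int_\Omega \mu_\omega\, d\omega$, Theorem \ref{fh2} gives, for almost every $\omega$, an isomorphism $(X, \mu_\omega, T) \cong (W_\omega, dw, T) \times (Z_\omega, dz, T)$ with $W_\omega$ Bernoulli and $Z_\omega$ the Host--Kra factor $\z_\omega$ (zero entropy). Apply Lemma \ref{disjoint} with $\phi = f$ and $\psi = f'$ in each component: this replaces $f$ by $\E^{\nu_\omega}[f \mid \z_\omega]$ inside the integral over $X \times Y$ with respect to $\nu_\omega$. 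Integrating over $\omega$, and using Proposition \ref{conditionwell} to identify $\E^{\nu_\omega}[f \mid \z_\omega]$ with the (globally defined) conditional expectation $\E^{\nu}[f \mid \z]$ restricted to the component — here one should note $f$ is $\mu$-measurable and $\z_\omega$ is the Host--Kra factor of the $\omega$-component, matching the global $\z$ of Definition \ref{HKdef} — we obtain
\[
\int_{X \times Y} f(x) f'(y)\, \nu(dx\,dy) = \int_{X \times Y} \E^{\nu}[f \mid \z](x)\, f'(y)\, \nu(dx\,dy).
\]
(One small subtlety: $\E^{\nu}[f\mid \z]$ versus $\E^{\mu}[f\mid \z]$ — since $f$ depends only on the $X$-coordinate, conditioning with respect to $\z \subset \B_X$ is the same under $\nu$ and under its first marginal $\mu$, so Proposition \ref{conditionwell} applies as stated.) Next, restrict the domain to $B^c$: by Corollary \ref{periodicpoints} the integral over $B$ of $f(x) f'(y)$ vanishes, and since $\E^{\nu}[f\mid \z]$ is still bounded by $1$ and $B$ is $T$-invariant and $Y$-measurable, the same argument (conditioning $f$ does not change the integral over $B$ either, by Lemma \ref{disjoint} applied on $B$) shows $\int_B \E^{\nu}[f\mid\z](x) f'(y)\, d\nu = 0$ as well; hence
\[
\left| \int_{B^c} \E^{\nu}[f \mid \z](x)\, f'(y)\, \nu(dx\,dy) \right| > c + \eta.
\]
Finally, average over the $k$ shifts $T^h$: both $\E^{\nu}[f\mid\z]$ and $f'$ intertwine with $T$ in the sense that $\E^{\nu}[f\mid\z]\circ T = \E^{\nu}[f\circ T\mid\z]$ (the Host--Kra factor is $T$-invariant) and $B^c$ is $T$-invariant, so by $T$-invariance of $\nu$ each term $\int_{B^c} T^h(\E^{\nu}[f\mid\z]\cdot f')\, d\nu$ equals $\int_{B^c}\E^{\nu}[f\mid\z]\cdot f'\, d\nu$; averaging leaves the value unchanged, giving the claim.

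The main obstacle I anticipate is bookkeeping around the ``$\mu_\omega$ versus $\mu$'' and ``$\z_\omega$ versus $\z$'' identifications: Lemma \ref{disjoint} is stated per ergodic component, so one has to verify that the exceptional $\omega$-sets (where the Bernoulli$\times$zero-entropy splitting fails, where Proposition \ref{conditionwell} fails, where $\|f\|_{L^\infty(\mu_\omega)} > 1$) are jointly null, then integrate the per-component identities in $\omega$ and invoke Fubini legitimately for the joining $\nu$ — this is exactly what Propositions \ref{HKexists} and \ref{conditionwell} were set up to handle, so it is routine but needs care. The other point requiring a line of justification is that restricting to $B^c$ commutes with all of this: $B$ is $Y$-measurable and $T$-invariant, so $\ch_{B^c}$ can be absorbed into $f'$ (replace $f'$ by $\ch_{B^c} f'$, still a $Y$-measurable $1$-bounded function), and then Corollary \ref{periodicpoints} plus the above gives the bound on $B^c$ directly.
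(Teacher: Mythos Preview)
Your proposal is correct and follows essentially the same approach as the paper: combine Corollary \ref{periodicpoints}, the ergodic decomposition, Theorem \ref{fh2} with Lemma \ref{disjoint}, Proposition \ref{conditionwell}, and $T$-invariance. The only difference is the order of operations---the paper restricts to $B^c$ first (applying Corollary \ref{periodicpoints} directly to $f$), then averages over shifts, then disintegrates and conditions, whereas you condition first and then have to argue separately that $\int_B \E^{\nu}[f\mid\z]\,f'\,d\nu=0$; both orders work, and your observation that $\ch_{B^c}$ can be absorbed into the $Y$-measurable function $f'$ is exactly what makes the two routes equivalent.
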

	\begin{proof}
		Recall that
		\begin{align*}
		&\left| \int_{X \times Y} f(x) \cdot f'(y) \nu(dx dy) \right| > c + \eta.
		\intertext{By Corollary \ref{periodicpoints}, we have that}
		&\left| \int_{B^c} f(x) \cdot f'(y) \nu(dx dy) \right| > c + \eta.
		\intertext{Since $B^c$ is $T$ invariant and $\nu$ is $T$ invariant, we can average over shifts}
		&\left| \int_{B^c} \E_{h \leq k} f(T^h x) \cdot f'(T^h y) \nu(dx dy) \right| > c + \eta.
		\intertext{Next, we disintegrate the measure $\nu$,}
		\bigg| \int_\Omega &\int_{B^c} \E_{h \leq k} f(T^h x) \cdot f'(T^h y) \nu_\omega(dx dy) d\omega \bigg| > c + \eta.
		\intertext{Notice, for each $h$, $f'(T^h y) \ch_{y \not\in B}$ is a function on $Y$. By Theorem \ref{fh2}, $(X, \mu_\omega, T)$ is isomorphic to a product of a Bernoulli factor with the Host Kra factor for almost every $\omega$. Since the Host Kra factor has entropy zero, the hypotheses of Lemma \ref{disjoint} are satsified. Thus, by Lemma \ref{disjoint}, }
		\bigg| \int_\Omega &\int_{B^c} \E_{h \leq k} \E^{\mu_\omega} [ f | \z_\omega ](T^h x) \cdot f'(T^h y) \nu_\omega(dx dy) d\omega \bigg| > c + \eta.
		\intertext{By Proposition \ref{conditionwell}, }
		\bigg| \int_\Omega &\int_{B^c} \E_{h \leq k} \E^{\mu} [ f | \z ](T^h x) \cdot f'(T^h y) \nu_\omega(dx dy) d\omega \bigg| > c + \eta.
\intertext{
By definition of the ergodic decomposition,
}
\bigg| &\int_{B^c} \E_{h \leq k} \E^{\mu} [ f | \z ](T^h x) \cdot f'(T^h y) \nu(dx dy) \bigg| > c + \eta.
		\end{align*}
		This completes the proof.
	\end{proof}
	Now we forget everything about the joining of $X$ and $Y$ and reduce to the worst case scenario, where we choose the worst possible $y$ in $Y$ for each $x$ in $X$. 
	\begin{cor}\label{startingpoint2}
	Let $X$, $Y$, $\nu$, $\mu$, $f$, $f'$, $c$ and $\eta$ be as in Subsection \ref{background}. Let $\z$ be as in Definition \ref{HKdef}. Let $B$ be as in Corollary \ref{periodicpoints}. 
		Since whether $(x,y) \in B$ only depends on $y$, we abuse notation and write $y \in B$ to mean $(x,y) \in B$ for some $x$. Then,
		\[
		\int_X  \sup_{y \not\in B} \Big| \E_{h \leq k} \E^{\mu} [f | \z] (T^h x) \cdot f'(T^h y) \Big|  \mu(dx) > c + \eta,
		\]
		where the supremum is an essential supremum taken with respect to the second marginal of $\nu$.
	\end{cor}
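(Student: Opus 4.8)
The plan is to deduce Corollary~\ref{startingpoint2} from Corollary~\ref{startingpoint} by moving the absolute value inside the integral and then replacing the integral against $\nu$ by an integral against its first marginal $\mu$, exploiting that $f'$ takes only finitely many values to make rigorous sense of the essential supremum over $y$. First I would start from Corollary~\ref{startingpoint} and apply the triangle inequality:
\[
c + \eta < \left| \int_{B^c} \E_{h \leq k} \E^{\mu}[f \mid \z](T^h x) \cdot f'(T^h y) \, \nu(dx\,dy) \right| \leq \int_{B^c} \Big| \E_{h \leq k} \E^{\mu}[f \mid \z](T^h x) \cdot f'(T^h y) \Big| \, \nu(dx\,dy).
\]
Recall that $B$ depends only on the $Y$-coordinate, so $B^c = X \times B^c$ as a subset of $X \times Y$, and that $\nu(B^c)>0$ (otherwise the left-hand side would be $0$).

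Next, since $f'$ is finite-valued with values in some finite set $\mathcal{V}$, the tuple $(f'(T^1 y), \ldots, f'(T^k y))$ ranges over the finite set $\mathcal{V}^k$, and the cylinders $E_\epsilon := \{(x,y) \in B^c : f'(T^h y) = \epsilon_h \text{ for all } h \leq k\}$, $\epsilon \in \mathcal{V}^k$, partition $B^c$. Let $\Sigma$ be the (finite, nonempty) set of $\epsilon$ with $\nu(E_\epsilon) > 0$ — that is, the words of $f'$ realized on $B^c$ — and set $\Phi(x) := \max_{\epsilon \in \Sigma} \big| \E_{h \leq k} \E^{\mu}[f\mid\z](T^h x) \cdot \epsilon_h \big|$. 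This $\Phi$ is a $\z$-measurable, $1$-bounded function of $x$, and it is exactly $\sup_{y \notin B}\big|\E_{h\leq k}\E^{\mu}[f\mid\z](T^hx)\cdot f'(T^hy)\big|$ when the supremum is read as an essential supremum with respect to the second marginal of $\nu$, because that function of $y$ is constant on each $E_\epsilon$ and its $\pi_2\nu$-essential supremum is the maximum of its values on the positively-weighted cylinders. Since the integrand above equals $\big| \E_{h \leq k} \E^{\mu}[f\mid\z](T^h x)\epsilon_h \big| \leq \Phi(x)$ on $E_\epsilon$ whenever $\nu(E_\epsilon)>0$, summing over $\epsilon \in \Sigma$ gives
\[
\int_{B^c} \Big| \E_{h \leq k} \E^{\mu}[f \mid \z](T^h x) \cdot f'(T^h y) \Big| \, \nu(dx\,dy) \leq \int_{B^c} \Phi(x) \, \nu(dx\,dy) \leq \int_{X \times Y} \Phi(x) \, \nu(dx\,dy) = \int_X \Phi(x) \, \mu(dx),
\]
the final equality using that $\Phi$ depends only on $x$ and $\mu$ is the first marginal of $\nu$. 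Chaining these inequalities with Corollary~\ref{startingpoint} yields $\int_X \Phi \, d\mu > c+\eta$, which is the assertion.

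The only real point that needs care is the second step: pinning down what ``essential supremum with respect to the second marginal of $\nu$'' means and checking that, because $f'$ is finite-valued, it is literally the finite maximum $\Phi(x)$ over words that occur with positive $\nu$-measure. Once this is done there is no measurability issue for the $x$-supremum and, crucially, no need to disintegrate $\nu$ over $\mu$: finiteness of $\Sigma$ means no null set of $x$ ever intervenes, and one only uses the trivial identity that an $X$-measurable function has the same $\nu$- and $\mu$-integral. (If one instead took the supremum over all of $\mathcal{V}^k$, the argument would go through identically and give a weaker-but-still-sufficient bound; phrasing it with $\Sigma$ is what makes it compatible with the later steps that count words occurring with positive logarithmic density.)
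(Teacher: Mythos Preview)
Your argument is correct and is precisely the routine verification the paper leaves implicit: after Corollary~\ref{startingpoint} the paper simply remarks that one ``reduces to the worst case scenario, where we choose the worst possible $y$ in $Y$ for each $x$ in $X$'' and states Corollary~\ref{startingpoint2} without further proof. Your use of the finiteness of the value set of $f'$ to identify the essential supremum with a finite maximum over positively occurring words, followed by the marginal identity $\int \Phi(x)\,\nu(dxdy)=\int \Phi\,d\mu$, is exactly the intended (and only reasonable) way to make that sentence rigorous.
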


	We will need the following lemma, which states that conditioning with respect to a conditional measure is essentially the same as conditioning with respect to the original measure.
	\begin{lem}\label{relcondexp}
		Let $A$ be a positive measure set in $\z$ and denote $\mu_A (S) = \mu(S | A)$. Then for any measurable function $f$, 
		\[
		\E^{\mu_A}[f | \z] = \E^{\mu}[f | \z],
		\]
		$\mu_A$ almost everywhere i.e. for $\mu$-almost every point in $A$.
	\end{lem}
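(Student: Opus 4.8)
The plan is to verify directly that $\E^{\mu}[f \mid \z]$ already satisfies the two properties that uniquely characterize $\E^{\mu_A}[f \mid \z]$, and then invoke uniqueness of conditional expectation. The only structural input is the hypothesis $A \in \z$, which makes $\ch_A$ itself $\z$-measurable. First I would record the elementary observation that, since $\mu_A(S) = \mu(S \cap A)/\mu(A)$, integration of a function $\phi$ against $\mu_A$ equals $\mu(A)^{-1}$ times the integral of $\ch_A \cdot \phi$ against $\mu$. The candidate $\E^{\mu}[f \mid \z]$ is $\z$-measurable by construction, so the first defining property is immediate.

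For the second property I would fix an arbitrary bounded $\z$-measurable test function $g$. The crucial point is that $g \cdot \ch_A$ is again bounded and $\z$-measurable — this is exactly where $A \in \z$ is used — so applying the defining property of $\E^{\mu}[f \mid \z]$ to the test function $g\ch_A$ gives
\[
\int_X g \cdot \E^{\mu}[f \mid \z] \, d\mu_A = \frac{1}{\mu(A)} \int_X g \, \ch_A \, \E^{\mu}[f \mid \z] \, d\mu = \frac{1}{\mu(A)} \int_X g \, \ch_A \, f \, d\mu = \int_X g \cdot f \, d\mu_A.
\]
Since $g$ ranges over all bounded $\z$-measurable functions, uniqueness of the conditional expectation with respect to the pair $(\mu_A, \z)$ forces $\E^{\mu_A}[f \mid \z] = \E^{\mu}[f \mid \z]$ $\mu_A$-almost everywhere. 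To finish I would note that $\mu_A$ and the restriction of $\mu$ to $A$ have the same null sets inside $A$, so the identity holding $\mu_A$-a.e.\ is exactly the stated conclusion that it holds for $\mu$-almost every point of $A$.

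I do not expect any genuine obstacle here: the statement is a soft measure-theoretic fact, and the only subtlety is bookkeeping — keeping in mind that $A$ belongs to the sigma algebra $\z$ being conditioned on, so that indicator functions of $A$ can be absorbed into test functions without leaving the admissible class. In the application $f$ is bounded, so integrability never enters; for a general measurable $f$ one would simply apply the above to the truncations $f\ch_{|f| \le n}$ and let $n \to \infty$.
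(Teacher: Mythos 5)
Your proof is correct and takes essentially the same approach as the paper: both verify the defining property of $\E^{\mu_A}[f \mid \z]$ by absorbing $\ch_A$ into the test function, using that $A \in \z$, and then invoke uniqueness of conditional expectation. The only cosmetic difference is that you test against bounded $\z$-measurable functions $g$ while the paper tests against indicator functions of sets $C \in \z$.
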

	\begin{proof}
		Let $C$ be another set in $\z$. Then 
		\begin{align*}
		\int_{C} \E^{\mu}[ f | \z ](x) \mu_A(dx) =& \int_{C \cap A} \frac{1}{\mu(A)} \E^{\mu}[ f | \z ](x) \mu(dx) 
		\intertext{Since $A$ is in $\z$, we know that $A \cap C$ is in $\z$. By definition of conditional expectation, this is}
		=& \frac{1}{\mu(A)} \int_{C \cap A} f \mu(dx) \\
		=& \int_{C \cap A} f \mu_A(dx).
		\end{align*}
		This is the defining property of $\E^{\mu_A} [f | \z ]$. Since conditional expectation is well defined up to sets of measure $0$, we obtain the result.
	\end{proof}
	The system $X$ possesses an extra symmetry that most dynamical systems do not have, a dilation symmetry. In fact, it possesses a whole family of dilation symmetries. It is not obvious which dilation makes the problem easiest. Therefore, instead of choosing a particular dilation, we use a random dilation. 
	\begin{prop}
		Let $P$ be any natural number. Then
		\[
		\E_{P/2 < p \leq P} \int_X   p \ch_{M^{-1}(p \hat{\Z} )}(x) \sup_{y \not\in B} \Big| \E_{h \leq k} \E^{\mu} [f | \z] (T^{ph} x) \cdot f'(T^h y) \Big|  \mu(dx) > c + \eta,
		\]
		where $p$ is always restricted to be prime.
	\end{prop}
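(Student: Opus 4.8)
The plan is to prove the inequality one prime at a time and then average. Concretely, I claim that for every prime $p$ with $P/2 < p \le P$,
\[
\int_X p\,\ch_{M^{-1}(p\widehat{\Z})}(x)\,\sup_{y\not\in B}\Big|\E_{h\le k}\,\E^{\mu}[f|\z](T^{ph}x)\cdot f'(T^hy)\Big|\,\mu(dx) > c+\eta ,
\]
and the Proposition follows at once, since an average over $P/2<p\le P$ of quantities each exceeding $c+\eta$ still exceeds $c+\eta$. Fix such a $p$ and set $A:=M^{-1}(p\widehat{\Z})$. Since $M\circ T=M+1$ and $M$ takes values in the compact group $\widehat{\Z}$, on every ergodic component $\omega$ the function $M$ generates an equicontinuous factor of $(X,\mu_\omega,T)$, hence a factor of the Kronecker factor; so $M$ is $\z$-measurable and $A\in\z$. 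Also $M(T^{ph}x)=M(x)+ph$, so $A$ is $T^{p}$-invariant and $T^{ph}x\in A$ whenever $x\in A$.

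Next I would apply the pushforward axiom in the definition of a dynamical model for $a$ to the bounded measurable function
\[
\phi(x):=\sup_{y\not\in B}\Big|\E_{h\le k}\,\E^{\mu}[f|\z](T^hx)\cdot f'(T^hy)\Big| ,
\]
which by Corollary \ref{startingpoint2} satisfies $\int_X\phi\,d\mu>c+\eta$. The axiom gives $\int_X\phi\,d\mu=\int_X p\,\ch_A(x)\,\phi(I_px)\,\mu(dx)$. Iterating the intertwining axiom $I_p\circ T^p=T\circ I_p$ yields $T^hI_px=I_p(T^{ph}x)$ for a.e.\ $x\in A$, so
\[
\phi(I_px)=\sup_{y\not\in B}\Big|\E_{h\le k}\,\E^{\mu}[f|\z]\big(I_p(T^{ph}x)\big)\cdot f'(T^hy)\Big| .
\]
The key point, which I take up below, is the identity $\E^{\mu}[f|\z]\circ I_p=\overline{a(p)}\,\E^{\mu}[f|\z]$ holding $\mu$-a.e.\ on $A$. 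Granting it, and using that $T^{ph}x\in A$ together with $|a(p)|=1$, the scalar $\overline{a(p)}$ may be pulled out of the $h$-average and discarded, so $\phi(I_px)=\sup_{y\not\in B}\big|\E_{h\le k}\E^{\mu}[f|\z](T^{ph}x)f'(T^hy)\big|$; substituting this into $\int_X\phi\,d\mu=\int_X p\,\ch_A(x)\,\phi(I_px)\,\mu(dx)>c+\eta$ gives the single-prime inequality, and averaging over $P/2<p\le P$ finishes the proof.

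It remains to establish $\E^{\mu}[f|\z]\circ I_p=\overline{a(p)}\,\E^{\mu}[f|\z]$ on $A$. Since $A\in\z$, Lemma \ref{relcondexp} identifies $\E^{\mu}[f|\z]$ with $\E^{\mu_A}[f|\z]$ on $A$, where $\mu_A=\mu(\cdot\mid A)$. By the pushforward axiom, $I_p$ transports $\mu_A$ to $\mu$, and since conditional expectation commutes with measure-preserving maps, $\E^{\mu}[f|\z]\circ I_p=\E^{\mu_A}\big[f\circ I_p\mid I_p^{-1}(\z)\big]$. The covariance axiom $f\circ I_p=\overline{a(p)}f$ on $A$ then turns the right-hand side into $\overline{a(p)}\,\E^{\mu_A}\big[f\mid I_p^{-1}(\z)\big]$. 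The step that requires genuine work is to check that $I_p^{-1}(\z)$ coincides with $\z\cap A$ as $\sigma$-algebras on $A$ modulo $\mu_A$-null sets; this amounts to saying that $I_p$, which is a factor map from $(A,\mu_A,T^p)$ onto $(X,\mu,T)$, transports the Host--Kra $\sigma$-algebra to the Host--Kra $\sigma$-algebra — using functoriality of the Host--Kra factor under factor maps together with the fact that the Host--Kra $\sigma$-algebra of a system agrees with that of any power $T^p$, so that the "$T^p$-Host--Kra structure'' on $A$ pulled back from $(X,T^p)$ is exactly $\z\cap A$. With this in hand, $\E^{\mu_A}\big[f\mid I_p^{-1}(\z)\big]=\E^{\mu_A}\big[f\mid \z\cap A\big]=\E^{\mu}[f|\z]$ on $A$ by Lemma \ref{relcondexp} again. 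Thus the main obstacle is the purely dynamical verification that $I_p$ respects the Host--Kra $\sigma$-algebra; everything else is formal bookkeeping with the dynamical-model axioms and the results already proved.
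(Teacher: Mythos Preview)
Your approach is essentially the paper's: start from Corollary~\ref{startingpoint2}, push forward through $I_p$, intertwine $T^h I_p = I_p T^{ph}$, and reduce everything to the identity $\E^{\mu}[f|\z]\circ I_p=\overline{a(p)}\,\E^{\mu}[f|\z]$ on $A$. The bookkeeping is fine and the single-prime reduction is harmless.

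The gap is in your justification of $I_p^{-1}(\z)=\z\cap A$. Functoriality of the Host--Kra factor under the factor map $I_p\colon(A,\mu_A,T^p)\to(X,\mu,T)$ gives only the inclusion $I_p^{-1}(\z)\subset\z(A,T^p)$, and your observation that $\z(T)=\z(T^p)$ identifies the latter with $\z\cap A$; but this yields only $I_p^{-1}(\z)\subset\z\cap A$, not equality. The map $I_p$ is genuinely many-to-one on $A$ (in the concrete model it only records the coordinates at multiples of $p$), so $I_p^{-1}(\z)$ can be strictly smaller than $\z\cap A$. What is actually needed is not the equality of $\sigma$-algebras but the equality of the two conditional expectations of $f$, and the paper secures this with an extra ingredient you omit: on each ergodic component the Host--Kra factor is an inverse limit of nilmanifolds, and on such a nilmanifold $I_p$ acts (up to translation) as multiplication by $p$, which is a local isomorphism away from $p$-torsion; the $p$-torsion contribution to $\E^\mu[f|\z]$ vanishes because $f$ is orthogonal to locally periodic functions by Corollary~\ref{MRTerg}. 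Without invoking Corollary~\ref{MRTerg} (or an equivalent input) there is no reason for $\E^{\mu_A}[f\mid I_p^{-1}(\z)]$ and $\E^{\mu_A}[f\mid \z\cap A]$ to agree, and your last paragraph does not supply one.
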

	\begin{proof} 
		By Corollary \ref{startingpoint2} we have 
		\[
		\int_X  \sup_{y \not\in B} \Big| \E_{h \leq k} \E^{\mu} [f | \z] (T^h x) \cdot f'(T^h y) \Big|  \mu(dx) > c + \eta.
		\]
		Now we use that $I_{p}$ pushes forward $p \ch_{M^{-1}(p \hat{\Z} )} \mu$ to $\mu$ for every $p$ and average in $p$.
		\[
		\E_{P/2 < p \leq P} \int_{X} p \ch_{M^{-1}(p \hat{\Z} )} \sup_{y \not\in B} \Big| \E_{h \leq k} \E^{\mu} [f | \z] (T^h I_p x) \cdot f'(T^h y) \Big|  \mu(dx) > c + \eta.
		\]
		Because $I_p \circ T^{hp}(x) = T^h \circ I_p (x)$ for almost every $x$ in $M^{-1}(p \hat{\Z} )$ we have that,
		\[
		\E_{P/2 < p \leq P} \int_{X} p \ch_{M^{-1}(p \hat{\Z} )} \sup_{y \not\in B} \Big| \E_{h \leq k} \E^{\mu} [f | \z] (I_p T^{ph} x) \cdot f'(T^h y) \Big|  \mu(dx) > c + \eta.
		\]
		Next we use the standard fact that 
		\[
		\E^{\mu}[ f | \z ] \circ I_p = \E^{I_{p*}  \mu}[ f \circ I_p | I_p^{-1}(\z) ],
		\]
		$I_{p*} \mu$-almost everywhere, where $I_{p*} \mu$ is the pushforward of $\mu$. Since $I_{p*} \mu = \frac{1}{p} \mu$, we can replace $I_{p*} \mu$ by $\mu$. 
Note that $I_p$ defines a factor map between $(M^{-1}(p \widehat{\Z}), p \mu, T^p)$ and $(X, \mu, T)$. Since Host Kra factors are functorial, the Host Kra factor for $(M^{-1}(p \widehat{\Z}, p \mu, T^p) )$ factors onto the Host Kra factor for $(X, \mu, T)$. Thus, $I^{-1}_p(\z)$ is contained in the Host Kra factor of some dynamical system and thus corresponds to an inverse limit of nilsystems. This is all we actually need for our purposes. However, for the sake of avoiding notation, we also prove that 
\[
\ch_{M^{-1}(p \widehat{\Z}) } \E^{\mu}[ f \circ I_p | I_p^{-1}(\z) ] = \ch_{M^{-1}(p \widehat{\Z}) } \overline{a(p)} \E^{\mu}[ f | \z ].
\]
That $f \circ I_p = \overline{a(p)} f$ follows from the definition of $I_p$. If $\z_i(T^p)$ denotes the $i^{th}$ Host Kra factor for $T^p$ and $\z_i(T)$ denotes the $i^{th}$ Host Kra factor for $T$, then any $T^p$ invariant subset of the cube $X^{2^i}$ is an element of the Konecker factor i.e. the first Host Kra factor for $(X^{2^i}, T, \mu^{[i]})$ (where $\mu^{[i]}$ is the measure on the cube defined in section 3 of \cite{HostKra}). Since the Host Kra factor of an ergodic system is the smallest sigma algebra generating the invariant factor on the cube, we conclude that $\z_i(T^p) \subset \z_{i+1}(T)$ so $I^{-1}_p(\z) \subset \z \cap M^{-1}(p \widehat{\Z})$. In fact, as in the appendix, the Host Kra factor for $X$ is a joining of the Host Kra factor on the space of sequences $D^\Z$ and $\widehat{\Z}$. On the second factor, $I_p$ acts by division by $p$. On the first factor, $I_p \circ T^p = T \circ I_p$ and so on each ergodic component of the first factor, $I_p$ acts by multiplication by $p$ up to a possible translation. Multiplication by $p$ is a local isomorphism of any nilmanifold that does not contain $p$ torsion. However, by Corollary \ref{MRTerg}, $f$ is already orthogonal to all $p$ torsion. Thus, 
\[
\ch_{M^{-1}(p \widehat{\Z}) } \E^{\mu}[ f \circ I_p | I_p^{-1}(\z) ] = \ch_{M^{-1}(p \widehat{\Z}) } \overline{a(p)} \E^{\mu}[ f | \z ].
\]
 Combined with Lemma \ref{relcondexp} and the fact that $M^{-1}(p \hat{\Z} )$ is $T^p$ invariant and therefore an element of $\z$ we get,
		\[
		\E_{P/2 < p \leq P} \int_{X} p \ch_{M^{-1}(p \hat{\Z} )}  \sup_{y \not\in B} \Big| \E_{h \leq k} \E^{\mu} \overline{a(p)} [f | \z] (T^{ph} x) \cdot f'(T^h y) \Big|  \mu(dx) > c + \eta.
		\]
		Recall that $|a(p)| = 1$ for all $p$. Thus, $\overline{a(p)}$ merely gets absorbed into the absolute value.
		\[
		\E_{P/2 < p \leq P} \int_{X} p \ch_{M^{-1}(p \hat{\Z} )}  \sup_{y \not\in B} \Big| \E_{h \leq k} \E^{\mu} [f | \z] ( T^{ph} x) \cdot f'(T^h y) \Big|  \mu(dx) > c + \eta.
		\]
	\end{proof}
\subsection{The Entropy Decrement Argument}
	Next, we use the entropy decrement method to replace $p \ch_{M^{-1}(\hat{\Z})}$ by its average, $1$. This is essentially due to Tao but because our statement is slightly different we reproduce the argument. For the definitions of entropy, conditional entropy, mutual information and conditional mutual information see \cite{TaoChowla}.

Let $x'$ be a random variable distributed according to $\mu$ and fix a natural number $P$. From this, we get the following two random variables. Set $X_P = (x_1, \ldots, x_{(k+1)P})$ where $x_i = \E^\mu [f | \z] (T^i x')$ and set $Y_P$ in $\prod_{P/2 < p \leq P} \Z / p \Z$ by $Y_P = (M(x') \mod p)_{P/2 < p \leq P}$. Denote $Y_P \mod p = y_p$ so that $Y_P = (y_p)_{P/2 < p \leq P}$. Note that $Y_P$ is uniformly distributed in $\prod_{P/2 < p \leq P} \Z / p \Z$ and that the distribution of $X_P$ is the same as the distribution of $T^i X_P$ for any $i$ because $\mu$ is translation invariant. Technically, if $\E^\mu[f | \z]$ takes infinitely many values then we will have to round $\E^\mu[f | \z](T^i x')$ so that each $x_i$ takes values in a finite set but this slightly annoying detail may be delayed for the moment.
We want to study the following integral:
		\[
		\E_{P/2 < p \leq P} \int_{X} p \ch_{M^{-1}(p \hat{\Z} )}  \sup_{y \not\in B} \Big| \E_{h \leq k} \E^{\mu} [f | \z] ( T^{ph} x) \cdot f'(T^h y) \Big|  \mu(dx) 
		\]
By translation invariance, this is equal to 
		\[
		\E_{P/2 < p \leq P} \E_{i \leq P} \int_{X} p T^i \ch_{M^{-1}(p \hat{\Z} )}  \sup_{y \not\in B} \Big| \E_{h \leq k} \E^{\mu} [f | \z] ( T^{ph+i} x) \cdot f'(T^h y) \Big|  \mu(dx).
		\]
Notice that this is the expected value of some function of $X_P$ and $Y_P$. In particular, we are interested in 
\[
\E \left( \E_{P/2 < p \leq P}  f_{P,p} (X_P, y_p) \right)
\]
where $f_{P, p} \colon \C^{(k+1)P} \times \Z / p \Z \rightarrow \C$ is defined by the formula
\[
f_{P,p} (X_P, y_p) = \E_{i \leq P} \ p \ch_{y_p = i} \sup_{y \not\in B} \Big| \E_{h \leq k} x_{hp + i} f(T y) \Big|.
\]
Define
\[
f_P(X_P, Y_P) = \E_{P/2 < p \leq P} f_{P, p}(X_P y_P). 
\]
Thus, we are interested in 
\[
\E f_P(X_P, Y_P).
\]
We would like to say that $X_P$ and $Y_P$ are very close to independent for some large choice of $P$. Let $W_P$ be a random variable with the same distribution as $Y_P$ but which is independent of $X_P$. We would like to say that
\[
\E[ f_P(X_P, Y_P) ] \approx \E[ f_P(X_P, W_P) ].
\]
A property like this actually holds in a more general setting, which we take the liberty of stating now.
\begin{thm}\label{tao}[\cite{TaoChowla} Section 3; see also \cite{Blog2},\cite{TJ} Lemma 3.4 and Proposition 3.5 and \cite{TJ2} Section 4]
Let $A$ be a finite set and let $C$ be a natural number. For each power of two $P$, let $X_P = (x_1, \ldots, x_{CP})$ be a sequence of random variables with $x_i$ taking values in $A$ and let $Y_P$ be a random variable that is uniformly distributed in $\prod_{P/2 < p \leq P} \Z / p \Z$. We write $Y_P = (y_p)_{P/2 < p \leq P}$ where $y_p = Y_P \mod p$. We further assume that for different values of $P$, the random variables $Y_P$ are jointly independent meaning $(y_p)_{p \leq P}$ is uniformly distributed in $\prod_{p \leq P} \Z / p \Z$ for all powers of two $P$. Suppose that, for any natural numbers $i$ and $m$ such that $i + m \leq CP$ we have that the distribution of $(x_1, \ldots, x_m)$ is equal to the distribution of $(x_{i+1}, \ldots, x_{i + m})$. Furthermore, suppose that for any $P$ and any element $b$ in $\prod_{p \leq P} \Z / p \Z$ and any $S$ a measurable subset of $\C^{m}$, \[ \mathbb{P}( (x_1,\ldots, x_m) \in S \ | \ (y_p)_{p \leq P} = b) = \mathbb{P}( (x_{i +1},\ldots, x_{i+m}) \in S \ | \ (y_p)_{p \leq P} = b + i). \] For each $p$ with $P/2 < p \leq P$, let $f_{P,p}$ be a $1$-bounded function $A^{CP} \times \Z / p \Z \rightarrow \C$ and let $f_P(X_P, Y_P) = \E_{P/2 < p \leq P} f_{P, p}(X_P, y_p)$. Let $W_P$ be a random variable with the same distribution as $Y_P$ but which is independent of $X_P$. Then
\[
\liminf_{P \rightarrow \infty} \E [ | f_P(X_P, Y_P) - f_P(X_P, W_P) | ] = 0.
\]
\end{thm}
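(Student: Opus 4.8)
The plan is to run Tao's entropy decrement argument (see \cite{TaoChowla}, Section 3, as well as \cite{Blog2}, \cite{TJ} and \cite{TJ2}); since the hypotheses here are slightly more general I will only outline the structure and flag what needs attention.

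First I would reduce to a mutual information estimate. For each prime $p$ with $P/2 < p \le P$, since $f_{P,p}$ is $1$-bounded and $W_P$ is independent of $X_P$ and uniform, the quantity $\E[\,|f_{P,p}(X_P,y_p) - f_{P,p}(X_P,w_p)|\,]$ is bounded by twice the expected total variation distance between $\operatorname{Law}(y_p \mid X_P)$ and the uniform law on $\Z / p\Z$, which by Pinsker's inequality is at most $\sqrt{2\, I(X_P : y_p)}$. Averaging over $p$, using $f_P = \E_{P/2 < p \le P} f_{P,p}$, and applying Cauchy--Schwarz,
\begin{align*}
\E\big[\,|f_P(X_P,Y_P) - f_P(X_P,W_P)|\,\big] &\le \E_{P/2 < p \le P}\, \sqrt{2\, I(X_P : y_p)} \\
&\le \sqrt{\,2\, \E_{P/2 < p \le P}\, I(X_P : y_p)\,}.
\end{align*}
(In fact one may replace $X_P$ throughout by the bounded tuple of coordinates that $f_{P,p}$ actually inspects, which is what makes the next step feasible.) So it suffices to find arbitrarily large powers of two $P$ at which $\E_{P/2 < p \le P} I(X_P : y_p)$ is as small as we like.

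The heart of the matter is the entropy decrement that produces such $P$. One studies the normalized entropies $N^{-1} H(x_1,\dots,x_N)$; stationarity forces subadditivity of $H(x_1,\dots,x_N)$ in $N$, so this quantity decreases and lies in $[0,\log|A|]$. The crucial inequality, proved exactly as in \cite{TaoChowla}, asserts that if at a dyadic scale $P$ the average information $\E_{P/2 < p \le P} I(x_{[1,N]} : y_p)$ fails to be small --- for a suitable, $P$-dependent window length $N$ --- then $N^{-1} H(x_1,\dots,x_N)$ must drop by a definite amount; the proof uses the chain rule for Shannon entropy together with the facts that the $Y_P$ are uniform and jointly independent across dyadic blocks and that conditioning on the residues $(y_q)_{q \le P}$ commutes with the shift (the conditional shift-invariance hypothesis). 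Since a quantity in $[0,\log|A|]$ can drop by a fixed positive amount only boundedly often, all but boundedly many dyadic scales are ``good'', and along a sequence of good scales $P_j \to \infty$ the right-hand side of the display above tends to $0$, giving $\liminf_{P\to\infty} \E[\,|f_P(X_P,Y_P) - f_P(X_P,W_P)|\,] = 0$.

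I expect the decisive difficulty to be this last inequality --- turning non-negligible shared information at many primes in a block into a quantitative loss of entropy rate. The delicate point is to reconcile the window length one can afford (bounded by the finite entropy budget) with the ``reach'' of $f_{P,p}$, which reads coordinates of $X_P$ spread along an arithmetic progression of step $p$; as in the cited references this is managed by tracking the statistics of the process sampled along arithmetic progressions rather than along contiguous windows, and by letting $N$ grow with $P$ at just the right rate. A minor technical point, relevant when the theorem is later applied with $x_i = \E^{\mu}[f | \z](T^i x')$, is that one must first discretize the values to a finite alphabet, at the cost of a perturbation of $f_P$ that can be made arbitrarily small.
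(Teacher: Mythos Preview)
Your outline follows the same entropy-decrement strategy as the paper, and your per-prime Pinsker reduction is a legitimate alternative to the paper's route. The paper does not bound things prime by prime: after centering so that $\E_{w_p} f_{P,p}(a,w_p)=0$ for each $a$, it applies Hoeffding's inequality to the average $f_P(a,W_P)$ over the $\sim P/\log P$ independent coordinates $w_p$, obtaining concentration at rate $\exp(-c\,\e^2 P/\log P)$, and then applies \emph{once} the Pinsker-type bound $\mathbb{P}(Y_P\in E\mid X_P=a)\lesssim \bigl(H(W_P)-H(Y_P\mid X_P=a)+O(1)\bigr)/\bigl(-\log\mathbb{P}(W_P\in E)\bigr)$ of \cite{TJ2}. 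Your route instead uses standard Pinsker for each $y_p$ and then the inequality $\sum_{P/2<p\le P} I(X_P:y_p)\le I(X_P:Y_P)$ (valid because the $y_p$ are independent, so $H(Y_P)=\sum_p H(y_p)$ while $H(Y_P\mid X_P)\le\sum_p H(y_p\mid X_P)$). Both reductions land on the same requirement, namely that $I(X_P:Y_P\mid Y_{\le P/2})=o(P/\log P)$ along a subsequence of dyadic $P$; the paper's Hoeffding step buys only that one can use the cruder large-deviation form of Pinsker rather than the TV version.

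There is, however, a genuine slip in your description of the decrement. The unconditional entropy rate $N^{-1}H(x_1,\dots,x_N)$ is already subadditive and hence convergent, regardless of the $y_p$'s; nothing about $I(x_{[1,N]}:y_p)$ forces it to drop, so the statement ``a quantity in $[0,\log|A|]$ can drop by a fixed positive amount only boundedly often'' has no content there. The quantity that actually decrements is the \emph{conditional} entropy $H(X_P\mid Y_{\le P})$: one has $H(X_P\mid Y_{\le P})=H(X_P\mid Y_{\le P/2})-I(X_P:Y_P\mid Y_{\le P/2})$ by the chain rule, and the conditional shift-invariance hypothesis is exactly what gives $H(X_P\mid Y_{\le P/2})\le 2\,H(X_{P/2}\mid Y_{\le P/2})$; iterating these two inequalities produces the contradiction if the conditional mutual information is uniformly $\gg \e P/\log P$. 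Accordingly your Pinsker step must be carried out conditionally on $Y_{\le P/2}$ (and then averaged), not unconditionally as written; this is a straightforward fix once you track the right quantity.
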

	\begin{proof}
Fix a large power of two $P$ and $\e > 0$.
By replacing $f_{P,p}(a,b)$ by $f_{P,p}(a,b) -  f_{P, p}(a, W_P)$ we may assume that $ f_{P, p}(a, W_P) = 0$ for all $a$. 
		To prove the theorem, first we need a very good understanding of the case when $X_P$ and $Y_P$ are independent. In that case, even if we know the exact value of $X_P$, $f_P$ is still a sum of independent random variables $f_{P, p}(a, W_P)$ and therefore exhibits concentration. This is formalized in Hoeffding's inequality, which says that large collections of independent random variables exhibit concentration.
		\begin{lem}[Hoeffding's Inequality]
			Suppose $Z_1, \ldots, Z_n$ are independent random variables taking values in $[-2, 2]$. Then 
			\[ \mathbb{P}( |Z_1 + \cdots + Z_n - \E[Z_1 + \cdots + Z_n] | > t ) \leq \exp ( - n t^2 / 16  ). \]
		\end{lem}
Let $a$ be an element of $A^{CP}$. We apply Hoeffding's inequality to the random variables $f_{P,p}(a, Y_P)$. (We remind the reader that there are roughly $\frac{P}{2 \log P}$ many such terms, by the prime number theorem).
\begin{equation}\label{hoeffcor}
\mathbb{P} ( |f_P(a, W_P)| > \e ) \leq  \exp(-\frac{\e^2 P}{40 \log P}).
\end{equation}
		Next, we aim to show that if $Y_P$ is not necessarily independent of $X_P$ but nearly independent of $Y_P$, we still can obtain a good bound. 
To do this, we use a Pinsker-type inequality.
		\begin{lem}\label{pinsker}[\cite{TJ2} Lemma 3.4]
			Let $Y$ be a random variable taking values in a finite set, let $W$ be a uniformly distributed random variable on the same set and let $E$ be a set. Then
			\[\mathbb{P}(Y \in E) \leq -\frac{H(W) - H(Y) + \log 2}{ \log \mathbb{P}(W \in E) }. \]
		\end{lem}
Let $a$ be an element of $A^{CP}$. Let $E$ be the set of $b$ in $\prod_{P/2 < p \leq P} \Z / p \Z$ such that $| f_P(a,b) | > \e$.  By \ref{hoeffcor}, we know 
\[
\mathbb{P}(W_P \in E)  \leq  \exp(-\frac{\e^2  P}{40 \log P}).
\]
Applying Lemma \ref{pinsker} to $\mathbb{P}( \ \cdot \ | X_P = a)$, we find 
\[\mathbb{P}( |f(a,Y_P)| > \e | X_P = a ) \leq -\frac{(H(W_P) - H(Y_P | X_P = a) + \log 2) 40 \log P}{ \e^2  P}. \]
Note that 
\[
\sum_a \mathbb{P}(X_P = a) (H(W_P) - H(Y_P | X_P = a)) = H(W_P) - H(Y_P | X_P) = I(X_P, Y_P),
\]
where the last equality follows since $H(W_P) = H(Y_P)$ since the two random variables have the same distribution. Therefore, summing over $a$, we get
\[
\mathbb{P}( |f(X_P,Y_P)| > \e ) \leq -\frac{(I(X_P, Y_P) + \log 2) 40 \log P}{ \e^2  P}.
\]
If
\[
I(X_P, Y_P) \lesssim \frac{\e P}{ \log P}
\]
then 
\[
\mathbb{P}( |f(X_P, Y_P)| > \e) \lesssim \e.
\]
This would complete the proof. Let $Y_{\leq P/2} = (y_p)_{p \leq P/2}$.  Fix $b'$ an element of $\prod_{p \leq P/2} \Z / p \Z$. Then we may repeat the previous argument with $\mathbb{P} ( \ \cdot \ | Y_{\leq P/2} = b')$ to conclude:
\[
\mathbb{P}( |f(X_P,Y_P)| > \e ) \leq -\frac{(I(X_P, Y_P | Y_{\leq P/2 } ) + \log 2) 40 \log P}{ \e^2  P}.
\]
and therefore if
\[
I(X_P, Y_P | Y_{\leq P/2 } )  \lesssim \frac{\e^3  P}{ \log P}
\]
then 
\[
\mathbb{P}( |f(X_P,Y_P)| > \e ) \lesssim \e
\]
and therefore
\[
\E |f(X_P,Y_P)| \lesssim \e.
\]
Let $P_0$ be a power of two. We will try to show that there exists $P \geq P_0$ such that 
\[
\E |f(X_P,Y_P)| \lesssim \e.
\]
This would complete the proof.
Suppose not.
Then 
\[
I(X_P, Y_P | Y_{\leq P/2 } )  \gg \frac{\e^3  P}{\log P},
\]
for all $P \geq P_0$. By definition of mutual information,
\begin{align}
H(X_P | Y_{\leq P}) &= H(X_P | Y_{\leq P/2} ) - I(X_P, Y_P | Y_{\leq P/2}) \nonumber
\intertext{where $Y_{\leq P} = (y_p)_{p \leq P}$. By assumption, we have a lower bound for the mutual information}
&\leq H(X_P | Y_{\leq P/2} ) - \frac{\e^3  P}{ \log P} \nonumber
\intertext{By subadditivity of entropy, }
&\leq H(X_{P/2} | Y_{\leq P/2} ) + H(x_{CP/2 + 1}, \ldots, x_{CP} | Y_{\leq P/2} ) - \frac{\e^3  P}{ \log P} \label{entropyformula}
\end{align}
where $X_{P/2} = (x_1, \ldots, x_{CP/2})$. Since $(x_1, \ldots, x_{CP/2})$ has the same distribution as $(x_1, \ldots, x_{CP/2})$, for any set $S$ in $\C^{CP/2}$ and for any $b'$ in $\prod_{p \leq P/2} \Z / p \Z$
\[
\mathbb{P}( (x_1, \ldots, x_{CP/2}) \in S | Y_{\leq P/2} = b' )  = \mathbb{P}( (x_{CP/2 + 1}, \ldots, x_{CP}) \in S | Y_{\leq P/2} = b' +  CP/2 ).
\]
Since the entropy of a random variable only depends on its distribution, we conclude that, for all $b'$
\[
H( x_1, \ldots, x_{CP/2}  | Y_{\leq P/2} = b' ) = H( x_{CP/2 + 1}, \ldots, x_{CP} | Y_{\leq P/2} = b' +  CP/2 )
\]
Since $Y_{\leq P/2}$ is uniformly distributed, for all $b'$,
\[
\mathbb{P}(Y_{P/2} = b') = \mathbb{P}(Y_{P/2} = b' + CP/2).
\]
Therefore, summing in $b'$,
\begin{align}
& H( x_1, \ldots, x_{CP/2}  | Y_{\leq P/2}) \label{entropyformula2} \\
= &\sum_{b'} \mathbb{P}(Y_{P/2} = b') H( x_1, \ldots, x_{CP/2}  | Y_{\leq P/2} = b' ) \nonumber
\\
= & \sum_{b'}\mathbb{P}(Y_{P/2} = b' + CP/2) H( x_{CP/2 + 1}, \ldots, x_{CP} | Y_{\leq P/2} = b' +  CP/2 ) \nonumber \\
= & H( x_{CP/2 + 1}, \ldots, x_{CP} | Y_{\leq P/2} ). \nonumber
\end{align}
Applying \ref{entropyformula2} to \ref{entropyformula},
\begin{align*}
H(X_P | Y_{\leq P}) &\leq 2H(X_{P/2} | Y_{\leq P/2} )  - \frac{\e^3  P}{ \log P}.
\intertext{We just obtained an upper bound for $H(X_P | Y_{\leq P})$. We can apply the same argument to obtain an upper bound for $H(X_{P/2} | Y_{\leq P/2} )$. }
&\leq 4H(X_{P/4} | Y_{\leq P/4} ) - 2 \frac{\e  P/2}{\log  P/2}  - \frac{\e^3  P}{\log P}
\end{align*}
where $X_{P/2} = (x_1, \ldots, x_{CP/4})$ and where $Y_{\leq P /4} = (y_p)_{p \leq P /4}$. Applying this argument inductively, if $P = 2^m \cdot P_0$ then
\begin{equation}\label{entropyformula3}
\leq 2^m \left( H(X_{P_0} | Y_{\leq P_0}) - \e \sum_{j = 1}^{m} \frac{P_0 }{j} \right).
\end{equation}
However, $\sum_{m \leq \log_2 P / P_0 }\frac{1}{m} \sim \log \log P$ so for large $P$
\[
\e \sum_{j = 1}^{m} \frac{P_0 }{j} \gg C P_0 \log |A| \geq H(X_{P_0} | Y_{\leq P_0}).
\]
Combining this with \ref{entropyformula},
\[
H(X_P, Y_{\leq P}) < 0
\]
which is impossible.

	\end{proof}
	Applying the Theorem \ref{tao} to our situation yields,
	\begin{cor}\label{cor1}
	Let $X$, $Y$, $\mu$, $f$, $f'$, $M$, $I_p$, $c$ and $\eta$ be as in Subsection \ref{background}. Let $\z$ be as in Definition \ref{HKdef}. Let $B$ be as in Corollary \ref{periodicpoints}. 
		We have
		\[
		\limsup_{P \rightarrow \infty} \E_{P/2 < p \leq P} \int_X \sup_{y \not\in B} \Big| \E_{h \leq k} \E^{\mu} [f | \z] (T^{ph} x) \cdot f'(T^h y) \Big|  \mu(dx) > c.
		\]
	\end{cor}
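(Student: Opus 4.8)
The plan is to read off Corollary \ref{cor1} from Theorem \ref{tao}, the entropy decrement statement, applied to the random variables and functions set up just before it. Recall the notation introduced there: $x_i = \E^{\mu}[f | \z](T^i x')$ for $x'$ distributed according to $\mu$, $X_P = (x_1,\dots,x_{(k+1)P})$, $Y_P = (M(x') \bmod p)_{P/2 < p \leq P}$, and $f_{P,p}$, $f_P = \E_{P/2<p\leq P} f_{P,p}$ as defined there. The proposition preceding this subsection, rewritten via the averaging over $i$ carried out above, is precisely the statement that
\[
\E\big[ f_P(X_P, Y_P) \big] \;=\; \E_{P/2 < p \leq P}\int_X p\,\ch_{M^{-1}(p\widehat{\Z})}\,\sup_{y\not\in B}\Big|\E_{h\leq k}\E^{\mu}[f|\z](T^{ph}x)\,f'(T^h y)\Big|\,\mu(dx) \;>\; c+\eta
\]
for every natural number $P$, in particular along the powers of two. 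If $\E^{\mu}[f|\z]$ takes infinitely many values I would first fix $\e > 0$ small compared with $\eta$, partition the closed unit disk into finitely many Borel sets of diameter at most $\e$, and round $\E^{\mu}[f|\z]$ to a function $g$ that is constant on each piece; every quantity below then changes by $O(\e)$, so it is harmless to assume that $x_i$ takes values in a fixed finite alphabet $A$ and that the lower bound above reads $> c+\eta-O(\e)$.

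The next step is to check the hypotheses of Theorem \ref{tao}. Finiteness of $A$ is arranged above, and each $f_{P,p}$ is $O(1)$-bounded (because $|x_i|\leq 1$, $|f'|\leq 1$, and $\E_{i\leq P} p\,\ch_{y_p = i}\leq 2$ when $P/2 < p\leq P$), so after rescaling by an absolute constant one may take the $f_{P,p}$ to be $1$-bounded. That $Y_P$ is uniform on $\prod_{P/2<p\leq P}\Z/p\Z$ and that the $Y_P$ are jointly independent over the powers of two both follow from the fact that the axioms of a dynamical model for $a$ force $M_*\mu$ to be the Haar probability measure on $\widehat{\Z}$: translation invariance of $M_*\mu$ is immediate from $M\circ T = M+1$ and $T$-invariance of $\mu$, and $M_*\mu(m\widehat{\Z}) = \tfrac{1}{m}$ is the case of a constant test function in the pushforward axiom for $I_m$. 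Finally, stationarity of $(x_i)$ and the conditional stationarity hypothesis come from $T$-invariance of $\mu$ together with $M\circ T = M+1$: the measure-preserving map $T^i$ sends $x_j$ to $x_{j+i}$ and $M$ to $M+i$, so it identifies, up to a translation of the conditioning variable, the joint law of $\big((x_1,\dots,x_m),(y_p)_{p\leq P}\big)$ with that of $\big((x_{i+1},\dots,x_{i+m}),(y_p)_{p\leq P}\big)$, while the conditioning events all have measure $\big(\prod_{p\leq P}p\big)^{-1}$.

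Theorem \ref{tao} then yields a sequence of powers of two $P_j\to\infty$ along which $\E\big[\,|f_{P_j}(X_{P_j},Y_{P_j}) - f_{P_j}(X_{P_j},W_{P_j})|\,\big]\to 0$, where $W_P$ is an independent copy of $Y_P$. I would finish by computing $\E[f_P(X_P,W_P)]$: since $W_P$ is uniform and independent of $X_P$, conditioning on $X_P$ gives $\E_{W_P}\big[p\,\ch_{W_P = i\bmod p}\big] = 1$, so the arithmetic weight collapses to its mean, and then stationarity of $(x_i)$ absorbs the shift by $i$, leaving
\[
\E\big[f_P(X_P,W_P)\big] \;=\; \E_{P/2<p\leq P}\int_X \sup_{y\not\in B}\Big|\E_{h\leq k} g(T^{ph}x)\,f'(T^h y)\Big|\,\mu(dx),
\]
which is within $O(\e)$ of the integral appearing in Corollary \ref{cor1}. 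Combining this with $\E[f_{P_j}(X_{P_j},Y_{P_j})] > c+\eta-O(\e)$ and the displayed convergence gives $\E[f_{P_j}(X_{P_j},W_{P_j})] > c+\eta-O(\e)-o(1) > c$ for $j$ large, provided $\e$ was fixed small enough at the outset, and hence $\limsup_{P\to\infty}$ of the integral in the statement is $> c$.

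I expect the only genuinely delicate point — beyond quoting Theorem \ref{tao}, which does the real work — to be this last identification of $\E[f_P(X_P,W_P)]$ with the target integral: one must be sure that passing from $Y_P$ to the independent uniform copy $W_P$ really does replace the weight $p\,\ch_{M^{-1}(p\widehat{\Z})}$ by $1$ (this is exactly where uniformity of $M_*\mu$ modulo $p$ enters), and one must keep the finite-alphabet rounding error $O(\e)$ uniform in $P$ so that it can be absorbed into $\eta$ once and for all.
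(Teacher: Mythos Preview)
Your proposal is correct and follows essentially the same route as the paper's own proof: round $\E^\mu[f|\z]$ to a finite-alphabet function, verify the hypotheses of Theorem~\ref{tao} (boundedness of $f_{P,p}$, uniformity of $Y_P$ via $M_*\mu$ being Haar, and stationarity via $T$-invariance), apply the entropy decrement to extract a subsequence along which $\E[f_P(X_P,Y_P)]$ and $\E[f_P(X_P,W_P)]$ agree asymptotically, compute the latter by collapsing the weight $p\,\ch_{w_p=i}$ to $1$ and absorbing the shift by translation invariance, and then let $\e\to 0$. Your computation of $\E[f_P(X_P,W_P)]$ and your justification of the Haar property of $M_*\mu$ are in fact spelled out slightly more explicitly than in the paper, but the argument is the same.
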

	\begin{proof}
Recall that, for all natural numbers $P$,
		\[
		\E_{P/2 < p \leq P} \int_{X} p \ch_{M^{-1}(p \hat{\Z} )}  \sup_{y \not\in B} \Big| \E_{h \leq k} \E^{\mu} [f | \z] ( T^{ph} x) \cdot f'(T^h y) \Big|  \mu(dx) > c + \eta.
		\]
By translation invariance, for all natural numbers $P$,
		\[
		\E_{P/2 < p \leq P} \E_{i \leq P} \int_{X} p \ch_{M^{-1}(p \hat{\Z} + i )}  \sup_{y \not\in B} \Big| \E_{h \leq k} \E^{\mu} [f | \z] ( T^{ph + i} x) \cdot f'(T^h y) \Big|  \mu(dx) > c + \eta.
		\]
Let $x'$ be a random variable with distribution $\mu$. Fix $\e > 0$ small. We will ask that $\e < 10 \cdot \eta$. Let $\phi$ be a measurable function on $X$ which uniformly approximates $ \E^{\mu} [f | \z]$ i.e.
\[
|| \phi - \E^{\mu} [f | \z] ||_{L^\infty} < \e.
\]
For instance, $\phi(x)$ could be obtained by rounding $\E^{\mu} [f | \z](x)$ to the closest element of $\frac{\e}{10} \cdot \Z[i]$. By the triangle inequality
		\[
		\E_{P/2 < p \leq P} \E_{i \leq P} \int_{X} p \ch_{M^{-1}(p \hat{\Z} + i )}  \sup_{y \not\in B} \Big| \E_{h \leq k} \E^{\mu} \phi ( T^{ph + i} x) \cdot f'(T^h y) \Big|  \mu(dx) > c + \eta - \e.
		\]
For each natural number $P$, let $X_P = (x_1, \ldots, x_{CP})$ where $x_i = \phi(T^i x')$ and where $C = k+1$. Let $Y_P = (y_p)_{P/2 < p \leq P}$ where $y_p = M(x') \mod p$. For each natural number $P$, let 
\[
f_{P,p} (X_P, y_p) = \E_{i \leq P} \ p \ch_{y_p = i} \sup_{y \not\in B} \Big| \E_{h \leq k} x_{hp + i} f(T y) \Big|.
\]
Define
\[
f_P(X_P, Y_P) = \E_{P/2 < p \leq P} f_{P, p}(X_P y_P). 
\]
Unpacking definitions, for every natural number $P$,
\[
\E f_P(X_P, Y_P) > c + \eta - \e.
\]
Now we check the hypotheses of Theorem \ref{tao}. Because $\phi$ takes only finitely many values, $x_i$ takes values in a finite set.  For all natural numbers $P$, since the distribution of $(y_p)_{p \leq P}$ is a $+1$ invariant measure on $\prod_{p \leq P} \Z /p \Z$, it must be the uniform distribution. Since $\mu$ is translation invariant, for any natural numbers $i$ and $m$ and any subset $E$ of $X^m$
\[
\mathbb{P} ( (T x', \ldots, T^m x') \in E ) = \mathbb{P} ( (T^{i+1} x', \ldots, T^{i + m} x') \in E ).
\]
Applying this to the preimage under $(\phi,\ldots,\phi)$ of an arbitrary subset $S$ of $\C^m$ reveals that the distribution of $(x_1, \ldots, x_m)$ is the same as the distribution of $(x_{i+1}, \ldots, x_{i+m})$. Similarly, if $b$ is an element in $\prod_{p \leq P} \Z / p \Z$ if $E$ is the preimage under $(\phi, \ldots, \phi)$ of an arbitrary set $S$ intersected with the set of points $z$ in $X$ such that $M(z) = b \mod \prod_{p \leq P} p$ then we conclude
\[ \mathbb{P}( (x_1,\ldots, x_m) \in S \ | \ (y_p)_{p \leq P} = b) = \mathbb{P}( (x_{i +1},\ldots, x_{i+m}) \in S \ | \ (y_p)_{p \leq P} = b + i). \] 
For each natural number $P$ and each prime $P/2 < p \leq P$, for at most two values of $i \leq P$ is it true that $y_p = i \mod p$. Therefore, at most two terms in the sum 
\[ \E_{i \leq P} \ p \ch_{y_p = i} \sup_{y \not\in B} \Big| \E_{h \leq k} x_{hp + i} f(T y) \Big| \]
are nonzero. Therefore $f_{P, p}$ is bounded by $2$. 
Let $W_P$ be a random variable with the same distribution as $Y_P$. Then by Theorem \ref{tao}
\[
\liminf_{P \rightarrow \infty} \E [ | f_P(X_P, Y_P) - f_P(X_P, W_P) | ] = 0.
\]
Since, for any natural number $P$,
\[
\E f_P(X_P, Y_P) > c + \eta - \e.
\]
We conclude that 
\[
\limsup_{P \rightarrow \infty} \E f_P(X_P, W_P) > c +  \eta - \e.
\]
Unpacking definitions, this proves
		\[
		\limsup_{P \rightarrow \infty} \E_{P/2 < p \leq P} \E_{i \leq P} \int_{X} \sup_{y \not\in B} \Big| \E_{h \leq k} \phi( T^{ph + i} x) \cdot f'(T^h y) \Big|  \mu(dx) > c + \eta - \e.
		\]
By the triangle inequality
		\[
		\limsup_{P \rightarrow \infty} \E_{P/2 < p \leq P} \E_{i \leq P} \int_{X}  \sup_{y \not\in B} \Big| \E_{h \leq k} \E^{\mu} [f | \z] ( T^{ph + i} x) \cdot f'(T^h y) \Big|  \mu(dx) > c + \eta - 2\e.
		\]
Since $\e$ was arbitrary
		\[
		\limsup_{P \rightarrow \infty} \E_{P/2 < p \leq P} \E_{i \leq P} \int_{X}  \sup_{y \not\in B} \Big| \E_{h \leq k} \E^{\mu} [f | \z] ( T^{ph + i} x) \cdot f'(T^h y) \Big|  \mu(dx) > c.
		\]
By translation invariance
		\[
		\limsup_{P \rightarrow \infty} \E_{P/2 < p \leq P} \int_{X}  \sup_{y \not\in B} \Big| \E_{h \leq k} \E^{\mu} [f | \z] ( T^{ph} x) \cdot f'(T^h y) \Big|  \mu(dx) > c.
		\]
		This completes the proof.
	\end{proof}
\subsection{Nilsystems and Algebraic Structure}
	Now we want to use \cite{HostKra} to show that $\E^{\mu} [f | \z]$ has some local algebraic structure. This algebraic structure makes $\E^{\mu} [f | \z]$ much easier to understand than $f$.
	\begin{prop}\label{fourierdecomp}
		Let $\omega$ be an element of $\Omega$ such that 
		\[
		\limsup_{P \rightarrow \infty} \E_{P/2 < p \leq P} \int_X \sup_{y \not\in B} \Big| \E_{h \leq k} \E^{\mu} [f | \z] (T^{ph} x) \cdot f'(T^h y) \Big|  \mu_\omega(dx) > c.
		\]
		Then for almost all such choices for $\omega$, there exists a collection of nilsystems $(G(j)/ \Gamma(j), dx, g(j), \mathcal{B} )$, $1$-bounded functions $F_j$ and factor maps $\psi_j \colon X \rightarrow G(j) / \Gamma(j)$ so that $F_j$ is a nilcharacter on $G(j) / \Gamma(j)$ with frequency nontrivial on the identity component and such that, after identifying $F_j$ with a function on $X$, we have that $\sum F_j = F$ satisfies $|| F ||_\infty \leq 1$ and 
		\[
		\limsup_{P \rightarrow \infty} \E_{P/2 < p \leq P} \int_X \sup_{y \not\in B} \Big| \E_{h \leq k} F(T^{ph} x) \cdot f'(T^h y) \Big|  \mu_\omega(dx) > c.
		\]
	\end{prop}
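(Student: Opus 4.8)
The plan is to pass to an ergodic component of $\mu$, invoke the Host--Kra structure theorem to realize $\E^\mu[f|\z]$ as a function on an inverse limit of nilsystems, and then decompose that function one central step at a time into vertical Fourier (nilcharacter) components, keeping those whose frequency is nontrivial on the identity component and showing that everything else must vanish by Corollary \ref{MRTerg}.

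\emph{Reduction to an ergodic component and approximation.} By Corollary \ref{cor1}, the ergodic decomposition $\mu=\int_\Omega\mu_\omega\,d\omega$, and $1$-boundedness of the integrand, the hypothesised inequality holds for $\omega$ in a set of positive measure; fix such an $\omega$, lying also in the full-measure set where the conclusions of Corollary \ref{MRTerg} and Proposition \ref{conditionwell} are valid. By Proposition \ref{conditionwell}, $\E^\mu[f|\z]=\E^{\mu_\omega}[f|\z_\omega]$ $\mu_\omega$-almost everywhere, so $\E^\mu[f|\z]$ is, up to $\mu_\omega$-null sets, a function on the Host--Kra factor $\z_\omega$ of the ergodic system $(X,\mu_\omega,T)$; by the Host--Kra structure theorem \cite{HostKra} this factor is an increasing limit of nilfactors, with factor maps $\psi_i\colon X\to G_i/\Gamma_i$ and $\z_\omega=\bigvee_i\psi_i^{-1}(\mathcal{B})$ up to null sets. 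Put $\bar{F}_i=\E^{\mu_\omega}[f|\psi_i^{-1}(\mathcal{B})]$, regarded as a $1$-bounded function on $G_i/\Gamma_i$; then $\bar{F}_i\to\E^\mu[f|\z]$ in $L^2(\mu_\omega)$ by martingale convergence. Since $\mu_\omega$ is $T$-invariant and $|f'|\le 1$, replacing $\E^\mu[f|\z]$ by $\bar{F}_i$ in the displayed integral perturbs it by at most $||\bar{F}_i-\E^\mu[f|\z]||_{L^2(\mu_\omega)}$, uniformly in $p$ and $P$; as the inequality of Corollary \ref{cor1} is strict, it suffices to fix $i$ large and produce the required decomposition of $\bar{F}_i$.

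\emph{Vertical Fourier decomposition and induction on the step.} Let $s$ be the nilpotency class of $G_i$, so that $(G_i)_s$ is central. Decomposing $\bar{F}_i$ into isotypic components for the action of the compact abelian group $(G_i)_s/(\Gamma_i)_s$ writes $\bar{F}_i=\sum_\xi(\bar{F}_i)_\xi$ in $L^2(\mu_\omega)$, where $(\bar{F}_i)_\xi$ is a nilcharacter of frequency $\xi$ obtained by averaging $\bar{F}_i$ against $\overline\xi$ over $(G_i)_s/(\Gamma_i)_s$; in particular $||(\bar{F}_i)_\xi||_\infty\le||\bar{F}_i||_\infty\le 1$. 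The component $(\bar{F}_i)_1$ is $(G_i)_s$-invariant, descends to the quotient nilsystem of class $s-1$, and is handled by the inductive hypothesis. A component $(\bar{F}_i)_\xi$ with $\xi$ nontrivial on the identity component of $(G_i)_s$ we keep: it is a $1$-bounded nilcharacter of the required type, realized on the quotient of $G_i/\Gamma_i$ by $\ker\xi$ via an induced factor map. For a frequency $\xi$ that is nontrivial but trivial on the identity component, one uses the standard reduction from the theory of polynomial sequences on nilmanifolds --- in the spirit of Leibman's work --- that the corresponding nilcharacter equals, modulo genuine nilcharacters of class $\le s-1$ (possibly dressed with periodic factors, which re-enter the induction), a function that is periodic along every $T$-orbit. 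Each such periodic function is measurable with respect to the Kronecker factor, hence with respect to $\z$, so Corollary \ref{MRTerg} yields $\langle\E^\mu[f|\z],\phi\rangle_{L^2(\mu_\omega)}=\langle f,\phi\rangle_{L^2(\mu_\omega)}=0$ for every such $\phi$; since each of these components is the orthogonal projection of $\bar{F}_i$, equivalently of $\E^\mu[f|\z]$, onto the corresponding subspace, they all vanish identically. Unwinding the terminating induction, $\bar{F}_i=\sum_j F_j$ in $L^2(\mu_\omega)$ with each $F_j$ a $1$-bounded nilcharacter of frequency nontrivial on the identity component, defined on a nilsystem $G(j)/\Gamma(j)$ via a factor map $\psi_j\colon X\to G(j)/\Gamma(j)$. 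Setting $F=\bar{F}_i=\sum_j F_j$ gives $||F||_\infty\le 1$ and the displayed inequality; finally, to obtain the statement for almost every admissible $\omega$ we argue as in the proof of Proposition \ref{conderg}, intersecting the countably many full-measure sets that arise (there being only countably many nilsystems up to isomorphism).

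\emph{The main obstacle} is the handling, inside the vertical Fourier induction, of a nilcharacter whose top-step frequency is trivial on the identity component: one must make the reduction to ``periodic times lower step'' precise, track the lower-step pieces (and their periodic dressings) as they feed back into the induction, and check that the process terminates leaving only good nilcharacters together with periodic --- hence, by Corollary \ref{MRTerg}, absent --- pieces. A secondary issue is the bookkeeping that keeps the total loss from all the approximations below the gap in the strict inequality of Corollary \ref{cor1}.
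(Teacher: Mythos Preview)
Your overall strategy matches the paper's: pass to an ergodic component via Proposition \ref{conditionwell}, approximate $\E^{\mu_\omega}[f\mid\z_\omega]$ by its projection $F$ onto a single nilfactor $G/\Gamma$ using \cite{HostKra}, take the vertical Fourier decomposition $F=\sum_\xi F_\xi$, and induct on the step, killing periodic pieces at the base with Corollary \ref{MRTerg}. Where you diverge is exactly at what you flag as ``the main obstacle'': a frequency $\xi$ that is trivial on the identity component of $G_s$ but nontrivial on $G_s$. You propose a Leibman-style reduction to ``periodic times lower step, possibly with periodic dressings re-entering the induction,'' which you concede you have not made precise.

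The paper's resolution is simpler: for $s>1$ this case is \emph{empty}. The point is that $G_s$ is generated by iterated commutators, and every commutator $[\sigma,\tau]$ of elements of $G$ already lies in the identity component modulo $\Gamma$. Concretely, the left action of $G$ on the finite set of components of $G/\Gamma$ gives a homomorphism $\sigma\mapsto\sigma_*$; since $g$ acts ergodically, each component meets $\{g^n\Gamma\}$, so after adjusting by $\Gamma$ one may replace $\sigma,\tau$ by powers of $g$, and $[g^n,g^m]=e$ (with higher-order corrections handled inductively) forces $[\sigma,\tau]_*=\mathrm{id}$, i.e.\ $[\sigma,\tau]\in G^0\cdot\Gamma$. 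It follows that any character of $G_s/\Gamma_s$ trivial on the identity component is trivial on all of $G_s$ when $s>1$, so $F_\xi$ descends directly to the step $s{-}1$ quotient $(G/G_s)/(\Gamma/\Gamma_s)$ with no periodic factor attached. The only genuinely ``locally constant'' contributions appear at the abelian base $s=1$, where Corollary \ref{MRTerg} applies exactly as you say (and as the paper argues, via $0=\langle f,\overline{F_r}\rangle=\langle F,\overline{F_r}\rangle=\|F_r\|_2^2$). So your induction is correct in outline, but the bookkeeping you worry about never materialises once you use this connectivity fact.
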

	\begin{proof}
		We are given that 
		\[
		\limsup_{P \rightarrow \infty} \E_{P/2 < p \leq P} \int_X \sup_{y \not\in B} \Big| \E_{h \leq k} \E^{\mu} [f | \z] (T^{ph} x) \cdot f'(T^h y) \Big|  \mu_\omega(dx) > c.
		\]
		Recall that by Lemma \ref{conditionwell}, we know that 
		\[
		\limsup_{P \rightarrow \infty} \E_{P/2 < p \leq P} \int_X \sup_{y \not\in B} \Big| \E_{h \leq k} \E^{\mu_\omega} [f | \z_\omega] (T^{ph} x) \cdot f'(T^h y) \Big|  \mu_\omega(dx) > c.
		\]
		By \cite{HostKra} Theorem 10.1, $(X, \mu_\omega, T, \z)$ is isomorphic to an inverse limit of nilsystems. Therefore, there exists $(G / \Gamma, dx, g, \mathcal{B})$ a nilsystem, $\psi \colon X \rightarrow G / \Gamma$ a factor map such that 
		\[
		\limsup_{P \rightarrow \infty} \E_{P/2 < p \leq P} \int_X \sup_{y \not\in B} \Big| \E_{h \leq k} \E^{\mu_\omega} [f | \psi^{-1}(\mathcal{B}) ] (T^{ph} x) \cdot f'(T^h y) \Big|  \mu_\omega(dx) > c.
		\]
		We denote $F = \E^{\mu_\omega} [f | \psi^{-1}(\mathcal{B}) ]$. By a Fourier decomposition, we may write $F$ as a sum of nilcharacters, $F = \sum_\xi F_\xi$. For each $\xi$, either $\xi$ is nontrivial on the identity component of $G / \Gamma$ or $\xi$ is trivial on the identity component. If $\xi$ is trivial on the identity component and the step $s$ of $G$ is $> 1$, then $\xi$ is actually trivial on $G_s$. That is because, for any $\sigma$ in $G$, the multiplication by $\sigma$ map $\sigma \colon G / \Gamma \rightarrow G / \Gamma$ is continuous so it takes components to components. Let $\sigma_* \colon \text{components of $G$} \rightarrow \text{components of $G$}$ be the induced map on components and let $\tau$ be any other element of $G$. Then if $\sigma$ and $\tau$ are in the same component of $G$ then for any $\sigma'$ in $G$, multiplication by $\sigma'$ on the right is also continuous, so $\sigma \sigma'$ is in the same component as $\tau \sigma'$ so $\sigma_* = \tau_*$. We return to the general case where $\sigma$ and $\tau$ are not necessarily in the same component. Also note that, for any element $\gamma$ in $\Gamma$, $(\gamma \sigma)_* = [\gamma, \sigma]_* \sigma_*  \gamma_* = [\gamma, \sigma]_* \sigma_*$. Pick $n$, $m$, $\gamma$ and $\gamma'$ such that $g^n \gamma$ is in the same component as $\sigma$ and $g^m \gamma'$ is in the same component as $\tau$. Thus $[\sigma, \tau]_* = [g^n \gamma, g^m \gamma']_* = \pi_* [g^n, g^m]_*$ where $\pi$ is an element of higher order. Of course $[g^n, g^m] = e$ and by induction we get that $[\sigma, \tau]_*$ is the identity and therefore $[\sigma, \tau] \gamma$ is in the identity component for some $\gamma$. Therefore, if $s > 1$, the function $F_\xi$ descends to a function of on $(G / G_s) / (\Gamma / \Gamma_s)$. By induction, we can almost prove the theorem, namely we can find a collect of nilsystems $(G(j)/ \Gamma(j), dx, g(j), \mathcal{B} )$ and functions $F_j$ and factor maps $\psi_j \colon X \rightarrow G(j) / \Gamma(j)$ so that $F_j$ is a nilcharacter on $G(j) / \Gamma(j)$ with frequency nontrivial on the identity component or $G(j)$ is abelian and such that, after identifying $F_j$ with a function on $X$, we have that $\sum F_j = F$ satisfies $|| F ||_\infty \leq 1$ and 
		\[
		\limsup_{P \rightarrow \infty} \E_{P/2 < p \leq P} \int_X \sup_{y \not\in B} \Big| \E_{h \leq k} F(T^{ph} x) \cdot f'(T^h y) \Big|  \mu_\omega(dx) > c.
		\]
		It remains to observe that the case of a locally constant function on an abelian group cannot occur by Corollary \ref{MRTerg} as follows: we can think of the $F_j's$ as all functions on the group $G$ with some additional equivariance properties; by construction the different $F_j$'s have different frequencies so if $F_r$ is a locally constant function on an abelian group and thus is locally periodic, meaning $F_r(T^h x)$ is a periodic function of $h$. Then by Corollary \ref{MRTerg},
		\begin{align*}
		0 = &\int_{X} f \cdot \overline{F_r} \mu_\omega(dx).
		\intertext{Since $F_r$ is $\psi^{-1}(\mathcal{B})$ measurable,}
		= &\int_{G / \Gamma} F \cdot \overline{F_r} dx.
		\intertext{Since all the $F_j$'s have different frequencies, they are all orthogonal to each other.}
		= &\int_{G / \Gamma} F_r \cdot \overline{F_r} dx.
		\end{align*}
	\end{proof}
\begin{rem}
		Note that if we also know the $\kappa-1$-Fourier uniformity conjecture then the step of all nilpotent Lie groups is $\geq \kappa$ by Proposition \ref{conderg} (plugging in $\overline{F_r} = \phi$ in the statement of that proposition).
\end{rem}

	\begin{cor}\label{fixG}
		There exists a natural number $L$ independent of $k$, a nilpotent Lie group $G$ of step $s$, a cocompact subgroup $\Gamma$, an ergodic element $g$ in $G$ and a nilcharacter $\F$ with nontrivial frequency even when restricted to the identity component,
		\[
		\limsup_{k \in \mathcal{K}} \limsup_{P \rightarrow \infty} \E_{P/2 < p \leq P} \int_X \sup_{y \not\in B} \Big| \E_{h \leq k} \F(g^{ph} x) \cdot f'(T^h y) \Big|  dx > \frac{c}{L},
		\]
where $\K$ as defined in Subsection \ref{background} is an infinite set such that for $k$ in $\K$, the number of words of length $k$ of $f'$ is $o(k^2)$ if $t = 2$ or $O(k^{t-\e})$ if $t \neq 2$ for some $\e$.
	\end{cor}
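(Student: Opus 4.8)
The plan is to pass to the ergodic decomposition of $\mu$, isolate one good ergodic component, run the argument of Proposition \ref{fourierdecomp} at that component in a way that is uniform in $k$, and finish with a pigeonhole over a \emph{fixed, finite} list of vertical frequencies.

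First I would exploit Corollary \ref{cor1}, which gives, for every $k$,
\[
\limsup_{P \to \infty} \E_{P/2 < p \le P} \int_X \sup_{y \notin B} \Big| \E_{h \le k} \E^{\mu}[f | \z](T^{ph}x)\, f'(T^h y) \Big|\, \mu(dx) > c .
\]
Set $G_{P,k}(\omega) := \E_{P/2 < p \le P} \int_X \sup_{y \notin B} \big| \E_{h \le k} \E^{\mu}[f | \z](T^{ph}x)\, f'(T^h y) \big|\, \mu_\omega(dx) \in [0,1]$ and $R_k(\omega) := \limsup_P G_{P,k}(\omega)$. Since $\int_\Omega G_{P,k}\, d\omega$ equals the $\mu$-integral above, the reverse Fatou lemma in $P$ (all integrands are $1$-bounded and $\Omega$ has finite measure) gives $\int_\Omega R_k\, d\omega > c$, hence $\mu\{\omega : R_k(\omega) > c/2\} > c/2$ for every $k$. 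Enumerating $\K = \{k_1 < k_2 < \cdots\}$ and applying continuity of measure from above to the sets $\{R_{k_i} > c/2\}$, the set of $\omega$ with $R_{k_i}(\omega) > c/2$ for infinitely many $i$ has measure $\ge c/2 > 0$. Intersecting with the full-measure sets on which the conclusions of Proposition \ref{fourierdecomp}, Corollary \ref{MRTerg} and Proposition \ref{conditionwell} hold (and, in the conditional case, also Proposition \ref{conderg}), I fix one such ergodic component $\omega_0$ and let $\K_0 \subseteq \K$ be the infinite set of $k$ with $R_k(\omega_0) > c/2$.

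Next I would run the argument of Proposition \ref{fourierdecomp} at $\omega_0$, observing that it can be made independent of $k$. By \cite{HostKra}, $(X,\mu_{\omega_0},T,\z_{\omega_0})$ is an inverse limit of nilsystems; choose once a single nilsystem factor $\psi\colon X \to G/\Gamma$ with $\|\E^{\mu_{\omega_0}}[f | \z_{\omega_0}] - F\|_{L^1(\mu_{\omega_0})} < c/8$, where $F := \E^{\mu_{\omega_0}}[f | \psi^{-1}(\mathcal{B})]$. Because $f'$ is $1$-bounded and $k$ enters only through the $1$-bounded average $\E_{h \le k}$, replacing $\E^{\mu_{\omega_0}}[f | \z_{\omega_0}]$ (which by Proposition \ref{conditionwell} agrees $\mu_{\omega_0}$-a.e.\ with $\E^{\mu}[f|\z]$) by $F$ perturbs $R_k(\cdot)$ by at most $c/8$, uniformly in $k$, so for $k \in \K_0$ one still has $\limsup_P \E_{P/2<p\le P} \int_X \sup_{y\notin B}\big|\E_{h\le k} F(T^{ph}x) f'(T^h y)\big|\, \mu_{\omega_0}(dx) > 3c/8$. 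Decompose $F = \sum_j F_j$ on $G/\Gamma$ into nilcharacters, which by the argument of Proposition \ref{fourierdecomp} (the abelian/locally periodic pieces being annihilated via Corollary \ref{MRTerg}) may be taken $1$-bounded with frequency nontrivial on the identity component, and truncate once at a finite level $J = J(\omega_0)$ with $\|F - \sum_{j \le J} F_j\|_{L^1(\mu_{\omega_0})} < c/8$. This yields, for every $k \in \K_0$ and with $J$ independent of $k$,
\[
\limsup_{P \to \infty} \E_{P/2 < p \le P} \int_X \sup_{y \notin B} \Big| \E_{h \le k} \sum_{j \le J} F_j(T^{ph}x)\, f'(T^h y) \Big|\, \mu_{\omega_0}(dx) > \frac{c}{4}.
\]
By the triangle inequality together with subadditivity of $\limsup_P$ and of $\E_{P/2<p\le P}$, for each $k \in \K_0$ there is an index $j(k) \le J$ with $\limsup_P \E_{P/2<p\le P} \int_X \sup_{y\notin B}\big|\E_{h\le k} F_{j(k)}(T^{ph}x) f'(T^h y)\big|\, \mu_{\omega_0}(dx) > c/(4J)$. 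Since $\K_0$ is infinite and $j(k)$ ranges over $\{1,\dots,J\}$, some value $j^\ast$ occurs for infinitely many $k$. Put $G := G(j^\ast)$, $\Gamma := \Gamma(j^\ast)$, $g := g(j^\ast)$ (ergodic, since $(X,\mu_{\omega_0},T)$ is ergodic and $(G/\Gamma,g)$ is a factor of it, of step $\ge \kappa$ in the conditional case by Proposition \ref{conderg}), $s$ its step, $\F := F_{j^\ast}$ (a nilcharacter with frequency nontrivial on the identity component), and $L := 4J + 1$; pushing $\mu_{\omega_0}$ forward along the factor map turns the integral into one against Haar measure $dx$ on $G/\Gamma$, giving $\limsup_{k \in \K} \limsup_{P \to \infty} \E_{P/2 < p \le P} \int_X \sup_{y \notin B} \big| \E_{h \le k} \F(g^{ph}x)\, f'(T^h y) \big|\, dx > c/L$, as asserted.

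The step I expect to be the main obstacle is the uniformity-in-$k$ bookkeeping: Proposition \ref{fourierdecomp} is stated for a single $k$, and a priori both the nilsystem factor it produces and the set of relevant vertical frequencies could depend on $k$. The point is that once $\omega_0$ is fixed these data can be fixed once and for all (the relevant $L^1$-truncation errors do not see $k$), so the only $k$-dependent step becomes the final pigeonhole over the fixed finite index set $\{1,\dots,J\}$, which then forces a single nilcharacter $\F$ to record the correlation for infinitely many $k \in \K$. The two applications of the reverse Fatou/continuity argument used to locate a component $\omega_0$ with $\limsup_{k \in \K} R_k(\omega_0) > 0$ are the other mildly delicate point.
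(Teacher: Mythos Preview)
Your proof is correct and follows essentially the same route as the paper: pass to a single ergodic component, replace $\E^\mu[f|\z]$ by a finite sum of nilcharacters on a fixed nilmanifold factor, truncate in $L^1$ (the paper uses $L^2$) with error independent of $k$, and pigeonhole onto one nilcharacter. Your treatment of the $k$-uniformity---using reverse Fatou and the $\limsup$-of-sets argument to locate a single $\omega_0$ working for infinitely many $k\in\K$, then pigeonholing the index $j(k)$ over the finite set $\{1,\dots,J\}$---is in fact more explicit than the paper's, which leaves that bookkeeping largely implicit.
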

	\begin{proof}
By Corollary \ref{cor1},
		\[
		\limsup_{P \rightarrow \infty} \E_{P/2 < p \leq P} \int_X \sup_{y \not\in B} \Big| \E_{h \leq k} \E^{\mu} [f | \z] (T^{ph} x) \cdot f'(T^h y) \Big|  \mu(dx) > c.
		\]
Thus, for a positive measure set of $\omega$,
		\[
		\limsup_{P \rightarrow \infty} \E_{P/2 < p \leq P} \int_X \sup_{y \not\in B} \Big| \E_{h \leq k} \E^{\mu} [f | \z] (T^{ph} x) \cdot f'(T^h y) \Big|  \mu_\omega(dx) > c.
		\]
		By Proposition \ref{fourierdecomp}, we know that for almost every $\omega$,
		\[
		 \limsup_{k \in \K}  \limsup_{P \rightarrow \infty} \E_{P/2 < p \leq P} \int_X \sup_{y \not\in B} \Big| \E_{h \leq k} \sum_j F_j (T^{ph} x) \cdot f'(T^h y) \Big|  \mu_\omega(dx) > c,
		\]
where $F_j$ is as in Proposition \ref{fourierdecomp}.
Fix such an $\omega$.

		Since the sum $\sum_j F_j$ converges in $L^2(\mu_\omega)$, there exists a natural number $L$ independent of $k$ such that $|| \sum_{j \leq \frac{L}{2}} F_j - \sum_j F_j ||_{L^2(\mu_\omega)} < \frac{c}{2}$. By the triangle inequality
		\begin{align*}
		 \limsup_{k \in \K}  \limsup_{P \rightarrow \infty} \E_{P/2 < p \leq P} & \int_X \sup_{y \not\in B} \Big| \E_{h \leq k} \sum_{j \leq \frac{L}{2}} F_j (T^{ph} x) \cdot f'(T^h y) \Big|  \mu_\omega(dx) \\ 
+ & \int_X \sup_{y \not\in B} \E_{h \leq k} \Big|  \sum_{j > \frac{L}{2}} F_j (T^{ph} x) \cdot f'(T^h y) \Big|  \mu_\omega(dx) 
> c.
		\end{align*}
The second term is bounded by
\[
\int_X \Big|  \sum_{j > \frac{L}{2}} F_j (x) \Big|  \mu_\omega(dx),
\]
using that $\mu_\omega$ is shift invariant and $f'$ is 1-bounded.
By Cauchy-Schwarz, this term is bounded by $|| \sum_{j \leq \frac{L}{2}} F_j - \sum_j F_j ||_{L^2(\mu_\omega)} < \frac{c}{2}$. Thus by the triangle inequality.
		\[
		\limsup_{k \in \K}  \limsup_{P \rightarrow \infty} \E_{P/2 < p \leq P} \int_X \sup_{y \not\in B} \Big| \E_{h \leq k} \sum_{j \leq \frac{L}{2}} F_j (T^{ph} x) \cdot f'(T^h y) \Big|  \mu_\omega(dx) > \frac{c}{2}.
		\]
		By the pigeonhole principle, there exists some $F_j$ such that 
		\[
		\limsup_{k \in \K}  \limsup_{P \rightarrow \infty} \E_{P/2 < p \leq P} \int_X \sup_{y \not\in B} \Big| \E_{h \leq k} F_j (T^{ph} x) \cdot f'(T^h y) \Big|  \mu_\omega(dx) > \frac{c}{L}.
		\]
		Renaming everything gives the conclusion. We remark that the corollary just stated that such an $L$, $G$, $\Gamma$, $g$ and $\Phi$ exist and therefore the statement of the corollary allows $L$ to depend on $G$, $\Gamma$ and all the other data that comes from $\omega$. The remainder of the argument essentially takes place inside a single ergodic component and so how the constants vary from component to component is not important for our purposes.
	\end{proof}
	For the remainder of the proof, we fix $G$, $\Gamma$, $g$ and $\F$.
	We let $c_0 = \frac{c}{L}$.
	For the next few pages, we fix an integer $k$ in $\K$ such that 
	\[
	\limsup_{P \rightarrow \infty} \E_{P/2 < p \leq P} \int_X \sup_{y \not\in B} \Big| \E_{h \leq k} \F(g^{ph} x) \cdot f'(T^h y) \Big|  dx > c_0.
	\]
	We will later send $k$ to infinity.
	The following lemma does two things: First, it uses H\"older's inequality to raise the exponent of $\Big| \E_{h \leq k} \F(g^{ph} x) \cdot f'(T^h y) \Big|$. We want this term raised to an even power because we want to expand out the product and get rid of the absolute values which are less ``algebraic" and therefore harder to understand directly using the theory of nilpotent Lie groups. We also want this even power to be larger the more oscillatory our function $\F$ is. This is because the more $\F$ oscillates, the more cancellation we expect in larger and larger products. The larger the power we use, the smaller the fraction of terms which do not exhibit cancellation is. Second, we use the pigeonhole principle. This lemma and the following lemma are where we make essential use of our bound on the word growth rate of $b$.
	\begin{lem}\label{lem1}
		Recall that $b$ had at most $k^{t - \e}$ words of length $k$ occuring with  positive upper logarithmic density if for $k$ in $\K$ or $b$ has $o(k^2)$ many words of length $k$ that occur with positive upper logarithmic density if $t = 2$. Fix $\delta$ a constant that is small even when compared to $c_0$.
		Then for each $k$ in $\K$ there is a word $\epsilon = (\epsilon_1, \ldots, \epsilon_k)$ of length $k$ such that
		\[
		\limsup_{P \rightarrow \infty} \E_{P/2 < p \leq P} \int_X  \Big| \E_{h \leq k} \F(g^{ph} x) \cdot \epsilon_h \Big|^{2t}  dx > k^{-t + \e} c_0^{2t},
		\]
		for $t \neq 2$ and when $t = 2$, we have
		\[
		\limsup_{P \rightarrow \infty} \E_{P/2 < p \leq P} \int_X  \Big| \E_{h \leq k} \F(g^{ph} x) \cdot \epsilon_h \Big|^{2t}  dx > \delta^{-1} k^{-t } c_0^{2t}.
		\]
	\end{lem}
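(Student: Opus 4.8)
The plan is to combine three elementary moves: replacing the essential supremum over $y$ by a supremum over the finitely many words of $f'$, raising to the power $2t$ via Jensen's inequality, and then applying the pigeonhole principle with the hypothesized bound on the number of words.

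First I would observe that for almost every $y$ (with respect to the second marginal of $\nu$) the tuple $(f'(T^h y))_{h \le k}$ is a word of $f'$ of length $k$: if a tuple $\epsilon$ were realized only on a $\nu$-null set, then, discarding the finitely many such $\epsilon$ (here I use that $f'$ is finite-valued), almost every $y$ realizes a word of positive measure. Hence for every $x$,
\[
\sup_{y \not\in B} \Big| \E_{h \leq k} \F(g^{ph} x) f'(T^h y) \Big| \le \sup_{\epsilon \in \Sigma_k} \Big| \E_{h \leq k} \F(g^{ph} x) \epsilon_h \Big|,
\]
where $\Sigma_k$ is the (finite) set of words of $f'$ of length $k$. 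As recorded in Subsection \ref{background}, $\Sigma_k$ is contained in the set of words of $b$ of length $k$ occurring with positive upper logarithmic density, so $|\Sigma_k| \le k^{t-\e}$ for $k \in \K$ when $t \neq 2$, and $|\Sigma_k| = o(k^2)$ (hence $|\Sigma_k| \le \delta k^2$ for all sufficiently large $k \in \K$) when $t = 2$. Starting from Corollary \ref{fixG}, this pointwise bound gives
\[
\limsup_{P \rightarrow \infty} \E_{P/2 < p \leq P} \int_X \sup_{\epsilon \in \Sigma_k} \Big| \E_{h \leq k} \F(g^{ph} x) \epsilon_h \Big|  dx > c_0.
\]

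Next I would apply Jensen's inequality with the convex, increasing, continuous function $u \mapsto u^{2t}$ to the probability measure $\E_{P/2 < p \leq P} \int_X (\cdot)\, dx$; since the integrand is $1$-bounded and nonnegative, taking $\limsup_{P \to \infty}$ of both sides yields
\[
\limsup_{P \rightarrow \infty} \E_{P/2 < p \leq P} \int_X \Big( \sup_{\epsilon \in \Sigma_k} \Big| \E_{h \leq k} \F(g^{ph} x) \epsilon_h \Big| \Big)^{2t}  dx > c_0^{2t}.
\]
Now $\big( \sup_{\epsilon \in \Sigma_k} |\cdot| \big)^{2t} = \sup_{\epsilon \in \Sigma_k} |\cdot|^{2t} \le \sum_{\epsilon \in \Sigma_k} |\cdot|^{2t}$, and since $\Sigma_k$ is a fixed finite set independent of $P$, I may interchange $\sum_{\epsilon \in \Sigma_k}$ with $\int_X$ and with $\E_{P/2 < p \leq P}$, and then pull the finite sum outside the $\limsup_P$ at the cost of an inequality, obtaining
\[
\sum_{\epsilon \in \Sigma_k} \ \limsup_{P \rightarrow \infty} \E_{P/2 < p \leq P} \int_X \Big| \E_{h \leq k} \F(g^{ph} x) \epsilon_h \Big|^{2t}  dx > c_0^{2t}.
\]
By the pigeonhole principle there is a word $\epsilon \in \Sigma_k$ whose term exceeds $c_0^{2t}/|\Sigma_k|$; substituting $|\Sigma_k| \le k^{t-\e}$ gives the first claimed inequality when $t \neq 2$, and substituting $|\Sigma_k| \le \delta k^2 = \delta k^t$ gives the second when $t = 2$.

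I do not expect a genuine obstacle here: this lemma is essentially bookkeeping (H\"older plus pigeonhole). The only points needing care are (a) justifying that the essential supremum over $y$ is literally a supremum over the finite word set $\Sigma_k$, so that it can be dominated pointwise in $x$, and (b) tracking the order of $\limsup_P$, the finite sum over $\Sigma_k$, and the integral so that the pigeonhole conclusion survives. The reason one takes the exponent $2t$ rather than some other even power will only become visible in the next lemmas, where expanding $|\E_{h \leq k} \F(g^{ph} x) \epsilon_h|^{2t}$ and annihilating the off-diagonal terms by the circle method leaves exactly the $\sim k^t$ "diagonal" terms, so the exponent must be matched to $t = {\kappa + 1 \choose 2}$; but that interaction plays no role at the level of this lemma.
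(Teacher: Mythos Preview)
Your proposal is correct and follows essentially the same route as the paper: the paper also starts from Corollary \ref{fixG}, applies H\"older (your Jensen step is the same inequality), replaces the essential supremum over $y$ by a sum over the finitely many words of $f'$, pulls the finite sum outside the $\limsup_P$ via subadditivity, and pigeonholes using the word-count bound. Your explicit remarks on points (a) and (b) make precise exactly the two places the paper's proof glosses over.
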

	\begin{proof}
		We know that
		\[
		\limsup_{P \rightarrow \infty} \E_{P/2 < p \leq P} \int_X \sup_{y \not\in B} \Big| \E_{h \leq k} \F(g^{ph} x) \cdot f'(T^h y) \Big|  dx > c_0.
		\]
		By Holder's inequality, we have
		\[
		\limsup_{P \rightarrow \infty} \E_{P/2 < p \leq P} \int_X \sup_{y \not\in B} \Big| \E_{h \leq k} \F(g^{ph} x) \cdot f'(T^h y) \Big|^{2t}  dx > c_0^{2t}.
		\]
		Because each term is nonnegative, we can replace the essential $\sup$ by a sum over words that occur with positive $\log$-density.
		\[
		\sum_{\epsilon} \limsup_{P \rightarrow \infty} \E_{P/2 < p \leq P} \int_X  \Big| \E_{h \leq k} \F(g^{ph} x) \cdot \epsilon_h \Big|^{2t}  dx > c_0^{2t}.
		\]
		We assumed that the number of words occuring with positive logarithmic density and therefore the number of terms in the sum is at most $\delta k^t$ when $t = 2$ or $k^{t - \e}$ when $t \neq 2$.
		By the pigeonhole principle, when $t = 2$ there is a word such that
		\[
		\limsup_{P \rightarrow \infty} \E_{P/2 < p \leq P} \int_X  \Big| \E_{h \leq k} \F(g^{ph} x) \cdot \epsilon_h \Big|^{2t}  dx > \delta^{-1} k^{-t} c_0^{2t},
		\]
		and similarly for $t \neq 2$, we have 
		\[
		\limsup_{P \rightarrow \infty} \E_{P/2 < p \leq P} \int_X  \Big| \E_{h \leq k} \F(g^{ph} x) \cdot \epsilon_h \Big|^{2t}  dx > k^{-t + \e} c_0^{2t},
		\]
		which completes the proof.
	\end{proof}
	We need a slightly different estimate for the abelian case. The key to the next lemma is the idea that if $e(\alpha h)$ correlates with $\epsilon_h$ for $h \leq k$ then $e(\alpha h)$ also must correlate with translates of $\epsilon$ of size $\sim k$. Thus, in the abelian case, the previous lemma is rather lossy. When we replace the $\sup$ by a sum, we should gain an extra power of $k$.
	\begin{lem}\label{lem2}
		For $t =2$, for all $k$ in $\K$, there is a word $\epsilon$ such that
		\begin{align*}
		&\limsup_{P \rightarrow \infty} \int_X \E_{P/2 < p \leq P}  \sup_{\ell \in \N} \Big| \E_{h \leq k}  \F(g^{ph + \ell} x) \cdot \epsilon_h \Big|^{2t-2}  dx \\ > &\frac{c_0^{2t-2}}{9} \cdot \floor*{\frac{c_0^{2t-2} k}{6}} \delta^{-1} k^{-t}.
		\end{align*}
	\end{lem}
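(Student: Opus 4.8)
The plan is to mimic the proof of Lemma \ref{lem1} --- raise the exponent, replace $\sup_{y}$ by a sum over the words of $f'$, and pigeonhole --- but to exploit an observation that lets us work with the exponent $2t-2$ instead of $2t$ and still gain a factor of $k$. Fix a point $y\notin B$ whose length-$k$ window correlates with $\F(g^{p\,\cdot}x)$. Shifting the averaging window from $[1,k]$ to $[\ell+1,\ell+k]$ changes $\big|\E_{h\le k}\F(g^{ph}x)f'(T^hy)\big|$ by at most $2\ell/k$, and since $\F(g^{p(h+\ell)}x)=\F(g^{ph+p\ell}x)$ we may push the extra shift $p\ell$ into the supremum over $\ell'$. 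Hence the translate $\epsilon^{(\ell)}:=(f'(T^{h+\ell}y))_{h\le k}$ --- which is again a word of $f'$, at the point $T^\ell y$ still outside the $T$-invariant set $B$, hence a word of $b$ of positive logarithmic density --- satisfies $\sup_{\ell'\in\N}\big|\E_{h\le k}\F(g^{ph+\ell'}x)\epsilon^{(\ell)}_h\big|\ge\big|\E_{h\le k}\F(g^{ph}x)f'(T^hy)\big|-2\ell/k$. So a single correlating window produces $L+1$ correlating words, where $L:=\floor*{c_0^{2t-2}k/6}$; this is what the $\sup_{\ell'\in\N}$ in the statement is for (in the genuinely abelian case $\big|\E_{h\le k}\F(g^{ph+\ell'}x)\epsilon_h\big|$ is independent of $\ell'$, but the translates $\epsilon^{(\ell)}$ are genuinely different words, and it is this multiplicity that buys the extra power of $k$ over Lemma \ref{lem1}).

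Carrying this out: by Corollary \ref{fixG}, passing to a subsequence of $P\to\infty$ and writing $G(x,p):=\sup_{y\notin B}\big|\E_{h\le k}\F(g^{ph}x)f'(T^hy)\big|$, we have $\E_{P/2<p\le P}\int_X G(x,p)\,dx>c_0$. For each $(x,p)$ select (measurably, using that $f'$ is finite-valued and that the essential supremum is witnessed on positive-measure sets) a point $y_{x,p}\notin B$ realising $G(x,p)$ up to $o(1)$ with $(f'(T^hy_{x,p}))_{h\le k}$ a genuine word of $f'$, and set $\epsilon^{(\ell)}_{x,p}:=(f'(T^{h+\ell}y_{x,p}))_{h\le k}$ for $0\le\ell\le L$. \emph{Key claim:} whenever $G(x,p)>2L/k$, the words $\epsilon^{(0)}_{x,p},\dots,\epsilon^{(L)}_{x,p}$ are pairwise distinct. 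Granting this, raise the displayed inequality above to the power $2t-2$ and sum over the words $\epsilon$ of $f'$, keeping for each $(x,p)$ only the $L+1$ distinct translates:
\[
\sum_{\epsilon}\ \sup_{\ell'\in\N}\big|\E_{h\le k}\F(g^{ph+\ell'}x)\epsilon_h\big|^{2t-2}\ \ge\ (L+1)\big(G(x,p)-\tfrac{2L}{k}\big)_+^{2t-2}.
\]
Integrate over $X$, average over $P/2<p\le P$, apply Jensen's inequality for the convex map $u\mapsto u^{2t-2}$ on the probability space $\E_{P/2<p\le P}\int_X dx$, and use $\E_{P/2<p\le P}\int_X G\,dx>c_0$ together with $\tfrac{2L}{k}\le\tfrac{c_0^{2t-2}}{3}\le\tfrac{c_0}{3}$ to obtain $\E_{P/2<p\le P}\int_X\sum_{\epsilon}\sup_{\ell'}|\cdots|^{2t-2}\,dx>(L+1)(2c_0/3)^{2t-2}$ along the subsequence. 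Taking $\limsup$ over these $P$, interchanging with the finite sum (using subadditivity of $\limsup$), and recalling that for $k\in\K$ there are at most $\delta k^t$ words of $f'$ of length $k$ (since $t=2$ and the count is $o(k^2)$), the pigeonhole principle produces a word $\epsilon$ with
\[
\limsup_{P\to\infty}\int_X\E_{P/2<p\le P}\sup_{\ell'\in\N}\big|\E_{h\le k}\F(g^{ph+\ell'}x)\epsilon_h\big|^{2t-2}\,dx\ \ge\ \frac{(L+1)(2c_0/3)^{2t-2}}{\delta k^t}\ >\ \frac{c_0^{2t-2}}{9}\floor*{\frac{c_0^{2t-2}k}{6}}\delta^{-1}k^{-t},
\]
which is the assertion.

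I expect the Key claim to be the main obstacle. It is exactly where the reduction to $B^c$ (Corollary \ref{periodicpoints}) and the non-correlation of $\F(g^{p\,\cdot}x)$ with periodic sequences (Theorem \ref{MRT}) enter: if the orbit segment $\big(f'(T^ny_{x,p})\big)_{1\le n\le k+L}$ had fewer than $L+1$ distinct factors of length $k$, then some factor would repeat at several positions $\le L$, and a Fine--Wilf argument would force $n\mapsto f'(T^ny_{x,p})$ to be periodic, with a period $\le L$, on an interval of length $\gtrsim k$; since on such an interval $\F(g^{p\,\cdot}x)$ cannot correlate with this periodic pattern, this contradicts $G(x,p)>2L/k$ once $k$ is large. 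Two remarks ease the bookkeeping. First, one only needs the number of distinct factors to be at least a fixed positive fraction of $L$, which suffices at the cost of the explicit constant $\tfrac19$. Second, for the finitely many small $k$ with $\floor*{c_0^{2t-2}k/6}=0$ the stated inequality is trivial, since its right-hand side vanishes while the left-hand side is positive by Corollary \ref{fixG}. Making the "few distinct factors $\Rightarrow$ locally periodic $\Rightarrow$ no correlation" implication quantitative, with enough uniformity in $p$, is the one genuinely delicate point.
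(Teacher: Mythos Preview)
Your overall architecture --- Hölder to exponent $2t-2$, then for each correlating $(x,y)$ produce $\sim c_0^{2t-2}k$ distinct translated words $\epsilon^{(\ell)}=(f'(T^{h+\ell}y))_{h\le k}$, each still correlating after absorbing the shift $p\ell$ into $\sup_{\ell'}$, then sum over words and pigeonhole --- is exactly the paper's. The gap is your Key claim and the proposed mechanism behind it.

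You want: whenever $G(x,p)>2L/k$, the translates $\epsilon^{(0)},\dots,\epsilon^{(L)}$ are pairwise distinct, arguing that otherwise Fine--Wilf forces local periodicity of $h\mapsto f'(T^hy)$ and then ``$\F(g^{p\,\cdot}x)$ cannot correlate with this periodic pattern''. But there is no such non-correlation result: Theorem \ref{MRT} and Corollary \ref{MRTerg} concern the multiplicative function $a$ (equivalently $f$ integrated against $\mu_\omega$), not the nilsequence $h\mapsto\F(g^{ph}x)$ at a fixed point $x$ and fixed $p$ on a fixed finite window $[1,k]$. For a specific $x$ the orbit $(g^{ph}x)_{h\le k}$ can sit near a periodic orbit, and $\F(g^{ph}x)$ can correlate perfectly with a short-period function --- indeed the whole lemma asserts that $\F(g^{p\,\cdot}x)$ correlates with \emph{some} word, and nothing prevents that word from being periodic. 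So the Key claim is simply false in general, and no amount of quantitative sharpening will rescue it.

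The paper's fix is purely combinatorial and uses only that $y\notin B$ means $n\mapsto f'(T^ny)$ is not \emph{eventually} periodic. One does \emph{not} assert all initial translates are distinct. Instead: if $\epsilon(1),\dots,\epsilon(j-1)$ are distinct but $\epsilon(j)=\epsilon(j-d)$ for some $d<j$, then $\epsilon(j-d)$ is $d$-periodic; since $y\notin B$ there is a first $r\ge j-d$ where $\epsilon(r)$ ceases to be $d$-periodic. Then $\epsilon(1),\dots,\epsilon(j-1)$ together with $\epsilon(r),\dots,\epsilon(r+L-j+1)$ are all distinct (one checks they differ in where they first or last deviate from the periodic block), and because $\epsilon(r-1)$ coincides with some earlier $\epsilon(q)$, the triangle-inequality loss for the second batch routes through $\epsilon(q)$ and stays $\le c_0^{2t-2}/3$. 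This produces the required $L$ distinct words without any appeal to non-correlation of $\F$ with periodic sequences.

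A secondary point: the paper chooses a single $y$ per $x$ (so that the \emph{average over $p$} is $>2c_0^{2t-2}/3$) rather than a $y_{x,p}$ depending on $p$; this is what makes the word-distinctness argument a statement about $y$ alone, uniform in $p$. Your Jensen step and the treatment of small $k$ are fine.
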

	\begin{proof}
		Again, we know that
		\[
		\limsup_{P \rightarrow \infty} \E_{P/2 < p \leq P} \int_X \sup_{y \not\in B} \Big| \E_{h \leq k} \F(g^{ph} x) \cdot f'(T^h y) \Big|  dx > c_0.
		\]
		Again, by Holder's inequality
		\[
		\limsup_{P \rightarrow \infty} \E_{P/2 < p \leq P} \int_X \sup_{y \not\in B} \Big| \E_{h \leq k} \F(g^{ph} x) \cdot f'(T^h y) \Big|^{2t-2}  dx > c_0^{2t-2}.
		\]
		Again we want to replace $F'(T^h y)$ by a sum over words. Let $P$ be a number satisfying  
		\[
		\E_{P/2 < p \leq P} \int_X \sup_{y \not\in B} \Big| \E_{h \leq k} \F(g^{ph} x) \cdot f'(T^h y) \Big|^{2t-2}  dx > c_0^{2t-2}.
		\]
		Let $A$ be the set of $x$ such that 
		\[
		\E_{P/2 < p \leq P} \sup_{y \not\in B} \Big| \E_{h \leq k} \F(g^{ph} x) \cdot f'(T^h y) \Big|^{2t-2} > \frac{2c_0^{2t-2}}{3}.
		\]
		Therefore, the measure of $A$ is at least $\frac{c^{2t-2}}{3}$.
		We want to show that for $\mu$-almost every $x$ in $A$, there are at least $\floor*{\frac{c^{2t-2} k}{6}}$ many distinct words of $f'$ such that 
		\[
		 \E_{P/2 < p \leq P} \sup_{\ell \in \N} \sup_{y \not\in B} \Big| \E_{h \leq k} \F(g^{ph + \ell} x) \cdot f'(T^h y) \Big|^{2t-2} > \frac{c_0^{2t-2}}{3}.
		\]
		Let $y$ be an element of $B^c$ such that the words of $f'(T^h y)$ are words of $Y$ and such that
		\[
		\E_{P/2 < p \leq P} \Big| \E_{h \leq k} \F(g^{ph} x) \cdot f'(T^h y) \Big|^{2t-2} > \frac{2c_0^{2t-2}}{3}.
		\]
		Denote by $\epsilon(m)$ the word of length $k$ whose $h^{th}$ entry is $\epsilon(m)_h = f'(T^{m + h} y)$. If the words $\epsilon(m)$ are distinct for $m = 1, \ldots, \floor*{\frac{c_0^{2s} k}{6}}$ then by the triangle inequality 
		\begin{align*}
		& \E_{P/2 < p \leq P} \sup_{y \not\in B} \Big| \E_{h \leq k} \F(g^{ph +pm} x) \cdot \epsilon(m)_h \Big|^{2t-2} \\ = & \E_{P/2 < p \leq P} \Big| \E_{h \leq k} \F(g^{ph +pm} x) \cdot f'(T^{h + pm} y) \Big|^{2t-2}. 
\intertext{We note that all but $2mk$ many terms in the average are the same if we replace $\F (g^{ph +pm} x) \cdot f'(T^{h + pm} y)$ with $\F (g^{ph} x) \cdot f'(T^{h} y)$. Thus }
		\geq& \E_{P/2 < p \leq P} \Big| \E_{h \leq k} \F(g^{ph} x) \cdot f'(T^{h } y) \Big|^{2t-2} - \frac{2m}{k} \\
		\geq& \E_{P/2 < p \leq P} \Big| \E_{h \leq k} \F(g^{ph} x) \cdot f'(T^{h } y) \Big|^{2t-2} - \frac{c_0^{2t-2}}{3}.
		\end{align*}
		Suppose for a moment that instead the words $\epsilon(m)$ are not distinct for $m = 1, \ldots, \floor*{\frac{c_0^{2t-2} k}{6}}$. Then there exist a minimum $j$ such that $\epsilon(1), \ldots, \epsilon(j)$ are not distinct. Fix such a $j$ for the remainder of the proof. Thus, there exists some $1 \leq d < j$ such that $\epsilon(j) = \epsilon(j-d)$. We claim that $\epsilon(j-d)$ is $d$-periodic: that's because $\epsilon(j-d)_h = \epsilon(j)_h = \epsilon(j-d)_{h+d}$. Furthermore, if $\epsilon(j-d-1)_1 = \epsilon(j-1)_1$ then since $\epsilon(j-d-1)_h = \epsilon(j-d)_{h-1} = \epsilon(j)_{h-1} = \epsilon(j-1)_h$ for all $h > 1$, we clearly have $\epsilon(j-d-1) = \epsilon(j-1)$ and $j$ is not minimal. For the rest of the proof, let $r$ be the minimum number such that $r \geq j-d$ and $\epsilon(r)$ is not $d$ periodic. For $y$ not in $B$, we can find such an $r$ because $f'(T^h y)$ is not eventually periodic. Since $\epsilon(r)$ is not $d$ periodic but $\epsilon(r-1)$ is $d$ periodic and is equal to $\epsilon(q)$ for some $q$ between $j-d$ and $j-1$, we have that $\epsilon(r)_k \neq \epsilon(r)_{k-d}$ but $\epsilon(r)_h = \epsilon(r)_{h-d}$ for all other $h \leq k$. We claim that the words $1, \ldots, j-1$ and $r, \ldots, r + \floor*{\frac{c_0^{2t-2} k}{6}} - j + 1$ are all distinct. The reason is that for all $m$ between $1$ and $j-d$, we have that $\epsilon(m)_h = \epsilon(j-d)_{h + m - j + d}$ for all $h > m - j + d$ and precisely no larger range of $h$ and for all $m$ between $r$ and $r + \floor*{\frac{c_0^{2t-2} k}{6}} - j + 1$ we have that $\epsilon(m)_h = \epsilon(j-d)_{h + m - r}$ for all $1 \leq h < k - m + r$ and precisely no larger range of $h$. For $m$ between $j-d$ and $j-1$, $\epsilon(m)$ is $d$ periodic but because $j$ was the minimal natural number such that $\epsilon(1), \ldots, \epsilon(j)$ are not distinct, we have that the $\epsilon(m)$ for $m$ between $j-d$ and $j-1$ are still distinct. For $m$ between $1$ and $j-d$ and $m'$ between $r$ and $r + \floor*{\frac{c_0^{2t-2} k}{6}} - j + d$ we have that the intervals $h > m- j + d$ and $h < k -m + r$ meet so the previous argument shows that $\epsilon(m) \neq \epsilon(m')$. A similar triangle inequality computation shows that for $m$ between $r$ and $r + \floor*{\frac{c_0^{2t-2} k}{6}} - j + 1$ we still have
		\begin{align*}
		& \E_{P/2 < p \leq P} \sup_{\ell \in \N}  \Big| \E_{h \leq k} \F(g^{ph +\ell} x) \cdot \epsilon(m)_h \Big|^{2t-2} \\ =& \E_{P/2 < p \leq P} \sup_{\ell \in \N} \Big| \E_{h \leq k} \F(g^{ph + \ell} x) \cdot f'(T^{h + m} y) \Big|^{2t-2} \\
		\geq&  \E_{P/2 < p \leq P} \sup_{\ell \in \N} \Big| \E_{h \leq k} \F(g^{ph + \ell} x) \cdot f'(T^{h + r -1} y) \Big|^{2t-2} - \frac{2(m - r + 1)}{k}
		\intertext{Next, we use that $\epsilon(r-1) = \epsilon(q)$ for some $q$ between $j-d$ and $j-1$.}
		\geq& \E_{P/2 < p \leq P} \sup_{\ell \in \N} \Big| \E_{h \leq k} \F(g^{ph + \ell} x) \cdot f'(T^{h + q} y) \Big|^{2t-2} - \frac{2(m - r + 1)}{k} \\
		\geq& \E_{P/2 < p \leq P} \Big| \E_{h \leq k} \F(g^{ph} x) \cdot f'(T^{h} y) \Big|^{2t-2} - \frac{c_0^{2t-2}}{3}.
		\end{align*}
		This proves the claim that for $x$ in $A$ there are at least $\floor*{\frac{c_0^{2t-2} k}{6}}$ many distinct words $\epsilon$ of $f'$ such that 
		\[
		\E_{P/2 < p \leq P} \sup_{\ell \in \N}  \Big| \E_{h \leq k} \F(g^{ph +\ell} x) \cdot \epsilon_h \Big|^{2t-2} >  \frac{c_0^{2t-2}}{3}.
		\]
		Summing over words we get that for almost every $x$ in $A$,
		\begin{align*}
		&\sum_{ \epsilon \text{ a word of $f'$} } \E_{P/2 < p \leq P} \sup_{\ell \in \N} \Big| \E_{h \leq k} \F(g^{ph + \ell} x) \cdot \epsilon_h \Big|^{2t-2} \\ >& \floor*{\frac{c^{2t-2} k}{6}} \cdot \frac{c_0^{2t-2}}{3}.
		\end{align*}
		Next, we use that $\mu(A) > \frac{c_0^{2t-2}}{3}$.
		\begin{align*}
		&\int_X \sum_{ \epsilon \text{ a word of $f'$} } \sup_{\ell \in \N} \E_{P/2 < p \leq P} \Big| \E_{h \leq k} \F(g^{ph + \ell} x) \cdot \epsilon_h \Big|^{2t-2} \\ >& \frac{c_0^{2t-2}}{3} \floor*{\frac{c_0^{2t-2} k}{6}} \cdot \frac{c_0^{2t-2}}{3}.
		\end{align*}
		Sending $P$ to infinity and using the pigeonhole principle, we deduce that for some word $\epsilon$,
		\begin{align*}
		&\limsup_{P \rightarrow \infty} \int_X \sup_{\ell \in \N} \E_{P/2 < p \leq P} \Big| \E_{h \leq k} \F(g^{ph + \ell} x) \cdot \epsilon_h \Big|^{2t-2}  dx \\ > &\frac{c_0^{2t-2}}{9} \cdot \floor*{\frac{c_0^{2t-2} k}{6}} \delta^{-1} k^{-t}.
		\end{align*}
	\end{proof}
	\begin{rem}\label{rem1}
		For $G$ abelian and therefore $\F$ a character, we have
		\begin{align*}
		&\limsup_{P \rightarrow \infty} \int_X \E_{P/2 < p \leq P} \sup_{\ell \in \N} \Big| \E_{h \leq k} \F(g^{ph + \ell} x) \cdot \epsilon_h \Big|^{2t-2}  dx \\ = & \limsup_{P \rightarrow \infty} \int_X  \E_{P/2 < p \leq P} \Big| \E_{h \leq k}  \F(g^{ph} x) \cdot \epsilon_h \Big|^{2t-2} dx
		\end{align*}
	\end{rem}
	Therefore, by choosing $\delta$ sufficiently small, in the abelian case, we get
	\[
	\limsup_{k \in \K} \limsup_{P \rightarrow \infty} \E_{P/2 < p \leq P} \int_X | \E_{h \leq k} \epsilon_h \Phi(g^{ph} x) |^{2} dx \gg k^{-1}.
	\]
	
	The next theorem contradicts the previous two lemmas and proves Theorem \ref{mainthm}. In its proof, we rely heavily on \cite{MR3548534}, \cite{Frantzikinakis}, \cite{GT2}, \cite{GT1} and \cite{GTZ}.
	\begin{thm}\label{bigthm}
		Recall that, after Corollary \ref{fixG}, we fixed a nilpotent Lie group $G$, a cocompact lattice $\Gamma$, a nilcharacter $\F$ with with nontrivial frequency on the identity component $\xi$ and an element $g$ which acts ergodically on $ G / \Gamma$, such that $||\F||_{L^\infty_x} = 1$.
		Recall that the step $s$ of $G$ is at least $\kappa$ where $t = {\kappa + 1 \choose 2}$.
		Let $\epsilon$ be a sequence of words implicitly depending on $k$. Let $\e > 0$. Then
		\[
		\limsup_{k \in \K} \limsup_{P \rightarrow \infty} \int_X \E_{P/2 < p \leq P} | \E_{h \leq k} \epsilon_h \Phi(g^{ph} x) |^{2t} dx \cdot k^{t - \e} = 0,
		\]
		If $t = 2$ then we do not need the epsilon loss and instead get the estimate
		\[
		\limsup_{k \in \K} \limsup_{P \rightarrow \infty}\int_X  \E_{P/2 < p \leq P} | \E_{h \leq k} \epsilon_h \Phi(g^{ph} x) |^{2t} dx \cdot k^{t} \leq C_s.
		\]
	\end{thm}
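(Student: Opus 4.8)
The plan is to expand the $2t$-th power, convert the resulting quantity into a prime average of nilcharacter integrals along the diagonal of $G^{2t}/\Gamma^{2t}$, and show that only a negligible set of frequency tuples survives that prime average. Writing out the modulus,
\[
\int_X \big| \E_{h \leq k} \epsilon_h \F(g^{ph} x) \big|^{2t}\, dx = \E_{j \in [k]^{2t}} \Big( \prod_{i \leq t} \epsilon_{j_i} \Big)\Big( \prod_{i > t} \overline{\epsilon_{j_i}} \Big)\, J_p(j),
\]
where
\[
J_p(j) := \int_X \prod_{i \leq t} \F(g^{p j_i} x) \prod_{i > t} \overline{\F(g^{p j_i} x)}\, dx .
\]
Since the coefficients are $1$-bounded, it suffices to exhibit a ``resonant'' set $R_k \subseteq [k]^{2t}$ with $|R_k| \ll_{\e'} k^{t+\e'}$ (and $|R_k| \leq C_s k^{t}$ when $t=2$) such that, for each fixed $k$, $\limsup_{P\to\infty}|\E_{P/2<p\leq P} J_p(j)| = 0$ for every $j \notin R_k$: the finitely many non-resonant $j$ then contribute $o_P(1)$, while the resonant $j$ contribute at most $|R_k|/k^{2t}$.

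\textbf{The equidistribution dichotomy.} Fix $j$, set $\underline g_j = (g^{j_1},\dots,g^{j_{2t}})$ and $\Psi(z_1,\dots,z_{2t}) = \prod_{i\leq t}\F(z_i)\prod_{i>t}\overline{\F(z_i)}$, so that, lifting to $G^{2t}/\Gamma^{2t}$, $J_p(j) = \int_{\Delta(G/\Gamma)} \Psi(\underline g_j^{\,p} z)\, d\mu_\Delta(z)$ is the integral of the nilcharacter $\Psi$ against the push-forward of a polynomial orbit (in $p$) of the diagonal. I would feed this into the Green--Tao quantitative equidistribution theorem for polynomial sequences on nilmanifolds and its consequences for averages over primes (\cite{GT1}, \cite{GT2}, \cite{GTZ}): either the orbit $p\mapsto \underline g_j^{\,p}\Delta$ totally equidistributes in the sub-nilmanifold it generates, in which case the prime average of $J_p(j)$ converges to the corresponding Haar integral, which vanishes because $\F$ has frequency $\xi$ nontrivial on the identity component of $G_s$; or there is an obstruction --- a nontrivial horizontal character on (a sub-nilmanifold of) $G^{2t}$ that varies slowly along the orbit and pairs nontrivially with $\xi$. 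Using that $g$ acts ergodically and that $\xi$ lives on $G_s$, one runs the ``vanishing of power sums'' argument of Frantzikinakis (Section 4.5, Step 4 of \cite{Frantzikinakis}) coordinate-by-coordinate on $G^{2t}$ to conclude that such an obstruction occurs only when $j$ satisfies the system
\[
\sum_{i \leq t} j_i^d = \sum_{i > t} j_i^d \qquad (d = 1, 2, \ldots, s),
\]
and recall $s \geq \kappa$. Take $R_k$ to be the set of $j\in[k]^{2t}$ solving these equations, together with the $O_{G,\Gamma}(k^{t})$ tuples for which the orbit is trapped for a ``component'' reason (absorbed into the $k^{t+\e'}$ bound when $t\geq 3$).

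\textbf{Counting the resonant tuples.} It remains to count solutions of $\sum_{i\leq t} j_i^d = \sum_{i>t} j_i^d$ for $d=1,\dots,\kappa$ with $j\in[k]^{2t}$. Since $2t = \kappa(\kappa+1) = 2\binom{\kappa+1}{2}$, the critical Vinogradov exponent $2t-\binom{\kappa+1}{2}$ equals exactly $t$, so by the resolved main conjecture in the Vinogradov mean value theorem (\cite{MR3548534}) the number of solutions is $\ll_\kappa k^{t+\e'}$. Hence $\limsup_{P\to\infty}\int_X |\E_h \epsilon_h \F(g^{ph}x)|^{2t}\,dx \ll_{\e'} k^{-t+\e'}$, and choosing $\e'<\e$ gives $\limsup_{k\in\K}(\cdots)\,k^{t-\e}=0$. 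When $t=2$ (so $s\geq 2$, i.e.\ $G$ non-abelian), the equations $j_1+j_2=j_3+j_4$ and $j_1^2+j_2^2=j_3^2+j_4^2$ force $\{j_1,j_2\}=\{j_3,j_4\}$ by Newton's identities, so $|R_k|\leq 2k^2 + O_G(k^2)$ and we obtain the clean bound $\cdot\, k^2 \leq C_s$. Finally, in the abelian case $s=1$ one uses Remark \ref{rem1} to reduce to the $(2t-2)$-th power, notes that $\F(g^{ph}x) = e(\theta p h)$ up to an $x$-dependent unit --- with $\theta$ irrational, since $\F$ is not locally constant and $g$ is ergodic --- and applies Weyl's bound for $\E_{P/2<p\leq P} e(p\beta)$, $\beta$ irrational, to kill every off-diagonal term; the surviving count $\asymp k^{2t-3}$ supplies the needed power of $k$.

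\textbf{The main obstacle.} The crux is the equidistribution step: extracting the exact system of power-sum equations from the qualitative ``equidistribute or be trapped'' dichotomy, uniformly over all $j\in[k]^{2t}$, with every implied constant depending only on $(G,\Gamma,g,\F)$ and never on $k$. This forces one to marry the Green--Tao nilmanifold/primes machinery with Frantzikinakis's reduction to vanishing power sums, and then to invoke the sharp Vinogradov mean value theorem at precisely the critical exponent; any loss in the counting (beyond $k^{t+\e'}$ for $t\geq 3$, or beyond $O(k^2)$ for $t=2$) would destroy the argument.
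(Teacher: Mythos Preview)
Your proposal follows the same strategy as the paper: expand the $2t$-th power into a sum over $j\in[k]^{2t}$, show that each $j$ not solving the Vinogradov system $\sum_{i\leq t}j_i^m=\sum_{i>t}j_i^m$ ($1\leq m\leq s$) contributes $o_P(1)$, and count the surviving tuples via Bourgain--Demeter--Guth (with Newton's identities giving the sharp count when $t=2$).

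A few remarks on the equidistribution step. Your ``dichotomy'' framing is not quite right. The paper first passes from prime averages to integer averages explicitly, via the $W$-trick and the Green--Tao--Ziegler estimate that $\Lambda_{b,W}-1$ has Gowers $U^{s+1}$-norm $o_{W\to\infty}(1)$ (together with the dual-norm decomposition of Theorem~\ref{gt1}); you gloss over this passage. Once one is averaging over integers, the empirical measures \emph{always} converge to Haar measure on the sub-nilmanifold ${\bf X}$ for the group ${\bf G}$ generated by $\underline g_j$, the diagonal $G_\Delta$, and $\Gamma^{2t}$; there is no second branch. The only question is whether $\Xi$ has nontrivial frequency on the top step of ${\bf G}$, and the paper answers this via Lemma~\ref{frz} (Frantzikinakis, Section~5.7 rather than~4.5): the elements $(u^{(Wj_\ell)^m})_{\ell}$ lie in ${\bf G}$ for every $u\in G_s$ and every $m\leq s$, so if any power sum $\sum_{i\leq t}j_i^m-\sum_{i>t}j_i^m$ is nonzero then $\Xi$ transforms nontrivially under such an element and the Haar integral vanishes. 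In particular your ``$O_{G,\Gamma}(k^t)$ tuples trapped for a component reason'' are spurious: the argument shows that \emph{every} non-Vinogradov $j$ contributes zero, with no exceptional set. Your separate handling of the abelian case via Remark~\ref{rem1} and the $(2t-2)$-th power is appropriate and matches how the paper actually applies the theorem in that case.
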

This contradicts Lemmas \ref{lem1} and \ref{lem2} as follows. When $G$ is abelian and thus $t = 2$, Lemma \ref{lem2} states that there is a word $\epsilon$ of length $k$ such that
		\begin{align*}
		&\limsup_{P \rightarrow \infty} \int_X \E_{P/2 < p \leq P}  \sup_{\ell \in \N} \Big| \E_{h \leq k}  \F(g^{ph + \ell} x) \cdot \epsilon_h \Big|^{2t-2}  dx \\ > &\frac{c_0^{2t-2}}{9} \cdot \floor*{\frac{c_0^{2t-2} k}{6}} \delta^{-1} k^{-t},
		\end{align*}
for any $\delta$ we choose so long as $k$ is chosen from the set $\K$ of natural numbers such that $f'$ has fewer than $\delta k^2$ many words of length $k$. Thus, picking $\delta$ small, (in particular, smaller than say $C_1^{-1} 100 c_0^{4}$), we find that
		\begin{align*}
		\limsup_{P \rightarrow \infty} \int_X \E_{P/2 < p \leq P}  \Big| \E_{h \leq k}  \F(g^{ph} x) \cdot \epsilon_h \Big|^{2t-2}  dx  > k^{-t} C_1,
		\end{align*}
contradicting Theorem \ref{bigthm}. (Note that we have replaced $|\E_{h \leq k} \F(g^{ph+\ell}) \epsilon_h|$ by the same expression without the shift in $\ell$ as in Remark \ref{rem1}). Similarly, if the group is not abelian, Lemma \ref{lem1} states that there exists a word $\epsilon$ of length $k$ such that
\[
		\limsup_{P \rightarrow \infty} \E_{P/2 < p \leq P} \int_X  \Big| \E_{h \leq k} \F(g^{ph} x) \cdot \epsilon_h \Big|^{2t}  dx > k^{-t + \e} c_0^{2t},
		\]
		for $t \neq 2$ and when $t = 2$, we have
		\[
		\limsup_{P \rightarrow \infty} \E_{P/2 < p \leq P} \int_X  \Big| \E_{h \leq k} \F(g^{ph} x) \cdot \epsilon_h \Big|^{2t}  dx > \delta^{-1} k^{-t } c_0^{2t}.
		\]
When $t = 2$, again by picking $\delta$ small, this time smaller than $C_s^{-1} c_0^{2t}$, proves that
		\[
		\limsup_{P \rightarrow \infty} \E_{P/2 < p \leq P} \int_X  \Big| \E_{h \leq k} \F(g^{ph} x) \cdot \epsilon_h \Big|^{2t}  dx > C_s k^{-t },
		\]
again contadicting Theorem \ref{bigthm}. 
Finally, when $t \neq 2$, 
\[
		\limsup_{P \rightarrow \infty} \E_{P/2 < p \leq P} \int_X  \Big| \E_{h \leq k} \F(g^{ph} x) \cdot \epsilon_h \Big|^{2t}  dx > k^{-t + \e} c_0^{2t},
		\]
contradicts
		\[
		\limsup_{k \in \K} \limsup_{P \rightarrow \infty} \int_X \E_{P/2 < p \leq P} | \E_{h \leq k} \epsilon_h \Phi(g^{ph} x) |^{2t} dx \cdot k^{t - \e} = 0,
		\]
from Theorem \ref{bigthm}.

Thus, the rest of this section will be devoted to showing that Theorem \ref{bigthm} is true.
	Suppose not and for the moment fix $k$ in $\K$  such that
	\[
	\limsup_{P \rightarrow \infty} \E_{P/2 < p \leq P} \int_X \Big| \E_{h \leq k} \epsilon_h \Phi(g^{ph} x) \Big|^{2t} dx \gg k^{-t+\e}.
	\]
	The first step is to replace averages over primes by uniform averages over natural numbers. To do this, we need the machinery of Green-Tao \cite{GT1} \cite{GT2} and Green-Tao-Ziegler \cite{GTZ}.
	By the triangle inequality, we may replace averages over primes by averages weighted by the von Mangoldt function.
	\[
	\limsup_{P \rightarrow \infty} \E_{P/2 < n \leq P} \int_X \Lambda(n) \Big| \E_{h \leq k} \epsilon_h \Phi(g^{ph} x) \Big|^{2t} dx \gg k^{-t + \e}.
	\]
	We denote $\psix(m) = \Phi(g^{m} x)$.
	We expand:
	\begin{align*}
	\limsup_{P \rightarrow \infty} & \E_{P/2 < n \leq P} \int_X \Lambda(n) \E_{J \in [k]^{2t}} \epsilon_J  \psix(nj_1) \cdot \cdots \cdot \psix(nj_t) \cdot  \\ &\overline{\psix}(nj_{t+1}) \cdot \cdots \cdot \overline{\psix}(nj_{2t} ) dx
	\gg k^{-t + \e},	
	\end{align*}
where $\epsilon_j$ is a phase given by the formula $\epsilon_J = \epsilon_{j_1} \cdots \epsilon_{j_t} \cdot \overline{\epsilon}_{j_{t+1}} \cdots \overline{\epsilon}_{j_{2t}}$.

	We say $J \in [k]^s$ is diagonal if $\# \{ m \leq s \colon j_m = h \} = \# \{ m > s \colon j_m = h \}$ for all $h \leq k$. We say $J$ solves the Vinogradov mean value problem if, for all $m$ between $1$ and $s$, we have
	\[
	j_1^m + \cdots + j_t^m = j_{t+1}^m + \cdots + j_{2t}^m.
	\] 
	Every diagonal $J$ also solves Vinogradov's mean value problem. 
	We rely on the following Theorem due to Bourgain, Demeter and Guth which says that those account for ``most" solutions, up to a constant.
	\begin{thm}[\cite{MR3548534} Theorem 1.1]\label{BDG}
		For all $\e$ and $s$ there exists a constant $C_{s,\e}$ such that the number of solutions to the Vinogradov mean value problem is less than $C_{s,\e}k^{t + \e}$ where $t \leq {{s+1} \choose 2}$.
	\end{thm}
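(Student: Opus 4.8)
The plan is to deduce Theorem \ref{BDG} from the $\ell^{2}$ \emph{decoupling} inequality for the moment curve $\gamma(t)=(t,t^{2},\dots,t^{s})$, $t\in[0,1]$, which I would in turn establish by the Bourgain--Demeter--Guth induction on scales. First I would replace the count by an analytic mean value: for $\alpha=(\alpha_{1},\dots,\alpha_{s})\in(\R/\Z)^{s}$ set $g_{k}(\alpha)=\sum_{j=1}^{k}e(j\alpha_{1}+j^{2}\alpha_{2}+\cdots+j^{s}\alpha_{s})$, so that by orthogonality of characters the number of $J\in[k]^{2r}$ solving $j_{1}^{m}+\cdots+j_{r}^{m}=j_{r+1}^{m}+\cdots+j_{2r}^{m}$ $(1\le m\le s)$ equals $J_{s,r}(k):=\int_{(\R/\Z)^{s}}|g_{k}(\alpha)|^{2r}\,d\alpha$. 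Writing $n:={{s+1}\choose 2}$ for the critical number of variables per side (the bound is classical for $s\le 2$), it suffices to prove the critical estimate $J_{s,n}(k)\ll_{s,\e}k^{n+\e}$, since for any integer $t\le n$ Jensen's inequality on the probability space $(\R/\Z)^{s}$ gives
\[
J_{s,t}(k)=\int_{(\R/\Z)^{s}}\bigl(|g_{k}|^{2n}\bigr)^{t/n}\,d\alpha\;\le\;\Bigl(\int_{(\R/\Z)^{s}}|g_{k}|^{2n}\,d\alpha\Bigr)^{t/n}=J_{s,n}(k)^{t/n}\ll_{s,\e}k^{t+\e},
\]
which is precisely the assertion of Theorem \ref{BDG}.

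The heart of the matter is the decoupling inequality: for every $\e>0$ there is $C_{\e}$ such that for every $R\ge 1$, every $F$ supported on $[0,1]$, and every ball $B_{R}\subset\R^{s}$ of radius $R$,
\[
\left\Vert E_{[0,1]}F\right\Vert_{L^{s(s+1)}(B_{R})}\;\le\;C_{\e}\,R^{\e}\Bigl(\sum_{I}\left\Vert E_{I}F\right\Vert_{L^{s(s+1)}(w_{B_{R}})}^{2}\Bigr)^{1/2},
\]
where $E_{I}F(x)=\int_{I}F(t)\,e(tx_{1}+t^{2}x_{2}+\cdots+t^{s}x_{s})\,dt$, the sum runs over the arcs $I$ of length $R^{-1/s}$ partitioning $[0,1]$, and $w_{B_{R}}$ is a standard weight adapted to $B_{R}$. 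The exponent $s(s+1)=2n$ and the anisotropic scale $R^{-1/s}$ are forced by the fact that a $R^{-1/s}$-arc of $\gamma$ sits in a box of dimensions $R^{-1/s}\times R^{-2/s}\times\cdots\times R^{-1}$.

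I would prove this inequality by a double induction, on the scale $R$ and on the dimension $s$ (with $s=1$ trivial via Plancherel), assembling four ingredients. (i) A transverse ($s$-linear) decoupling estimate for $\gg 1$-separated arcs $I_{1},\dots,I_{s}$, obtained by interpolating the trivial $L^{2}$ decoupling against an $L^{\infty}$-type bound supplied by the Bennett--Carbery--Tao / Guth multilinear Kakeya inequality, with the relevant transversality constant controlled uniformly in $R$ through the Brascamp--Lieb inequalities applied to the nondegenerate datum coming from $s$ distinct tangent directions of $\gamma$. (ii) Anisotropic (``parabolic'') rescaling, which carries an $R^{-1/s}$-arc affinely onto $[0,1]$ while preserving the shape of $\gamma$, thereby relating decoupling for $[0,1]$ at scale $R$ to decoupling at the smaller scale $R^{1-1/s}$. (iii) A Bourgain--Guth broad/narrow dichotomy at each intermediate scale: either the mass is carried by boundedly many well-separated arcs, where (i) applies, or $\gamma$ is essentially concentrated near an affine image of a lower-dimensional moment curve, where the dimension-$(s-1)$ hypothesis applies after an affine change of variables --- this uses a polynomial Wolff-type partitioning of $[0,1]$ near the zero set of an auxiliary low-degree polynomial. (iv) Iterating (ii)--(iii) and tracking the accumulated constants to replace the a priori power of $R$ by an arbitrary $\e$. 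Finally, to extract Theorem \ref{BDG} I would apply the decoupling inequality with $F=\ch_{[0,1]}$ and $R=k^{s}$, so the arcs have length $1/k$; the standard discretisation --- replacing $E_{I}F$ by its value $\sim k^{-1}$ on a dual box of dimensions $k\times k^{2}\times\cdots\times k^{s}$, changing variables $\beta_{i}=x_{i}/k^{i}$, and using periodicity --- converts the two sides of the inequality into the arithmetic estimate $J_{s,n}(k)\ll_{\e}k^{n+\e}$ with $2n=s(s+1)$, which with the first paragraph completes the proof.

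The main obstacle is carrying out (i)--(iii) cleanly. For (i) one must verify that the Brascamp--Lieb datum built from $s$ distinct points of $\gamma$ is nondegenerate \emph{with a transversality constant bounded uniformly in $R$} --- if this degrades, the multilinear Kakeya input and hence the whole iteration fails --- and for (iii) one must arrange the broad/narrow splitting so that the ``narrow'' part genuinely falls under the lower-dimensional hypothesis, which is the technically heaviest point. An alternative that avoids harmonic analysis entirely is Wooley's efficient congruencing: one sets up the iteration bounding the number of solutions with the variables restricted to prescribed residue classes modulo $p^{j}$ in terms of the analogous count modulo $p^{j+1}$, and closes the induction by optimising over the free parameters; I would develop the decoupling proof as primary and indicate efficient congruencing only as a substitute.
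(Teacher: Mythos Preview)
The paper does not prove Theorem~\ref{BDG}; it merely cites it as Theorem~1.1 of Bourgain--Demeter--Guth \cite{MR3548534} and uses it as a black box input. So there is no ``paper's own proof'' to compare against: the theorem is an external result, invoked solely to bound the number of $2t$-tuples solving the Vinogradov system by $C_{s,\e}k^{t+\e}$.

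Your proposal is, in outline, a faithful sketch of the actual Bourgain--Demeter--Guth argument: the reduction via orthogonality and Jensen to the critical moment $J_{s,n}(k)$ with $n=\binom{s+1}{2}$, the $\ell^{2}$ decoupling inequality for the moment curve at the critical exponent $2n=s(s+1)$, the multilinear Kakeya/Brascamp--Lieb input, parabolic rescaling, the broad/narrow dichotomy with induction on dimension, and the standard discretisation to recover the arithmetic estimate. You also correctly flag Wooley's efficient congruencing as a self-contained alternative. As a high-level plan this is accurate; the serious technical burdens you identify (uniform transversality in the multilinear step, and making the narrow case land cleanly in dimension $s-1$) are indeed where the real work lies. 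But for the purposes of this paper none of this is required: the result is simply quoted, and your sketch goes well beyond what the paper itself contains.
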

	We will show that if $J$ does not solve Vinogradov's mean value problem then $J$ does not contribute to the sum. Thus, fix $J$ which does not solve Vinogradov's mean value theorem and suppose that
	\begin{align*}
	&\limsup_{P \rightarrow \infty}  \left| \E_{P/2 < n \leq P} \int_X \Lambda(n) \psix(nj_1) \cdot \cdots \cdot \psix(nj_t) \cdot \overline{\psix}(nj_{t+1}) \cdot \cdots \cdot \overline{\psix}(nj_{2t} ) dx \right| \\ &\gtrsim_{k, s, c}  1.	
	\end{align*}
	We denote 
	\[\Psix (n) = \psix(nj_1) \cdot \cdots \cdot \psix(nj_t) \cdot \overline{\psix}(nj_{t+1}) \cdot \cdots \cdot \overline{\psix}(nj_{2t}). \]
	Fix a subsequence such that
	\[
	\lim_{P \in I} \left|  \E_{P/2 < n \leq P} \int_X \Lambda(n) \Psix(n) dx \Big| =  \limsup_{P \rightarrow \infty} \Big|  \E_{P/2 < n \leq P} \int_X \Lambda(n) \Psix(n)dx \right| \gtrsim_{k, s, c} 1,
	\]
	where $I$ is some infinite subset of the natural numbers and where the implied constant may depend on $\Psix$.
	Fix a large number $W$, a product of many small primes. We will later choose exactly how large $W$ must be. We pass to a subsequence where the following limit exists for each $b \leq W$,
	\[
	\lim_{P \in I'} \left| \E_{P/2 < Wn \leq P} \int_X \Lambda(Wn + b) \Psix(Wn +b) dx\right|,
	\]
	where $I'$ is an infinite subset of $I$.
	We may do this by a diagonalization argument.
	By the triangle inequality,
	\[
	\E_{b < W} \lim_{P \in I'} \left| \E_{P/2 < Wn + b \leq P} \int_X \Lambda(Wn + b) \Psix(Wn +b) dx \right| \gtrsim_{k, s, c} 1,
	\]
	where the implied constant does not depend on $W$. Note that because $b < W$, we miss at most one term by changing the bounds of the sum from $P/2 < Wn + b \leq P$ to $P/2 < Wn \leq P$. Since $W$ is much smaller than $P$, this is an acceptable error.
	Note that if $b$ is not coprime to $W$, then 
	\[ \lim_{P \in I'} \E_{P/2 < Wn \leq P}  \int_X \Lambda(Wn + b) \Psix(Wn +b) dx = 0, \]
	because $Wn + b$ is never prime.
	By the pigeonhole principle, there exists $b < W$ such that 
	\[ \lim_{P \in I'} \left| \E_{P/2 < Wn \leq P} \int_X \frac{W}{\varphi(W)}\Lambda(Wn + b) \Psix(Wn +b) dx \right| \gtrsim_{k, s, c} 1, \]
	where again the implied constant does not depend on $W$ and where $\varphi(W)$ is Euler's torient function, the function which counts the number of residue classes mod $W$ that are coprime to $W$. Denote $\frac{W}{\varphi(W)}\Lambda(Wn + b) = \Lambda_{b, W}$.
	Then we can write our expression as a sum of two terms
	\begin{align*}
	&\lim_{P \in I'} \left| \E_{P/2 < Wn \leq P} \int_X \Lambda_{b, W}(n)\Psix(Wn + b) dx \right| \\ = &\lim_{P \in I'}   \left| \E_{P/2 < Wn \leq P} \int_X (\Lambda_{b, W}(n) - 1)\Psix(Wn + b) + \Psix(Wn + b) dx \right|
	\end{align*}
	To handle the first term, we need the following theorems of Green-Tao and Green-Tao-Ziegler.
	\begin{thm}[\cite{GT2} Proposition 11.2]\label{gt1}
		Let $G / \Gamma$ be a degree $s$ filtered nilmanifold, and let $M > 0$. Suppose that $F(g^nx)_{n=1}^\infty$ is a bounded nilsequence on $G/\Gamma$ with Lipschitz constant at most $M$, where $F$ is a function on $G / \Gamma$, $g$ is an element of $G$ and $x$ is a point in $G/ \Gamma$. Let $\e \in (0,1)$ and $P$ a large natural number. Then we may decompose
		\[
		F(g^nx) = F_1(n) + F_2(n),
		\]
		where $F_1 \colon \N \rightarrow [-1,1]$ is a sequence with Lipschitz constant $O_{M,\e, G/ \Gamma}(1)$ and obeying the dual norm bound
		\[
		|| F_1 ||_{U^{s+1} [P/2 < Wn \leq P]^*} = O_{M,\e,G/\Gamma}(1),
		\]
		while $F_2 \colon \N \rightarrow \mathbb{R}$ obeys the uniform bound
		\[
		||F_2||_\infty \leq \e.
		\] 
	\end{thm}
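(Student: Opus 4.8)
The plan is to follow the approach of Green and Tao: reduce, via the quantitative factorization theorem for polynomial nilsequences, to the case where the orbit is totally equidistributed on all of $G/\Gamma$, and in that case establish the dual norm bound directly by a Gowers--Cauchy--Schwarz argument, inducting on the degree $s$ and on $\dim(G/\Gamma)$. Observe first that once we have produced $F_2$ with $||F_2||_\infty \leq \e$ and with Lipschitz constant $O_{M,\e,G/\Gamma}(1)$, the remaining sequence $F_1(n) = F(g^n x) - F_2(n)$ automatically has Lipschitz constant $O_{M,\e,G/\Gamma}(1)$, since $F$ is $M$-Lipschitz and consecutive points $g^n x$, $g^{n+1}x$ stay a bounded distance apart. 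So the whole content is the dual norm estimate, with a controlled small-$L^\infty$ correction permitted along the way.

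By the factorization theorem of \cite{GT2}, for any parameter $M_0$ we may write $g^n x = \beta(n)\, g'(n)\, \gamma(n)\, y$ on the interval $P/2 < Wn \leq P$, where $\beta$ is $(M_0,N)$-smooth in the sense of \cite{GT2} ($N$ the length of the interval), $(g'(n)\Gamma')$ is totally $M_0$-equidistributed in a rational subnilmanifold $G'/\Gamma'$, and $\gamma$ is $M_0$-rational, hence $q$-periodic with $q \leq M_0$; the parameter $M_0$ will be chosen as a function of $\e$, $M$ and $G/\Gamma$ at the end. Partition the interval into $O(M_0^{O(1)})$ arithmetic progressions, each obtained by fixing a residue class modulo $q$ and subdividing so that $\beta$ varies by at most $\e$ across it; on each such progression $\beta(n)$ and $\gamma(n)$ are essentially constant, so $F(g^n x)$ agrees, up to an $L^\infty$-error $\leq \e$ that we put into $F_2$, with a bounded Lipschitz nilsequence $n \mapsto \widetilde F(g'(n) y_0)$ of degree $\leq s$ on $G'/\Gamma'$. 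Since the $U^{s+1}[P/2 < Wn \leq P]^*$ norm is controlled, with a loss depending only on $M_0$, by the sum of the $U^{s+1}$-dual norms on the pieces of a bounded-complexity partition into progressions, it suffices to bound the dual norm of each piece.

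We now induct. If $\dim(G'/\Gamma') < \dim(G/\Gamma)$, we are done by the dimension induction applied to the nilsequence on $G'/\Gamma'$. If $G' = G$, so that the orbit is totally equidistributed on all of $G/\Gamma$, we expand $\widetilde F$ in its vertical Fourier series $\widetilde F = \sum_\chi \widetilde F_\chi$ over characters $\chi$ of $G_s/\Gamma_s$; because $\widetilde F$ is Lipschitz, the tail beyond $O_{M,\e,G/\Gamma}(1)$ characters has $L^\infty$-norm $\leq \e$ and is absorbed into $F_2$, leaving boundedly many terms. For $\chi$ trivial, $\widetilde F_\chi$ is $G_s$-invariant and so descends to a degree $\leq s-1$ nilsequence on $G/(G_s\Gamma)$, handled by the degree induction. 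For $\chi$ nontrivial, $\widetilde F_\chi$ is a nilcharacter with nonzero top frequency, and here one argues directly: given a test sequence $h$, apply van der Corput and Cauchy--Schwarz $s+1$ times to $\E_n h(n)\overline{\widetilde F_\chi(g^n y_0)}$, using that for fixed shifts $h_1,\dots,h_j$ the multiplicative derivative $\Delta_{h_1}\cdots\Delta_{h_j}\widetilde F_\chi$ is again a nilcharacter of degree $\leq s-j$ on an associated product-type nilmanifold, so that after $s+1$ derivatives no oscillatory factor survives; the resulting quantity is then at most $||h||_{U^{s+1}[P/2 < Wn \leq P]}$ times a constant depending only on $M$, $s$ and $G/\Gamma$, with the error terms at each stage controlled by the quantitative equidistribution --- via Leibman's theorem --- of $g^n$ and its derivatives on the relevant nilmanifolds.

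The main obstacle is precisely this last step. The qualitative fact that nilsequences do not correlate with Gowers-uniform functions is classical; the difficulty is to make it quantitative and uniform, that is, to track the complexity --- Lipschitz constants, dimensions, and the rationality and equidistribution parameters --- of the nilmanifolds produced by successive multiplicative derivatives, and to arrange that the equidistribution-controlled error terms generated by each Cauchy--Schwarz step can be swept into $F_2$ without degrading the bound. Once this bookkeeping is in place, the factorization theorem, the behaviour of the dual norm under restriction to progressions, and the vertical Fourier truncation are all standard machinery that may be cited directly from \cite{GT2}.
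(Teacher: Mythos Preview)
The paper does not prove this statement at all: it is quoted verbatim as Proposition~11.2 of \cite{GT2} and used as a black box in the argument following Theorem~\ref{bigthm}. There is therefore no ``paper's own proof'' to compare against. Your proposal is a reasonable high-level sketch of the Green--Tao argument itself (factorization into smooth, equidistributed and rational parts; reduction to subprogressions; vertical Fourier decomposition; induction on step and dimension via van~der~Corput), so if your intention was to reconstruct the proof of the cited result you are on the right track, but for the purposes of this paper no proof is required.
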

	Note that the bound $|| F_1 ||_{U^{s+1} [N]^*} = O_{M,\e,G/\Gamma}(1)$ is uniform in the element $g$. We also need the following theorem of Green-Tao-Ziegler. The proof of this theorem is spread out over \cite{GTZ}, \cite{GT2} and \cite{GT1}, making it somewhat hard to give a specific theorem number. Essentially, if the Gowers norm were big then the Inverse Conjecture for the Gowers Norms would imply that the Mobius function correlates with a nilsequence which it does not by the Mobius-Nilsequence Conjecture. In \cite{GT2}, Theorem 7.2 states the theorem follows from the Mobius-Nilsequence Conjecture and the Inverse Conjecture for the Gowers Norms. The first of these conjectures is an immediate consequence of Theorem 1.1 in \cite{GT1}. The second of these conjectures is Theorem 1.3 in \cite{GTZ}. 
	\begin{thm}[\cite{GTZ}; see also \cite{GT1} and \cite{GT2}]\label{gt2}
		With all the notation as before,
		\[|| \Lambda_{b, W} - 1 ||_{U^{s+1} [P/2 < Wn \leq P]} = o_{W \rightarrow \infty}(1). \]
	\end{thm}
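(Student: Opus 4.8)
The plan is to deduce this from three inputs, two of which were conjectural in \cite{GT2} but are now unconditional: the inverse conjecture for the Gowers $U^{s+1}$-norms $\mathrm{GI}(s)$ (\cite{GTZ} Theorem 1.3), the M\"obius--nilsequence estimate $\mathrm{MN}(s)$ (\cite{GT1} Theorem 1.1), and a transference/dense-model argument allowing one to pass from the unbounded function $\Lambda_{b,W}$ to $1$-bounded functions (this is carried out in \cite{GT2}; it is also the mechanism behind the closely related decomposition recorded in Theorem \ref{gt1}). Once these are in place the statement is essentially \cite{GT2} Theorem 7.2, up to the cosmetic replacement of the interval $[N]$ by $\{ n : P/2 < Wn \le P \}$, so I will only sketch the assembly. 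Throughout, the $U^{s+1}$-norm over that set of $n$ is understood in the usual way, by embedding the interval $\{ n : P/(2W) < n \le P/W \}$ into a cyclic group of comparable size.

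First I would argue by contradiction: suppose there is $\delta > 0$ and a sequence $W \to \infty$ (with, for each $W$, a large $P = P(W)$) along which $\| \Lambda_{b,W} - 1 \|_{U^{s+1}[P/2 < Wn \le P]} \ge \delta$. The function $\Lambda_{b,W}$ is not $1$-bounded, but after the $W$-trick it admits an $(s+1)$-pseudorandom majorant $\nu$ of Goldston--Y\i ld\i r\i m type (here the coprimality of $b$ with $W$ and the slow growth of $W$ are what make the linear-forms and correlation conditions hold). A transference argument (the dense-model decomposition of \cite{GT2}) then produces a splitting $\Lambda_{b,W} - 1 = g + h$ with $g$ being $O(1)$-bounded, $\| h \|_{U^{s+1}} = o(1)$, and therefore $\| g \|_{U^{s+1}} \gg \delta$.

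Next I would apply $\mathrm{GI}(s)$ to the bounded function $g$: there is a degree-$\le s$ nilmanifold $G/\Gamma$ of complexity $O_\delta(1)$, an element $g_0 \in G$, a point $x \in G/\Gamma$, and a Lipschitz function $F$ with $\| F \|_{\mathrm{Lip}} = O_\delta(1)$ so that $\big| \E_{P/2 < Wn \le P}\, g(n)\, \overline{F(g_0^n x)} \big| \gg_\delta 1$; by Cauchy--Schwarz together with the dual bound for bounded Lipschitz nilsequences (which controls the $h$-contribution by $o(1)$), the same correlation bound holds with $\Lambda_{b,W} - 1$ in place of $g$. On the other hand, $\mathrm{MN}(s)$, combined with the standard passage from $\mu$ to $\Lambda_{b,W} - 1$ via a Vaughan-type identity and partial summation (\cite{GT2}, Sections~5--6), gives $\E_{P/2 < Wn \le P} (\Lambda_{b,W}(n) - 1)\, \overline{F(g_0^n x)} = o_{W \to \infty}(1)$, uniformly over all nilsequences on a fixed-complexity nilmanifold with bounded Lipschitz norm. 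For $W$ large enough depending on $\delta$ these two estimates are incompatible, which is the desired contradiction.

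The main obstacle is the transference step: one must produce the bounded function $g$ that inherits the Gowers correlation, which requires pseudorandomness of $\nu$ not only against linear forms but against the dual functions attached to the $U^{s+1}$-norm, and one must check that the nilsequence extracted by $\mathrm{GI}(s)$ has complexity bounded independently of $W$ and $P$, so that the error term in $\mathrm{MN}(s)$ (which depends on that complexity) still tends to $0$ as $W \to \infty$. Both points are handled in \cite{GT2}; restricting the averaging to $\{ n : P/2 < Wn \le P \}$ rather than $[N]$ changes nothing essential, since the majorant construction and the orthogonality estimates are insensitive to passing to a sub-interval of comparable length.
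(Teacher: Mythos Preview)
Your proposal is correct and follows essentially the same approach the paper indicates: the paper does not give a self-contained proof but simply remarks that this is \cite{GT2} Theorem 7.2, originally conditional on $\mathrm{MN}(s)$ and $\mathrm{GI}(s)$, which are now unconditional by \cite{GT1} Theorem 1.1 and \cite{GTZ} Theorem 1.3 respectively. Your sketch supplies more detail than the paper does (in particular the transference to a bounded function and the passage from $\mu$ to $\Lambda_{b,W}-1$), but the underlying strategy is the same.
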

Thus, our nilsequence $\Psi_x$ can be written as a sum $\Psi_x = F_1 + F_2$ where $F_1$ and $F_2$ implicitly depend on $x$ and enjoy the following properties. $F_2$ is uniformly small so 
	\[
	\limsup_{P \in I'} \left| \E_{P/2 < Wn \leq P} (\Lambda_{b, W}(n) - 1) F_1(Wn + b) \right| 
	\]
can be estimated by simply moving the absolute values inside. The remaining term is bounded in dual norm so 
\begin{align*}
(\Lambda_{b, W} - 1)(n) \cdot F_1(n) \leq || \Lambda_{b, W} - 1 ||_{U^{s+1} [P/2 < Wn \leq P] }\cdot || F_1 ||_{U^{s+1} [P/2 < Wn \leq P]^*}
\end{align*}
which tends to $0$. For a similar argument, see the proof of Proposition 10.2 in \cite{GT2}. It may also be possible to circumvent the use of \cite{GTZ} by using Theorem 7.1 in \cite{GT1}.
	Putting this together, we get that 
	\[
	\limsup_{P \in I'} \left| \E_{P/2 < Wn \leq P}  \int_X (\Lambda_{b, W}(n) - 1)\Psix(Wn + b) dx \right| = o_{W \rightarrow \infty}(1).
	\]
	As such for $W$ sufficiently large, by the triangle inequality
	\[
	\liminf_{P \in I'} \left| \E_{P/2 < Wn \leq P} \int_X \Psix(Wn + b) dx \right| \gtrsim_{k, s, c} 1.
	\]
	So far we exploited cancellation in the $\Lambda_{b, W}(n) - 1$ term and simply boundedness in the $\Psix(n)$ term. Next, we will try to exploit cancellation in $\Psix$ to obtain a contradiction. To exploit this cancellation we interpret the average as an integral over a complicated nilmanifold, then use the fact that the frequency of $\Phi$ is nontrivial on the identity component of $G / \Gamma$ and therefore nontrivial on every component of $G / \Gamma$. Let 
	$ G^{2t} = G \times \cdots \times G $ be the product of $G$ with itself $2t$ many times and let ${\bf g} = (g^{Wj_1}, g^{Wj_2},\ldots, g^{Wj_{2t}})$ be the element of $G^{2t}$ whose $\ell^{th}$ coordinate is $g^{Wj_\ell}$. For any $\sigma$ in $G$ let $\Delta \sigma = (\sigma, \ldots, \sigma)$ be the element of $G^{2t}$ whose entries are all $\sigma$ and let $G_\Delta$ be the set of all the elements of the form $\Delta \sigma$. Define
	\[
	{\bf G} = \overline{\langle {\bf g}, G_\Delta, \Gamma \times \cdots \times \Gamma \rangle},
	\]
	the closure of the group generated by ${\bf g}$, $G_\Delta$ and $\Gamma^{2t}$ inside $G^{2t}$. Our sequence $\Psix$ is a nilsequence on ${\bf G} / \Gamma^{2t}$. Consider the sequence of ``empirical" measures on $G^{2t} / \Gamma^{2t}$,
	\[
	\rho_P =  \E_{P/2 < Wn \leq P} (g^{(Wn + b)j_1}, \ldots, g^{(Wn+b)j_{2t}} )_* (\Delta_* dx), 
	\]
	where $\Delta_* dx$ is the Haar measure on $G_\Delta / (\Gamma^{2s} \cap G_\Delta)$ and where $*$ denotes the pushforward.
	By construction, if $\Xi \colon G^{2t} / \Gamma^{2t} \rightarrow \C$ is defined by 
	\[
	\Xi(x_1, \ldots, x_{2t}) = \prod_{j=1}^{t} \Phi(x_j) \cdot \prod_{j=t+1}^{2t} \overline{\Phi}(x_j),
	\] 
	then
	\[
	\E_{P/2 < Wn \leq P} \int_X \Psix(Wn + b) dx = \int_{G^{2t}/ \Gamma^{2t} } \Xi \ \ \rho_P(dx).
	\]
	By the Banach-Alaoglu theorem, there is a further subsequence along which the empirical measures converge weakly,
	\[
	\lim_{P \in I''} \rho_P \overset{*}{\rightharpoonup} \rho,
	\]
	where $I''$ is an infinite subset of $I'$. Note that, by summation by parts, $\rho_P$ is almost invariant by ${\bf g}$ in the following sense:
	\[
	{ \bf g }_* \rho_P = \rho_P + O\left(\frac{W}{P}\right)
	\]
	Therefore $\rho$ is actually ${\bf g}$ invariant.  Since $\rho$ is an average of $G_\Delta$ invariant measures, $\rho$ is a also $G_\Delta$ invariant. Of course $\rho$ is also $\Gamma^{2t}$ invariant because $\Gamma^{2t}$ acts trivially on $G^{2t} / \Gamma^{2t}$. Since stabilizers of measures are closed, $\rho$ is invariant under ${\bf G}$.
	By the classification of invariant measures, we know that $\rho$ is actually (a translate of) Haar measure on some nilmanifold ${\bf X}$. 
	Next we need the following result essentially due to Frantzikinakis \cite{Frantzikinakis}.
	\begin{lem}[\cite{Frantzikinakis}; see section 5.7 and especially the proof of Proposition 5.7]\label{frz}
		With all the notation as before, for any $u \in G_s$ and $m \leq s$, we have $( u^{(Wj_\ell)^m} )_{\ell=1}^{2t} \in {\bf G}$.
	\end{lem}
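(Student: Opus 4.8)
The claim is a purely group-theoretic statement about the closed subgroup $\mathbf{G}\le G^{2t}$: it contains a prescribed family of elements supported on $G_s$. Since $G_s$ is central in $G$ (so $G_{s+1}=\{e\}$), the building blocks can be produced by hand. First I would exploit the two ``abelian'' generating sets of $\mathbf{G}$. Because $\Gamma^{2t}\subseteq\mathbf{G}$ and $G_\Delta\subseteq\mathbf{G}$, for any $w\in G_{s-1}$, any $\gamma\in\Gamma$ and any integers $c_1,\dots,c_{2t}$ we have
\[
\big[(\gamma^{c_1},\dots,\gamma^{c_{2t}}),\,\Delta w\big]=\big([\gamma^{c_\ell},w]\big)_{\ell=1}^{2t}=\big([\gamma,w]^{c_\ell}\big)_{\ell=1}^{2t}\in\mathbf{G},
\]
the middle equality using that $[\gamma,w]\in G_s$ is central. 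Taking $c_\ell=(Wj_\ell)^m$ this already gives $\big(u^{(Wj_\ell)^m}\big)_{\ell=1}^{2t}\in\mathbf{G}$ for every $u\in[\Gamma,G_{s-1}]$ and every $m$. Likewise, iterating the commutator against $\mathbf{g}$ and using centrality of $G_s$ to kill all lower-order corrections, one gets the exact identity $\operatorname{ad}_{a^{p}}^{\,m}(v)=\big(\operatorname{ad}_a^{\,m}v\big)^{p^{m}}$ for $v\in G_{s-m}$ (where $\operatorname{ad}_x(w):=[x,w]$), so applying it coordinatewise with $a=g$, $p=Wj_\ell$ yields $\big((\operatorname{ad}_g^{\,m}v)^{(Wj_\ell)^m}\big)_{\ell=1}^{2t}\in\mathbf{G}$ for $v\in G_{s-m}$.

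For each fixed $m$, the set of $u\in G_s$ with $\big(u^{(Wj_\ell)^m}\big)_\ell\in\mathbf{G}$ is therefore a subgroup of $G_s$ containing both $[\Gamma,G_{s-1}]$ and $\operatorname{ad}_g^{\,m}(G_{s-m})$; and it is closed, since $\mathbf{G}$ is closed and $u\mapsto(u^{(Wj_\ell)^m})_\ell$ is continuous. So the lemma reduces to the assertion that $[\Gamma,G_{s-1}]$ is dense in $G_s$. After the usual reduction to the case where $G$ is generated by its identity component together with $g$, this follows from the fact that the cocompact lattice $\Gamma$ is Zariski-dense in the relevant connected subgroup of $G$, so that $\overline{[\Gamma,G_{s-1}]}=[G,G_{s-1}]=G_s$; the contribution of $g$ to $G_s=[G,G_{s-1}]$ is absorbed either into this density statement or, if one prefers to argue without connectedness, via the iterated-$\mathbf{g}$ commutators above together with the Jacobi identity in the associated graded Lie ring to move a lattice generator into the outermost commutator slot.

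The genuinely delicate point is the reduction step and the density claim $\overline{[\Gamma,G_{s-1}]}=G_s$ --- that is, verifying that the explicit elements produced really do topologically generate all of $G_s^{2t}$ that is needed --- which is precisely where the ergodicity of $g$ enters. This is carried out in detail in \cite{Frantzikinakis}, Section~5.7 and the proof of Proposition~5.7, and I would simply follow that argument.
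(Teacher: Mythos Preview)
Your route is genuinely different from the paper's. The paper argues by induction on $m$: the base case $m=1$ uses ergodicity of $g$ on the abelianisation $G/G_2\Gamma$ to approximate any $\tau$ by powers of $g$, giving $(\tau^{Wj_\ell})_\ell\in\mathbf{G}$ modulo $G_2^{2t}$; the inductive step manufactures higher-degree exponents by commuting these against diagonal elements of $G_m$ (Claims~2 and~3). Your idea of exploiting $\Gamma^{2t}\subseteq\mathbf{G}$ directly is nice, and for $s\ge 2$ with $G_{s-1}$ connected it really does yield $(u^{c_\ell})_\ell\in\mathbf{G}$ for \emph{arbitrary} integers $c_\ell$ once one checks that $\langle[\Gamma\cap G^0,\,G_{s-1}]\rangle=\exp([\mathfrak{g},\mathfrak{g}_{s-1}])$ via a Mal'cev basis. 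That is strictly more than the lemma asks for.

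There is, however, a real gap at the bottom of the induction. When $s=1$ (the abelian case, which is used in the paper) every commutator is trivial: both $[\Gamma,G_{s-1}]$ and $\operatorname{ad}_g^m(G_{s-m})$ collapse to $\{e\}$, so your argument produces no elements of $\mathbf{G}$ at all. More generally, for $s\ge 2$ your density claim $\overline{\langle[\Gamma,G_{s-1}]\rangle}=G_s$ can fail when $G\neq G^0\Gamma$: the missing piece is precisely the contribution of $g$ to $G_s$, and your iterated-$\mathbf{g}$ commutators only deliver exponents $(Wj_\ell)^m$ for the \emph{one} value of $m$ matching the depth of the iteration, not for all $m\le s$ simultaneously. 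Filling this in via ``Jacobi in the graded Lie ring'' amounts to rerunning the inductive argument of Frantzikinakis. In particular, Section~5.7 of \cite{Frantzikinakis} does not prove your density statement; it proves the lemma itself by the step-by-step induction starting from ergodicity of $g$, which is exactly what the paper reproduces. So you cannot cite it to close the gap without essentially abandoning your approach for the paper's.
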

	We include the proof for completeness and because our result differs very slightly from the way it was stated in \cite{Frantzikinakis}.

\begin{proof}
We split Lemma \ref{frz} into three claims:
\begin{claim}\label{claim1}
Let $m$ and $\ell$ be natural numbers.
If $g_1$ is in  ${\bf G}_m$ and $g_2$ and $g_3$ are in  ${\bf G}_r$ then there exists $\sigma$ in $ {\bf G}_{m + r + 1}$ such that
\[
[g_1, g_2] \cdot [g_1, g_3] = [g_1, g_2 \cdot g_3] \cdot \sigma.
\]
Moreover, $\sigma$ depends continuously on $g_1$, $g_2$ and $g_3$. In fact, this holds for any nilpotent Lie group, not just  ${\bf G}$.
\end{claim}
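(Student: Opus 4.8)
The plan is to reduce the claim to routine commutator calculus, so that the only genuine content is bookkeeping in the lower central series. Throughout I use the convention $[x,y]=xyx^{-1}y^{-1}$. I would start from two identities valid in an arbitrary group: the collection identity $[a,bc]=[a,b]\cdot b[a,c]b^{-1}$, and the conjugation identity $b\,u\,b^{-1}=u\cdot[u^{-1},b]$. Substituting $u=[a,c]$ into the second and feeding the result into the first gives the exact equality
\[
[a,bc]=[a,b]\cdot[a,c]\cdot\bigl[\,[a,c]^{-1},b\,\bigr],
\]
again valid in any group.

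Next I would specialize to $a=g_1\in{\bf G}_m$ and $b=g_2,\ c=g_3\in{\bf G}_r$ and track where each factor lives. By the standard nesting $[{\bf G}_i,{\bf G}_j]\subseteq{\bf G}_{i+j}$ of the lower central series we have $[g_1,g_3]\in{\bf G}_{m+r}$, hence $\bigl[\,[g_1,g_3]^{-1},g_2\,\bigr]\in[{\bf G}_{m+r},{\bf G}_r]\subseteq{\bf G}_{m+2r}$, and since $r\ge 1$ this is contained in ${\bf G}_{m+r+1}$. Rearranging the displayed identity, and using that ${\bf G}_{m+r+1}$ is a subgroup, yields
\[
[g_1,g_2]\cdot[g_1,g_3]=[g_1,g_2g_3]\cdot\sigma,\qquad \sigma:=\bigl[\,[g_1,g_3]^{-1},g_2\,\bigr]^{-1}\in{\bf G}_{m+r+1},
\]
which is exactly the assertion. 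Continuity of $\sigma$ in $(g_1,g_2,g_3)$ is immediate because $\sigma$ is a fixed word in the three elements and their inverses and the group operations are continuous; and since the argument used only the existence of the lower central series and its nesting property, it holds in any group, nilpotent or not, as stated.

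I do not anticipate a real obstacle here: this is a warm-up identity whose role is to feed into Claims~2 and~3. The only points demanding a little attention are fixing one commutator convention and adhering to it, and noticing that the inequality $m+2r\ge m+r+1$ — equivalently $r\ge 1$ — is precisely what licenses pushing $\sigma$ down into ${\bf G}_{m+r+1}$ rather than only ${\bf G}_{m+2r}$.
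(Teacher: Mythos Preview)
Your proof is correct and follows essentially the same approach as the paper: both are straightforward commutator calculus using the nesting $[{\bf G}_i,{\bf G}_j]\subseteq{\bf G}_{i+j}$. The paper works directly modulo ${\bf G}_{m+r+1}$ by commuting factors past each other, whereas you invoke the standard collection identity to obtain an explicit formula for $\sigma$; this has the minor advantage that the continuity claim becomes immediate from the formula, but the underlying argument is the same.
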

\begin{claim}\label{claim2}
For any $m$ between $1$ and $s$ and any element $\tau$ in $ G_m$ and in the identity component (which is automatic for $m > 1$), there exists an element $\sigma$ in $ (G_{m+1})^{2t}$ such that 
\[
(\tau^{(Wj_\ell)^m } )_{\ell = 1}^{2t} \cdot \sigma \in {\bf G}.
\]
\end{claim}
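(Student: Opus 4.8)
The plan is to prove Claim \ref{claim2} by induction on $m$, the engine being Claim \ref{claim1} (bilinearity of commutators modulo the next term of the lower central series) together with the operation of commuting against ``linear diagonal'' elements $(\tilde\tau^{Wj_\ell})_{\ell=1}^{2t}$, which raises the lower-central-series degree and the degree of the polynomial exponent by one each, in lockstep. I will use the Mal'cev structure freely: after the usual reduction we may take $G$ connected and simply connected, so $\exp\colon\mathfrak g_r\to G_r$ is a diffeomorphism, $G_r$ is connected, and $\mathfrak g_r=[\mathfrak g_1,\mathfrak g_{r-1}]$; for $m=1$ we keep the restriction $\tau\in G^\circ$.

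\emph{Base case $m=1$.} This is where ergodicity of $g$ is used. Passing to a suitable power $g^N$ to kill any disconnectedness of the horizontal torus $G/(G_2\Gamma)$, ergodicity gives that $\overline{\langle g^N\rangle}$ surjects onto the identity component of $G/(G_2\Gamma)$, so $\overline{\langle{\bf g}\rangle}\subseteq{\bf G}$ projects onto the subgroup $\{(\bar\tau^{Wj_\ell})_\ell:\bar\tau\in G/(G_2\Gamma)\}$ of $(G/(G_2\Gamma))^{2t}$. Hence for any $\tau\in G^\circ$ the element $(\tau^{Wj_\ell})_\ell$ agrees modulo $(G_2\Gamma)^{2t}$ with some ${\bf h}\in\overline{\langle{\bf g}\rangle}$; since $\Gamma^{2t}\subseteq{\bf G}$ and $G_2$ is normal in $G$, one may move the $\Gamma^{2t}$-part past the $G_2$-part to produce $\sigma\in(G_2)^{2t}$ with $(\tau^{Wj_\ell})_\ell\cdot\sigma\in{\bf G}$, which is Claim \ref{claim2} for $m=1$.

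\emph{Inductive step.} Assume Claim \ref{claim2} for $m-1$, $2\le m\le s$. Given $\tau\in(G_m)^\circ$, write $\log\tau\equiv\sum_i[Z_i,\log\tau_i']\pmod{\mathfrak g_{m+1}}$ with $Z_i\in\mathfrak g_1$ and $\tau_i'\in(G_{m-1})^\circ$, which is possible since $\mathfrak g_m/\mathfrak g_{m+1}$ is spanned by brackets from $\mathfrak g_1\times\mathfrak g_{m-1}$. For each $i$ the inductive hypothesis gives ${\bf a}_i=({\tau_i'}^{(Wj_\ell)^{m-1}})_\ell\,\sigma_i'\in{\bf G}$ with $\sigma_i'\in(G_m)^{2t}$, and the base case gives ${\bf b}_i=(\tilde\tau_i^{Wj_\ell})_\ell\,\tilde\sigma_i\in{\bf G}$ with $\tilde\sigma_i\in(G_2)^{2t}$, where $\tilde\tau_i=\exp(Z_i)$. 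Computing $[{\bf a}_i,{\bf b}_i]\in{\bf G}$ coordinatewise: because $\tau_i'\in G_{m-1}$ and $\tilde\tau_i\in G_1$, every term produced by $\sigma_i'$ or $\tilde\sigma_i$ lies in $G_{m+1}$, and by the bilinearity in Claim \ref{claim1} the remaining part equals $([\tau_i',\tilde\tau_i]^{(Wj_\ell)^{m-1}\cdot Wj_\ell})_\ell=([\tau_i',\tilde\tau_i]^{(Wj_\ell)^m})_\ell$ modulo $(G_{m+1})^{2t}$; thus Claim \ref{claim2} for $m$ holds for the element $[\tau_i',\tilde\tau_i]$, whose logarithm is $[Z_i,\log\tau_i']\bmod\mathfrak g_{m+1}$. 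Finally, Claim \ref{claim1} also shows that $S_m=\{\tau\in G_m:(\tau^{(Wj_\ell)^m})_\ell\sigma\in{\bf G}\text{ for some }\sigma\in(G_{m+1})^{2t}\}$ satisfies: $S_mG_{m+1}$ is a closed subgroup of $G_m$, and it contains $G_{m+1}$ together with the connected family $\{\exp([Z,\log\tau']):Z\in\mathfrak g_1,\ \tau'\in(G_{m-1})^\circ\}$, whose logarithms span $\mathfrak g_m/\mathfrak g_{m+1}$; hence $S_mG_{m+1}\supseteq(G_m)^\circ$, which completes the induction. When $m=s$ one has $G_{m+1}=\{e\}$, so this yields exactly $(\tau^{(Wj_\ell)^s})_\ell\in{\bf G}$ for $\tau\in(G_s)^\circ=G_s$; removing the correction $\sigma$ in the cases $m<s$ that Lemma \ref{frz} also needs requires a further argument, presumably the remaining claim of the decomposition.

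\emph{Main obstacle.} The conceptual crux is the base case: one has to extract from ergodicity that $\overline{\langle{\bf g}\rangle}$ already realizes every linear diagonal element up to an error in $(G_2)^{2t}$, while correctly handling the possible disconnectedness of $G$ and of the horizontal torus and the noncommutative rearrangement of the $\Gamma^{2t}$ and $G_2$ parts. The other point requiring care is the bookkeeping in the inductive step: one must check that commuting a degree-$(m-1)$, exponent-$(Wj_\ell)^{m-1}$ element against a degree-$1$, exponent-$(Wj_\ell)^1$ element produces a degree-$m$, exponent-$(Wj_\ell)^m$ element with error genuinely confined to $(G_{m+1})^{2t}$ — i.e.\ that the central-series degree and the exponent degree climb together — which is exactly what Claim \ref{claim1} delivers and where an off-by-one would be fatal.
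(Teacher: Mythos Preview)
Your proposal is correct and follows essentially the same strategy as the paper: induction on $m$, with the base case $m=1$ extracted from ergodicity of $g$ on the horizontal torus $G/G_2\Gamma$, and the inductive step obtained by commuting a degree-$1$ element (from the base case) against a degree-$(m-1)$ element (from the inductive hypothesis), using Claim \ref{claim1} to control the error in $(G_{m+1})^{2t}$ and to see that the exponents multiply to $(Wj_\ell)^m$.

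The differences are purely presentational. You phrase the inductive step in Lie-algebra language (writing $\log\tau$ as a sum of brackets and invoking $\mathfrak g_m/\mathfrak g_{m+1}=[\mathfrak g_1,\mathfrak g_{m-1}]/\mathfrak g_{m+1}$), whereas the paper works directly with group commutators and the fact that $G_{m+1}$ is generated by $[G_1,G_m]$; and you close the induction with a ``$S_mG_{m+1}$ is a closed subgroup containing a spanning set'' argument, whereas the paper explicitly checks closure under products and then appeals to generation by commutators. Your reduction to $G$ connected and simply connected is a convenience the paper does not make (and is not strictly needed, since the claim already restricts to $\tau$ in the identity component), but it is harmless. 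The error-term bookkeeping you flag --- that $[\sigma_i',\tilde\tau_i^{Wj_\ell}]\in G_{m+1}$ and $[\tau_i'^{(Wj_\ell)^{m-1}},\tilde\sigma_i]\in G_{m+1}$ --- is exactly what the paper's repeated application of Claim \ref{claim1} amounts to.
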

\begin{claim}\label{frz2}
For any natural number $r$ between $1$ and $s$  and any natural number $m$ between $1$ and $r$ and for any $\tau$ in $G_r$ there exists an element $\sigma$ in $(G_{r+1})^{2s}$ such that
\[
(\tau^{(Wj_\ell)^m } )_{\ell = 1}^{2t} \cdot \sigma \in {\bf G}.
\]
\end{claim}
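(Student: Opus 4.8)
The plan is to bootstrap from the ``diagonal'' case $m=r$ (Claim \ref{claim2}) to all pairs $m\le r$ by manufacturing the lower powers $(Wj_\ell)^m$ out of iterated commutators inside $\mathbf G$. Two structural observations drive this. First, $\mathbf G$ contains $\mathbf g^{n}=(g^{nWj_\ell})_\ell$, the full diagonal $G_\Delta=\{(\sigma,\dots,\sigma):\sigma\in G\}$, and the entire product $\Gamma^{2t}$ with \emph{independent} entries in the $2t$ coordinates; being closed, $\mathbf G$ is therefore closed under commutators of such elements, so in particular $(g^{aWj_\ell})_\ell\in\mathbf G$ and $(\delta^{\,q(j_\ell)})_\ell\in\mathbf G$ for every $\delta\in\Gamma$ and every integer-valued polynomial $q$. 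Second, by the Hall--Petrescu commutator expansion, a nested commutator $[x_1^{a_1},[x_2^{a_2},\dots[x_{r-1}^{a_{r-1}},x_r^{a_r}]\dots]]$ equals the corresponding nested commutator of the $x_i$ raised to the single power $a_1\cdots a_r$, times a product of strictly longer iterated commutators carrying polynomial exponents in the $a_i$ --- and every one of those extra factors lies in $G_{r+1}$. We may also assume, as is standard in Host--Kra theory, that $G=\overline{\langle g,\Gamma\rangle}$, so that $G_r/G_{r+1}$ is topologically generated by the cosets of length-$r$ iterated commutators of elements of $\{g\}\cup\Gamma$.

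I would then prove the assertion ``for every $\tau\in G_r$ there is $\sigma\in(G_{r+1})^{2t}$ with $(\tau^{(Wj_\ell)^m})_\ell\cdot\sigma\in\mathbf G$'' by induction on $r$ from $1$ to $s$, and for each fixed $r$ by induction on $m$ from $1$ up to $r$. The case $r=m=1$ is immediate from the first observation (realize the generators directly, then pass to all of $G$), and the top case $m=r$ at each level is exactly Claim \ref{claim2} --- for $r\ge2$ the identity-component hypothesis there is automatic since $G_r\subseteq G_2$ is connected. For $m<r$: given a length-$r$ iterated-commutator generator $\tau=[\delta_1,[\dots[\delta_{r-1},\delta_r]\dots]]$ of $G_r/G_{r+1}$, I form the analogous nested commutator inside $\mathbf G$ in which each slot with $\delta_i=g$ is replaced by $\mathbf g^{a_i}=(g^{a_iWj_\ell})_\ell$ and each slot with $\delta_i\in\Gamma$ is replaced by $(\delta_i^{\,p_i(j_\ell)})_\ell\in\Gamma^{2t}$, choosing the auxiliary data $(a_i,p_i)$ so that the product of the per-coordinate exponents is an integer-valued polynomial $P(j_\ell)$ of degree exactly $m$ with leading term a nonzero multiple of $(Wj_\ell)^m$. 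By Hall--Petrescu this element of $\mathbf G$ equals $(\tau^{P(j_\ell)})_\ell$ times a product of $(G_{r+1})^{2t}$-valued corrections; the sub-leading monomials of $P$ attach powers $j_\ell^{m'}$, $m'<m$, to the \emph{same} $\tau\in G_r$ and are removed using the already-established instances with exponent $m'<m$ at level $r$, together with interpolation over several choices of the $a_i$ to isolate the degree-$m$ coefficient; the longer iterated-commutator corrections land at strictly lower filtration levels $r'<r$ with exponents of degree $\le r'$, so they are removed using the outer induction at levels below $r$. Finally, Claim \ref{claim1} upgrades the conclusion from the iterated-commutator generators to arbitrary $\tau\in G_r$: near-additivity of $\tau\mapsto(\tau^{(Wj_\ell)^m})_\ell$ modulo $(G_{r+1})^{2t}$ makes the set of admissible $\tau$ a subgroup, and the continuity clause of Claim \ref{claim1} legitimizes the passage to the topological closure needed to reach all of $G_r/G_{r+1}$.

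The main obstacle is the combinatorial heart of the inductive step: selecting the nested commutator and the auxiliary polynomials $(a_i,p_i)$ so that precisely the monomial $(Wj_\ell)^m$ on precisely the element $\tau$ survives, and checking via Hall--Petrescu that every discarded factor truly lies either in $(G_{r+1})^{2t}$ or at a strictly lower filtration level with a controlled-degree exponent, so that the two interlocking inductions (on $r$, and on $m$) actually close. A secondary point demanding care is that $\mathbf G$ contains $\Gamma^{2t}$ with arbitrary per-coordinate entries but only the \emph{diagonal} copy $G_\Delta$ of $G$, so all the $\ell$-dependence must be pushed onto the $\mathbf g$- and $\Gamma^{2t}$-slots; this is exactly where the reduction $G=\overline{\langle g,\Gamma\rangle}$ --- itself a consequence of ergodicity of $g$ --- and the resulting description of the generators of $G_r/G_{r+1}$ are used.
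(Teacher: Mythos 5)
There is a genuine gap, and you flag it yourself: the ``combinatorial heart of the inductive step'' --- choosing the nested commutator and the auxiliary data $(a_i,p_i)$, isolating the $(Wj_\ell)^m$ monomial on precisely $\tau$ by interpolation, and verifying that every discarded Hall--Petrescu factor lands where you want it --- is left unresolved, and this is not a detail the reader can be expected to supply. The paper avoids the entire apparatus with a single observation you are missing. Since $G_r$ is generated modulo $G_{r+1}$ by commutators $[g_1,g_2]$ with $g_1 \in G_m$ and $g_2 \in G_{r-m}$, and since the set of admissible $\tau$ is (as you note, via Claim~\ref{claim1}) a subgroup modulo $(G_{r+1})^{2t}$, it suffices to treat $\tau = [g_1,g_2]$. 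Now apply Claim~\ref{claim2} directly to $g_1 \in G_m$ to produce $\sigma \in (G_{m+1})^{2t}$ with $(g_1^{(Wj_\ell)^m})_\ell\cdot\sigma\in\mathbf G$, and take the commutator with the \emph{diagonal} element $(g_2)_\ell \in G_\Delta \subset \mathbf G$. Because the diagonal slot carries exponent $j_\ell^0$, Claim~\ref{claim1} gives $[(g_1^{(Wj_\ell)^m})_\ell\cdot\sigma,\,(g_2)_\ell] = (\tau^{(Wj_\ell)^m})_\ell\cdot\sigma'$ with $\sigma'\in(G_{r+1})^{2t}$: the filtration level jumps from $m$ to $r$ while the exponent is left alone. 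No Hall--Petrescu collection, no iterated-commutator generators from $\{g\}\cup\Gamma$, no polynomial interpolation, and no double induction over $(r,m)$ are needed; the only induction remaining is the one already internal to Claim~\ref{claim2}.

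Two smaller points. First, your claim that ``the longer iterated-commutator corrections land at strictly lower filtration levels $r'<r$'' has the direction reversed: longer commutators sit \emph{deeper} in the lower central series, i.e. in $G_{r+1}$ and beyond, so those corrections would actually be absorbed into $\sigma$ for free. This does not repair the interpolation step, but it means part of the bookkeeping you feared is benign. Second, you invoke $G = \overline{\langle g,\Gamma\rangle}$ and a description of $G_r/G_{r+1}$ by iterated commutators in the generators $\{g\}\cup\Gamma$; the paper's proof of Claim~\ref{frz2} needs neither, since writing $\tau$ as a product of commutators $[g_1,g_2]$ with $g_1\in G_m$, $g_2\in G_{r-m}$ is a statement about the lower central series of an arbitrary nilpotent Lie group and makes no reference to a preferred generating set. (Ergodicity of $g$ is used, but only inside Claim~\ref{claim2}'s base case $m=1$, which you are already allowed to quote.)
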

We remark that taking $\tau = u$ in Claim \ref{frz2} gives Lemma \ref{frz}.
\begin{proof}[Proof of Claim \ref{claim1}]
The proof is simply a computation. For any $g_1$, $g_2$ and $g_3$ as above
\begin{align*}
[g_1, g_2] \cdot [g_1, g_3] = & g_1 g_2 g_1^{-1} g_2^{-1} [g_1, g_3] \\
= & g_1 g_2 g_1^{-1} [g_1, g_3] g_2^{-1} \mod G_{m + r + 1} \\
= & g_1 g_2 g_3 g_1^{-1} g_3^{-1} g_2^{-1} \mod G_{m + r + 1} \\
= & [g_1, g_2 g_3] \mod G_{m + r + 1}
\end{align*}
\end{proof}
\begin{proof}[Proof of Claim \ref{claim2}]
We prove Claim \ref{claim2} by induction on $m$. First, suppose $m =1$. Consider the torus $Z = G / G_2 \Gamma$. Let $\pi$ be the projection map $\pi \colon G \rightarrow Z$. Then since $g$ acts ergodically on $G / \Gamma$, we know $\pi(g)$ is an ergodic element in $Z$. Therefore, for any $\pi(\tau)$ in $G/G_2$, note that $\pi(\tau)$ is in the orbit of $\pi(g)$. By the definition of ${\bf G}$, $(\pi(g^{Wj_\ell}))_{\ell = 1}^{2t}$ is an element of $\pi^{2t} ( {\bf G} )$.  Thus, for any $\tau$ in $G$, $(\pi(\tau^{Wj_\ell}))_{\ell = 1}^{2t}$ is an element of $\pi^{2t} ( {\bf G} )$ so by definition of the quotient
\[
(\tau^{Wj_\ell } )_{\ell = 1}^{2t} \cdot \sigma \in {\bf G}
\]
for some $\sigma$ in $G_2^{2t}$.

Next, assume by induction that Claim \ref{claim2} holds for $m$. We will try to prove the claim for $m+1$. We begin with the case where $\tau$ is the  commutator of two elements of the following form. Suppose that there exists $g_1$ in $G$ and $g_2$ in $G_m$ such that $[g_1, g_2] = \tau$. By assumption, there exists $\sigma_1$ in $G_2^{2t}$ and $\sigma_2$ in $G_{m+1}^{2t}$ such that
\[
(g_1^{Wj_\ell } )_{\ell = 1}^{2t} \cdot \sigma_1 \in {\bf G} \text{ and } (g_2^{(Wj_\ell)^m } )_{\ell = 1}^{2t} \cdot \sigma_2 \in {\bf G}.
\]
Since ${\bf G}$ is a group, we conclude that the commutator is in ${\bf G}$.
\[
[ (g_1^{Wj_\ell } )_{\ell = 1}^{2t} \cdot \sigma_1, (g_2^{(Wj_\ell)^m } )_{\ell = 1}^{2t} \cdot \sigma_2] \in {\bf G}.
\]
Using Claim \ref{claim1} repeatedly, this is
\[
([ g_1, g_2 ]^{W j_\ell^{m+1}} )_{\ell = 1}^{2t} \sigma \in {\bf G},
\]
for some $\sigma$ in $G_{m+2}^{2t}$.

Finally, we note that commutators generate $G_{m+1}$ so it suffices to show that if $\tau_1$ and $\tau_2$ are elements of $G_{m+1}$ that satisfy Claim \ref{claim2} then so does their product. After all, if
\[
(\tau_1^{(Wj_\ell)^{m+1} } )_{\ell = 1}^{2t} \cdot \sigma_1 \in {\bf G} \text{ and } (\tau_2^{(Wj_\ell)^{m+1} } )_{\ell = 1}^{2t} \cdot \sigma_2 \in {\bf G},
\]
where $\sigma_1$ and $\sigma_2$ are in $G_{m+2}^{2t}$ then 
\begin{align*}
& (\tau_1^{(Wj_\ell)^{m+1} } )_{\ell = 1}^{2t} \cdot \sigma_1 \cdot (\tau_2^{(Wj_\ell)^{m+1} } )_{\ell = 1}^{2t} \cdot \sigma_2  \\
= & (\tau_1^{(Wj_\ell)^{m+1} } )_{\ell = 1}^{2t} \cdot (\tau_2^{(Wj_\ell)^{m+1} } )_{\ell = 1}^{2t}  \cdot \sigma  \\
= & ((\tau_1 \tau_2)^{(Wj_\ell)^{m+1} } )_{\ell = 1}^{2t} \cdot \sigma'  \\
= & (\tau^{(Wj_\ell)^{m+1} } )_{\ell = 1}^{2t} \cdot \sigma',
\end{align*}
where $\sigma$ and $\sigma'$ are in $G_{m+2}^{2t}$. This completes the proof of Claim \ref{claim2}.
\end{proof}

\begin{proof}[Proof of Claim \ref{frz2}]
First, if $m = r$ then we are done by Claim \ref{claim2}. Thus, we will assume $m < r$.

Second, we check that if $\tau_1$ and $\tau_2$ are in $G_r$ and satisfy Claim $\ref{frz2}$ then so does their product. By assumption, we may write
\[
(\tau_i^{(Wj_\ell)^m } )_{\ell = 1}^{2t} \cdot \sigma_i \in {\bf G},
\]
where $\sigma_i$ is an element of $(G_{r+1})^{2s}$ and $i = 1, 2$. Then the product is given by
\begin{align*}
&(\tau_1^{(Wj_\ell)^m } )_{\ell = 1}^{2t} \cdot \sigma_1 \cdot (\tau_2^{(Wj_\ell)^m } )_{\ell = 1}^{2t} \cdot \sigma_2 \\ = & (\tau_1^{(Wj_\ell)^m } )_{\ell = 1}^{2t} \cdot (\tau_2^{(Wj_\ell)^m } )_{\ell = 1}^{2t} \cdot \sigma_1 \cdot  [\sigma_1^{-1}, (\tau_2^{-(Wj_\ell)^m } )_{\ell = 1}^{2t} ] \cdot \sigma_2.
\intertext{
Then we use that $\tau_1^{(Wj_\ell)^m} \cdot \tau_2^{(Wj_\ell)^m} = (\tau_1 \tau_2)^{(Wj_\ell)^m}$ up to higher order terms.
}
= & ((\tau_1 \tau_2)^{(Wj_\ell)^m } )_{\ell = 1}^{2t} \mod (G_{r+1})^{2s}.
\end{align*}
Therefore, it suffices to prove Claim \ref{frz2} in the case that $\tau = [g_1, g_2]$ where $g_1$ is in $G_m$ and $g_2$ is in $G_{r-m}$ because such commutators generate $G_r$ as a group up to higher order corrections. 

By Claim \ref{claim2}, there exists $\sigma$ in $(G_{m+1})^{2s}$ such that
\[
(g_1^{(Wj_\ell)^m } )_{\ell = 1}^{2t} \cdot \sigma \in {\bf G}.
\]
We also know, because ${\bf G}$ contains diagonal elements, that $(g_2)_{\ell = 1}^{2t}$ is an element of ${\bf G}$. We conclude that 
\[
[(g_1^{(Wj_\ell)^m } )_{\ell = 1}^{2t} \cdot \sigma, (g_2)_{\ell = 1}^{2t}] \in {\bf G}.
\]
By Claim \ref{claim1}, this is given by 
\[
(\tau^{(Wj_\ell)^m } )_{\ell = 1}^{2t} \cdot \sigma' \in {\bf G},
\]
for some $\sigma'$ in $G_{r+1}$. 
\end{proof}
This completes the proof of Lemma \ref{frz} by plugging in $r = s$.
\end{proof}

	Since the frequency $\xi$ of $\F$ is nontrivial on the identity component, there exists an element $u$ in the identity component of $G_s$ such that $\frac{1}{2\pi i}\log \xi(u)$ is irrational. Fix such a $u$. Now since $J$ does not solve Vinogradov's mean value problem there exists $m \leq s$ such that $j_1^m + \cdots + j_t^m - j_{t+1}^m - \cdots - j_{2t}^m \neq 0$. Fix such an $m$. Then the map $G_s \rightarrow G_s$ given by $v \mapsto v^{(j_1^m + \cdots + j_t^m - j_{t+1}^m - \cdots - j_{2t}^m) W^m}$ has image both open and closed so $u$ is in the image. For more details, see \cite{Frantzikinakis}. Fix a $v$ such that $v \mapsto u$. Then by Lemma \ref{frz}, $(v^{(Wj_\ell)^m})_{\ell =1}^{2t} \in {\bf G}$. As such
	\begin{align*}
	\int_{\bf X} \Xi(x) \ \rho(dx) =& \int_{\bf X} \Xi(v x) \ \rho(dx) \\
	=& \int_{\bf X} \xi(u) \Xi(x) \ \rho(dx) \\
	=& 0.
	\end{align*}
	This gives a contradiction. We conclude that the terms which do not solve Vinogradov's mean value problem do not contribute to our sum.

	For every $2t$-tuple $j_1, \ldots, j_{2t}$ in $[k]^{2t}$, we have that for all $p$
\[
\big| \F(g^{pj_1} x) \cdot \cdots \cdot  \F(g^{pj_t} x) \cdot \overline{\F}(g^{pj_{t+1}} x) \cdot \cdots \cdot \overline{\F}(g^{pj_{2t}} x) \big| \leq 1,
\]
simply using a trivial $L^\infty$ bound.
	For every $2t$-tuple which does not solve Vinogradov's mean value problem we have 
\[
\limsup_{k \in \K} \limsup_{P \rightarrow \infty} \E_{P/2 < p \leq P} \int_X \F(g^{pj_1} x) \cdot \cdots \cdot  \F(g^{pj_t} x) \cdot \overline{\F}(g^{pj_{t+1}} x) \cdot \cdots \cdot \overline{\F}(g^{pj_{2t}} x)= 0.
\]
	Therefore, the average is bounded by the fraction of terms which solve Vinogradov's mean value problem. There are no more than $C_{s, \e} k^{t + .5 \e}$ such solutions by Bourgain-Demeter-Guth (Theorem \ref{BDG}). Thus
	\[
	\limsup_{k \in \K} \limsup_{P \rightarrow \infty} \E_{P/2 < p \leq P} \int_X | \E_{h \leq k} \epsilon_h \Phi(g^{ph} x) |^{2t} dx \cdot k^{t - \e} = 0,
	\]
	in the case $t \neq 2$ and 
	\[
	\limsup_{k \in \K} \limsup_{P \rightarrow \infty} \E_{P/2 < p \leq P} \int_X | \E_{h \leq k} \epsilon_h \Phi(g^{ph} x) |^{2t} dx \cdot k^{t}  \leq C,
	\]
	in the case $t = 2$. After all, since diagonal solutions are the only solutions to Vinogradov's mean value problem in the case of two variables and one equation i.e. $j_1 = j_2$, there is no $\e$ loss when $t = 2$. 
	Thus, we obtain Theorem \ref{bigthm} and in turn Theorem \ref{mainthm} and Theorem \ref{condmainthm}.

	\section{Proof of Theorem \ref{approxmainthm}}\label{section3}
	The proof of Theorem \ref{approxmainthm} is essentially the proof of Theorem \ref{mainthm} with a few minor simplifications. As before suppose not. Then as before, we can find a joining such that
	\[
	\left| \int_{X \times Y} f(x) f'(y)  \nu(dx dy) \right| > c.
	\]
	As before, we can apply \cite{FrantzikinakisHost2} such that
	\[
	\left| \int_{X \times Y} \E^\mu[f | \z](x) f'(y)  \nu(dx dy) \right| > c.
	\]
	Unlike before, we do not need to restrict the integral to $B$.
	As before, we can average over translates
	\[
	\int_{X \times Y} | \E_{h \leq k} \E^\mu[f | \z](T^h x) f'(T^h y) |  \nu(dx dy) > c.
	\]
	As before, we can take an essential supremum over $y$
	\[
	\int_{X} \sup_{y \in Y} | \E_{h \leq k} \E^\mu[f | \z](T^h x) f'(T^h y) | \mu(dx) > c.
	\]
	As before, we can apply the entropy decrement argument, for some $P \gg k$, we have
	\[
	\E_{P/2 < p \leq P} \int_{X} \sup_{y \in Y} | \E_{h \leq k} \E^\mu[f | \z](T^{ph} x) f'(T^h y) | \mu(dx) > c.
	\]
	We can use the Cauchy-Schwarz inequality
	\[
	\E_{P/2 < p \leq P} \int_{X} \sup_{y \in Y} | \E_{h \leq k} \E^\mu[f | \z](T^{ph} x) f'(T^{h} y) |^2 \mu(dx) > c^2.
	\]
	This time, would like to replace $f'$ by a sum over words of length $k$ up to $\e$ rounding. In the no-rounding case, we knew that words of $f'$ were words of $b$. We double check that a similar result holds for words up to constant rounding. In particular, fix $k$ such that there are at most $\delta k$ words of length $k$ that occur with positive $\log$ density up to $\e$ rounding. Thus, we can fix a set $\Sigma$ of words of length $k$ such that $\# \Sigma \leq \delta k$ and for all $n$ outside a set of $0$ $\log$ density there exists an $\epsilon$ in $\Sigma$ such that $| b(n +h) - \epsilon_h | \leq \e$. Translating this to the dynamical setting,
	\[\nu \{ (x,y) \colon \text{there exists $\epsilon$ in $\Sigma$ such that } |f'(T^h y) - \epsilon_h| \leq \e \} = 0. \]
	Therefore, we can replace $f'$ by a sum over words as before.
	\[
	\sum_{\epsilon} \E_{P/2 < p \leq P} \int_{X} | \E_{h \leq k} \E^\mu[f | \z](T^{ph} x) \epsilon_h |^2 \mu(dx) > c^2 - 2\e.
	\]
	Notice that this time, when we replace $f'(T^h y)$ by a word, we incur an error of $\e$.
	Now the rest of the argument runs exactly the same as before. In fact, after pigeonholing, any dependence on $b$ completely drops out of the argument.
	\appendix
	\section{Frantzikinakis-Host and dynamical models}
	\cite{TaoBlog} shows that there is a joining $(X_0 \times Y, \nu_0, T, f, f', M, I_m)$ of a dynamical model for $a$ with $b$ where $X_0 = D^\Z \times \widehat{\Z}$ is the space of sequences in the unit disk with the product topology, $T$ is the shift map on $D^\Z$ and $+1$ on $\widehat{\Z}$, $f$ is the evaluation at $0$ map, $M$ is projection onto the second factor and $I_m( (x(n))_{n \in \Z}, r  ) = ( (\overline{a(m)} x(m n))_{n \in \Z}, \frac{r}{m}  )$ whenever $r$ is in $m\widehat{\Z}$.
	Call $\mu_0$ the pushforward of $\nu_0$ onto $X_0$. 
	Of course, $X_0$ factors onto $D^\Z$ by projection onto the first factor. Call $\rho$ the pushforward of $\nu$ onto $D^\Z$.
	\cite{FrantzikinakisHost2} (Proposition 4.2 in that paper) showed that $(D^\Z, T, \rho)$ is a factor of a system $(\tilde{X}, \tilde{\rho}, T)$ where $\tilde{X} = (D^\Z)^\Z$, $T$ is the shift map and there exists a natural number $d$ so that if $\P_d$ is the set of primes which are $1 \mod d$ then 
	\[
	\int_{\tilde{X}} \prod_{j = -K}^K F_j (T^j x) \tilde\rho(dx) = \lim_{N \rightarrow \infty} \E_{p \in \P_d \cap [N]} \int_{D^\Z} \prod_{j=-K}^K F_j(T^{pj} x) \rho(dx),  
	\]
	where $K$ is any natural number, the functions $F_j$ are any bounded measurable functions depending only on the $0^{th}$ coordinate and by \cite{FrantzikinakisHost2} the limit always exists. We fix such a $d$. 
	By \cite{FrantzikinakisHost2} (see Theorem 4.5 in that paper), each ergodic component of $\tilde{X}$ is isomorphic to a product of a Bernoulli system with an inverse limit of nilsystems. Thus, we get a joining of $\tilde{X}$ with $X_0$ over their common factor $D^\Z$. Call this joining $(X, \mu, T)$. We also get a joining of $X_0 \times Y$ and $X$ over their common factor $X_0$, which we call $(X \times Y, \nu, T)$. 
Explicitly, this joining is defined as follows. A point in $(X \times Y, \nu, T)$ can be thought of as a triple of points $(x_1, x_2, y)$ with $x_1$ in $\tilde X$ and $(x_2, y)$ in $X_0 \times Y$. Since $X_0 = D^\Z \times \widehat \Z$, we have that $x_2 = (x_3, r)$ for some $x_3$ in $D^\Z$ and $r$ in $\widehat \Z$. The measure is supported on triples where $\pi(x_1) = x_3$ so we will often forget $x_1$ and simply write a point in $X \times Y$ as a triple $(x, r, y)$ with $x$ in $\tilde X$, $r$ in $\widehat \Z$ and $y$ in $Y$. The measure is given explicitly by the following formula: if $K$ is a natural number, $F_j$ are bounded measurable functions on $D^\Z$ depending only on the $0^{th}$ coordinate, $\phi$ is a bounded measurable function on $\widehat \Z$ and $\psi$ is a bounded measurable function on $Y$ then
\begin{align*}
&\int_{X \times Y} \prod_{j = -K}^{K} F_j(T^j x) \cdot \phi(r) \cdot \psi(y) \nu(dx dr dy) \\ =  \lim_{N \rightarrow \infty} \E_{p \in \P_d \cap [N]} &\int_{X_0 \times Y} \prod_{j = -K}^{K} F_j(T^{pj} x) \cdot \phi(r) \cdot \psi(y) \nu_0(dx dr dy).
\end{align*}

We will proceed to check that $X \times Y$ has all the desired properties. We define $M \colon X \rightarrow \widehat{\Z}$ by taking an element $(x, r)$ with $x$ in $\tilde{X}$ and $r$ in $\widehat{\Z}$ to $r$. Let $x$ be an element of $\tilde{X}$. We will write $x = (x_n)_{n \in \Z}$ for a sequence of elements $x_n$ in $D^\Z$ and write $x_n(k) \in D$ for the $k^{th}$ element of the sequence $x_n$. Let $\iota_m((x(k))_{k \in \Z}) = \overline{a(m)} (x(mk))_{k \in \Z} $ We define $I_m(x,r) = (\iota_m( (x_{nm})_{n \in \Z} ), \frac{r}{m})$. Explicitly
	\[
	I_m( x,  r) = \left( (\overline{a(m)} x_{nm}(km)_{k \in \Z}  )_{n \in \Z}, \frac{r}{m}  \right)
	\]
	whenever $r$ is in $m\widehat{\Z}$. We define $f \colon X \times Y \rightarrow \C$ by the formula $f(x,r,y) = x_0(0)$. This is just the pullback of $f \colon X_0 \times Y \rightarrow \C$ under the factor map $X \times Y \rightarrow X_0 \times Y$. We define $f' \colon X \times Y \rightarrow \C$ by pulling back  $f' \colon X_0 \times Y \rightarrow \C$ under the same factor map i.e. $f'(x, r, y) = y$.
Now we check
	\begin{itemize}
		\item $M(T(x, r)) = M(Tx,r+1) =  r+1 = M(x, r) +1$
		\item We have \begin{align*}
		I_m \circ T^m(x,r) =& \left( (\overline{a(m)} x_{nm+m}(km)_{k \in \Z}  )_{n \in \Z}, \frac{r+m}{m}   \right) \\
		=& \left( ( \overline{a(m)} x_{(n+1)m}(km)_{k \in \Z}  )_{n \in \Z}, \frac{r}{m} +1  \right) \\
		=& T\left( ( \overline{a(m)} x_{nm}(km)_{k \in \Z}  )_{n \in \Z}, \frac{r}{m}  \right) \\
		=& T \circ I_m (x, r).
		\end{align*}
		for any $m$ and whenever $r$ is in $m\widehat{\Z}$.
		\item Let $K$ be a natural number and $F_j \colon \tilde{X} \rightarrow \C$ be a sequence of bounded measurable functions depending only on $0$. Let $\phi$ be a function which is measurable with respect to $\widehat{\Z}$. Then for any $m$,
		\begin{align*}
		&\int_{X} \ch_{M^{-1}(m\widehat{\Z})}(x) \phi(I_m x) \cdot \prod_{j = -K}^K F_j(T^j I_m x) \mu(dx) 
		\\=& \lim_{N \rightarrow \infty} \E_{p \in \P_d \cap [N]} \int_{X_0} \ch_{r \in m \widehat{\Z}} \phi\left( \frac{r}{m} \right) \cdot \prod_{j = -K}^K F_j \circ \iota_m ( T^{pmj} x) \mu_0(dxdr),
		\intertext{by definition of $\mu$. Next, we use that $\iota_m \circ T^{pmj} = T^{pj} \circ \iota_m$.}
		=& \lim_{N \rightarrow \infty} \E_{p \in \P_d \cap [N]} \int_{X_0} \ch_{r \in m \widehat{\Z}} \phi\left( \frac{r}{m} \right) \cdot \prod_{j = -K}^K F_j \circ T^{pj} \circ \iota_m ( x) \mu_0(dxdr).
		\intertext{Because $X_0$ is a dynamical model for $a$, it possesses a dilation symmetry,}
		=& \lim_{N \rightarrow \infty} \E_{p \in \P_d \cap [N]} \int_{X_0} \frac{1}{m} \phi(r) \cdot \prod_{j = -K}^K F_j( T^{pj} x  ) \mu_0(dxdr).
		\intertext{Finally, we apply the definition of $\mu$ one more time,}
		=&\int_{X} \frac{1}{m} \phi(x) \cdot \prod_{j = -K}^K F_j(T^j  x) \mu(dx). 
		\end{align*}
		\item For any natural number $m$ and any $r$ in $m \widehat{\Z}$, we have
		\[
		f (I_m (x,r) ) = f( \left( (\overline{a(m)} x_{nm}(km)_{k \in \Z}  )_{n \in \Z}, \frac{r}{m}  \right) ) = \overline{a(m)} x_0(0) = \overline{a(m)} f(x,r).
		\]
		\item Clearly, for any natural numbers $m$ and $h$,
		\[
		I_h I_m( x,  r) = \left( (\overline{a(mh)} x_{nmh}(kmh)_{k \in \Z}  )_{n \in \Z}, \frac{r}{mh}  \right) = I_m I_h (x, r),
		\]
		for any $r$ in $hm \widehat{\Z}$.
		\item Since $f$ and $f'$ are pulled back from $X_0 \times Y$, the ``statistics" of $f \colon X \times Y \rightarrow \C$ will be the same as the statistics of $f \colon X_0 \times Y \rightarrow \C$ and similarly for $f'$. 
	\end{itemize}
	Therefore, $(X \times Y, \nu, T, f, f', I_m, M)$ is a joining of a dynamical model for $a$ with $b$. 
	
	Let $(X, \mu_\omega, T)$ be an ergodic component of $(X, \mu, T)$ which joins the corresponding ergodic component $(\tilde X, \tilde\rho_\omega, T)$ of $(\tilde X, \tilde\rho, T)$ with $\widehat{\Z}$. Note that $\widehat{\Z}$ is already an ergodic inverse limit of nilsystems: after all it is an inverse limit of the ergodic systems of the form $\Z / m \Z$ and the inverse limit of ergodic systems is ergodic. By \cite{FrantzikinakisHost2}, there is a Bernoulli system $(W, dw, T)$ and an inverse limit of nilsystems $(Z_0, dz, T)$ such that $(\tilde X, \tilde\rho_\omega, T) \cong (W, dw, T) \times (Z_0, dz, T)$. 
Therefore $(X, \mu_\omega, T)$ is isomorphic to $(W \times Z_0 \times \widehat{\Z}, \mu', T)$ where $\mu'$ is some mystery measure and where $T$ is just the product transformation. We can think of this system as a joining of $(W \times Z_0, dw \times dz, T)$ with $(\widehat{\Z}, dz, T)$ or we can think of this system as a joining of $(W, dw, T)$ with $(Z_0 \times \widehat{\Z}, \zeta, T)$ where $\zeta$ is some unknown measure given by pushing forward $\mu'$ onto $Z_0 \times \widehat{\Z}$. 
Next, we claim that any ergodic joining of two inverse limits of nilsystems is in fact isomorphic to an inverse limit of nilsystems. After all, if $Z_1$ and $Z_2$ are two nilsystems and $\zeta$ is an ergodic invariant measure on $Z_1 \times Z_2$, then $\zeta$ is a translate of Haar measure on some closed subgroup by measure classification for nilsystems. Thus $(Z_1 \times Z_2, \zeta, T) \cong (Z_3, dz, T)$ for some nilsystem $Z_3$. Taking inverse limits, $(Z_0 \times \widehat{\Z}, \zeta, T)$ is isomorphic to an inverse limit of nilsystems $(Z, dz, T)$. Because $(Z, dz, T)$ is an inverse limit of nilsystems, it has zero entropy so the only possible joining of $(Z, dz, T)$ with the Bernoulli system $(W, dw, T)$ is the trivial joining i.e. $\mu'$ is the product measure $dw \times dz$.
 Lastly, we claim that $Z$ is isomorphic to the Host Kra factor of $(X, \mu_\omega, T)$. Since the Host Kra factor $\z(X)$ is isomorphic to an inverse limit of nilsystems, it has zero entropy, so any factor map from $W \times Z$ to $\z(X)$ where $W$ is Bernoulli necessarily factors through $Z$. Thus $Z$ factors onto $\z(X)$. Of course, since $X$ factors onto $Z$, the Host Kra factor for $X$ factors onto the Host Kra factor for $Z$. Implicitly in \cite{HostKra} and explicitly, for instance, in \cite{HostKra2} chapter 12, for any nilsystem $(Z_1, dz, T)$ the Host Kra factor of $Z_1$ is $Z_1$. Thus, taking inverse limits gives that the Host Kra factor of $Z$ is $Z$ so $\z(X) \cong Z$. This completes the proof.
 
\section{Reduction to the completely multiplicative case}

We have stated our main theorems in the case that $a$ is completely multiplicative. In this appendix, we show that these assumptions can be weakened to include all multiplicative functions. For example, we will show that Theorem \ref{mainthm} holds in this generality. The same argument works for Theorem \ref{condmainthm} and Theorem \ref{approxmainthm} (although in this last case, the way that $c$ depends on $\e$ gets worse). 
The argument here will be entirely formal, using nothing of the proof of Theorem \ref{mainthm} and only the result. However, we remark that the interested reader could check that the proof we give can be adapted to the more general case of multiplicative functions. The main difference is that now the dynamical model for $a$ does not satisfy the identity that the push forward of $\mu$ restricted to $M^{-1}(m\widehat{\Z})$ is $\frac{1}{m} \mu$ but instead we incur a $\frac{1}{m}$ error i.e. for all $\phi$ in satisfying $|| \phi ||_{L^\infty(\mu)} \leq 1$ we have
\[
\int_X \phi(x) \mu(dx) = \int_X m \ch_{x \in M^{-1} (m \widehat{\Z} )} \phi(I_m(x)) \mu(dx) + O\left(\frac{1}{m}\right).
\]
This introduces an error term of size $O\left( \frac{1}{P} \right)$ in Corollary \ref{cor1} which tends to $0$ as $P$ tends to infinity.

However, here we proceed just using the statement of Theorem \ref{mainthm}.
Famously, we can write
\[
\mu(n) = \sum_{d^2 | n} \lambda\left( \frac{n}{d^2} \right) \mu(d),
\]
where $\lambda$ is Liouville function and $\mu$ is the M\"obius function which agree with the Liouville function on squarefree numbers and vanishes on numbers which are not squarefree. Of course,
\[
\lambda\left( \frac{n}{d^2} \right) = \lambda(n),
\]
but we write it this way to suggest that the convolution identity
\[
\mu = \lambda * \phi
\]
where $\phi(d^2) = \mu(d)$ may be generalized. In fact, for any multiplicative function $a$ taking values on the unit circle, we may write
\[
a = a_1 * a_2
\]
where $a_1$ is some completely multiplicative function taking values on the unit circle and $a_2$ is a (possibly unbounded) multiplicative function supported on numbers of the form $d^k$ for some natural numbers $d$ and $k$ with $k \geq 2$. To prove this is possible, it suffices to check it is possible on prime powers since both sides are multiplicative. For any prime $p$, we define 
\[
a_1(p) = a(p)
\]
and so 
\begin{align*}
a_1(p) \cdot a_2(1) + a_1(1) \cdot a_2(p) &= a(p) \cdot 1 + 1 \cdot 0 = a(p).
\end{align*}
We also want,
\begin{align*}
a(p^2) = & a_1(p^2) \cdot 1 + 1 \cdot a_2(p^2)
\intertext{so we choose}
a_2(p^2) = & a(p^2) - a_1(p^2) \\
= & a(p^2) - a(p)^2.
\intertext{
	Iteratively, we may define
}
a_2(p^k) =& a(p^k) - \sum_{0 \leq i < k} a_1(p^{k-i}) a_2 (p^i) \\
=& a(p^k) - \sum_{0 \leq i < k} a(p)^{k-i} a_2 (p^i).
\end{align*}
Since whether $a$ is unpretentious or not depends only on the behavior of $a$ at primes, clearly if $a$ is unpretentious then so is $a_1$.

Informally, the probability that a random number is divisible by $d$ is roughly $\frac{1}{d}$. Thus, the expected number of times that any number of the form $d^k$ for $k \geq 2$ divides a random natural number is at most
\[
\sum_{d \geq 2} \sum_{k \geq 2} \frac{1}{d^k}
\]
which is summable. Thus the tails 
\[
\sum_{d \geq C} \sum_{k \geq 2} \frac{1}{d^k}
\]
and
\[
\sum_{d \geq 2} \sum_{k \geq C} \frac{1}{d^k}
\]
tend to zero as $C$ tends to infinity. Let $S$ be the set of natural numbers $n$ for which $d^k$ divides $n$ for $d, 
k \geq 2$ implies $d, k \leq C$. The previous analysis says most numbers are in $S$. Fix a function $b$ as in the statement of Theorem \ref{mainthm}, that is a bounded function such that for any $\delta > 0$ there are infinitely many $k$ such that the number of words of $b$ of length $k$ that occur with positive upper logarithmic density is at most $\delta k^2$. Our goal will be to show that for $N$ large,
\[
| \E_{n \leq N}^{\log} a(n) b(n) |
\]
is small, say less than a constant times some small positive number $\e$. If $C$ is sufficiently large depending on $\e$ but still very small compared to $N$, we may modify $a$ on the set of numbers outside $S$. In particular, $a$ is given by the formula
\begin{align*}
a(n) = &\sum_{\ell | n} a_1\left( \frac{n}{\ell} \right) a_2( \ell )
\intertext{
	For most numbers, this is the same as
}
= &\sum_{ \substack{\ell | n \\ \ell \leq C^C } } a_1\left( \frac{n}{\ell} \right) a_2( \ell ).
\intertext{
	That formula works as long as $n$ is not divisible by a number of the form $d^k$ where either $d$ or $k$ is greater than $C$. In that exceptional case when $n$ is divisble by a number of the form $d^k$ with $d$ or $k$ greater than $C$, we can write $n$ as $i \cdot j$ where $i$ is not divisible by any number greater than $C^C$ and is as large as possible given that constraint. We conclude that if $\ell \leq C^C$ and $\ell | n$ then $\ell | i$. Thus, expanding the definitions and using multiplicativity,
}
a_1(j) \cdot a(i) = & a_1(j) \sum_{ \substack{\ell | i \\ \ell \leq C^C } } a_1\left( \frac{i}{\ell} \right) a_2( \ell ) \\ 
=&\sum_{ \substack{\ell | i \\ \ell \leq C^C } } a_1\left( \frac{i j}{\ell} \right) a_2( \ell ) \\
=&\sum_{ \substack{\ell | n \\ \ell \leq C^C } } a_1\left( \frac{n}{\ell} \right) a_2( \ell ) \\
\end{align*}
We conclude that the formula
\[
\sum_{\ell | n} a_1\left( \frac{n}{\ell} \right) a_2( \ell )
\]
is bounded and agrees with $a(n)$ all but at most
\[
\sum_{d \geq C} \sum_{k \geq 2} \frac{1}{d^k} +\sum_{d \geq 2} \sum_{k \geq C} \frac{1}{d^k}
\]
of the time. Thus, it suffices to show
\begin{align*}
& \left| \E_{n \leq N}^{\log} \sum_{\substack{m \ell = n \\ \ell \leq C^C}} a_1(m) a_2(\ell) b(n) \right|
\intertext{
	is small. By changing variables and applying Fubini,
}
= & \left| \sum_{\ell \leq C^C} a_2(\ell) \E_{m \leq N/\ell}^{\log}  a_1(m)  b(m \ell) \right|.
\end{align*}		
Fix a natural number $\ell$. Notice that every word of length $k$ of the function $m \mapsto b(m \ell)$ embeds in a word of $b$ of length $k \cdot \ell$. Thus, it is easy to check that $m \mapsto b(m \ell)$ still satisfies the conditions of Theorem \ref{mainthm}. Therefore, as $N$ tends to infinity, the previous expression tends to $0$.

\begin{bibdiv}
	\begin{biblist}
		
				\bib{MR3043150}{article}{
				author={Bourgain, J.},
				title={M\"{o}bius-{W}alsh correlation bounds and an estimate of
					{M}auduit and {R}ivat},
				date={2013},
				ISSN={0021-7670},
				journal={J. Anal. Math.},
				volume={119},
				pages={147\ndash 163},
				eprint={https://doi.org/10.1007/s11854-013-0005-2},
				review={\MR{3043150}},
			}
		
			\bib{MR3095150}{article}{
			author={Bourgain, J.},
			title={On the correlation of the {M}oebius function with rank-one
				systems},
			date={2013},
			ISSN={0021-7670},
			journal={J. Anal. Math.},
			volume={120},
			pages={105\ndash 130},
			eprint={https://doi.org/10.1007/s11854-013-0016-z},
			review={\MR{3095150}},
		}
		
		\bib{MR3548534}{article}{
			author={Bourgain, Jean},
			author={Demeter, Ciprian},
			author={Guth, Larry},
			title={Proof of the main conjecture in {V}inogradov's mean value theorem
				for degrees higher than three},
			date={2016},
			ISSN={0003-486X},
			journal={Ann. of Math. (2)},
			volume={184},
			number={2},
			pages={633\ndash 682},
			eprint={https://doi.org/10.4007/annals.2016.184.2.7},
			review={\MR{3548534}},
		}

		\bib{MR2986954}{incollection}{
			author={Bourgain, J.},
			author={Sarnak, P.},
			author={Ziegler, T.},
			title={Disjointness of {M}oebius from horocycle flows},
			date={2013},
			booktitle={From {F}ourier analysis and number theory to {R}adon transforms
				and geometry},
			series={Dev. Math.},
			volume={28},
			publisher={Springer, New York},
			pages={67\ndash 83},
			eprint={https://doi.org/10.1007/978-1-4614-4075-8_5},
			review={\MR{2986954}},
		}
		
		\bib{MR3459905}{article}{
			author={Downarowicz, Tomasz},
			author={Kasjan, Stanis\l~aw},
			title={Odometers and {T}oeplitz systems revisited in the context of
				{S}arnak's conjecture},
			date={2015},
			ISSN={0039-3223},
			journal={Studia Math.},
			volume={229},
			number={1},
			pages={45\ndash 72},
			review={\MR{3459905}},
		}
	
			\bib{MR3415586}{article}{
		author={El~Abdalaoui, El~Houcein},
		author={Kasjan, Stanis\l~aw},
		author={Lema\'{n}czyk, Mariusz},
		title={0-1 sequences of the {T}hue-{M}orse type and {S}arnak's
			conjecture},
		date={2016},
		ISSN={0002-9939},
		journal={Proc. Amer. Math. Soc.},
		volume={144},
		number={1},
		pages={161\ndash 176},
		eprint={https://doi.org/10.1090/proc/12683},
		review={\MR{3415586}},
	}
		
		\bib{MR3622068}{article}{
			author={El~Abdalaoui, El~Houcein},
			author={Ku\l~aga Przymus, Joanna},
			author={Lema\'{n}czyk, Mariusz},
			author={de~la Rue, Thierry},
			title={The {C}howla and the {S}arnak conjectures from ergodic theory
				point of view},
			date={2017},
			ISSN={1078-0947},
			journal={Discrete Contin. Dyn. Syst.},
			volume={37},
			number={6},
			pages={2899\ndash 2944},
			eprint={https://doi.org/10.3934/dcds.2017125},
			review={\MR{3622068}},
		}
		
				\bib{MR3819999}{article}{
			author={Fan, Ai-Hua},
			author={Jiang, Yunping},
			title={Oscillating sequences, {MMA} and {MMLS} flows and {S}arnak's
				conjecture},
			date={2018},
			ISSN={0143-3857},
			journal={Ergodic Theory Dynam. Systems},
			volume={38},
			number={5},
			pages={1709\ndash 1744},
			eprint={https://doi.org/10.1017/etds.2016.121},
			review={\MR{3819999}},
		}

		\bib{Frantzikinakis}{article}{
	author={Frantzikinakis, Nikos},
	title={Ergodicity of the {L}iouville system implies the {C}howla
		conjecture},
	date={2017},
	ISSN={2397-3129},
	journal={Discrete Anal.},
	pages={Paper No. 19, 41},
	review={\MR{3742396}},
}

		\bib{FrantzikinakisHost2}{article}{
			author={Frantzikinakis, Nikos},
			author={Host, Bernard},
			title={Furstenberg systems of bounded multiplicative functions and
				applications},
			date={2018},
			journal={International Mathematics Research Notices},
			eprint={https://arxiv.org/pdf/1804.08556.pdf},
		}
		
		\bib{FrantzikinakisHost}{article}{
			author={Frantzikinakis, Nikos},
			author={Host, Bernard},
			title={The logarithmic {S}arnak conjecture for ergodic weights},
			date={2018},
			ISSN={0003-486X},
			journal={Ann. of Math. (2)},
			volume={187},
			number={3},
			pages={869\ndash 931},
			eprint={https://doi.org/10.4007/annals.2018.187.3.6},
			review={\MR{3779960}},
		}

		\bib{MR670131}{article}{
			author={Furstenberg, H.},
			author={Katznelson, Y.},
			author={Ornstein, D.},
			title={The ergodic theoretical proof of {S}zemer\'{e}di's theorem},
			date={1982},
			ISSN={0273-0979},
			journal={Bull. Amer. Math. Soc. (N.S.)},
			volume={7},
			number={3},
			pages={527\ndash 552},
			eprint={https://doi.org/10.1090/S0273-0979-1982-15052-2},
			review={\MR{670131}},
		}

		\bib{alex2019mbius}{article}{
			author={Gomilko, Alexander},
			author={Lemańczyk, Mariusz},
			author={de~La~Rue, Thierry},
			title={M{ö}bius orthogonality in density for zero entropy dynamical
				systems},
			date={2019},
			eprint={https://arxiv.org/abs/1905.06563}
		}
		
		\bib{GT2}{article}{
			author={Green, Benjamin},
			author={Tao, Terence},
			title={Linear equations in primes},
			date={2010},
			ISSN={0003-486X},
			journal={Ann. of Math. (2)},
			volume={171},
			number={3},
			pages={1753\ndash 1850},
			eprint={https://doi.org/10.4007/annals.2010.171.1753},
			review={\MR{2680398}},
		}
		
		\bib{GT1}{article}{
			author={Green, Ben},
			author={Tao, Terence},
			title={The {M}\"{o}bius function is strongly orthogonal to
				nilsequences},
			date={2012},
			ISSN={0003-486X},
			journal={Ann. of Math. (2)},
			volume={175},
			number={2},
			pages={541\ndash 566},
			eprint={https://doi.org/10.4007/annals.2012.175.2.3},
			review={\MR{2877066}},
		}
		
		\bib{GT3}{article}{
			author={Green, Benjamin},
			author={Tao, Terence},
			title={The quantitative behaviour of polynomial orbits on nilmanifolds},
			date={2012},
			journal={Ann. of Math.},
			volume={175},
			pages={465\ndash 540},
			eprint={https://doi.org/10.4007/annals.2012.175.2.2},
		}
		
		\bib{GTZ}{article}{
			author={Green, Ben},
			author={Tao, Terence},
			author={Ziegler, Tamar},
			title={An inverse theorem for the {G}owers {$U^{s+1}[N]$}-norm},
			date={2012},
			ISSN={0003-486X},
			journal={Ann. of Math. (2)},
			volume={176},
			number={2},
			pages={1231\ndash 1372},
			eprint={https://doi.org/10.4007/annals.2012.176.2.11},
			review={\MR{2950773}},
		}
		
		\bib{Hildebrand}{article}{
			author={Hildebrand, Adolf},
			title={On consecutive values of the {L}iouville function},
			date={1986},
			ISSN={0013-8584},
			journal={Enseign. Math. (2)},
			volume={32},
			number={3-4},
			pages={219\ndash 226},
			review={\MR{874689}},
		}
		
		\bib{HostKra}{article}{
			author={Host, Bernard},
			author={Kra, Bryna},
			title={Nonconventional ergodic averages and nilmanifolds},
			date={2005},
			ISSN={0003-486X},
			journal={Ann. of Math. (2)},
			volume={161},
			number={1},
			pages={397\ndash 488},
			eprint={https://doi.org/10.4007/annals.2005.161.397},
			review={\MR{2150389}},
		}
		
		\bib{HostKra2}{book}{
			author={Host, Bernard},
			author={Kra, Bryna},
			title={Nilpotent structures in ergodic theory},
			publisher={American Mathematical Society},
			ISBN={1-4704-4780-0}
			date={2018}
		}
		
		\bib{MR3631324}{article}{
			author={Huang, Wen},
			author={Lian, Zhengxing},
			author={Shao, Song},
			author={Ye, Xiangdong},
			title={Sequences from zero entropy noncommutative toral automorphisms
				and {S}arnak conjecture},
			date={2017},
			ISSN={0022-0396},
			journal={J. Differential Equations},
			volume={263},
			number={1},
			pages={779\ndash 810},
			eprint={https://doi.org/10.1016/j.jde.2017.02.046},
			review={\MR{3631324}},
		}
		
		\bib{MR3347317}{article}{
			author={Liu, Jianya},
			author={Sarnak, Peter},
			title={The {M}\"{o}bius function and distal flows},
			date={2015},
			ISSN={0012-7094},
			journal={Duke Math. J.},
			volume={164},
			number={7},
			pages={1353\ndash 1399},
			eprint={https://doi.org/10.1215/00127094-2916213},
			review={\MR{3347317}},
		}

		\bib{MR3263939}{article}{
			author={Martin, Bruno},
			author={Mauduit, Christian},
			author={Rivat, Jo\"{e}l},
			title={Th\'{e}or\'{e}me des nombres premiers pour les fonctions
				digitales},
			date={2014},
			ISSN={0065-1036},
			journal={Acta Arith.},
			volume={165},
			number={1},
			pages={11\ndash 45},
			eprint={https://doi.org/10.4064/aa165-1-2},
			review={\MR{3263939}},
		}

		\bib{MR}{article}{
			author={Matom\"{a}ki, Kaisa},
			author={Radziwi\l\l, Maksym},
			title={Multiplicative functions in short intervals},
			date={2016},
			ISSN={0003-486X},
			journal={Ann. of Math. (2)},
			volume={183},
			number={3},
			pages={1015\ndash 1056},
			eprint={https://doi.org/10.4007/annals.2016.183.3.6},
			review={\MR{3488742}},
		}
		
		\bib{MRT2}{article}{
			author={Matom\"{a}ki, Kaisa},
			author={Radziwi\l~\l, Maksym},
			author={Tao, Terence},
			title={Sign patterns of the {L}iouville and {M}\"{o}bius functions},
			date={2016},
			ISSN={2050-5094},
			journal={Forum Math. Sigma},
			volume={4},
			pages={e14, 44},
			eprint={https://doi.org/10.1017/fms.2016.6},
			review={\MR{3513734}},
		}
		
		\bib{MRT3}{article}{
			author={Matom\"{a}ki, Kaisa},
			author={Radziwi\l~\l, Maksym},
			author={Tao, Terence},
			title={Fourier uniformity of bounded multiplicative functions in short
				intervals on average},
			journal={Inventiones Mathematicae}
			date={2018},
			eprint={https://arxiv.org/pdf/1812.01224.pdf},
		}
		
		\bib{MRT}{article}{
			author={Matom\"{a}ki, Kaisa},
			author={Radziwi\l\l, Maksym},
			author={Tao, Terence},
			title={An averaged form of {C}howla's conjecture},
			date={2015},
			ISSN={1937-0652},
			journal={Algebra Number Theory},
			volume={9},
			number={9},
			pages={2167\ndash 2196},
			eprint={https://doi.org/10.2140/ant.2015.9.2167},
			review={\MR{3435814}},
		}
	
			\bib{Blog2}{article}{
		author={Moreira, Joel},
		title={Tao’s proof of (logarithmically averaged) chowla’s conjecture
			for two point correlations},
		date={2018},
		eprint={https://joelmoreira.wordpress.com/2018/11/04/taos-proof-of-logarithmically-averaged-chowlas-conjecture-for-two-point-correlations/},
	}

			\bib{MR3724218}{article}{
		author={M\"{u}llner, Clemens},
		title={Automatic sequences fulfill the {S}arnak conjecture},
		date={2017},
		ISSN={0012-7094},
		journal={Duke Math. J.},
		volume={166},
		number={17},
		pages={3219\ndash 3290},
		eprint={https://doi.org/10.1215/00127094-2017-0024},
		review={\MR{3724218}},
	}
		
		\bib{MR3859364}{article}{
			author={Peckner, Ryan},
			title={M\"{o}bius disjointness for homogeneous dynamics},
			date={2018},
			ISSN={0012-7094},
			journal={Duke Math. J.},
			volume={167},
			number={14},
			pages={2745\ndash 2792},
			eprint={https://doi.org/10.1215/00127094-2018-0026},
			review={\MR{3859364}},
		}
		
		\bib{MR3014544}{article}{
			author={Sarnak, Peter},
			title={Mobius randomness and dynamics},
			date={2012},
			journal={Not. S. Afr. Math. Soc.},
			volume={43},
			number={2},
			pages={89\ndash 97},
			review={\MR{3014544}},
		}
		
		\bib{Sawin}{article}{
			author={Sawin, Will},
			title={Dynamical models for liouville and obstructions to further
				progress on sign patters},
			date={2018},
			eprint={https://arxiv.org/pdf/1809.03280.pdf},
		}
		
		\bib{TaoChowla}{article}{
			author={Tao, Terence},
			title={The logarithmically averaged {C}howla and {E}lliott conjectures
				for two-point correlations},
			date={2016},
			ISSN={2050-5086},
			journal={Forum Math. Pi},
			volume={4},
			pages={e8, 36},
			eprint={https://doi.org/10.1017/fmp.2016.6},
			review={\MR{3569059}},
		}
		
		\bib{TaoEquiv}{incollection}{
			author={Tao, Terence},
			title={Equivalence of the logarithmically averaged {C}howla and {S}arnak
				conjectures},
			date={2017},
			booktitle={Number theory---{D}iophantine problems, uniform distribution and
				applications},
			publisher={Springer, Cham},
			pages={391\ndash 421},
			review={\MR{3676413}},
			eprint={https://arxiv.org/abs/1605.04628}
		}
		
		\bib{TaoBlog}{article}{
			author={Tao, Terence},
			title={Furstenberg limits of the liouville function},
			date={2017},
			eprint={https://terrytao.wordpress.com/2017/03/05/furstenberg-limits-of-the-liouville-function/},
		}
		
		\bib{TJ2}{article}{
			author={Tao, Terence},
			author={Ter\"av\"ainen, Joni},
			title={Odd order cases of the logarithmically averaged chowla
				conjecture},
			journal={J. Numb. Thy. Bordeaux},
			eprint={https://arxiv.org/abs/1710.02112}
			date={2017},
		}
		
		\bib{TJ}{article}{
			author={Tao, Terence},
			author={Ter\"av\"ainen, Joni},
			title={The structure of logarithmically averaged correlations of
				multiplicative functions, with applications to the chowla and elliott
				conjectures},
			date={2017},
			journal={Duke Mathematical Journal},
			eprint={https://arxiv.org/pdf/1708.02610.pdf},
		}
		
		\bib{MR3702497}{article}{
			author={Veech, William~A.},
			title={M\"{o}bius orthogonality for generalized {M}orse-{K}akutani
				flows},
			date={2017},
			ISSN={0002-9327},
			journal={Amer. J. Math.},
			volume={139},
			number={5},
			pages={1157\ndash 1203},
			eprint={https://doi.org/10.1353/ajm.2017.0031},
			review={\MR{3702497}},
		}
		
		\bib{MR3660308}{article}{
			author={Wang, Zhiren},
			title={M\"{o}bius disjointness for analytic skew products},
			date={2017},
			ISSN={0020-9910},
			journal={Invent. Math.},
			volume={209},
			number={1},
			pages={175\ndash 196},
			eprint={https://doi.org/10.1007/s00222-016-0707-z},
			review={\MR{3660308}},
		}
		
	\end{biblist}
\end{bibdiv}

\end{document}